\numberwithin{equation}{section}
\theoremstyle{plain}
\newtheorem{lem}[equation]{Lemma}
\newtheorem{thm}[equation]{Theorem}
\newtheorem{cor}[equation]{Corollary}
\theoremstyle{definition}
\newtheorem{definition}[equation]{Definition}
\newtheorem{remark}[equation]{Remark}
\newtheorem{claim}{Claim}
\newtheorem*{que*}{Question}
\newtheorem*{remark*}{Remark}
\newtheorem*{definition*}{Definition}
\newcommand*{\inc}{\ensuremath{\mathcal{I}}}
\def\co{\colon\thinspace}
\begin{document}
\title{Top dimensional quasiflats in $CAT(0)$ cube complexes}
\author{Jingyin Huang}

\address{The Department of Mathematics and Statistics\\
McGill University\\
Burnside Hall, Room 1242 \\
805 Sherbrooke W.\\
Montreal, QC, H3A 0B9\\
Canada}
\email{jingyin.huang@mcgill.ca}

\begin{abstract}
We show that every $n$-quasiflat in an $n$-dimensional $CAT(0)$ cube complex is at finite Hausdorff distance from a finite union of $n$-dimensional orthants. Then we introduce a class of cube complexes, called {\em weakly special} cube complexes and show that quasi-isometries between their universal covers preserve top dimensional flats. This is the foundational towards the quasi-isometry classification of right-angled Artin groups with finite outer automorphism group.

Some of our arguments also extend to $CAT(0)$ spaces of finite geometric dimension. In particular, we give a short proof of the fact that a top dimensional quasiflat in a Euclidean building is Hausdorff close to finite union of Weyl cones, which was previously established in \cite{kleiner1997rigidity,eskin1997quasi,wortman2006quasiflats} by different methods.
\end{abstract}

\maketitle

\setcounter{tocdepth}{2}
\tableofcontents

\section{Introduction}
\subsection{Summary of results}
A quasiflat of dimension $d$ in a metric space $X$ is a quasi-isometric embedding $\phi\co\Bbb E^{d}\to X$, i.e. there exist positive constants $L,A$ such that for all $x,y\in \Bbb E^{d}$,
\begin{equation*}
L^{-1}d(x,y)-A\le d(\phi(x),\phi(y))\le Ld(x,y)+A\,.
\end{equation*}

Top dimensional (or maximal) flats and quasiflats in spaces of higher rank are analogues of geodesics and quasi-geodesics in Gromov hyperbolic spaces, which play a key role in understanding the large scale geometry of these spaces. In particular, several quasi-isometric rigidity results were established on the study of such flats or quasiflats, here is a list of examples:
\begin{itemize}
\item Euclidean buildings and symmetric spaces of non-compact type: \cite{mostow1973strong,kleiner1997rigidity,eskin1997quasi,kramer2009coarse}.
\item Universal covers of certain Haken manifolds: \cite{kapovich1997quasi}; higher-dimensional graph manifolds: \cite{frigerio2011rigidity}; two-dimensional tree groups and their higher dimensional analogues: \cite{behrstock2008quasi,MR2727658}.
\item $CAT(0)$ 2-complexes: \cite{bestvina2008quasiflats}, with applications to the quasi-isometric rigidity of atomic right-angled Artin groups in \cite{MR2421136}.
\item Flats generated by Dehn-twists in mapping class groups: \cite{MR2928983}.
\end{itemize}

In this paper, we will mainly focus on top dimensional quasiflats and flats in $CAT(0)$ cube complexes. All cube complexes in this paper will be finite dimensional. Our first main result shows how the cubical structure interacts with quasiflats. 

\begin{thm}
\label{1.1}
If $X$ is a $CAT(0)$ cube complex of dimension $n$, then for every $n$--quasiflat $Q$ in $X$, there is a finite collection {$O_{1},...,O_{k}$} of $n$--dimensional orthant subcomplexes in $X$ such that
\begin{center}
$d_{H}(Q, \cup_{i=1}^{k}O_k)<\infty$
\end{center}
Here $d_{H}$ denotes the Hausdorff distance.
\end{thm}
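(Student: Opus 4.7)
The plan is to pass to an asymptotic cone. Fix a non-principal ultrafilter $\omega$ and a scaling sequence adapted to the quasi-isometry constants of $Q$, so that the quasi-isometric embedding $\phi\co\mathbb{E}^{n}\to X$ induces a bi-Lipschitz embedding $\phi_\omega\co\mathbb{E}^{n}\to X_\omega$ of the rescaled flat into the corresponding asymptotic cone $X_\omega$. First I would invoke Kleiner's theorem that the geometric (hence topological) dimension of $X_\omega$ is at most $n$; combined with the version of invariance of domain valid for metric spaces of controlled dimension, this forces $\phi_\omega(\mathbb{E}^{n})$ to be a closed, top-dimensional subset of $X_\omega$ with non-empty interior.

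The next, and central, step is to exploit the cubical structure inherited by $X_\omega$. Each hyperplane of $X$ contributes, via the ultralimit, a convex ``wall'' in $X_\omega$, and away from a negligible set a point of $X_\omega$ has a neighborhood locally isometric to a finite product of $\mathbb{R}$-trees. At a generic interior point of $\phi_\omega(\mathbb{E}^{n})$ the image must cut each $\mathbb{R}$-tree factor along a single geodesic, forcing it to coincide locally with an $n$-dimensional Euclidean flat tangent to only finitely many combinatorial orthant germs in $X_\omega$. A connectedness/monodromy argument then propagates this local rigidity globally across $\phi_\omega(\mathbb{E}^{n})$, identifying the image with a finite union of top-dimensional orthants $\bar O_1,\ldots,\bar O_k$ in the cone.

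Finally, I would descend back to $X$. Each $\bar O_i$ is an ultralimit of combinatorial orthant subcomplexes of $X$, which can be amalgamated into a genuine orthant subcomplex $O_i\subset X$. A standard bootstrap — if $d_H(Q,\bigcup_i O_i)=\infty$ then the defect would survive rescaling and reappear in the cone, contradicting the equality already established there — completes the argument.

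The main obstacle is the second paragraph: upgrading the topologically $n$-dimensional bi-Lipschitz image in $X_\omega$ into an explicit \emph{finite} union of combinatorial orthants. This is where the specific cube complex structure (hyperplanes, halfspaces, and the local product decomposition at interior points of top-dimensional cubes) has to be combined with the geometric dimension bound in a non-trivial way; in the more general settings of symmetric spaces or Euclidean buildings, the analogous step produces Weyl cones in place of orthants, which accounts for the paper's parallel treatment promised in the abstract.
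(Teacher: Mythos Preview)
Your approach via asymptotic cones is genuinely different from the paper's, which never passes to an ultralimit. The paper instead replaces $Q$ by the support set $S$ of a top-dimensional proper homology class, proves a monotonicity/lower-density bound for $S$, shows that far from a basepoint each link $\Sigma_xS$ is a metric suspension (the $\epsilon$--splitting of Bestvina--Kleiner--Sageev), builds geodesic rays in $S$ of ``constant $\Delta_{mod}$ direction'' to produce a finite simplicial complex $K\subset\partial_TX$ of all-right spherical simplices containing $\partial_TS$, proves $K$ is an $(n-1)$--cycle by pushing $[\sigma]$ out to $C_TX$, and finally cones $K$ back into $X$ using a quasi-isometric ``cubical coning'' map whose image is Hausdorff-close to a finite union of orthant subcomplexes. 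The finiteness of orthants comes for free from the packing bound (Lemma~3.12 type estimate) on $S$ combined with the $\pi/2$--gap between straight rays.

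Your proposal, by contrast, has real gaps beyond the one you flag. First, the claim that generic points of $X_\omega$ have neighborhoods isometric to a finite product of $\mathbb{R}$--trees is not standard and not obviously true: $X_\omega$ is a median space of rank $\le n$, but the wall structure is a measured wallspace rather than a discrete one, and no local product decomposition is available off the shelf. Second, and more seriously, the descent step is where the argument breaks. An orthant $\bar O_i\subset X_\omega$ is an ultralimit of \emph{sequences} of subsets of $X$, not of a single orthant subcomplex; extracting a genuine orthant subcomplex $O_i\subset X$ from $\bar O_i$ would require exactly the kind of rigidity (producing an actual flat sector in $X$ from asymptotic data) that the theorem is meant to establish. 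The Kleiner--Leeb argument you are modeling this on works because Euclidean buildings have a rigid chamber/apartment structure that survives in the cone and can be pulled back; cube complexes have no analogous canonical Weyl structure, which is precisely why the paper has to manufacture the orthants by hand via the $\Delta_{mod}$--direction and straight-ray arguments. Finally, your bootstrap is not quite right: unbounded Hausdorff distance does not automatically survive passage to a \emph{single} asymptotic cone with a fixed basepoint and scale sequence, so even granting everything else you would need an additional quantitative argument.
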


An \textit{orthant} $O$ of $X$ is a convex subset which is isometric to the Cartesian product of finitely many half-lines $\Bbb R_{\ge 0}$. If $O$ is both a subcomplex and an orthant, then $O$ is called an \textit{orthant subcomplex}. We caution the reader that the definition of orthant subcomplex here is slightly different from other places, i.e. we require an orthant subcomplex to be convex with respect to the $CAT(0)$ metric.

The $2$--dimensional case of Theorem \ref{1.1} was proved in \cite{bestvina2008quasiflats}. We will use this theorem as one of main ingredients to study the coarse geometry of right-angled Artin groups (see Corollary~\ref{1.4} below and the remarks after). Also note that recently Behrstock, Hagen, and Sisto have obtained a quasi-flat theorem of quite different flavour in \cite{behrstock2014hierarchically}. Their result does not imply our result and vice versa.

Based on Theorem \ref{1.1}, we study how the top dimensional flats behave under quasi-isometries. In general, quasi-isometries between $CAT(0)$ complexes of the same dimensional do not necessarily preserve top dimension flats up to finite Hausdorff distance, even if the underlying spaces are cocompact. However, motivated by \cite{MR2377497}, we can define a large class of cube complexes such that top dimensional flats behave nicely with respect to quasi-isometries between universal covers of these complexes. Our class contains all compact non-positively curved special cube complexes up to finite cover (\cite[Proposition 3.10]{MR2377497}).

\begin{definition}
\label{1.2}
A cube complex $W$ is \textit{weakly special} if and only if it has the following properties:
\begin{enumerate}
\item $W$ is non-positively curved.
\item No hyperplane \textit{self-osculates} or \textit{self-intersects}.
\end{enumerate}
\end{definition}

The notions of self-osculate and self-intersect were introduced in \cite[Definition 3.1]{MR2377497}. 

\begin{thm}
\label{1.3}
Let $W'_{1}$ and $W'_{2}$ be two compact weakly special cube complexes with $\dim(W'_{1})=\dim(W'_{2})=n$, and let $W_{1}$, $W_{2}$ be the universal covers of $W'_{1}$, $W'_{2}$ respectively. If $f\co W_{1}\to W_{2}$ is a $(L,A)$--quasi-isometry, then there exists a constant $C=C(L,A)$ such that for any top dimensional flat $F\subset W_{1}$, there exists a top dimensional flat $F'\subset W_{2}$ with $d_{H}(f(F),F')<C$.
\end{thm}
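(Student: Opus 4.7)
The plan is to combine Theorem~\ref{1.1} with a gluing argument that uses weak specialness. Applying Theorem~\ref{1.1} to the $n$-quasiflat $f(F)\subset W_2$ produces a finite collection of $n$-dimensional orthant subcomplexes $O_1,\ldots,O_k$ in $W_2$ with $d_H(f(F),\bigcup_{i=1}^k O_i)$ bounded, and compactness of $W_1'$ and $W_2'$ lets one arrange for this Hausdorff bound to depend only on $(L,A)$. The task then reduces to showing this union is contained in a single $n$-flat $F'\subset W_2$, whence the triangle inequality gives $d_H(f(F),F')<C(L,A)$.

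The key step should be a gluing lemma in the universal cover $W$ of a compact weakly special cube complex: any finite union of $n$-dimensional orthant subcomplexes of $W$ that lies within finite Hausdorff distance of some $n$-flat in $W$ is itself contained in a single $n$-flat of $W$. One builds this flat by extending each $O_i$ across its codimension-one boundary faces, cube by cube, using the coarse presence of the nearby flat as a scaffold to direct the extension. Weak specialness is essential here: the no-self-intersection condition ensures that each hyperplane in $W$ is embedded and two-sided in a way compatible with its projection, so an extension never collides with itself or with an unexpected wall; the no-self-osculation condition guarantees that at each newly added vertex, edges dual to the extending hyperplanes span a genuine $n$-cube rather than an osculating corner. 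Iterating yields a convex subcomplex of $W_2$ isometric to $\mathbb{R}^n$, namely the desired flat $F'$.

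The main obstacle is the gluing lemma. Individual orthants in a general $CAT(0)$ cube complex need not extend to flats, so both conditions of Definition~\ref{1.2} must enter essentially, and careful combinatorial bookkeeping is needed to propagate the extension coherently across all $O_i$ at once. A likely ingredient is that $W_2'$ has only finitely many hyperplane orbits, so any obstruction to extending a wall must recur in a pattern that weak specialness ultimately rules out; this pigeonholing, together with the coarse flat scaffold provided by $f(F)$, should upgrade local extendability to the needed global flat with Hausdorff bound controlled only by $(L,A)$.
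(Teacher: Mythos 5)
Your proposal misidentifies where the difficulty lies, and the ``gluing lemma'' it rests on is not correctly formulated. As stated, the lemma says: any finite union of $n$-orthant subcomplexes of $W$ that lies within finite Hausdorff distance of an $n$-flat in $W$ is contained in a single $n$-flat. That hypothesis is circular --- if you already knew $f(F)$ is Hausdorff close to a flat in $W_2$, you would be done. Reading charitably, you mean to use the quasiflat $f(F)$ itself as the ``scaffold'', but a quasiflat is not a flat and provides no rigid directional data to ``coherently propagate'' an extension; the whole problem is to rule out that $f(F)$ is close to a union of orthants (perhaps more than $2^n$ of them) arranged in a branching, non-flat pattern. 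Theorem~\ref{1.1} alone gives no control on how many orthants appear or how they fit together, and this is exactly where the content of the theorem lies.

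The paper's proof goes through an entirely different mechanism: it builds the lattice $\mathcal{KQ}(W_i)$ generated by top-dimensional quasiflats under coarse union, intersection, and subtraction, shows quasi-isometries induce bijections of these lattices, and studies the ``order'' $|[A]|$ (number of top-dimensional orthants in a representative). The key combinatorial input is Lemma~\ref{5.11}: minimal essential elements have order exactly $2^i$ and can be completed by $2^{n-i}-1$ further minimal essential elements to a genuine flat. A downward induction on $i$ (Lemma~\ref{5.12}) then shows the order is a quasi-isometry invariant for minimal essential elements, and Lemma~\ref{5.2} converts order-preservation into the flat-preservation statement by a volume/density argument. Weak specialness enters not through local cleanliness of cubes (the no-self-intersection/osculation conditions as you describe them are essentially automatic in any $CAT(0)$ cube complex) but through the \emph{labeling} of edges in the compact quotient, which makes the mirror/doubling construction of Lemma~\ref{5.4}--\ref{5.6} and the branching analysis of Lemma~\ref{5.9} work; that is a global periodicity argument, not the local propagation you sketch. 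Your proposal omits the entire lattice machinery, does not explain why the number of orthants must be $2^n$, and uses weak specialness of $W_2$ only, whereas the actual argument needs it for both $W_1$ and $W_2$ (to decompose flats in $W_1$ into minimal essential pieces and to run the mirror construction on both sides). These are not minor gaps --- they are the core of the proof.
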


We now apply this result to \textit{right-angled Artin groups} (RAAGs). Recall that for every finite simplicial graph $\Gamma$ with its vertex set denoted by $\{v_i\}_{i\in I}$, one can define a group using the following presentation.
\begin{center}
\{$v_i$, for $i\in{I}\ |\ [v_i,v_j]=1$ if $v_{i}$ and $v_{j}$ are adjacent\}
\end{center}
This is called the \textit{right-angled Artin group with defining graph $\Gamma$}, and we denote it by $G(\Gamma)$. Each $G(\Gamma)$ can be realized as the fundamental group of a non-positively curved cube complex $\bar{X}(\Gamma)$, which is called the Salvetti complex (see \cite{charney2007introduction} for a precise definition). The $2$--skeleton of the Salvetti complex is the usual presentation complex for $G(\Gamma)$. The universal cover of $\bar{X}(\Gamma)$ is a $CAT(0)$ cube complex, which we denote by $X(\Gamma)$. 

\begin{cor}
\label{1.4}
Let $\Gamma_{1}$, $\Gamma_{2}$ be finite simplicial graphs, and let $\phi\co X(\Gamma_{1})\to X(\Gamma_{2})$ be an $(L, A)$--quasi-isometry. Then
\begin{enumerate}
\item $\dim(X(\Gamma_{1}))=\dim(X(\Gamma_{2}))$.
\item There is a constant $D=D(L, A)$ such that for any top-dimensional flat $F_{1}$ in $X(\Gamma_{1})$, we can find a flat $F_{2}$ in $X(\Gamma_{2})$ such that $d_{H}(\phi(F_{1}),F_{2})< D$.
\end{enumerate}
\end{cor}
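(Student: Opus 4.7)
The plan is to reduce part (2) directly to Theorem~\ref{1.3}, and to establish part (1) as a consequence of the QI-invariance of the maximal quasiflat dimension. First I would verify that each Salvetti complex $\bar{X}(\Gamma_i)$ is compact weakly special: it is non-positively curved by construction, and the combinatorial structure makes clear that its hyperplanes (each dual to a single standard generator, and isomorphic to the Salvetti complex of a link in $\Gamma_i$) cannot self-osculate or self-intersect. This is also a special case of the Haglund--Wise result \cite{MR2377497} that Salvetti complexes are special.

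For part (1), I would use the standard fact that $\dim X(\Gamma_i)$ equals the clique number of $\Gamma_i$ and is realized by an honest $n_i$-flat inside $X(\Gamma_i)$, namely the coordinate flat spanned by the generators of a maximal clique. Thus $\phi$ sends an $n_1$-flat of $X(\Gamma_1)$ to an $n_1$-quasiflat in $X(\Gamma_2)$, and symmetrically $\phi^{-1}$ produces an $n_2$-quasiflat in $X(\Gamma_1)$. To conclude $n_1=n_2$ one needs the fact that an $n$-dimensional CAT(0) cube complex admits no $k$-quasiflat for $k>n$. I expect this to be the main obstacle in part (1): it is not a formal consequence of Theorem~\ref{1.1} (which governs only the top dimension), but can be extracted from known upper bounds on the asymptotic or topological covering dimension of CAT(0) cube complexes, both of which are QI-invariants that bound the dimension of any quasiflat.

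With $n:=n_1=n_2$ established, part (2) is an immediate application of Theorem~\ref{1.3} to $W'_i=\bar{X}(\Gamma_i)$: if $C=C(L,A)$ is the constant produced by that theorem, then setting $D=C$ yields, for every top-dimensional flat $F_1\subset X(\Gamma_1)$, a top-dimensional flat $F_2\subset X(\Gamma_2)$ with $d_H(\phi(F_1),F_2)<D$, which is exactly the content of part (2). The only place where real work happens is therefore the dimension-matching step; everything else is bookkeeping or a direct citation of Theorems \ref{1.1} and \ref{1.3}.
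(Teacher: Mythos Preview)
Your proposal is correct and matches the paper's approach: the paper restates this as Theorem~\ref{5.21}, deduces part~(2) directly from Theorem~\ref{5.18} (=Theorem~\ref{1.3}) after noting that Salvetti complexes are weakly special, and for part~(1) remarks that one can argue as in Corollary~\ref{5.20} or invoke the invariance of cohomological dimension. The only minor difference is in the justification of part~(1): where you appeal to bounds on quasiflat dimension via asymptotic/covering dimension, the paper's preferred argument (Corollary~\ref{5.20}) uses that a quasi-isometry induces an isomorphism on proper homology $H^{\textmd{p}}_{\ast}$, so a nontrivial class coming from a top-dimensional flat in $X(\Gamma_2)$ forces $\dim X(\Gamma_1)\ge\dim X(\Gamma_2)$---this is perhaps cleaner since the machinery is already set up in the paper, but your route is equally valid.
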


This is the foundation for a series of work on quasi-isometric classification and rigidity of RAAGs \cite{huang2014quasi,huang2015quasi,huang2016groups,huang2016rigid}.

\begin{remark*}
We could also use Theorem \ref{1.3} to obtain an analogous statement for quasi-isometries between the Davis complexes of certain right-angled Coxeter groups, but in general the dimension of maximal flats in a Davis complex are strictly smaller than the dimension of complex itself, so we need extra condition on the right-angled Coxeter groups, see Corollary \ref{5.20} for a precise statement.
\end{remark*}

Corollary \ref{1.4} implies that $\phi$ maps chains of top dimensional flats to chains of top dimensional flats, and this gives rise to several quasi-isometry invariant for RAAGs. More precisely, we consider a graph $\mathcal{G}_{d}(\Gamma)$ where the vertices are in 1--1 correspondence to top dimensional flats in $X(\Gamma)$ and two vertices are connected by an edge if and only if the coarse intersection of the corresponding flats has dimension $\ge d$. The connectedness of $\mathcal{G}_{d}(\Gamma)$ can be read off from $\Gamma$, which gives us the desired invariants. 

\begin{definition}
\label{1.5}
Let $d\ge 1$ be an integer. Let $\Gamma$ be a finite simplicial graph and let $F(\Gamma)$ be the flag complex that has $\Gamma$ as its $1$--skeleton. $\Gamma$ has \textit{property $(P_{d})$} if and only if
\begin{enumerate}
\item Any two top dimensional simplexes $\Delta_{1}$ and $\Delta_{2}$ in $F(\Gamma)$ are connected by a \textit{$(d-1)$--gallery}.
\item For any vertex $v\in F(\Gamma)$, there is a top dimensional simplex $\Delta\subset F(\Gamma)$ such that $\Delta$ contains at least $d$ vertices that are adjacent to $v$.
\end{enumerate}
A sequence of $n$--dimensional simplexes $\{\Delta_{i}\}_{i=1}^{p}$ in $F(\Gamma)$ is a \textit{$k$--gallery} if $\Delta_{i}\cap\Delta_{i+1}$ contains a $k$--dimensional simplex for $1\le i\le p-1$.
\end{definition}

\begin{thm}
\label{1.6}
$\mathcal{G}_{d}(\Gamma)$ is connected if and only if $\Gamma$ has property $(P_{d})$. In particular, for any $d\ge 1$, property $(P_{d})$ is a quasi-isometry invariant for RAAGs.
\end{thm}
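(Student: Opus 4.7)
My plan is to identify the vertices and edges of $\mathcal{G}_d(\Gamma)$ combinatorially in terms of cliques and cosets, match connectedness with $(P_d)$ directly, and then deduce QI-invariance from Corollary~\ref{1.4}.

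Using Theorem~\ref{1.1} applied to $X(\Gamma)$, every top-dimensional flat is at finite Hausdorff distance from a \emph{standard flat} $g\langle K\rangle$, where $K\subset\Gamma$ is a maximal clique with $|K|=n=\dim X(\Gamma)$; distinct standard flats have infinite Hausdorff distance, so the vertices of $\mathcal{G}_d(\Gamma)$ correspond to pairs $(K,g\langle K\rangle)$. The central combinatorial input will be a coarse intersection lemma: the coarse intersection of $g_1\langle K_1\rangle$ and $g_2\langle K_2\rangle$ is at finite Hausdorff distance from a standard sub-flat whose dimension is at most $|K_1\cap K_2|$, and equals $|K_1\cap K_2|$ exactly when the two cosets meet. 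In particular, adjacency in $\mathcal{G}_d$ forces $|K_1\cap K_2|\ge d$.

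For the $(P_d)\Rightarrow$ connectedness direction, let $C(x)$ denote the subgraph of $\mathcal{G}_d$ induced by flats through a vertex $x\in X(\Gamma)$. Since $x\langle K_1\rangle\cap x\langle K_2\rangle=x\langle K_1\cap K_2\rangle$, two such flats are $\mathcal{G}_d$-adjacent iff $|K_1\cap K_2|\ge d$, i.e., iff the corresponding top-dimensional simplices share a $(d-1)$-face; condition~(1) then makes $C(x)$ connected. To bridge $C(x)$ and $C(xv)$ across an edge of $X(\Gamma)$, condition~(2) furnishes a top-dimensional simplex $\Delta$ containing a subset $S$ of $d$ neighbors of $v$; because $v$ commutes with every element of $S$, the flats $x\langle\Delta\rangle$ and $xv\langle\Delta\rangle$ fellow-travel along the $|S|$-dimensional sub-flat $x\langle S\rangle$, producing a coarse intersection of dimension $\ge d$ and therefore a $\mathcal{G}_d$-edge. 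Iterating over an edge-path in $X(\Gamma)$ places all flats into one component.

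Conversely, if~(1) fails the coarse intersection lemma forces every $\mathcal{G}_d$-edge to connect cliques in a common $(d-1)$-gallery class, so distinct classes yield distinct components. If~(2) fails at some $v$, a Bass-Serre-like decomposition of $G(\Gamma)$ along subgroups associated with $v$ will supply a discrete invariant of flats that is preserved under $\mathcal{G}_d$-edges yet separates $\langle K\rangle$ from $v\langle K\rangle$. Finally, by Corollary~\ref{1.4} any quasi-isometry $\phi\co X(\Gamma_1)\to X(\Gamma_2)$ sends each top-dim flat within uniform Hausdorff distance of a top-dim flat, and bounded errors do not affect the integer dimension of a coarse intersection, so $\phi$ induces a graph isomorphism $\mathcal{G}_d(\Gamma_1)\to\mathcal{G}_d(\Gamma_2)$ and hence $(P_d)$ is QI-invariant. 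The main obstacles I expect are proving the coarse intersection lemma for general cosets, where the flats need not actually meet, and constructing the Bass-Serre invariant in the converse for~(2): the latter is delicate because a vertex failing~(2) may lie in no maximal $n$-clique, so direct inclusion bridges are unavailable and one has to reason entirely through parallel fellow-travelling.
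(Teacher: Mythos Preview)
Your proposal has a fundamental error at the outset: it is not true that every top-dimensional flat in $X(\Gamma)$ is at finite Hausdorff distance from a standard flat. Theorem~\ref{1.1} only yields a finite union of orthants, not a single standard flat. For a concrete counterexample, take $\Gamma=K_{2,2}$, so $G(\Gamma)\cong F_2\times F_2$ and $X(\Gamma)\cong T\times T$ for a $4$-valent tree $T$. Any product $\ell_1\times\ell_2$ of geodesic lines is a top-dimensional flat, but if $\ell_1$ is the axis of a non-generator element (say $ab$), then $\ell_1\times\ell_2$ is not within finite Hausdorff distance of any standard flat. Thus the vertex set of $\mathcal{G}_d(\Gamma)$ is strictly larger than the set of cosets of maximal-clique subgroups, and your combinatorial identification of vertices collapses. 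This also undermines your converse argument: even if two standard flats lie in distinct $(d-1)$-gallery classes, they could in principle be connected through a chain of non-standard flats.

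The paper handles this by introducing the full subgraph $\mathcal{G}^s_d(\Gamma)\subset\mathcal{G}_d(\Gamma)$ spanned by standard flats and proving (Lemma~\ref{5.23}) that $\mathcal{G}_d(\Gamma)$ is connected if and only if $\mathcal{G}^s_d(\Gamma)$ is. The non-obvious direction uses a straightening procedure: given a top-dimensional flat $F$ with a chosen vertex $x$ and orthogonal edge-frame $\{e_i\}$, one replaces the directions one at a time to build a $\mathcal{G}_d$-chain from $F$ to the standard flat through $x$ spanned by the standard geodesics extending the $e_i$, and then checks that different choices of base vertex and frame in $F$ yield standard flats connected \emph{within} $\mathcal{G}^s_d$. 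Once this reduction is made, the analysis of $\mathcal{G}^s_d(\Gamma)$ (Lemma~\ref{5.24}) is close in spirit to your forward direction. For the converse, your ``Bass--Serre-like decomposition'' is too vague; the paper instead uses a hyperplane-separation argument: if $v$ fails condition~(2) of $(P_d)$, the hyperplane dual to a $v$-edge separates two standard flats, and any $\mathcal{G}^s_d$-chain between them would produce, via Lemma~\ref{2.14}, $d$ mutually orthogonal edges whose dual hyperplanes all cross that hyperplane, hence whose labels are all adjacent to $v$, contradicting the failure of~(2).
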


\begin{remark*}
Another interesting fact in the case $d=1$ is that one can tell whether $\Gamma$ admits a non-trivial join decomposition by looking at the diameter of $\mathcal{G}_{1}(\Gamma)$. This basically follows from the argument in \cite{dani2012divergence}. See Theorem \ref{5.32} for a precise statement. Thus in the case of $X(\Gamma)$, one can determine whether the space splits as a product by looking at the intersection pattern of top dimensional flats. We ask whether this is true in general: if $Z$ is a cocompact geodesically complete $CAT(0)$ space that has $n$--flats but not $(n+1)$--flats, can one determine whether $Z$ splits as a product of two unbounded $CAT(0)$ spaces by looking at the intersection pattern of $n$--flats in $Z$?
\end{remark*}

Actually, a large portion of our discussion generalizes to $n$--dimensional quasiflats in $CAT(0)$ spaces of geometric dimension $=n$ (the notion of geometric dimension and its relation to other notions of dimension are discussed in \cite{kleiner1999local}). This will be discussed in the appendix and see Theorem \ref{6.19} and Theorem \ref{6.20} for a summary.

In particular, this leads to a short proof of the following result, which was previously established in \cite{kleiner1997rigidity,eskin1997quasi,wortman2006quasiflats} by different methods and it is one of the main ingredients in proving quasi-isometric rigidity for Euclidean buildings.
\begin{thm}
\label{1.8}
Let $Y$ be a Euclidean building of rank $n$, and let $Q\subset Y$ be an $n$--quasiflat. Then there exist finitely many Weyl cones $\{W_{i}\}_{i=1}^{h}$ such that 
\begin{equation*}
d_{H}(Q,\cup_{i=1}^{h}W_{i})<\infty\,.
\end{equation*}
\end{thm}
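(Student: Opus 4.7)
The plan is to derive Theorem~\ref{1.8} from the generalization of Theorem~\ref{1.1} to $CAT(0)$ spaces of finite geometric dimension developed in the appendix (Theorem~\ref{6.19} and Theorem~\ref{6.20}). A Euclidean building $Y$ of rank $n$ is a $CAT(0)$ space of geometric dimension exactly $n$, so the appendix result applies and produces a finite family of $n$-dimensional orthants $O_{1},\ldots,O_{k}\subset Y$ with $d_{H}(Q,\bigcup_{i=1}^{k}O_{i})<\infty$. What remains, and is specific to the Euclidean-building setting, is to upgrade each such orthant to a finite union of Weyl cones within bounded Hausdorff distance.

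For this I would first place each $O_{i}$ inside an apartment. Since $O_{i}$ is an isometric copy of $\Bbb R_{\ge 0}^{n}$, prolonging its bounding rays yields a maximal $n$-flat $F_{i}\supset O_{i}$, and in a rank-$n$ Euclidean building every maximal flat is an apartment $A_{i}\cong \Bbb E^{n}$. Inside $A_{i}$ the asymptotic boundary $\partial_{\infty}O_{i}$ is a closed spherical $(n-1)$-simplex $\Delta_{i}$ in the spherical Coxeter complex of $A_{i}$, and by discreteness $\Delta_{i}$ meets only finitely many closed Weyl chambers $C_{i,1},\ldots,C_{i,m_{i}}$. Choosing a special vertex $v_{i}\in A_{i}$ and taking the Weyl cones at $v_{i}$ whose asymptotic boundaries are the $C_{i,j}$ gives a candidate finite family $W_{i,1},\ldots,W_{i,m_{i}}$ of Weyl cones.

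The main obstacle is showing that $\bigcup_{i,j}W_{i,j}$ actually lies at finite Hausdorff distance from $\bigcup_{i}O_{i}$. A priori $\Delta_{i}$ need not be a union of Weyl chambers, and covering it from outside by full Weyl chambers can introduce extra directions that escape $O_{i}$ at linear rate. The resolution, which uses rigidity specific to Euclidean buildings and is where the argument departs from generic $CAT(0)$ spaces of geometric dimension $n$, is that the orthants produced by the general theorem can be chosen with their edges pointing in wall directions of the affine Weyl group on $A_{i}$; in that case $\Delta_{i}$ itself decomposes as a union of spherical Weyl chambers, and each $W_{i,j}$ exhausts the corresponding sub-cone of $O_{i}$ up to a bounded transverse error controlled only by the distance from $v_{i}$ to the tip of $O_{i}$. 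This wall-alignment step is the technical heart of the deduction; with it in hand, the finite union $\bigcup_{i,j}W_{i,j}$ provides the desired collection of Weyl cones.
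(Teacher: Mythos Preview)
Your first step contains a genuine gap: Theorems~\ref{6.19} and~\ref{6.20} do \emph{not} produce a finite family of orthants. Theorem~\ref{6.19} records local and asymptotic properties of the support set $S_{[\sigma]}$ (geodesic extension, monotonicity, asymptotic conicality), and Theorem~\ref{6.20} gives the isomorphism $H^{\textmd{p}}_{n,n}(Y)\cong H_{n-1}(\partial_{T}Y)$. Neither yields any orthant decomposition. The orthant-production in the cube-complex case (Section~\ref{orthants}, culminating in Lemma~\ref{4.9}) depends essentially on the cubical structure---on $\Delta_{mod}$-directions, straight rays, and Lemma~\ref{2.18}---and has no analogue for a general $CAT(0)$ space of geometric dimension $n$, nor for a non-discrete Euclidean building. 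So the sentence ``the appendix result applies and produces a finite family of $n$-dimensional orthants'' is unsupported, and everything downstream (embedding each $O_{i}$ in an apartment, wall-aligning its edges) is building on a step that is not there.

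The paper's route bypasses orthants entirely and is both shorter and more natural for buildings. The key observation is that $\partial_{T}Y$ is a spherical building, hence already a polyhedral complex; the boundary class $\partial[\sigma]\in H_{n-1}(\partial_{T}Y)$ is therefore represented by a cellular cycle, and its support is a finite union $K=\cup_{i=1}^{h}C_{i}$ of Weyl chambers. One then takes Weyl cones $W_{i}$ with $\partial_{T}W_{i}=C_{i}$, observes that any two lie in a common apartment (so the estimates of Lemma~\ref{4.39} go through), and uses the isomorphism of Theorem~\ref{6.20} to conclude $[\sigma]=[\tau]$ where $[\tau]$ is the class carried by $\cup_{i}W_{i}$. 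No intermediate orthant step, and no wall-alignment argument, is needed: the Weyl cones come directly from the chamber decomposition of the boundary cycle.
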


On the way to Theorem \ref{1.8}, we also give a more accessible proof of the following weaker version of one of the main results in \cite{quasimini}.
\begin{thm}
\label{1.7}
Let $q\co Y\to Y'$ be a quasi-isometric embedding, where $Y$ and $Y'$ are $CAT(0)$ spaces of geometric dimension $\le n$. Then $q$ induces a monomorphism $q_{\ast}\co H_{n-1}(\partial_{T}Y)\to H_{n-1}(\partial_{T}Y')$. If $q$ is a quasi-isometry, then $q_{\ast}$ is an isomorphism.
\end{thm}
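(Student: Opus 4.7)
The plan is to pass to asymptotic cones and exploit the fact that the Tits boundary appears inside the asymptotic cone as the base of an isometrically embedded Euclidean cone. Fix a non-principal ultrafilter $\omega$, a rescaling sequence $s_n\to\infty$, and basepoints $y_0\in Y$, $y_0'\in Y'$. The quasi-isometric embedding $q$ induces a bi-Lipschitz embedding $q_\omega\co Y_\omega\to Y'_\omega$ sending $y_\omega=[y_0]$ to $y'_\omega=[q(y_0)]$, between CAT(0) spaces of geometric dimension at most $n$ (geometric dimension is inherited by asymptotic cones). Using that the Tits angle between $\xi,\eta\in\partial_T Y$ equals the limit of comparison angles $\tilde\angle_{y_0}(c_\xi(t),c_\eta(t))$ as $t\to\infty$, the assignment $(\xi,r)\mapsto [c_\xi(rs_n)]$ defines an isometric embedding $\iota\co C_0\partial_T Y\to Y_\omega$ of the Euclidean cone onto a closed convex subset with vertex $y_\omega$; similarly for $\iota'$ in $Y'$. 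Cone-point excision then yields $\tilde H_{n-1}(\partial_T Y)\cong H_n(C_0\partial_T Y, C_0\partial_T Y\setminus\{y_\omega\})$, and analogously in $Y'$.

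The heart of the argument is to show that the bi-Lipschitz map $q_\omega$ descends to a well-defined injection on these local homologies that can then be matched with $H_{n-1}(\partial_T Y')$. The key input is the top-degree rigidity of local homology in CAT(0) spaces of geometric dimension $\le n$: any bi-Lipschitz embedding is injective on top-dimensional local homology at any point. This can be seen either through Kleiner's local-homological characterization of geometric dimension (local homology vanishes above the geometric dimension, and its top-dimensional piece is determined by $(n-1)$-cycles in the space of directions), or through Ambrosio--Kirchheim currents, whose top-degree version survives bi-Lipschitz pushforward. To land the image of $q_\omega$ back inside the Euclidean cone on $\partial_T Y'$, one applies the $1$-Lipschitz radial retraction from $y'_\omega$ (well-defined by CAT(0) convexity of distance to a point) and argues that every top-dimensional local class at $y'_\omega$ admits a representative supported in $\iota'(C_0\partial_T Y')$. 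I expect this last point to be the main obstacle, as it requires ruling out ``extraneous'' local homology coming from parts of $Y'_\omega$ not captured by the Euclidean cone on $\partial_T Y'$, via a careful excision argument combined with the geometric-dimension bound on the complement.

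For the isomorphism statement when $q$ is a quasi-isometry, pick a coarse inverse $\bar q\co Y'\to Y$. Since $\bar q\circ q$ and $q\circ\bar q$ are uniformly close to the respective identity maps, they induce the identity bi-Lipschitz maps on the corresponding asymptotic cones (bounded differences are erased in the limit), so the induced maps on $H_{n-1}$ of the Tits boundaries compose to the identity in both orders. Hence $q_\ast$ has a two-sided inverse, completing the argument.
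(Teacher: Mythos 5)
Your approach via asymptotic cones is genuinely different from the one in the paper, which works directly with proper singular homology of $Y$ and $Y'$ and introduces the group $H^{\mathrm{p}}_{n,n}(Y)$ of proper $n$-cycles of $r^n$-growth, establishes an isomorphism $\partial\colon H^{\mathrm{p}}_{n,n}(Y)\to H_{n-1}(\partial_T Y)$ together with an explicit inverse (the coning map $c$), and then obtains the monomorphism by approximating $q$ by a continuous proper map, building a coarse retraction onto a neighbourhood of $\mathrm{Im}\,q$ via nerves of coverings, and invoking the support-set lemma (Lemma \ref{3.2}) together with the openness of that neighbourhood. However, your outline contains a gap that I do not think is a technicality, and you yourself flag it as ``the main obstacle.''

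The central problem is that $Y'_\omega$ is in general vastly larger than $\iota'(C_0\partial_T Y')$. Points of $Y'_\omega$ come from arbitrary sequences $z_n$ with $d(y_0',z_n)=O(s_n)$; the Euclidean cone on $\partial_T Y'$ only accounts for sequences that $\omega$-a.e.\ track a single geodesic ray from the basepoint. Even when $Y'$ is hyperbolic, $\partial_T Y'$ is a discrete set while $Y'_\omega$ is a $2^{\aleph_0}$-branching $\mathbb{R}$-tree, so $H_n(Y'_\omega, Y'_\omega\setminus\{y'_\omega\})$ has no reason to be captured by the convex subcone $\iota'(C_0\partial_T Y')$. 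The nearest-point projection $\pi'$ onto $\iota'(C_0\partial_T Y')$ is $1$-Lipschitz and is a left inverse to $\iota'_*$, so $\iota'_*$ is split injective; but nothing forces $(q_\omega)_*\iota_*(\alpha)$ to lie in the image of $\iota'_*$, and there is no obvious excision that would push an arbitrary representative of a class in $H_n(Y'_\omega, Y'_\omega\setminus\{y'_\omega\})$ into the cone. The paper sidesteps precisely this issue: it never needs to understand local homology of an asymptotic cone, because the growth bound on proper cycles built into $H^{\mathrm{p}}_{n,n}$ already forces the relevant support sets to look conical (Lemma \ref{3.15}, Theorem \ref{6.19}(4)) before passing to any limit, and the Tits cone is reached via the explicit maps $f_\epsilon$, $\log_p$, not via an ultralimit.

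A second, separate assertion in your sketch is that bi-Lipschitz embeddings between CAT(0) spaces of geometric dimension $\le n$ are automatically injective on top-dimensional local homology. This is not a black box. The support-set inclusion $S_{f_*[\sigma]}\subset f(S_{[\sigma]})$ (Lemma \ref{3.2}) goes the wrong way for injectivity: a priori the pushforward cycle could bound using a chain that leaves $q_\omega(Y_\omega)$. Establishing injectivity is essentially the same hard content as the theorem itself, merely transported into the (less tractable, non-separable) asymptotic cone. The paper gets injectivity by a deliberate construction: it replaces $\mathrm{Im}\,q$ by an open $r$-neighbourhood $U$, uses that openness gives $S_{[\sigma],U}=S_{[\sigma],Y'}$ so $H^{\mathrm{p}}_{n,n}(U)\to H^{\mathrm{p}}_{n,n}(Y')$ is injective, and then builds an explicit coarse retraction $U\to Y$ inducing a left inverse. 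Finally, your claim that geometric dimension (equivalently homological dimension, by \cite{kleiner1999local}) passes to ultralimits is plausible but would need justification before being used; none of the standard characterizations --- via compact subsets, via local homology, or via links --- trivially carries over to $Y_\omega$.

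Your reduction of the isomorphism statement to the monomorphism statement, via a coarse inverse and the observation that bounded perturbations die in the asymptotic cone, is correct and mirrors the paper's handling of that step. But as it stands the monomorphism part of the argument is incomplete at exactly the point you anticipate, and the tools you cite do not close it without substantial further work that would likely recapitulate the paper's proper-homology machinery.
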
 

Here $\partial_{T}Y$ and $\partial_{T}Y'$ denote the Tits boundary of $Y$ and $Y'$ respectively. 

\subsection{Sketch of proofs}
\subsubsection{Proof of Theorem \ref{1.1} and Theorem \ref{1.8}}
The proof of Theorem \ref{1.1} has 5 steps as below. The first one follows \cite{bestvina2008quasiflats} closely, but the others are different, since part of the argument in \cite{bestvina2008quasiflats} depends heavily on special features of dimension $2$, and does not generalize to the $n$--dimensional case.

Let $X$ be a $CAT(0)$ piecewise Euclidean polyhedral complex with $\dim(X)=n$, and let $Q\co\Bbb E^{n}\to X$ be a top dimensional quasiflats in $X$.

\textit{Step 1:} Following \cite{bestvina2008quasiflats}, one can replace the top dimensional quasiflats, which usually contains local wiggles, by a minimizing object which is more rigid. 

More precisely, let us assume without of loss of generality that $Q$ is a continuous quasi-isometric embedding. Let $[\Bbb E_{n}]$ be the fundamental class in the $n$--th locally finite homology group of $\Bbb E^{n}$ and let $[\sigma]=Q_{\ast}([\Bbb E_{n}])$. Let $S$ be the support set (Definition \ref{3.1}) of $[\sigma]$. It turns out that $S$ has nice local property (it is a subcomplex with geodesic extension property) and asymptotic property (it looks like a cone from far away). Moreover, $d_{H}(S,Q)<\infty$.

In the next few steps, we study the structure of $S$ by looking at its \textquotedblleft boundary\textquotedblright.

Recall that $X$ has a Tits boundary $\partial_{T}X$, whose points are asymptotic classes of geodesic rays in $X$, and the asymptotic angle between two geodesic rays induces a metric on $\partial_{T}X$. See Section \ref{cat space} for a precise definition. We define boundary of $S$, denoted $\partial_{T}S$, to be the subset of $\partial_{T}X$ corresponding to geodesic rays inside $S$. 

\textit{Step 2:} We produce a collection of orthants in $X$ from $S$. More precisely, we find an embedded simplicial complex $K\subset\partial_{T}X$ such that $\partial_{T}S\subset K$. Moreover, $K$ is made of right-angled spherical simplexes, each of which is the boundary of an isometrically embedded orthant in $X$. This step depends on the cubical structure of $X$, and is discussed in Section \ref{orthants}.

\textit{Step 3:} We show $\partial_{T}S$ is actually a cycle. Namely, it is the \textquotedblleft boundary cycle at infinity\textquotedblright\ of the homology class $[\sigma]$. This step does not depend on the cubical structure of $X$ and is actually true in greater generality by the much earlier, but still unpublished work of Kleiner and Lang (\cite{quasimini}). However, their paper was based on metric current theory. Under the assumption of Theorem \ref{1.1}, we are able to give a self-contained account which only requires homology theory, see Section \ref{cycle at infinity}.

\textit{Step 4:} We deduce from the previous two steps that $\partial_{T}S$ is a cycle made of $(n-1)$--dimensional all-right spherical simplices. Moreover, each simplex is the boundary of an orthant in $X$. 

\textit{Step 5:} We finish the proof by showing $S$ is Hausdorff close to the union of these orthants. See Section \ref{cubical filling} for the last two steps.

If $X$ a is Euclidean building, then it is already clear that the cycle at infinity can be represented by a cellular cycle, since the Tits boundary is a polyhedral complex (a spherical building). The problem is that $X$ itself may not be a polyhedral complex. There are several ways to get around this point. Here we deal with it by generalizing several results of \cite{bestvina2008quasiflats} to $CAT(0)$ spaces of finite geometric dimension, which is of independent interests.

\subsubsection{Proof of Theorem \ref{1.3}}
Let $W_{1}$ and $W_{2}$ be the universal covers of two weakly special cube complexes. We also assume $\dim(W_{1})=\dim(W_{2})=n$. Our starting point is similar to the treatment in \cite{kleiner1997rigidity,MR2421136}. Let $\mathcal{KQ}(W_{i})$ be the lattice generated by finite unions, coarse intersections and coarse subtractions of top dimensional quasiflats in $W_{i}$, modulo finite Hausdorff distance. Any quasi-isometry $q\co W_{1}\to W_{2}$ will induces a bijection $q_{\sharp}\co\mathcal{KQ}(W_{1})\to\mathcal{KQ}(W_{2})$.

It suffices to study the combinatorial structure of $\mathcal{KQ}(W_{i})$. By Theorem \ref{1.1}, each element $[A]\in \mathcal{KQ}(W_{i})$ is made of a union of top dimensional orthants, together with several lower dimensional objects. We denote the number of top dimensional orthants in $[A]$ by $|[A]|$. $[A]$ is \textit{essential} if $|[A]|>0$ and $[A]$ is \textit{minimal essential} if for any $[B]\in\mathcal{KQ}(W_{i})$ with $[B]\subset[A]$ (i.e. $B$ is coarsely contained in $A$) and $[B]\neq [A]$, we have $|[B]|=0$. 

It suffices to study the minimal essential elements of $\mathcal{KQ}(W_{i})$, since every elements in $\mathcal{KQ}(W_{i})$ can be decomposed into minimal essential elements together with several lower dimensional objects. In the case of universal covers of special cube complexes, these elements have nice characterizations and behave nicely with respect to quasi-isometries:

\begin{thm}
If $[A]\in \mathcal{KQ}(W_{i})$ is minimal essential, then there exists a convex subcomplex $K\subset W_{i}$ which is isometric to $(\Bbb R_{\ge 0})^{k}\times \Bbb R^{n-k}$ such that $[K]=[A]$.
\end{thm}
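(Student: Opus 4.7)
The plan is to leverage Theorem~\ref{1.1} to reduce the question to a combinatorial analysis of orthants, then use the weakly special hypothesis to upgrade the combinatorial data into a convex subcomplex. Applying Theorem~\ref{1.1} to the quasiflats whose coarse operations produce $A$, one obtains finitely many $n$-dimensional orthant subcomplexes $O_{1},\ldots,O_{m}\subset W_{i}$ with $d_{H}(A,\bigcup_{j}O_{j})<\infty$. Each $O_{j}$ is specified by an apex cube together with $n$ pairwise crossing hyperplanes and a sign choice in each direction; its Tits boundary $\sigma_{j}\subset\partial_{T}W_{i}$ is an all-right spherical $(n-1)$-simplex. The weakly special hypothesis guarantees that these hyperplanes are embedded and non-self-osculating, so the full family of hyperplanes appearing across the $O_{j}$ assembles into a rigid combinatorial skeleton in $W_{i}$ and the $\sigma_{j}$ fit together in a well-behaved simplicial subcomplex of $\partial_{T}W_{i}$.

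The central step is to use the minimality of $[A]$ to force a product structure on the combinatorics of $\{O_{j}\}$. I would partition the set of hyperplane directions appearing in the $O_{j}$ into a set of $k$ \emph{fixed} directions, on which all orthants in $\{O_{j}\}$ lie on the same side, and a set of $n-k$ \emph{varying} directions, on which different orthants differ. Minimality should then force every one of the $2^{n-k}$ possible sign patterns in the varying directions to be realized: if some pattern were absent, the family $\{O_{j}\}$ would split into two nonempty subfamilies separated by a hyperplane dual to one of the varying directions, and coarsely intersecting $A$ with a suitable half-space (obtained from a quasiflat inside $W_{i}$) would produce a proper sub-element $[B]\subsetneq[A]$ in $\mathcal{KQ}(W_{i})$ with $|[B]|>0$, contradicting minimality. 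This step is where minimality really bites, and it pins down the combinatorial type of $[A]$.

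Once the combinatorial product structure is in place, the weakly special hypothesis allows one to glue the $2^{n-k}$ orthants into a single convex subcomplex. After using the rigidity of hyperplanes in a weakly special complex to locate a common apex cube $C$ shared by all the $O_{j}$ (up to a bounded adjustment), the candidate $K$ is the intersection of the $k$ half-spaces bounded by the fixed hyperplanes with the parallel set of the flat spanned by the $n-k$ varying directions. Standard properties of $CAT(0)$ cube complexes then show that $K$ is a convex subcomplex isometric to $(\mathbb{R}_{\ge 0})^{k}\times\mathbb{R}^{n-k}$, and $d_{H}(K,\bigcup_{j}O_{j})<\infty$ by construction, so $[K]=[A]$. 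I expect the main obstacle to be the minimality step: producing a proper sub-element of $[A]$ in $\mathcal{KQ}(W_{i})$ from a hypothetical missing sign pattern requires realizing the relevant half-of-$A$ as a coarse intersection of genuine quasiflats, which is nontrivial because $\mathcal{KQ}(W_{i})$ is generated by quasiflats rather than by orthants directly, and controlling this realization is where the combinatorics of weakly special cube complexes has to be used most delicately.
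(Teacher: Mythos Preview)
Your proposal has a structural gap at the very first step that undermines the rest of the argument. You assume that the orthants $O_{1},\ldots,O_{m}$ making up $[A]$ can be described via a common set of $n$ ``hyperplane directions'' together with sign choices. But distinct orthant subcomplexes in a $CAT(0)$ cube complex need not share any hyperplanes at all; a priori there is no common coordinate system in which to speak of ``fixed'' versus ``varying'' directions or of $2^{n-k}$ sign patterns. Establishing that the orthants in $[A]$ do lie (coarsely) in a single flat is essentially the content of the theorem, not something you can take as a starting point.

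The paper's route is quite different and uses the weakly special hypothesis much earlier and more substantially than you do. It begins with a \emph{single} orthant $O\subset[A]$ and uses the labelling of edges coming from the weakly special structure to build a ``mirror'' of $O$ via deck transformations (the construction before Lemma~\ref{5.6}); iterating this doubling $n$ times produces a genuine top-dimensional flat $F$ as a convex subcomplex with $[O]\subset[F]$, and minimality then forces $[A]\subset[F]$ (Corollary~\ref{5.7}). Only now does one have a common coordinate frame. The analysis then proceeds via the \emph{branched/unbranched} dichotomy for the codimension-one faces of $O$: Claim~2 of Lemma~\ref{5.11} uses branched faces to build auxiliary flats $F_i$ with $[A]\subset[F]\cap[F_i]$, cutting $[A]$ down; Claim~3 uses unbranchedness to show that every quasiflat containing $[O]$ must also contain its reflection across an unbranched face, forcing $[A]$ to contain the whole $G_u$-orbit of $[O]$. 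The delicate point you anticipated (realizing pieces of $[A]$ inside $\mathcal{KQ}(W_i)$) is handled in Claim~3 by the observation about $\mathcal{A}_{P_1,P_2}(X)$, and the stability of branchedness under the mirror construction (Lemma~\ref{5.9}) is what makes the induction in Claim~1 work --- this is precisely where the careful bookkeeping of labels in the doubling construction (see Remark~\ref{5.10}) is indispensable.

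In short: the weakly special hypothesis is not a gluing tool applied at the end, but the engine that produces the ambient flat and controls branching behaviour from the outset. Your outline would need to replace the unjustified ``common directions'' assumption with something like the paper's doubling construction before the rest of the combinatorics can even be formulated.
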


\begin{thm}
$|q_{\sharp}([A])|=|[A]|$ for any minimal essential element $[A]\in\mathcal{KQ}(W_{1})$.
\end{thm}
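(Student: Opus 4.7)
The plan is to use the preceding structure theorem to reduce to the model spaces $(\Bbb R_{\ge 0})^k\times \Bbb R^{n-k}$, compute $|\cdot|$ explicitly on those, and then observe that the exponent $k$ is preserved by quasi-isometries.

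By the preceding theorem, $[A]=[K]$ for a convex subcomplex $K\subset W_1$ isometric to $(\Bbb R_{\ge 0})^k\times \Bbb R^{n-k}$. Decomposing the $\Bbb R^{n-k}$ factor into its $2^{n-k}$ sign-orthants expresses $K$ as a union of $2^{n-k}$ top-dimensional orthants, and no proper sub-union is Hausdorff-close to $K$; hence $|[A]|=2^{n-k}$. Since $q$ sends top-dimensional quasi-flats to top-dimensional quasi-flats and $q_\sharp$ preserves coarse inclusion, $q_\sharp$ preserves essentiality and hence minimal essentiality. Applying the preceding theorem in $W_2$ gives $q_\sharp([A])=[K']$ with $K'\subset W_2$ isometric to $(\Bbb R_{\ge 0})^{k'}\times \Bbb R^{n-k'}$ and $|q_\sharp([A])|=2^{n-k'}$, so the theorem reduces to showing $k=k'$.

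To prove $k=k'$, observe that $q(K)$ is within bounded Hausdorff distance of $K'$ and $q^{-1}(K')$ is within bounded Hausdorff distance of $K$, so $q|_K$ and $q^{-1}|_{K'}$ are coarse-inverse quasi-isometries between $K$ and $K'$. Both model spaces $(\Bbb R_{\ge 0})^m\times \Bbb R^{n-m}$ are Euclidean cones that coincide with their own asymptotic cones, so this quasi-isometry upgrades at the asymptotic-cone level to a bi-Lipschitz equivalence between the model spaces. However the model spaces are pairwise non-bi-Lipschitz for distinct $m$: the maximal dimension of an isometric Euclidean flat inside $(\Bbb R_{\ge 0})^m\times \Bbb R^{n-m}$ equals $n-m$, realised by the ``core flat'' $\{0\}\times \Bbb R^{n-m}$, and this quantity is a bi-Lipschitz invariant of the model space. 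Hence $n-k=n-k'$ and $k=k'$, completing the proof.

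I expect the last step---the bi-Lipschitz invariance of the exponent $k$ in the model space---to be the main technical obstacle, since a priori a bi-Lipschitz map could twist boundary directions into interior directions. An alternative and perhaps more intrinsic argument compares the Tits boundaries of the two model spaces, which are spherical polytopes whose numbers of codimension-one faces are $k$ and $k'$ respectively; this combinatorial invariant is preserved under any quasi-isometry of the ambient $CAT(0)$ spaces (compare Theorem \ref{1.7}), again forcing $k=k'$.
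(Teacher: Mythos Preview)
Your reduction to showing $k=k'$ via the intrinsic quasi-isometry type of the model space $K\cong(\Bbb R_{\ge 0})^{k}\times\Bbb R^{n-k}$ does not work: for $k\ge 1$ these model spaces are all bi-Lipschitz to one another. Concretely, the quarter-plane $(\Bbb R_{\ge 0})^{2}$ and the half-plane $\Bbb R_{\ge 0}\times\Bbb R$ are bi-Lipschitz via the polar-coordinate map $(r,\theta)\mapsto(r,2\theta)$, and the same angle-rescaling trick works in all dimensions. So your asserted invariant ``maximal dimension of an isometric Euclidean flat'' is not a bi-Lipschitz invariant (it is a $CAT(0)$ invariant, but the bi-Lipschitz image of a flat need not be a flat), and neither is the face combinatorics of the Tits boundary of $K$. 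In particular, nothing about the restricted quasi-isometry $q|_{K}\co K\to K'$ alone can distinguish $k$ from $k'$ once both are positive; the exponent is a feature of how $K$ sits inside $W_{1}$, not of $K$ itself.

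The paper's argument is accordingly extrinsic. It first proves (Lemma~\ref{5.11}) that every minimal essential element has order a power of $2$, and moreover that any minimal essential $[A]$ with $|[A]|=2^{n-i}$ can be \emph{completed} inside $W_{1}$ to a top-dimensional flat $[F]=[A]\cup[A_{1}]\cup\cdots\cup[A_{2^{i}-1}]$ by $2^{i}-1$ further minimal essential elements of the same order. One then runs a downward induction on $i$: the base case $|[A]|=2^{n}$ is exactly $[A]=[F]$ a flat, and $|q_{\sharp}([F])|\ge 2^{n}$ together with Corollary~\ref{5.7} gives equality. For the inductive step, apply $q_{\sharp}$ to the decomposition of $F$; since $|q_{\sharp}([F])|\ge 2^{n}$ while each $|q_{\sharp}([A_{j}])|\le 2^{n-i}$ by the induction hypothesis and the power-of-two constraint, the sum forces $|q_{\sharp}([A_{j}])|=2^{n-i}$ for every $j$. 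The essential point is that the tiling of a flat by minimal essential pieces is what rigidifies the count, not any intrinsic geometry of a single piece.
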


Theorem \ref{1.3} essentially follows from the above two results.

\subsection{Organization of the paper}
In Section \ref{prelim} we will recall several basic facts about $CAT(\kappa)$ spaces and $CAT(0)$ cube complexes. We also collect several technical lemmas in this section, which will be used later.

In Section \ref{support of homology class} we will review the discussion in \cite{bestvina2008quasiflats} which enable us to replace the top dimensional quasiflat by the support set of the corresponding homology class. In Section \ref{topdim support set} we will study the geometry of this support set and prove Theorem \ref{1.1}. In Section \ref{lattice} and Section \ref{weakly special}, we look at the behaviour of top dimensional flats in the universal covers of weakly special cube complexes and prove Theorem \ref{1.3} and Corollary \ref{1.4}. In Section \ref{applications}, we use Corollary \ref{1.4} to establish several quasi-isometric invariants for RAAGs.

In the appendix, we generalize some results of Section \ref{support of homology class} and Section \ref{topdim support set} to $CAT(0)$ spaces of finite geometric dimension and prove Theorem \ref{1.7} and Theorem \ref{1.8}.

\subsection{Acknowledgement}
This paper is part of the author's Ph.D. thesis and it was finished under the supervision of B. Kleiner. The author would like to thank B. Kleiner for all the helpful suggestions and stimulating discussions. The author is grateful to B. Kleiner and U. Lang for sharing the preprint \cite{quasimini}, which influences several ideas in this paper. The author also thanks the referee for extremely helpful comments and clarifications.

\section{Preliminaries}
\label{prelim}
We start with several basic notations. The open and closed ball of radius $r$ in a metric space will be denoted by $B(p,r)$ and $\bar{B}(p,r)$ respectively. The sphere of radius $r$ centered at $p$ is denoted by $S(p,r)$. The open $r$--neighbourhood of a set $A$ in a metric space is denoted by $N_{r}(A)$. The diameter of $A$ is denoted by $diam(A)$.

For a metric space $K$, $C_{\kappa}K$ denotes the $\kappa$--cone over $K$ (see Definition I.5.6 of \cite{MR1744486}). When $\kappa=0$, we call it the Euclidean cone over $K$ and denote it by $CK$ for simplicity. All products of metric spaces in this paper will be $l^{2}$--products.

The closed and open star of a vertex $v$ in a polyhedral complex are denoted by $\overline{st}(v)$ and $st(v)$ respectively. We use \textquotedblleft$\ast$\textquotedblright\  for the join of two polyhedral complexes and \textquotedblleft$\circ$\textquotedblright\ for the join of two 
graphs.
\subsection{$M_{k}$--Polyhedral complexes with finite shapes}
In this section, we summary some results about $M_{k}$--polyhedral complexes with finitely many isometry types of cells from Chapter I.7 of \cite{MR1744486}, see also \cite{MR2686786}.

A $M_{k}$--polyhedral complex is obtained by taking a disjoint union of a collection of convex polyhedra from the complete simply-connected $n$--dimensional Riemannian manifolds with constant curvature equal to $k$ ($n$ is not fixed) and gluing them along isometric faces. The complex is endowed with the quotient metric (see \cite[Definition I.7.37]{MR1744486}). Note that the topology induced by the quotient metric may be different from the topology as a cell complex.

A $M_1$--polyhedral complex is also called a piecewise spherical complex. If the complex is made of right-angled spherical simplexes, the it is also called a all-right spherical complex. A $M_0$--polyhedral complex is also called a piecewise Euclidean complex. 

We are mainly interested in the case where the collection of convex polyhedra we use to build the complex has only finitely many isometry types. Following \cite{MR1744486}, we denote the isometry classes of cells in $K$ by $Shape(K)$. Note that we can barycentrically subdivide any $M_{k}$--polyhedral complex twice to get a $M_{k}$--simplicial complex.

For a $M_{k}$--polyhedral complex $K$ and a point $x\in K$, we denote the unique open cell of $K$ which contains $x$ by $supp(x)$ and the closure of $supp(x)$ by $Supp(x)$. We also denote the geometric link of $x$ in $K$ by $Lk(x,K)$ (see \cite[I.7.38]{MR2686786}). In this paper, we always truncate the usual length metric on $Lk(x,K)$ by $\pi$. If a $\epsilon$--ball $B(x,\epsilon)$ around $x$ satisfies:
\begin{itemize}
\item $B(x,\epsilon)$ is contained in the open star of $x$ in $K$.
\item $B(x,\epsilon)$ is isometric to the $\epsilon$--ball centered at the cone point in $C_{k}(Lk(x,K))$.
\end{itemize}
Then we call $B(x,\epsilon)$ a \textit{cone neighbourhood of $x$}.

\begin{thm}(Theorem I.7.39 of \cite{MR1744486})
\label{2.1}
If $K$ is a $M_{k}$--polyhedral complex with $Shape(K)$ finite, then for every $x\in K$, there exists a positive number $\epsilon$ (depended on $x$) such that $B(x,\epsilon)$ is a cone neighbourhood of $x$.
\end{thm}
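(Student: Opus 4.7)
The plan is to construct an isometry between a small ball centred at the cone point of $C_{k}(Lk(x,K))$ and a metric ball $B(x,\epsilon)\subset K$, which is precisely what being a cone neighbourhood asserts. I would proceed in three steps.

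First I would equip $Lk(x,K)$ with its natural $M_{1}$-polyhedral structure, whose cells are the geometric links of $x$ in each cell $\bar{C}\ni x$; this link inherits finiteness of $Shape(Lk(x,K))$ from finiteness of $Shape(K)$, since up to isometry the link data at $x$ depends only on the isometry type of the pair (cell, point-in-cell), and for each shape there are only finitely many combinatorial configurations of faces containing a point. Inside a single closed cell $\bar{C}\ni x$, the standard exponential map of $M_{k}^{n}$ at $x$ provides, for some radius $r_{\bar{C}}>0$, an isometry of the $r_{\bar{C}}$-ball in $C_{k}(Lk(x,\bar{C}))$ onto $\bar{C}\cap B(x,r_{\bar{C}})$; this is pure model-space trigonometry and requires no hypothesis beyond working in a single polyhedron.

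Second I would paste these per-cell isometries together. They are all restrictions of the single map ``exponential from $x$'' in whichever ambient copy of $M_{k}^{n}$ one picks, hence they agree on common faces, and the result is a well-defined map $\phi$ from a ball in $C_{k}(Lk(x,K))$ into $\overline{st}(x)$ which is an isometry on each closed cell of the cone and, in particular, non-expanding with respect to the quotient length metric on $K$.

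Third, and this is the real obstacle, I need to verify that $\phi$ is globally distance-preserving on a small enough ball, i.e.\ to rule out shortcut paths in $K$ that leave $\overline{st}(x)$ or pass through cells not containing $x$ in their closure. The key estimate is a strictly positive lower bound $\delta(x)>0$ on $d(x,\bar{C})$ as $\bar{C}$ ranges over closed cells with $x\notin\bar{C}$, together with a uniform positive lower bound $r_{0}$ on the $r_{\bar{C}}$ appearing in Step~1. This is where the assumption that $Shape(K)$ is finite is indispensable: infinitely many cells may accumulate near $x$, but each is isometric to one of finitely many model polyhedra, and the gluing data around $x$ is drawn from a finite pool, so $\delta(x)$ and $r_{0}$ can be bounded below using only the finite shape list rather than via a compactness argument on $\overline{st}(x)$ (which need not even be locally compact without the shape hypothesis). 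Taking $\epsilon=\tfrac{1}{2}\min(r_{0},\delta(x))$ then gives the desired cone neighbourhood.
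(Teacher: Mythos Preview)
The paper does not give its own proof of this statement: Theorem~\ref{2.1} is quoted verbatim from \cite[Theorem~I.7.39]{MR1744486} and used as a black box, so there is no in-paper argument to compare against. Your sketch is essentially the standard Bridson--Haefliger proof: build the exponential map cell by cell, glue along faces, and invoke finiteness of $Shape(K)$ to obtain the uniform positive lower bound $\epsilon(x)$ that prevents shortcuts through cells not containing $x$. That is the correct strategy and the correct place to use the hypothesis; if anything, the delicate part in the reference is making precise the ``finite pool of gluing data around $x$'' claim (this is the content of \cite[Lemma~I.7.9 and Corollary~I.7.10]{MR1744486}), which you have asserted rather than carried out, but the outline is sound.
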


\begin{thm}(Theorem I.7.50 of \cite{MR1744486})
\label{2.2}
If $K$ is a $M_{k}$--polyhedral complex with $Shape(K)$ finite, then $K$ is a complete geodesic metric space.
\end{thm}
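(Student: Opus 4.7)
The plan is to prove this by induction on $\dim K$, which is finite since $Shape(K)$ is. The base case $\dim K = 0$ is a discrete set, trivially complete and vacuously geodesic. For the inductive step, fix $n \ge 1$ and assume the conclusion for every finite-shape $M_{\kappa'}$-polyhedral complex of dimension $<n$.

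At each $x \in K$, the link $Lk(x,K)$ is an $M_{1}$-polyhedral complex of dimension $\le n-1$; its shapes are links of faces of cells from $Shape(K)$ and therefore finite in number. By the inductive hypothesis $Lk(x,K)$ is complete and geodesic, so the $\kappa$-cone $C_\kappa(Lk(x,K))$ is complete and geodesic (within its standard domain of definition when $\kappa>0$). Combined with Theorem \ref{2.1}, this makes $K$ locally complete and locally geodesic: every cone neighborhood $B(x,\epsilon(x))$ is isometric to a ball in a complete geodesic cone.

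The main obstacle is promoting these local statements to global ones, and the critical input from finite shapes is a uniform lower bound on cone-neighborhood radii at generic points. Specifically, one shows that there exists $\epsilon_{0}>0$, depending only on $Shape(K)$, such that $\epsilon(y)\ge\epsilon_{0}$ whenever $y$ lies in the interior of a cell that is maximal under the face relation. This follows from a cell-level estimate --- the inradius of each shape is positive, and one takes the minimum over finitely many shapes in $Shape(K)$. With $\epsilon_{0}$ in hand, I would establish completeness as follows: for a Cauchy sequence $\{x_{m}\}$ with tail diameter less than $\epsilon_{0}/2$, pick a point $y$ within $\epsilon_{0}/2$ of the tail lying in the interior of a maximal cell, so that the tail lies inside $B(y,\epsilon(y))$. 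This ball is isometric to a ball in the complete cone $C_\kappa(Lk(y,K))$, hence Cauchy sequences within it converge.

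For existence of geodesics between $x,y\in K$ with $d(x,y)=r$, I would take approximately $1$-Lipschitz paths $\gamma_{m}\co[0,r]\to K$ from $x$ to $y$ with $\mathrm{length}(\gamma_{m})\to r$, built as piecewise local geodesics through successive cone neighborhoods. The uniform cone structure from $\epsilon_{0}$ yields sufficient local compactness to run an Arzelà--Ascoli-type argument: pointwise, $\{\gamma_{m}(t)\}$ accumulates inside a complete cone ball around a nearby maximal-cell interior point, and the equicontinuity of the family allows extraction of a uniformly convergent subsequence. The limit is a $1$-Lipschitz path of length $\le r$ joining $x$ to $y$, hence a geodesic, completing the induction.
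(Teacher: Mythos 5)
There is a genuine gap in the key step. You claim that $\epsilon(y) \ge \epsilon_0$ for \emph{every} $y$ in the interior of a maximal cell, with $\epsilon_0 > 0$ depending only on $Shape(K)$, deduced from the minimum inradius over the shapes. This is false. If $y$ lies in the interior of a maximal cell $C$, then $C$ is the unique cell of $K$ containing $y$, so the open star of $y$ is exactly the interior of $C$; the first condition in the definition of a cone neighbourhood then forces $\epsilon(y) \le d(y,\partial C)$, which tends to $0$ as $y$ approaches $\partial C$ while staying interior. The inradius bound only furnishes \emph{one} point of each cell (the incenter) with a uniformly large cone neighbourhood, not all interior points. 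Consequently the completeness argument breaks: the required $y$ --- simultaneously within $\epsilon_0/2$ of the tail and with $\epsilon(y)\ge\epsilon_0$ --- need not exist, since any such $y$ lies at distance $\ge\epsilon_0$ from the boundary of its cell, while the tail of a Cauchy sequence may accumulate arbitrarily close to a cell boundary.

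The Arzel\`a--Ascoli step has a second, independent difficulty: finiteness of $Shape(K)$ does not imply local finiteness, so the links --- and hence the cones $C_\kappa(Lk(x,K))$ --- need not be locally compact at points where infinitely many cells meet, and pointwise relative compactness of $\{\gamma_m(t)\}$ is not available. The proof in \cite{MR1744486} avoids both obstacles by working with the realization of the quotient pseudometric as an infimum over $m$-strings (finite chains of points in which consecutive pairs lie in a common cell), establishing that strings of small length meet a uniformly bounded number of cells (the estimate behind Lemma~\ref{2.3}), and then using finiteness of $Shape(K)$ together with compactness of each model shape to extract limits of \emph{strings}. That route yields both completeness and existence of geodesics with no uniform lower bound on cone-neighbourhood radii and no local compactness hypothesis.
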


\begin{lem}
\label{2.3}
If $K$ is a $M_{k}$--polyhedral complex with $Shape(K)$ finite, then there exist positive constants $c_{1}, c_{2}$ $(c_{1}$ and $c_{2}$ depend on $Shape(K))$ such that every geodesic segment in K of length $L$ is contained in a subcomplex which is a union of at most $c_{1}L+c_{2}$ closed cells.
\end{lem}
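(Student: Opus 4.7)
The plan is to convert the finiteness of $Shape(K)$ into uniform metric constants, and then to bound the number of closed cells a geodesic traverses per unit length, handling the residual corner-cutting behaviour by induction on $\dim K$.

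Set $\lambda_{0}:=\min\{d(\tau_{1},\tau_{2}):\sigma\in Shape(K),\ \tau_{1},\tau_{2}\text{ disjoint closed faces of }\sigma\}$. This is strictly positive since there are finitely many isometry classes in $Shape(K)$, each with only finitely many pairs of disjoint closed faces. Given a geodesic $\gamma:[0,L]\to K$, partition $[0,L]$ into maximal closed subintervals $I_{1},\ldots,I_{N}$ on each of which $\gamma$ stays inside a single closed cell $C_{j}$, with $C_{j}\ne C_{j+1}$ consecutively. Since $\{C_{j}\}_{j=1}^{N}$ spans a subcomplex containing $\gamma([0,L])$, it suffices to bound $N$ linearly in $L$.

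For each $j$, let $F_{j}^{\pm}$ be the smallest closed faces of $C_{j}$ containing the endpoints of $\gamma|_{I_{j}}$. Call $I_{j}$ \emph{regular} when $F_{j}^{-}$ and $F_{j}^{+}$ are disjoint in $C_{j}$, and \emph{cornering} otherwise. Regular intervals satisfy $|I_{j}|\ge\lambda_{0}$ by the choice of $\lambda_{0}$, so they contribute at most $\lceil L/\lambda_{0}\rceil$ to $N$. The obstacle is that individual cornering intervals can be arbitrarily short, since the entry and exit points of $\gamma$ in $C_{j}$ may accumulate on a common low-dimensional face.

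To handle cornering intervals, group each maximal block of consecutive cornering $I_{j}$'s into a \emph{cluster}, localized in the open star of a common low-dimensional face $\tau$ of $K$. By Theorem \ref{2.1}, a cone neighbourhood of an interior point of $\tau$ is isometric to a neighbourhood of the apex of $C_{k}\bigl(Lk(\tau,K)\bigr)$, and $Lk(\tau,K)$ is itself an $M_{1}$-polyhedral complex whose shapes arise from links of cells of $K$ and form a finite set. Inside the cone, a cluster of cornering intervals unfolds into a single straight geodesic arc, and the cells of $K$ met by the cluster correspond to cells of $Lk(\tau,K)$ crossed by the radial projection of this arc. Induction on $\dim K$ (the base case $\dim K=0$ being trivial) applied to $Lk(\tau,K)$ then bounds the number of link cells crossed per unit length of $\gamma$ inside the cluster. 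Summing the regular contribution with the cluster contributions yields the required $N\le c_{1}L+c_{2}$. The main obstacle is the inductive cone argument: making the identification with a geodesic in $C_{k}\bigl(Lk(\tau,K)\bigr)$ precise, tracking how the ambient length of a cluster relates to the quantities on which the inductive hypothesis acts, and ensuring the inductive constants remain uniform over the faces $\tau$ that can occur.
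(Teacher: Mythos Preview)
The paper does not give a proof at all: it simply cites Corollaries I.7.29 and I.7.30 of Bridson--Haefliger. Your proposal, by contrast, attempts a self-contained argument, and its overall shape---regular versus cornering intervals, induction on dimension via links---is exactly the mechanism behind those Bridson--Haefliger results. So in spirit you are reproving what the paper cites.

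That said, your sketch has one unjustified step which you yourself flag: the claim that a maximal block of consecutive cornering intervals is ``localized in the open star of a common low-dimensional face $\tau$.'' This is the heart of the matter. A single cornering interval $I_j$ lies in the star of any point of $F_j^{-}\cap F_j^{+}$, but for a block $I_j,I_{j+1},\ldots$ there is no obvious reason the faces $F_j^{-}\cap F_j^{+}$, $F_{j+1}^{-}\cap F_{j+1}^{+}$, \ldots\ share a common subface; the ``corner'' can drift. Bridson--Haefliger avoid this by working with a uniform $\epsilon(K)>0$ such that every ball $B(x,\epsilon(K))$ meets only cells whose closure contains $x$, and then running the induction on a single $\epsilon(K)$-ball rather than on an a posteriori cluster. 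Once you have any uniform bound $M$ on the number of cells met by a geodesic of length at most $\epsilon(K)$, the linear bound $c_1L+c_2$ follows immediately by chopping a length-$L$ geodesic into $\lceil L/\epsilon(K)\rceil$ pieces. If you want to make your argument airtight, I would recommend replacing the cluster decomposition by this $\epsilon(K)$-ball localization; the inductive cone step (projecting to the link, where the projected path has length at most $\pi$) then goes through cleanly.
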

This lemma follows from Corollary I.7.29, Corollary I.7.30 of \cite{MR1744486}.

\subsection{$CAT(\kappa)$ spaces}
\label{cat space}
Please see \cite{MR1744486} for an introduction to $CAT(\kappa)$ spaces.

Let $X$ be a $CAT(0)$ spaces and pick $x,y\in X$, we denote by $\overline{xy}$ the unique geodesic segment joining $x$ and $y$. For any $y,z\in X\setminus \{x\}$, we denote the comparison angle between $\overline{xy}$ and $\overline{xz}$ at $x$ by $\overline{\angle}_{x}(y,z)$ and the Alexandrov angle by $\angle_{x}(y,z)$. 

The Alexandrov angle induces a distance on the space of germs of geodesics emanating from $x$. The completion of this metric space is called the \textit{space of directions} at $x$ and is denoted by $\Sigma_{x}X$. The tangent cone at $x$, denoted $T_{x}X$, is the Euclidean cone over $\Sigma_{x}X$. Following \cite{bestvina2008quasiflats}, we define the logarithmic map $\log_{p}\co X\setminus\{x\}\to\Sigma_{x}X$ by sending $y\in X\setminus\{x\}$ to the point in $\Sigma_{x}X$ represented by $\overline{xy}$. Similarly, one can define $\log_{x}\co X\to T_{x}X$. For constant speed geodesic $l\co[a,b]\to X$, we denote by $l^{-}(t)$ and $l^{+}(t)$ the incoming and outcoming direction in $\Sigma_{l(t)}X$ for $t\in [a,b]$. Note that if $X$ is a $CAT(0)$ $M_{k}$--polyhedral complex with finitely many isometry types of cells, then $\Sigma_{x}X$ is naturally isometric to $Lk(x,X)$, so we will identify this two objects.

Denote the Tits boundary of $X$ by $\partial_{T}X$. We also have a well-defined map $\log_{x}\co\partial_{T}X\to\Sigma_{x}X$. For $\xi_{1},\xi_{2}\in\partial_{T}X$, recall that the Tits angle $\angle_{T}(\xi_{1},\xi_{2})$ between them is defined as follows:
\begin{equation*}
\angle_{T}(\xi_{1},\xi_{2})=\sup_{x\in X}\angle_{x}(\xi_{1},\xi_{2})\,.
\end{equation*}
This induces a metric on $\partial_{T}X$, which is called the \textit{angular metric}. There are several different ways to define $\angle_{T}(\xi_{1},\xi_{2})$ (see \cite[Section 2.3]{kleiner1997rigidity} or \cite[Chapter II.9]{MR1744486}):

\begin{lem}
\label{2.4}
Let $X$ be a complete $CAT(0)$ space and let $\xi_{1},\xi_{2}$ be as above. Pick base point $p\in X$, and let $l_{1}$ and $l_{2}$ be two unit speed geodesic ray emanating from $p$ such that $l_{i}(\infty)=\xi_{i}$ for $i=1,2$. Then
\begin{enumerate}
\item $\angle_{T}(\xi_{1},\xi_{2})=\lim_{t,t'\to\infty}\overline{\angle}_{p}(l_{1}(t),l_{2}(t'))$.
\item $\angle_{T}(\xi_{1},\xi_{2})=\lim_{t\to\infty}\angle_{l_{1}(t)}(\xi_{1},\xi_{2})$.
\item $2\sin(\angle_{T}(\xi_{1},\xi_{2})/2)=\lim_{t\to\infty}d(l_{1}(t),l_{2}(t))/t$.
\end{enumerate}
\end{lem}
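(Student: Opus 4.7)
The plan is to prove (1) and (3) together from a common monotone-convergence argument, then deduce (2) as the pointwise limit along $l_1$. Two standard $CAT(0)$ features do the heavy lifting: the convexity of $t\mapsto d(l_1(t),l_2(t))$ (vanishing at $t=0$), which makes $d(l_1(t),l_2(t))/t$ non-decreasing with a limit $L\in[0,2]$; and the monotonicity of $\overline{\angle}_p(l_1(s),l_2(t))$ in each argument, from the $CAT(0)$ inequality applied to subtriangles at the vertex $p$. The Euclidean law of cosines in the isoceles comparison triangle gives the identity $d(l_1(t),l_2(t))/t=2\sin(\overline{\angle}_p(l_1(t),l_2(t))/2)$, so setting $\alpha:=2\arcsin(L/2)$ one obtains $\overline{\angle}_p(l_1(s),l_2(t))\to\alpha$ as $s,t\to\infty$ independently, linking the limits in (1) and (3).

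Basepoint-independence of $\alpha$ is an easy corollary: asymptotic rays $l_i,\,l_i^q$ to the same $\xi_i$ satisfy $d(l_i(t),l_i^q(t))\leq d(p,q)$ (a bounded convex function on $[0,\infty)$ is non-increasing), so the triangle inequality absorbs the change of basepoint into a uniform error. The easy half $\angle_T\leq\alpha$ then follows: for any $q$, monotonicity identifies the Alexandrov angle with an infimum, $\angle_q(\xi_1,\xi_2)=\inf_{s,t}\overline{\angle}_q(l_1^q(s),l_2^q(t))$, which is at most the supremum $\alpha$; taking $\sup_q$ yields $\angle_T\leq\alpha$.

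The main obstacle will be the reverse inequality $\angle_T\geq\alpha$. I plan to obtain it by proving (2) directly, i.e.\ showing $\angle_{l_1(T)}(\xi_1,\xi_2)\to\alpha$ as $T\to\infty$. Setting $q=l_1(T)$, the directions to $p$ and to $\xi_1$ are antipodal in $\Sigma_q X$, so the triangle inequality for Alexandrov angles gives $\angle_q(\xi_1,\xi_2)\geq\pi-\angle_q(p,\xi_2)$. The crux is the Busemann-function calculation
\begin{equation*}
\lim_{T'\to\infty}\overline{\angle}_{l_1(T)}(p,l_2(T'))=\arccos\!\big(b_{\xi_2}(l_1(T))/T\big),
\end{equation*}
obtained from the Euclidean law of cosines in $\Delta(p,l_1(T),l_2(T'))$ together with the expansion $d(l_1(T),l_2(T'))=T'+b_{\xi_2}(l_1(T))+o(1)$; continuity of Alexandrov angles under targets converging in the cone topology (i.e.\ $\log_{l_1(T)}l_2(T')\to\log_{l_1(T)}\xi_2$ in $\Sigma_{l_1(T)}X$) passes this bound up to $\angle_{l_1(T)}(p,\xi_2)$. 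Performing the same Busemann expansion at the vertex $p$ yields $b_{\xi_2}(l_1(T))/T\to-\cos\alpha$ as $T\to\infty$; hence the $\arccos$ tends to $\pi-\alpha$ and $\liminf_T\angle_{l_1(T)}(\xi_1,\xi_2)\geq\alpha$. Combined with the already-established $\angle_T\leq\alpha$, this proves (2) and $\angle_T=\alpha$, yielding (1); then (3) is the algebraic identity rearranged. The hardest bookkeeping is this Busemann step, where one converts asymptotic information about finite-triangle comparison angles into a sharp Alexandrov-angle bound.
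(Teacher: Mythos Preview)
The paper does not give its own proof of this lemma: it is stated as a standard fact with citations to \cite[Section 2.3]{kleiner1997rigidity} and \cite[Chapter II.9]{MR1744486}. So there is nothing to compare against, and your proposal should be judged on its own merits. Your overall architecture is sound and close to the standard argument in Bridson--Haefliger: monotonicity of $\overline{\angle}_p(l_1(s),l_2(t))$ and of $d(l_1(t),l_2(t))/t$ gives the common limit $\alpha$ and the link between (1) and (3); basepoint-independence and $\angle_T\le\alpha$ are as you say; and the route to $\angle_T\ge\alpha$ via the antipodal triangle inequality at $q=l_1(T)$ together with a Busemann computation is a clean way to get (2).

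There is one genuine gap. Your step ``continuity of Alexandrov angles under targets converging in the cone topology (i.e.\ $\log_{l_1(T)}l_2(T')\to\log_{l_1(T)}\xi_2$) passes this bound up to $\angle_{l_1(T)}(p,\xi_2)$'' is not valid as stated: Alexandrov angles are only upper semicontinuous, and upper semicontinuity gives $\angle_q(p,\xi_2)\ge\limsup_{T'}\angle_q(p,l_2(T'))$, which is the wrong direction for bounding $\angle_q(p,\xi_2)$ from above. The fix is minor but necessary: work with the ray $l_2^{\,q}$ from $q=l_1(T)$ asymptotic to $\xi_2$ rather than with $l_2$. Then for every $s>0$,
\[
\angle_q(p,\xi_2)=\angle_q\bigl(p,l_2^{\,q}(s)\bigr)\le\overline{\angle}_q\bigl(p,l_2^{\,q}(s)\bigr),
\]
and the same Busemann expansion (now $d(p,l_2^{\,q}(s))=s-b_{\xi_2}(q)+o(1)$, since the Busemann functions of asymptotic rays differ by the constant $b_{\xi_2}(q)$) yields
\[
\lim_{s\to\infty}\overline{\angle}_q\bigl(p,l_2^{\,q}(s)\bigr)=\arccos\!\bigl(b_{\xi_2}(l_1(T))/T\bigr).
\]
With this correction your argument goes through: $\angle_q(p,\xi_2)\le\arccos(b_{\xi_2}(l_1(T))/T)\to\pi-\alpha$, hence $\angle_{l_1(T)}(\xi_1,\xi_2)\ge\alpha-o(1)$, and (2) together with $\angle_T=\alpha$ follow.
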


$(\partial_{T}X,\angle_{T})$ is a $CAT(1)$ space (see \cite[Chapter II.9]{MR1744486}). We denote the Tits cone, which is the Euclidean cone over $\partial_{T}X$, by $C_{T}X$. Note that $C_{T}X$ is $CAT(0)$. Denote the cone point of $C_{T}X$ by $o$. Then for each $p\in X$, there is a well-defined 1--Lipschitz logarithmic map $\log_{p}\co C_{T}X\to X$ sending geodesic ray $\overline{o\xi}\subset C_{T}X$ ($\xi\in\partial_{T}X$) to geodesic ray $\overline{p\xi}\subset X$. This also gives rise to two other 1--Lipschitz logarithmic maps:
\begin{equation*}
\log_{p}\co C_{T}X\to T_{p}X;\ \ \log_{p}\co \partial_{T}X\to \Sigma_{p}X\,.
\end{equation*}

We always have $\angle_{p}(\xi_{1},\xi_{2})\le\angle_{T}(\xi_{1},\xi_{2})$ and the following flat sector lemma (see \cite[Section 2.3]{kleiner1997rigidity} or \cite[Chapter II.9]{MR1744486}) describes when the equality holds:

\begin{lem}
\label{2.5}
Let $X,\xi_{1},\xi_{2},l_{1},l_{2}$ and $p$ be as above. If $\angle_{T}(\xi_{1},\xi_{2})=\angle_{p}(\xi_{1},\xi_{2})<\pi$, then the convex hull of $l_{1}$ and $l_{2}$ in $X$ is isometric to a sector of angle $\angle_{p}(\xi_{1},\xi_{2})$ in the Euclidean plane.
\end{lem}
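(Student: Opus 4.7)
The plan is to reduce everything to the flat triangle lemma, e.g.\ \cite[Prop.~II.2.9]{MR1744486}, which says that in a $CAT(0)$ space a geodesic triangle whose Alexandrov angle at one vertex equals the corresponding comparison angle bounds a convex region isometric to its Euclidean comparison triangle.

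First, I would show that under the hypothesis the comparison angle $\overline{\angle}_p(l_1(s), l_2(t))$ is constant in $s,t > 0$ and equals $\angle_p(\xi_1,\xi_2)$. The general monotonicity of comparison angles in $CAT(0)$ spaces says that for two geodesics emanating from a common point, $(s,t)\mapsto \overline{\angle}_p(l_1(s), l_2(t))$ is non-decreasing in each variable, with infimum equal to the Alexandrov angle $\angle_p(\xi_1,\xi_2)$ and, by Lemma \ref{2.4}(1), supremum equal to the Tits angle $\angle_T(\xi_1,\xi_2)$. The assumed equality $\angle_p(\xi_1,\xi_2)=\angle_T(\xi_1,\xi_2)<\pi$ forces $\overline{\angle}_p(l_1(s), l_2(t))$ to be constant in $s,t$ and equal to this common value.

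Second, applying the flat triangle lemma to each geodesic triangle $\triangle(p, l_1(s), l_2(t))$, the Alexandrov angle at $p$ agrees with the corresponding angle in the Euclidean comparison triangle, so the convex hull $F_{s,t}$ of $\{p, l_1(s), l_2(t)\}$ in $X$ is isometric to this Euclidean triangle via an isometric embedding sending the sides to $l_1|_{[0,s]}$, $l_2|_{[0,t]}$, and the geodesic $[l_1(s),l_2(t)]$. Third, I need to verify that these flat triangles are nested consistently: for $s'\le s$, $t'\le t$, the vertices of $F_{s',t'}$ lie in $F_{s,t}$, and because $F_{s,t}$ is an isometrically embedded convex subset of $X$ and geodesics in $X$ between points of $F_{s,t}$ necessarily lie inside $F_{s,t}$, we get $F_{s',t'}\subset F_{s,t}$ as sub-triangles of Euclidean triangles.

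Finally, the nested union $F=\bigcup_{s,t>0} F_{s,t}$ inherits an isometry onto the Euclidean sector of angle $\angle_p(\xi_1,\xi_2)$ by patching together the compatible isometries; convexity of $F$ follows from convexity of each $F_{s,t}$ together with nestedness. Since $F$ contains $l_1\cup l_2$ and is convex, and conversely any convex set containing $l_1\cup l_2$ must contain every triangle $\triangle(p,l_1(s),l_2(t))$ and hence $F$, it is the convex hull of $l_1\cup l_2$ in $X$. The main obstacle I anticipate is the bookkeeping in the nesting and union step: one has to be careful that the geodesic segments $[l_1(s),l_2(t)]$ used inside different $F_{s,t}$'s agree on their overlap, and that the limit isometry extends to the full infinite sector rather than merely to each bounded truncation. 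This is where the convexity of each $F_{s,t}$ in $X$, provided by the flat triangle lemma, is essential, since it guarantees uniqueness of geodesics and hence compatibility of the local Euclidean charts.
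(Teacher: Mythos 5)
Your proof is correct. The paper does not give its own proof of this lemma; it cites \cite[Section 2.3]{kleiner1997rigidity} and \cite[Chapter II.9]{MR1744486}, and your argument (squeeze the comparison angle between the Alexandrov angle and the Tits angle using Lemma \ref{2.4}(1) and monotonicity, apply the flat triangle lemma to each $\triangle(p,l_1(s),l_2(t))$, then take a nested union) is exactly the standard route taken in those references.
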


Any convex subset $C\subset X$ is also a $CAT(0)$ space (with the induced metric) and there is an isomeric embedding $i\co\partial_{T}C\to\partial_{T}X$. There is a well-defined nearest point projection $\pi_{C}\co X\to C$, which has the following properties:
\begin{lem}
\label{2.6}
Let $X,C$ and $\pi_{C}$ be as above. Then:
\begin{enumerate}
\item $\pi_{C}$ is 1--Lipschitz.
\item For $x\notin C$ and $y\in C$ such that $y\neq\pi_{C}(x)$, we have $\angle_{\pi_{C}(x)}(x,y)\ge\frac{\pi}{2}$.
\end{enumerate}
\end{lem}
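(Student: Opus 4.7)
The plan is to establish part~(2) first by a first-variation / minimality argument, and then to derive part~(1) by combining part~(2) with convexity of the distance function along geodesics.

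For part~(2), let $p=\pi_{C}(x)$ and take $y\in C$ with $y\ne p$. Convexity of $C$ ensures that the constant-speed geodesic $\sigma\co[0,1]\to X$ from $p$ to $y$ lies entirely inside $C$. The function $h(t)=d(x,\sigma(t))$ is then well defined, convex on $[0,1]$ (distance to a fixed point along a geodesic is convex in any $CAT(0)$ space), and attains its minimum at $t=0$ by the defining property of $\pi_{C}$. Hence $h'(0^{+})\ge 0$. The first variation formula in $CAT(0)$ spaces (Chapter~II of \cite{MR1744486}) gives $h'(0^{+})=-d(p,y)\cos\angle_{p}(x,y)$, so $\cos\angle_{p}(x,y)\le 0$, i.e.\ $\angle_{p}(x,y)\ge\pi/2$.

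For part~(1), set $p_{i}=\pi_{C}(x_{i})$; assume $p_{1}\ne p_{2}$, otherwise there is nothing to prove. Parametrize the geodesic $c_{i}\co[0,1]\to X$ from $p_{i}$ to $x_{i}$ at constant speed, and set $f(t)=d(c_{1}(t),c_{2}(t))$. By convexity of the distance function along pairs of constant-speed geodesics in a $CAT(0)$ space, $f$ is convex on $[0,1]$, with $f(0)=d(p_{1},p_{2})$ and $f(1)=d(x_{1},x_{2})$. Part~(2) applied at $p_{1}$ (with $y=p_{2}$) and at $p_{2}$ (with $y=p_{1}$) gives $\angle_{p_{i}}(x_{i},p_{3-i})\ge\pi/2$ for $i=1,2$. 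The first variation formula for the distance between two simultaneously moving points then yields $f'(0^{+})=-d(p_{1},x_{1})\cos\angle_{p_{1}}(x_{1},p_{2})-d(p_{2},x_{2})\cos\angle_{p_{2}}(x_{2},p_{1})\ge 0$. A convex function on $[0,1]$ with non-negative right derivative at $0$ is non-decreasing, so $f(1)\ge f(0)$, i.e.\ $d(x_{1},x_{2})\ge d(p_{1},p_{2})$.

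The main obstacle I anticipate is the first variation formula for the distance between two simultaneously moving points in a general $CAT(0)$ setting; the single-point version is standard, and the two-point version can be obtained by combining two applications (holding one endpoint fixed at a time and using $1$-Lipschitzness of distance) or circumvented entirely by a comparison-triangle argument. Namely, part~(2) upgrades to $\overline{\angle}_{p_{1}}(x_{1},p_{2})\ge\pi/2$ and $\overline{\angle}_{p_{2}}(x_{2},p_{1})\ge\pi/2$ via the $CAT(0)$ inequality, and one then builds a Euclidean comparison quadrilateral by gluing the two comparison triangles along $\overline{p_{1}p_{2}}$ on opposite sides and applies Reshetnyak's majorization to conclude $d(x_{1},x_{2})\ge d(p_{1},p_{2})$. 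Both approaches use only the $CAT(0)$ condition.
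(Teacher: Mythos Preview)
The paper does not give its own proof of this lemma; it simply cites Chapter~II.2 of Bridson--Haefliger. Your argument for part~(2) via the single-point first variation formula is correct and is the standard one. For part~(1), your instinct about the weak spot is right: the two-point first variation identity $f'(0^{+})=-|c_{1}'|\cos\angle_{p_{1}}(x_{1},p_{2})-|c_{2}'|\cos\angle_{p_{2}}(x_{2},p_{1})$ is not something you can quote directly in a bare $CAT(0)$ space, and the naive ``hold one endpoint fixed at a time'' decomposition runs into the problem that $c_{2}(t)$ need not lie in $C$, so part~(2) does not apply to the angle $\angle_{p_{1}}(x_{1},c_{2}(t))$.

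Your comparison-quadrilateral alternative is the correct fix, and is essentially the argument in Bridson--Haefliger. Concretely: from part~(2) and the $CAT(0)$ inequality one gets $\overline{\angle}_{p_{1}}(x_{1},p_{2})\ge\pi/2$; gluing the Euclidean comparison triangles for $\triangle(x_{1},p_{1},p_{2})$ and $\triangle(x_{1},p_{2},x_{2})$ along the common side $\overline{\bar x_{1}\bar p_{2}}$, the total angle at $\bar p_{2}$ equals $\overline{\angle}_{p_{2}}(p_{1},x_{1})+\overline{\angle}_{p_{2}}(x_{1},x_{2})\ge\angle_{p_{2}}(p_{1},x_{1})+\angle_{p_{2}}(x_{1},x_{2})\ge\angle_{p_{2}}(p_{1},x_{2})\ge\pi/2$ by the triangle inequality for Alexandrov angles and part~(2). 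One is then left with a planar quadrilateral whose two ``base'' angles are at least $\pi/2$, and elementary Euclidean geometry gives $|\bar p_{1}\bar p_{2}|\le|\bar x_{1}\bar x_{2}|=d(x_{1},x_{2})$. So your proposal is correct once you commit to the comparison route rather than the two-point first variation.
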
 

See Chapter II.2 of \cite{MR1744486} for a proof of the above lemma.

Two convex subset $C_ {1}$ and $C_{2}$ are \textit{parallel} if $d(\cdot,C_{1})|_{C_{2}}$ and $d(\cdot,C_{2})|_{C_{1}}$ are constant. In this case, the convex hull of $C_{1}$ and $C_{2}$ is isometric to $C_{1}\times[0,d(C_{1},C_{2})]$. Moreover, $\pi_{C_{1}}|_{C_{2}}$ and $\pi_{C_{2}}|_{C_{1}}$ are isometric inverse to each other (see \cite[Section 2.3.3]{kleiner1997rigidity} or \cite[Chapter II.2]{MR1744486}).

Let $Y\subset X$ be a closed convex subset. We define the \textit{parallel set} of $Y$, denoted by $P_{Y}$, to be the union of all convex subsets which are parallel to $Y$. $P_{Y}$ is not a convex set in general, but when $Y$ has geodesic extension property, $P_{Y}$ is closed and convex. 

Now we turn to $CAT(1)$ spaces. In this paper, $CAT(1)$ spaces is assumed to have diameter $\le\pi$ (we truncate the length metric on the space by $\pi$). And we say a subset of a $CAT(1)$ space is \textit{convex} if it is $\pi$--convex.

For a $CAT(1)$ space $Y$ and $p\in Y$, $K\subset Y$, we define the \textit{antipodal set of z in $K$} to be $Ant(p,K):=\{v\in K\ |\ d(v,p)=\pi\}$.

Let $Y$ and $Z$ be two metric spaces. Their \textit{spherical join}, denoted by $Y\ast Z$, is the quotient space of $Y\times Z\times[0,\frac{\pi}{2}]$ under the identifications $(y,z_{1},0)\sim (y,z_{2},0)$ and $(y_{1},z,\frac{\pi}{2})\sim(y_{2},z,\frac{\pi}{2})$. One can write the elements in $Y\ast Z$ as formal sum $(\cos\alpha)y+(\sin\alpha)z$ where $\alpha\in[0,\frac{\pi}{2}]$, $y\in Y$ and $z\in Z$. Let $w_{1}=(\cos\alpha_{1})y_{1}+(\sin\alpha_{1})z_{1}$ and $w_{2}=(\cos\alpha_{2})y_{2}+(\sin\alpha_{2})z_{2}$. Their distance in $Y\ast Z$ is defined to be
\begin{equation*}
d_{Y\ast Z}(w_{1},w_{2})=\cos\alpha_{1}\cos\alpha_{2}\cos(d^{\pi}_{Y}(y_{1},y_{2}))+\sin\alpha_{1}\sin\alpha_{2}\sin(d^{\pi}_{Z}(z_{1},z_{2}))\,,
\end{equation*}
here $d^{\pi}_{Y}$ is the metric on $Y$ truncated by $\pi$, similarly for $d^{\pi}_{Z}$.

When $Y$ is only one point, $Y\ast Z$ is the spherical cone over $Z$. When $Y$ is two point with distance $\pi$ from each other, $Y\ast Z$ is the spherical suspension of $Z$. The spherical join of two $CAT(1)$ spaces is still $CAT(1)$.

\begin{definition}[cell structure on the link]
\label{cell structure}
Let $X$ be a $M_{\kappa}$--polyhedral complex and pick point $x\in X$. Suppose $\sigma_x$ is the unique closed cell which contains $x$ as its interior point. Then $Lk(x,X)$ is isometric to $Lk(x,\sigma_x)\ast Lk(\sigma_x,X)=\Bbb S^{k-1}\ast Lk(\sigma_x,X)$, here $k$ is the dimension of $\sigma_x$. Note that $Lk(\sigma_x,X)$ has a natural $M_1$--polyhedral complex structure which is induced from the ambient space $X$. 

When $X$ is made of Euclidean rectangles, $Lk(\sigma_x,X)$ is an all-right spherical complex. Moreover, there is a canonical way to triangulate $Lk(x,\sigma_x)$ into an all-right spherical complex which is isomorphic to an octahedron as simplicial complexes. The vertices of $Lk(x,\sigma_x)$ come from segments passing through $x$ which are parallel to edges of $\sigma_x$. Thus $Lk(\sigma_x,X)$ has a natural all-right spherical complex structure. In general, there is no canonical way to triangulate $Lk(x,\sigma_x)$. However, there are still cases when we want to treat $Lk(x,X)$ as a piecewise spherical complex. In such cases, one can pick an arbitrary all-right spherical complex structure on $Lk(x,\sigma_x)$.

If $X$ is $CAT(0)$, then we can identify $\Sigma_{x}X$ with $Lk(x,X)$. In this case, $\Sigma_{x}X$ is understood to be equipped with the above polyhedral complex structure.
\end{definition}

Any two points of distance less than $\pi$ in a $CAT(1)$ space are joined by a unique geodesic. A generalization of this fact would be:

\begin{lem}
\label{2.8}
Let $Y$ be $CAT(1)$ and let $\Delta\subset Y$ be an isometrically embedded spherical $k$--simplex with its vertices denoted by $\{v_{i}\}_{i=0}^{k}$. Pick $v\in \Delta$ and $v'\in Y$. If $d(v',v_{i})\leq d(v,v_{i})$ for all $i$, then $v=v'$.
\end{lem}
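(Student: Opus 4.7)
I argue by induction on $k$. The case $k=0$ is immediate, since the hypothesis $d(v',v_0)\leq 0$ forces $v'=v_0=v$. For the inductive step, if $v$ lies on a proper face $F\subsetneq\Delta$, then $F$ is itself an isometrically embedded spherical simplex of lower dimension, and applying the inductive hypothesis to $F$ (using the restriction of the hypothesis to the vertices of $F$) yields $v'=v$. It therefore suffices to treat the case where $v$ lies in the relative interior of $\Delta$.

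In the interior case, write $v=\sum_i\lambda_iv_i$ as a normalized positive combination in the ambient $\mathbb{R}^{k+1}\supset S^k$, so that $\lambda_i>0$ for every $i$ and $\sum_i\lambda_i\cos d(v,v_i)=\|v\|^2=1$; in particular $d(v,v_i)<\pi/2$ for at least one $i$. Assume for contradiction that $v'\neq v$ and set $T=d(v,v')>0$. The triangle inequality at an index with $d(v,v_i)<\pi/2$ gives $T<\pi$, and in the applications each triangle $vv'v_j$ has perimeter $<2\pi$ (automatic when $\Delta$ is all-right, which is the case used in the paper). Applying the spherical law of cosines in the $S^2$-comparison triangle together with the CAT(1) inequality $\angle_v(v',v_j)\leq\bar\angle_v(v',v_j)$ and $\cos d(v',v_j)\geq\cos d(v,v_j)$ then yields
\[
\cos\angle_v(v',v_j)\ \geq\ \cot d(v,v_j)\cdot\tan(T/2).
\]

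Set $u_i=\log_v(v_i),\ \xi=\log_v(v')\in\Sigma_vY$ and $\mu_i=\lambda_i\sin d(v,v_i)>0$. Spherical trigonometry inside the isometrically embedded link $\Sigma_v\Delta\cong S^{k-1}$ gives the linear relation $\sum_i\mu_iu_i=0$ in the tangent space, together with $\sum_i\mu_i\cot d(v,v_i)=\sum_i\lambda_i\cos d(v,v_i)=1$. Weighting the displayed inequality by $\mu_i$ and summing therefore produces
\[
\sum_i\mu_i\cos\angle_v(v',v_i)\ \geq\ \tan(T/2)\ >\ 0.
\]
Since some $d(v,v_i)<\pi/2$, the direction $\xi$ lies within $\pi/2$ of the convex subsphere $\Sigma_v\Delta\subset\Sigma_vY$, so the CAT(1) nearest-point projection onto $\Sigma_v\Delta$ is well-defined and 1-Lipschitz; let $\eta\in\Sigma_v\Delta$ be this projection. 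Then $d_{\Sigma_vY}(\eta,u_i)\leq d_{\Sigma_vY}(\xi,u_i)$, so the displayed inequality gives $\sum_i\mu_i\cos d_{\Sigma_vY}(\eta,u_i)>0$. But $\eta\in\Sigma_v\Delta\cong S^{k-1}$, and under the isometric embedding the relation $\sum_i\mu_iu_i=0$ forces this same sum to equal $\langle\eta,\sum_i\mu_iu_i\rangle=0$, a contradiction.

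The step most in need of care is the 1-Lipschitz nearest-point projection onto the isometrically embedded convex subsphere $\Sigma_v\Delta\subset\Sigma_vY$ for points within distance $\pi/2$: this is the CAT(1) analogue of the familiar CAT(0) projection lemma and is the single non-routine CAT-style ingredient in the proof. The other ingredients reduce to spherical trigonometry in $\mathbb{R}^{k+1}$ together with the CAT(1) angle comparison already recorded around Lemma \ref{2.4}.
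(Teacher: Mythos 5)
Your approach is genuinely different from the paper's. The paper inducts via the join decomposition $\Delta=\Delta_1\ast\overline{v_{k-1}v_k}$: it picks $w$ on the edge $\overline{v_{k-1}v_k}$ with $v\in\Delta_1\ast\{w\}$, shows $d(v',w)\le d(v,w)$ by a single CAT(1) triangle comparison (in the $S^2$-comparison triangle, $\tilde{w}$ is a nonnegative linear combination of $\tilde{v}_{k-1},\tilde{v}_k$, so $\langle\tilde{v}',\tilde{w}\rangle\ge\langle\tilde{v},\tilde{w}\rangle$), and closes by applying the inductive hypothesis to the $(k-1)$-simplex $\Delta_1\ast\{w\}$. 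You instead linearize at $v$, derive the first-variation estimate $\sum_i\mu_i\cos\angle_v(v',v_i)\ge\tan(T/2)>0$ against the identity $\sum_i\mu_iu_i=0$ in $\Sigma_v\Delta\cong S^{k-1}$, and try to contradict the latter via nearest-point projection. The first half of that computation is fine (modulo the all-right/perimeter restriction you already flag), and it is an attractive reduction to a statement in the link.

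The concluding step, however, has a genuine gap: the ``CAT(1) analogue of the CAT(0) projection lemma'' you invoke is false. What the $\ge\pi/2$ angle condition at the foot $\eta$ gives, via CAT(1) angle comparison and the spherical law of cosines, is $\cos d(\xi,u)\le\cos d(\eta,\xi)\cos d(\eta,u)$. This yields $d(\eta,u)\le d(\xi,u)$ only when $\cos d(\eta,u)\ge 0$, i.e.\ $d(\eta,u)\le\pi/2$; when $d(\eta,u)>\pi/2$, multiplying the negative number $\cos d(\eta,u)$ by $\cos d(\eta,\xi)<1$ increases it and the inequality can reverse. And here this is not an edge case: since each $\mu_i>0$ and $\sum_i\mu_iu_i=0$, \emph{every} $\eta\in\Sigma_v\Delta\cong S^{k-1}$ has $\langle\eta,u_{i_0}\rangle\le 0$, hence $d(\eta,u_{i_0})\ge\pi/2$, for some $i_0$. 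Concretely, in $\Sigma_vY=S^1\ast Z$ with $\Sigma_v\Delta=S^1$ and $\xi=(\cos\alpha)\eta+(\sin\alpha)z$, $0<\alpha<\pi/2$, the nearest point of $\xi$ on $S^1$ is $\eta$, yet for $u\in S^1$ with $d(\eta,u)>\pi/2$ one has $\cos d(\xi,u)=\cos\alpha\cos d(\eta,u)>\cos d(\eta,u)$, so $d(\xi,u)<d(\eta,u)$. Thus the inference $\sum_i\mu_i\cos d(\eta,u_i)>0$ does not follow, and the contradiction does not close. You need a different mechanism to rule out the first-variation inequality in the ambient link; the paper's route --- descending one dimension by selecting a suitable $w$ on an edge and never leaving the simplex --- sidesteps the projection entirely.
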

By spherical simplexes, we always means those which are not too large, i.e. they are contained in an open hemisphere.

\begin{proof}
We prove by induction. When $k=1$, it follows from the uniqueness of geodesic. In general, since $\bigtriangleup=\bigtriangleup_{1}\ast\bigtriangleup_{2}$ where $\bigtriangleup_{1}$ is spanned by vertices $\{v_{i}\}_{i=0}^{k-2}$ and $\bigtriangleup_{2}$ is spanned by $v_{k-1}$ and $v_{k}$, there exists $w\in\bigtriangleup_{2}$ such that $v\in \bigtriangleup_{1}\ast\{w\}$. Triangle comparison implies $d(v',w)\leq d(v,w)$, so we can apply the induction assumption to the $(k-1)$-simplex $\bigtriangleup_{1}\ast\{w\}$, which implies $v=v'$.
\end{proof}

\begin{lem}
\label{2.9}
Let $Y$ be a $CAT(1)$ piecewise spherical complex with finitely many isometry types of cells, and let $K\subset Y$ be a subcomplex which is a spherical suspension (in the induced metric). Pick a suspension point $v\in K$, then all points in $Supp(v)$ are suspension points of $K$ and we have splitting $K=\Bbb S^{k}\ast K'$, here $k=\dim(Supp(v))$ and $\Bbb S^{k}$ is the standard sphere of dimension $k$. 
\end{lem}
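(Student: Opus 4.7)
The plan is to combine the spherical join structure coming from $v$ being a suspension point with the standard link-splitting provided by the cell $\sigma=Supp(v)$.

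First, I would unpack the hypothesis: since $K$ is a spherical suspension with $v$ one of its poles, there exist $v^{*}\in K$ with $d(v,v^{*})=\pi$ and a $CAT(1)$ space $K''$ such that $K$ is isometric to $\{v,v^{*}\}\ast K''$. The equator $K''$ is canonically isometric to $Lk(v,K)=\Sigma_{v}K$, because in any spherical suspension every direction at a pole is the initial direction of a unique length-$\pi$ geodesic to the antipodal pole, and this geodesic meets the equator at its midpoint. Hence $K\cong \{v,v^{*}\}\ast Lk(v,K)$.

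Next, I would apply the link-splitting from Definition \ref{cell structure}. With $k=\dim\sigma$ and $v$ in the interior of $\sigma$, one has
\[Lk(v,K)=Lk(v,\sigma)\ast Lk(\sigma,K)=\Bbb S^{k-1}\ast Lk(\sigma,K),\]
the last equality because $\sigma$ is a spherical $k$-simplex. Combining with the previous step and using associativity of the spherical join,
\[K\cong\{v,v^{*}\}\ast \Bbb S^{k-1}\ast Lk(\sigma,K)\cong \Bbb S^{k}\ast Lk(\sigma,K),\]
so setting $K'=Lk(\sigma,K)$ yields the required splitting $K=\Bbb S^{k}\ast K'$.

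To conclude I would realize the $\Bbb S^{k}$ factor concretely as the subset $E=\{v,v^{*}\}\ast Lk(v,\sigma)\subset K$, namely the union over $\xi\in Lk(v,\sigma)$ of the unique length-$\pi$ geodesic from $v$ to $v^{*}$ with initial direction $\xi$. Any $w\in\sigma$ is joined to $v$ by a geodesic contained in $\sigma$ (convexity of the cell), whose initial direction lies in $Lk(v,\sigma)$; by the suspension structure at $v$ this geodesic prolongs to a length-$\pi$ geodesic ending at $v^{*}$, and the whole prolongation lies in $E$, so $w\in E$. Thus $Supp(v)=\sigma\subset E\cong\Bbb S^{k}$, and since every point of the $\Bbb S^{k}$-factor of a spherical join $\Bbb S^{k}\ast K'$ is a suspension point of the join (its antipode being its antipode inside $\Bbb S^{k}$), every point of $Supp(v)$ is a suspension point of $K$. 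The main modest obstacle is checking that the equator of the suspension and the abstract $\Bbb S^{k}$ factor from the join calculation really are the concrete subsets of $K$ that I identified; this reduces to a careful bookkeeping with spherical joins, using the finite-shape assumption to ensure $Lk(v,K)$ and $Lk(\sigma,K)$ are bona fide $CAT(1)$ piecewise spherical complexes.
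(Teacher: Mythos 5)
Your proof is correct and follows essentially the same route as the paper: write $K=\Bbb S^{0}\ast\Sigma_{v}K$ from the suspension structure at $v$, split $\Sigma_{v}K=\Sigma_{v}Supp(v)\ast Lk(\sigma,K)=\Bbb S^{k-1}\ast K'$ using the cell structure, and combine via associativity of the spherical join. You just spell out in more detail the final step (realizing the $\Bbb S^k$ factor concretely as $\{v,v^*\}\ast Lk(v,\sigma)$ and checking $Supp(v)$ lies inside it), which the paper states without elaboration.
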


\begin{proof}
By Theorem \ref{2.1}, $v$ has a small neighbourhood isometric to the $\epsilon$--ball centered at the cone point in the spherical cone over $\Sigma_{v}K$. Since $v$ is a suspension point, $K=\Bbb S^{0}\ast Lk(v,K)=\Bbb S^{0}\ast\Sigma_{v}K$. However, $\Sigma_{v}K=\Sigma_{v}Supp(v)\ast K'=\Bbb S^{k-1}\ast K'$ for some $K'$, thus $K=\Bbb S^{k}\ast K'$. Also every point in $Supp(v)$ belongs to the $\Bbb S^{k}$--factor, hence is a suspension point.
\end{proof}

\subsection{$CAT(0)$ cube complexes}
\textbf{All cube complexes in this paper are assumed to be finite dimensional.}

Every cube complex $X$ (a polyhedral complex whose building blocks are cubes) have a canonical cubical metric: endow each $n$--cube with the standard metric of unit cube in Euclidean $n$--space $\Bbb E^{n}$, then glue these cubes together to obtain a piecewise Euclidean metric on $X$. This metric is complete and geodesic if $X$ is of finite dimension and is $CAT(0)$ if the link of each vertex is a flag complex \cite{MR1744486,MR919829}.

Now we come to the notion of hyperplane, which is the cubical analogue of \textquotedblleft track\textquotedblright\  introduced in \cite{dunwoody1985accessibility}. A \textit{hyperplane} $h$ in a cube complex $X$ is a subset such that 
\begin{enumerate}
\item $h$ is connected.
\item For each cube $C\subset X$, $h\cap C$ is either empty or a union of mid-cubes of $C$. 
\item $h$ is minimal, i.e. if there exists $h'\subset h$ satisfying (1) and (2), then $h=h'$.
\end{enumerate}
Recall that a \textit{mid-cube} of $C=[0,1]^{n}$ is a subset of form $f_{i}^{-1}(1/2)$, where $f_{i}$ is one of the coordinate functions.

For each edge $e\in X$, there exists a unique hyperplane which intersects $e$ in one point. This is called the hyperplane \textit{dual} to the edge $e$. Following \cite{MR2377497}, we say a hyperplane $h$ \textit{self-intersects} if there exists a cube $C$ such $C\cap h$ contains at least two different mid-cubes. A hyperplane $h$ \textit{self-osculates} if there exist two different edges $e_{1}$ and $e_{2}$ such that (1) $e_{1}\cap e_{2}\neq\emptyset$; (2) $e_{1}$ and $e_{2}$ are not consecutive edges in a $2$--cube; (3) $e_{i}\cap h\neq\emptyset$ for $i=1,2$.

Let $X$ be a $CAT(0)$ cube complex, and let $e\subset X$ be an edge. Denote the hyperplane dual to $e$ by $h_{e}$. Suppose $\pi_{e}\co X\to e\cong [0,1]$ is the $CAT(0)$ projection. It is known that: 
\begin{enumerate}
\item $h_{e}$ is embedded, i.e. the intersection of $h_{e}$ with every cube in $X$ is either a mid-cube, or an empty set. 
\item $h_{e}$ is a convex subset of $X$ and $h_{e}$ with the induced cell structure from $X$ is also a $CAT(0)$ cube complex.
\item $h_{e}=\pi^{-1}_{e}(1/2)$.
\item $X\setminus h_{e}$ has exactly two connected components, they are called \textit{halfspaces}.
\item If $N_{h}$ is the union of closed cells in $X$ which has nontrivial intersection with $h_{e}$, then $N_{h}$ is a convex subcomplex of $X$ and $N_{h}$ is isometric to $h_{e}\times [0,1]$. We call $N_{h}$ the \textit{carrier} of $h_{e}$. Note that $N_{h}=P_{e}$ ($P_{e}$ is the parallel set of $e$). 
\end{enumerate}

We refer to \cite{MR1347406} for more information about hyperplanes.
 
Now we investigate the coarse intersection of convex subcomplexes. The following lemma adjusts Lemma \ref{2.4} in \cite{MR2421136} to our cubical setting. 
\begin{lem}
\label{2.10}
Let $X$ be a $CAT(0)$ cube complex of dimension $n$, and let $C_{1}$, $C_{2}$ be convex subcomplexes. Suppose $\bigtriangleup=d(C_{1},C_{2})$, $Y_{1}=\{y\in C_{1}\mid d(y,C_{2})=\bigtriangleup\}$ and $Y_{2}=\{y\in C_{2}\mid d(y,C_{1})=\bigtriangleup\}$. Then:
\begin{enumerate}
\item $Y_{1}$ and $Y_{2}$ are not empty.
\item $Y_{1}$ and $Y_{2}$ are convex; $\pi_{C_{1}}$ map $Y_{2}$ isometrically onto $Y_{1}$ and $\pi_{C_{2}}$ map $Y_{1}$ isometrically onto $Y_{2}$; the convex hull of $Y_{1}\cup Y_{2}$ is isometric to $Y_{1}\times [0,\bigtriangleup]$.
\item $Y_{1}$ and $Y_{2}$ are subcomplexes.
\item There exist $A=A(\Delta,n,\epsilon)$ such that if $p_{1}\in C_{1}$, $p_{2}\in C_{2}$ and $d(p_{1},Y_{1})\geq \epsilon>0$, $d(p_{2},Y_{2})\geq \epsilon>0$, then 
\begin{equation}
\label{2.11}
d(p_{1}, C_{2})\geq \bigtriangleup + Ad(p_{1},Y_{1})\,,\ d(p_{2}, C_{1})\geq \bigtriangleup + Ad(p_{2},Y_{2})\,.
\end{equation}
\end{enumerate}
\end{lem}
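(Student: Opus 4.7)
The plan splits into two parts: the first three conclusions follow by adapting the standard $CAT(0)$ theory of closed convex subsets at positive distance (cf.\ \cite[Chapter II.2]{MR1744486}) to the cubical setting, while the quantitative estimate (4), the cubical analog of \cite[Lemma 2.4]{MR2421136}, is the main obstacle and is where the dimension bound $n$ enters.

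For (1), the finiteness of $\Delta=d(C_1,C_2)$ implies that only finitely many hyperplanes of $X$ separate $C_1$ from $C_2$; the endpoints of a shortest combinatorial geodesic between them (whose $CAT(0)$ length equals $\Delta$) lie in $Y_1$ and $Y_2$ respectively, so both are nonempty. For (2), convexity of each $Y_i$ is immediate from convexity of the function $d(\cdot,C_{3-i})|_{C_i}$, while the mutually inverse isometries $\pi_{C_j}|_{Y_i}\co Y_i\to Y_j$ and the product decomposition $\mathrm{conv}(Y_1\cup Y_2)\cong Y_1\times[0,\Delta]$ come from the standard flat strip theorem (\cite[II.2.14,\,II.2.15]{MR1744486}) applied to the closed convex subsets $C_1,C_2$ at positive distance. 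For (3), I would express $Y_1$ as the intersection of $C_1$ with the carriers of the hyperplanes separating $C_1$ from $C_2$ (taken on the $C_1$-side); each such carrier is a convex subcomplex, so $Y_1$ is a subcomplex, and similarly for $Y_2$.

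The main obstacle is (4). Set $r=d(p_1,y_1)\geq\epsilon$ where $y_1=\pi_{Y_1}(p_1)$, and let $y_2=\pi_{Y_2}(y_1)\in Y_2$. The product structure of $P:=\mathrm{conv}(Y_1\cup Y_2)\cong Y_1\times[0,\Delta]$ from (2) identifies $\Sigma_{y_1}P\subset\Sigma_{y_1}X$ as the closed spherical $\pi/2$--ball (hemisphere) around the pole $v_2:=\log_{y_1}(y_2)$, with equator $\Sigma_{y_1}Y_1$. The key observation is that $P\cap C_1=Y_1$: any point of $P\cap C_1\setminus Y_1$ would be at distance $<\Delta$ from $Y_2\subset C_2$, contradicting $\Delta=d(C_1,C_2)$. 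Combined with the $CAT(0)$ uniqueness of geodesics in terms of their initial direction, this forces the direction $v_1:=\log_{y_1}(p_1)\in\Sigma_{y_1}C_1$ to lie strictly outside the closed hemisphere, i.e.\ $\angle(v_1,v_2)>\pi/2$. Working in the link $\Sigma_{y_1}X$, a $CAT(1)$ all-right spherical complex of dimension $\leq n-1$ with finitely many isometry types of cells (Theorem~\ref{2.1}), a combinatorial/compactness argument upgrades this to a uniform angle gap $\angle(v_1,v_2)\geq\pi/2+\beta$ with $\beta=\beta(n,\Delta,\epsilon)>0$. The first-variation formula for $d(\cdot,C_2)$ at $y_1$ in direction $v_1$ then gives an initial growth rate of $-\cos\angle(v_1,v_2)\geq\sin\beta$, and convexity of $d(\cdot,C_2)|_{C_1}$ along the geodesic $\overline{y_1p_1}$ yields
\begin{equation*}
d(p_1,C_2)\geq\Delta+r\sin\beta,
\end{equation*}
establishing the estimate with $A=\sin\beta=A(n,\Delta,\epsilon)$; the argument for $p_2$ is symmetric. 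The most delicate step will be the uniform angle gap: existence of a positive gap at each individual $y_1$ is immediate from $v_1\notin\Sigma_{y_1}P$, but extracting dependence only on $n,\Delta,\epsilon$ requires a compactness argument across all possible configurations $(\Sigma_{y_1}X,v_2,\Sigma_{y_1}C_1)$ of bounded combinatorial complexity, with the condition $r\geq\epsilon$ used to rule out the limit in which $v_1$ approaches the equator while $p_1$ stays a definite distance from $Y_1$.
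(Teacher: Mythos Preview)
Your argument for (4) has a genuine gap: the strict inequality $\angle(v_1,v_2)>\pi/2$ need not hold. Take $X=\Bbb R^2$ with the standard cubing, $C_1=\{x\le 0\}$, $C_2=\{x\ge 1,\,y\ge 1\}$; then $\Delta=1$, $Y_1=\{0\}\times[1,\infty)$, and for $p_1=(0,0)$ one gets $y_1=(0,1)$, $y_2=(1,1)$, so $v_1$ is the $-y$ direction and $v_2$ the $+x$ direction, giving $\angle(v_1,v_2)=\pi/2$ exactly. The first variation of $d(\cdot,C_2)$ at $y_1$ along $v_1$ vanishes and your bound collapses to $d(p_1,C_2)\ge\Delta$, whereas $d(p_1,C_2)=\sqrt2$; the linear gain is real but is not first-order at $y_1$. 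The faulty step is the inference ``$v_1\notin\Sigma_{y_1}P\Rightarrow\angle(v_1,v_2)>\pi/2$'': in general $\Sigma_{y_1}P=\Sigma_{y_1}Y_1\ast\{v_2\}$ is a \emph{proper} subset of $\bar B(v_2,\pi/2)\subset\Sigma_{y_1}X$, so $v_1$ can sit on the boundary of that ball without lying in $\Sigma_{y_1}P$. The paper avoids working infinitesimally at $y_1$: it applies a Bridson finiteness-of-shapes compactness (Lemma~\ref{2.3}) directly to the configurations arising on the level set $\{x\in C_1:d(x,Y_1)=1\}$ to extract a uniform $A>0$ with $d(x,C_2)\ge\Delta+A$ there, and then convexity of $d(\cdot,C_2)$ along $\overline{y_1p_1}$ gives $d(p_1,C_2)\ge\Delta+A\,d(p_1,Y_1)$ for all $d(p_1,Y_1)\ge 1$.

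Your sketches of (1) and (3) also need repair. For (1), a shortest combinatorial edge-path between $C_1$ and $C_2$ need not have $CAT(0)$ length $\Delta$ (already two opposite vertices of a unit square), and $X$ is not assumed locally compact, so attainment of the infimum is not automatic; the paper again uses finiteness of shapes. For (3), the proposed description of $Y_1$ via carriers of separating hyperplanes fails: with $X=[0,2]\times[0,1]$, $C_1=\{0\}\times[0,1]$, $C_2=\{(2,0)\}$, the only separating hyperplanes are $\{x=\tfrac12\}$ and $\{x=\tfrac32\}$, which say nothing about the $y$-coordinate, yet $Y_1=\{(0,0)\}$. The paper proves (3) instead by a cube-by-cube parallel transport of $Supp(y_1)$ along the minimizing geodesic $\overline{y_1y_2}$, showing the transported cube lands in $C_2$ and hence $Supp(y_1)\subset Y_1$.
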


\begin{proof}
For the first assertion, since $X$ has finite dimension, $X$ has only finitely many isometry types of cells, we use the \textquotedblleft quasi-compact\textquotedblright\ argument of Bridson (\cite{MR2686786}). Suppose we have sequences of points $\{x_{n}\}_{n=1}^{\infty}$ in $C_{1}$ and $\{y_{n}\}_{n=1}^{\infty}$ in $C_{2}$ such that
\begin{equation}
\label{2.12}
d(x_{n},y_{n})<\bigtriangleup +\frac{1}{n}\,.
\end{equation}
Then by Lemma \ref{2.3}, there exists an integer $N$ such that for every $n$, the geodesic joining $x_{n}$ and $y_{n}$ is contained in a subcomplex $K_{n}$ which is a union of at most $N$ closed cell. Denote $C_{1n}=C_{1}\cap K_{n}$ and $C_{2n}=C_{2}\cap K_{n}$, which are also subcomplexes. Since there are only finitely many isomorphisms types among $\{K_{n}\}_{n=1}^{\infty}$, we can assume, up to a subsequence, that there exist a finite complex $K_{\infty}$ and subcomplex $C_{1\infty}$, $C_{2\infty}$ of $K_{\infty}$ such that for any $n$, there is a simplicial isomorphism $\varphi_{n}\co K_{n}\to K$ with $\varphi_{n}(C_{1n})=C_{1\infty}$ and $\varphi_{n}(C_{2n})=C_{2\infty}$. By $(\ref{2.12})$, $d_{K_{\infty}}(C_{1\infty}, C_{2\infty})\leq\bigtriangleup$ in the intrinsic metric of $K_{\infty}$, so there exist $x_{\infty}\in C_{1\infty}$ and $y_{\infty}\in C_{2\infty}$
such that $d_{K_{\infty}}(x_{\infty},y_{\infty})\leq \bigtriangleup$ by compactness of $K_{\infty}$. It follows that $d_{X}(\varphi_{n}^{-1}(x_{\infty}),\varphi_{n}^{-1}(y_{\infty}))\leq d_{K_{n}}(\varphi_{n}^{-1}(x_{\infty}),\varphi_{n}^{-1}(y_{\infty}))\leq\bigtriangleup$.

We prove (4) with $\epsilon=1$, the other cases are similar. A similar argument as above implies that there is a constant $A>0$, such that if $x\in C_{1}$ and $d(x,Y_{1})=1$, then $d(x,C_{2})>A+\bigtriangleup$. Note that the combinatorial complexity depends on $\Delta$ and $n$, so $A$ also depends on $\Delta$ and $n$. Now for any $p_{1}\in C_{1}$ and $d(p_{1},Y_{1})\geq 1$, let $p_{0}=\pi_{Y_{1}}(p_{1})$ and let $l\co[0,d(p_{0},p_{1})]\to X$ be the unit speed geodesic from $p_{0}$ to $p_{1}$. We have $l(1)\in \{x\in C_{1}\mid d(x, Y_{1})=1\}$, so $d(l(1),C_{2}))>A+\bigtriangleup$ while $d(l(0),C_{2}))=\bigtriangleup$. Then $(\ref{2.11})$ follows from the convexity of the function $d(\cdot,C_{2})$.

The second assertion is a standard fact in Chapter II.2 of \cite{MR1744486}.

To prove $(3)$, it suffices to prove for every $y_{1}\in Y_{1}$, we have $Supp(y_{1})\in Y_{1}$. Denote $y_{2}=\pi_{C_{2}}(y_{1})\in Y_{2}$ (hence $y_{1}=\pi_{C_{1}}(y_{2})$ by $(2)$) and $l\co[0,\Delta]\to X$ the unit speed geodesic from $y_{1}$ to $y_{2}$. Recall that we use $l^{-}(t)$ and $l^{+}(t)$ to denote the incoming and outcoming direction of $l$ in $\Sigma_{l(t)}X$ for $t\in [0,\Delta]$. Our goal is to construct a \textquotedblleft parallel transport\textquotedblright\ of $Supp(y_{1})$ (which is a $k$--cube) along $l$. 

Since $X$ has only finitely many isometry types of cells, $l$ is contained in finite union of closed cells, and we can find a sequence of closed cubes $\{B_{i}\}_{i=1}^{N}$ and $0=t_{0}<t_{1}<...<t_{N-1}<t_{N}=\bigtriangleup$ such that each $B_{i}$ contains $\{l(t)\mid t_{i-1}<t<t_{i}\}$ as interior points. We denote $Supp(y_{1})$ by $\Box_{t_{0}}$ from now on.

\textit{Starting:} At $l(0)=y_{1}$, we have splitting $\Sigma_{y_{1}}X=\Sigma_{y_{1}}\Box_{t_{0}}\ast K_{1}$ for some convex subset $K_{1}\subset \Sigma_{y_{1}}X$. Since $y_{1}=\pi_{C_{1}}(y_{2})$ and $\Box_{t_{0}}\subset C_{1}$, by Lemma \ref{2.6} we know $d_{\Sigma _{y_{1}}X}(l^{+}(t_{0}),\Sigma _{y_{1}}\Box_{t_{0}})\geq\frac{\pi}{2}$. Thus $l^{+}(t_{0})\in K_{1}$ and $d_{\Sigma _{y_{1}}X}(v,l^{+}(t_{0}))=\frac{\pi}{2}$ for any $v\in\Sigma _{y_{1}}\Box_{t_{0}}$. It follows that the segment $B_{1}\cap l$ is orthogonal to $\Box_{t_{0}}$ in $B_{1}$. Since $\Box_{t_{0}}$ is a sub-cube of $B_{1}$, by geometry of cube, there is an isometric embedding $e_{1}\co\Box_{t_{0}}\times [0,t_{1}]\to B_{1}$ with $e_{1}(y_{1},t)=l(t)$. Denote $\Box_{t_{1}}=e_{1}(\Box_{t_{0}}\times \{t_{1}\})$, then $l(t_{1})\in\Box_{t_{1}}\subseteq Supp(l(t_{1}))\subseteq B_{1}\cap B_{2}$. Note that $\Box_{t_{1}}$ is not necessarily a subcomplex of $B_{1}$ (or $B_{2}$), but it is always parallel to some sub-cube of $B_{1}$ (or $B_{2}$).

\textit{Continuing:} By construction we know $d_{\Sigma_{l(t_{1})}}(l^{-}(t_{1}),v)=\frac{\pi}{2}$ for $v\in\Sigma_{l(t_{1})}\Box_{t_{1}}$, so $d_{\Sigma_{l(t_{1})}}(l^{+}(t_{1}),\Sigma_{l(t_{1})}\Box_{t_{1}})\geq\frac{\pi}{2}$ (since $d_{\Sigma_{l(t_{1})}}(l^{-}(t_{1}),l^{+}(t_{1}))=\pi$). However, there is a splitting $\Sigma_{l(t_{1})}X=\Sigma_{l(t_{1})}\Box_{t_{1}}\ast K_{2}$ for some convex subset $K_{2}\subset \Sigma_{l(t_{1})}X$. Thus $l^{+}(t_{1})\in K_{2}$ and $d_{\Sigma _{l(t_{1})}X}(v,l^{+}(t_{1}))=\frac{\pi}{2}$ for any $v\in\Sigma _{l(t_{1})}\Box_{t_{1}}$. It follows that inside $B_2$, the segment $B_{2}\cap l$ is orthogonal to $\Box_{t_{1}}$. Recall that $\Box_{t_{1}}$ is parallel to a sub-cube of $B_{2}$, hence by geometry of cube, we have an isometric embedding $e_{2}\co\Box_{t_{1}}\times [t_{1},t_{2}]\to B_{2}$ with $e_{2}(y,t)=l(t)$ for some $y\in \Box_{t_{1}}$. Denote $\Box_{t_{2}}=e_{2}(\Box_{t_{1}}\times \{t_{2}\})$ and we know $\Box_{t_{2}}$ is parallel to some sub-cube of $B_{3}$, so one can proceed to construct isometric embedding $e_{3}$ as before. More generally, we can build $e_{i}\co\Box_{t_{i-1}}\times [t_{i-1},t_{i}]\to B_{i}$ with $e_{i}(y,t)=l(t)$ for some $y\in \Box_{t_{i-1}}$ and $\Box_{t_{i}}=e_{i}(\Box_{t_{i-1}}\times \{t_{i}\})$ inductively. Note that $l(t_{i})\in\Box_{t_{i}}\subseteq Supp(l(t_{i}))\subseteq B_{i}\cap B_{i+1}$ by construction.

\textit{Arriving:} Since $y_{2}=l(t_{N})\in B_{N}\cap C_{2}$ where $B_{N}$ and $C_{2}$ are subcomplexes, we have $l(t_{N})\in\Box_{t_{N}}\subseteq Supp(l(t_{N}))\subseteq B_{N}\cap C_{2}$ by construction. Moreover, we can concatenate the embeddings $\{e_{i}\}_{i=1}^{N}$ constructed in the previous step to obtain a map $e\co\Box_{t_{0}}\times [0,\bigtriangleup]\to X$ such that
\begin{itemize}
\item $e(y,t)=l(t)$ for some $y\in \Box_{t_{0}}$;
\item $e(\Box_{t_{0}}\times \{0\})\subseteq C_{1}$;
\item $e(\Box_{t_{0}}\times \{\bigtriangleup\})\subseteq C_{2}$;
\item $e$ is $1$--lipschitz ($e$ is actually an isometric embedding, since $e$ is local isometric embedding by construction).
\end{itemize}
Therefore $d(y,C_{2})\leq \bigtriangleup$ for any $y\in\Box_{t_{0}}$ (recall that $Supp(y_{1})=\Box_{t_{0}}$), which implies assertion $(3)$.
\end{proof}

\begin{remark}\
\label{2.13}
\begin{enumerate}
\item By the same proof, items (1), (2) and (4) in the above lemma are true for piecewise Euclidean $CAT(0)$ complexes with finitely many isometry types of cells. However, (3) might not be true in such generality.
\item If $C_{1}$ and $C_{2}$ are orthant subcomplexes, then by item (2) and (3), $Y_{1}$ (or $Y_{2}$) is isomeric to $O\times\prod_{i=1}^{k}I_{i}$ where $O$ is an orthant and each $I_{i}$ is a finite interval. In other words, there exists an orthant subcomplex $O\subset X$ such that $d_{H}(Y_{1},O)<\infty$.
\item Equation (\ref{2.11}) implies for any $R_{1},R_{2}>0$, we have $N_{R_{1}}(C_{1})\cap N_{R_{2}}(C_{2})\subset N_{R'_{1}}(Y_{1})$ and $N_{R_{1}}(C_{1})\cap N_{R_{2}}(C_{2})\subset N_{R'_{2}}(Y_{2})$, here $R'_{1}=\min(1,(R_{1}+R_{2}-\Delta)/A)+R_{2}$ and $R'_{2}=\min(1,(R_{1}+R_{2}-\Delta)/A)+R_{1}$ ($A=A(\Delta,n,1)$). Moreover, $\partial_{T}C_{1}\cap\partial_{T}C_{2}=\partial_{T}Y_{1}=\partial_{T}Y_{2}$.
\end{enumerate}
\end{remark}

The last remark implies that $Y_{1}$ and $Y_{2}$ capture the information about how $C_{1}$ and $C_{2}$ intersect coarsely. We use the notation $\inc(C_{1},C_{2})=(Y_{1},Y_{2})$ to describe this situation, where $\mathcal{I}$ stands for the word \textquotedblleft intersect\textquotedblright. The next lemma gives a combinatorial description of $Y_{1}$ and $Y_{2}$.

\begin{lem}
\label{2.14}
Let $X$, $C_{1}$, $C_{2}$, $Y_{1}$ and $Y_{2}$ be as in Lemma \ref{2.10}. Pick an edge $e$ in $Y_{1}$ $($or $Y_{2})$, and let $h$ be the hyperplane dual to $e$. Then $h\cap C_{i}\neq\emptyset$ for $i=1,2$. Conversely, if a hyperplane $h'$ satisfies $h'\cap C_{i}\neq\emptyset$ for $i=1,2$, then $\inc(h'\cap C_{1},h'\cap C_{2})=(h'\cap Y_{1},h'\cap Y_{2})$ and $h'$ comes from the dual hyperplane of some edge $e'$ in $Y_{1}$ $($or $Y_{2})$.
\end{lem}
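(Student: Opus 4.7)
For the forward direction, take an edge $e \subset Y_1$ with midpoint $m$ and dual hyperplane $h$. The strategy is to show $\pi_{C_2}(m) \in h$. Set $m' = \pi_{C_2}(m)$; since $m \in Y_1$, Lemma \ref{2.10}(2) gives $m' \in Y_2$ with $d(m,m') = \Delta$. Recalling that $h = \pi_e^{-1}(m)$ where $\pi_e \co X \to e$ is the $CAT(0)$ projection, we verify $\pi_e(m') = m$: every $x \in e \subset C_1$ satisfies $d(m',x) \geq d(m',C_1) = \Delta = d(m',m)$, so $m$ realizes the minimum of $d(m',\cdot)$ over $e$. Hence $m' \in h \cap C_2$.

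For the converse, the plan is to exploit the product structure of the carrier $N_{h'} \cong h' \times [0,1]$. Because $C_i \cap N_{h'}$ contains an edge of $X$ dual to $h'$ (a vertical edge in the product decomposition), the hyperplane $h' \times \{1/2\}$ of $N_{h'}$ does not separate $C_i \cap N_{h'}$, and the halfspace characterization of convex subcomplexes then forces $C_i \cap N_{h'} = (C_i \cap h') \times [0,1]$. Applying Lemma \ref{2.10} inside $h'$ (itself a $CAT(0)$ cube complex) to $C_1 \cap h'$ and $C_2 \cap h'$ produces gates $Y_1', Y_2'$ and a distance $\Delta_h = d_{h'}(Y_1', Y_2')$; convexity of $N_{h'}$ in $X$ together with the product structure gives $\Delta_h = d_X(C_1 \cap N_{h'}, C_2 \cap N_{h'}) \geq \Delta$.

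The central step, and the main obstacle, is to show $\Delta_h = \Delta$. I plan to invoke the standard fact that in a $CAT(0)$ cube complex the nearest point projection to a convex subcomplex $N$ preserves convex subcomplexes, i.e.\ $\pi_N(C) \subset C \cap N$ whenever $C \cap N \neq \emptyset$; the underlying halfspace argument is that a hyperplane $k$ separating $v \in C$ from $N$ cannot cross $N$, and then since $C \cap N \neq \emptyset$ lies in the same halfspace of $k$ as $N$, $k$ cannot separate $v$ from $C$ either, so the projection of $v$ stays on $v$'s side of $k$, hence in $C$. Projecting a minimizing pair $(u_1,u_2) \in Y_1 \times Y_2$ with $d(u_1,u_2) = \Delta$ onto $N_{h'}$ then yields points in $C_i \cap N_{h'}$ at distance $\leq \Delta$, forcing $\Delta_h \leq \Delta$. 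With $\Delta_h = \Delta$ in hand, pick a vertex $p$ of $Y_1'$ paired with $p' \in Y_2'$; the edge $e_p \subset C_1$ dual to $h'$ (with midpoint $p$) satisfies, in the product coordinates of $N_{h'}$, $d((p,t),(p',t)) = d_{h'}(p,p') = \Delta$ with $(p',t) \in e_{p'} \subset C_2$ for every $t \in [0,1]$, forcing every point of $e_p$ into $Y_1$ and giving the required edge of $Y_1$ dual to $h'$.

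Finally, the identification $\inc(h' \cap C_1, h' \cap C_2) = (h' \cap Y_1, h' \cap Y_2)$ follows. Both subsets are nonempty (they contain $p$ and $p'$). By the forward direction applied to edges of $Y_1$ dual to $h'$, $\pi_{C_2}$ sends each vertex of $h' \cap Y_1$ into $h' \cap Y_2$; using that $h'$ is convex and that the fibers $\{v\} \times [0,\Delta]$ of the product $Y_1 \times [0,\Delta]$ (the convex hull of $Y_1 \cup Y_2$) through such vertices $v$ already lie in $h'$, a convex-combination argument extends this to $\pi_{C_2}(y) \in h' \cap Y_2$ for every $y \in h' \cap Y_1$. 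Thus $d(y, h' \cap C_2) \leq \Delta$, and conversely any $y \in h' \cap C_1$ with $d(y, h' \cap C_2) = \Delta$ satisfies $d(y,C_2) = \Delta$ and so $y \in Y_1$, completing the gate identification.
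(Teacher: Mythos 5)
Your forward direction is a pleasant alternative to the paper's: the paper simply refers back to the ``parallel transport'' construction in the proof of Lemma~\ref{2.10}, whereas your midpoint argument ($m'=\pi_{C_2}(m)$ is at distance $\Delta$ from every point of $e$, so by uniqueness of projections $\pi_e(m')=m$, i.e.\ $m'\in h$) is self-contained and cleaner.

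The converse has a genuine gap at the step you yourself flag as ``the main obstacle.'' You invoke the statement that $\pi_N(C)\subset C\cap N$ whenever $C$, $N$ are convex subcomplexes with $C\cap N\neq\emptyset$, and the halfspace sketch you offer does not work: you posit a hyperplane $k$ ``separating $v\in C$ from $N$'' and conclude ``the projection of $v$ stays on $v$'s side of $k$,'' but $\pi_N(v)$ lies in $N$, which by your own hypothesis is entirely on the \emph{other} side of $k$ --- the conclusion contradicts the setup. (In fact, if $C\cap N\neq\emptyset$ and $v\in C$, no hyperplane separates $v$ from $N$ at all, so the argument is vacuous.) One cannot simply run the halfspace/gate argument either, because the $CAT(0)$ projection of a point need not be a vertex, and the intersection of all closed halfspaces containing a convex subcomplex $C$ is in general strictly larger than $C$. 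Whether or not the invoked fact is ultimately true, as written it is unjustified, and it is precisely the load-bearing step of your proof of $\Delta_h=\Delta$.

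The paper sidesteps this entirely with a local argument that you should compare with. Set $(Y_1',Y_2')=\inc(h'\cap C_1,h'\cap C_2)$, pick $x\in Y_1'$ and $x'=\pi_{h'\cap C_2}(x)$. Since $C_i$ is a subcomplex, near $h'$ one has $C_i\cap(h'\times(\tfrac12-\epsilon,\tfrac12+\epsilon))=(h'\cap C_i)\times(\tfrac12-\epsilon,\tfrac12+\epsilon)$; combined with the fact that the direction $\log_{x'}(x)$ lies in $\Sigma_{x'}h'$ (orthogonal to the interval factor) and makes an angle $\ge\pi/2$ with every direction into $h'\cap C_2$, the spherical-join formula gives $\angle_{x'}(x,y)\ge\pi/2$ for all $y\in C_2$, whence $x'=\pi_{C_2}(x)$ by Lemma~\ref{2.6}. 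Symmetrically $x=\pi_{C_1}(x')$, so $x\in Y_1$ and $\Delta_h=d(x,x')=\Delta$ with no appeal to any global projection lemma. If you want to keep your global-structure phrasing, you should replace the projection claim with this angle computation, or supply a genuine proof of the projection fact (it is not elementary).
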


\begin{proof}
The first part of the lemma follows from the proof of Lemma \ref{2.10}. Let $\inc(h'\cap C_{1},h'\cap C_{2})=(Y'_{1},Y'_{2})$. Pick $x\in Y'_{1}$ and set $x'=\pi_{h'\cap C_{2}}(x)\in Y'_{2}$. Then $\pi_{h'\cap C_{1}}(x')=x$. We identify the carrier of $h'$ with $h'\times[0,1]$. Since $C_{i}$ is a subcomplex, $(h'\cap C_{i})\times(\frac{1}{2}-\epsilon,\frac{1}{2}+\epsilon)=C_{i}\cap(h'\times(\frac{1}{2}-\epsilon,\frac{1}{2}+\epsilon))$ for $i=1,2$ and $\epsilon<\frac{1}{2}$. Thus for any $y\in C_{2}$, $\angle_{x'}(x,y)\ge\pi/2$, which implies $x'=\pi_{C_{2}}(x)$. Similarly, $x=\pi_{C_{1}}(x')=\pi_{C_{1}}\circ\pi_{C_{2}}(x)$, hence $x\in Y_{1}$ and $Y'_{1}\subset Y_{1}$. By the same argument, $Y'_{2}\subset Y_{2}$, thus $Y'_{i}=Y_{i}\cap h'$ for $i=1,2$ and the lemma follows.
\end{proof}

\begin{definition}
\label{2.15}
We call an isometrically embedded orthant $O$ \textit{straight} if for any $x\in O$, $\Sigma_{x}O$ is a subcomplex of $\Sigma_{x}X$ (see Definition \ref{cell structure} for the cell structure on $\Sigma_{x}X$). In particular, if the orthant is $1$--dimensional, we will call it a \textit{straight geodesic ray}. Note that $O$ itself may not be a subcomplex.
\end{definition}

\begin{remark}
\label{2.16}
Any $k$--dimensional straight orthant $O\subset X$ is Hausdorff close to an orthant subcomplex of $X$.

To see this, let $k'=\max_{x\in O}\{\dim(Supp(x))\}$, we prove by induction on $k'-k$. The case $k'-k=0$ is clear. Assume $k'-k=m\ge 1$ and pick $x_{0}\in O$ such that $\dim(Supp(x_{0}))=k'$, then there exists $B\subset Supp(x_{0})$ such that $B\cong [0,1]^{k}$, $B$ is parallel to a $k$--dimensional sub-cube of $Supp(x_{0})$ and $O\cap Supp(x_{0})\subset B$. Choose line segment $e\cong [0,1]$ in $Supp(x_{0})$ such that $x_{0}\in e$, $e$ is orthogonal to $B$ and $e$ is parallel to some edge $e'$ of $Supp(x_{0})$. 

Suppose $h$ is the hyperplane dual to $e$ and suppose $N_{h}\cong e\times h$ is the carrier of $h$. For any other point $x\in O$, the segment $\overline{x_{0}x}$ is orthogonal to $e$ by our construction, thus there exists a point $y\in e$ such that $O\subset \{y\}\times h\subset N_{h}$. Now we can endow $\{y\}\times h$ with the induced cubical structure and use our induction hypothesis to find an orthant complex $O_{1}$ in the $k$--skeleton of $\{y\}\times h$ such that $d_{H}(O,O_{1})<\infty$. Since $N_{h}\cong e\times h$, we can slide $O_{1}$ along $e$ in $N_{h}$ to get an orthant subcomplex in the $k$--skeleton of $X$.
\end{remark}

\begin{lem}
\label{2.17}
Let X be a $CAT(0)$ cube complex. If $l_{1}$ and $l_{2}$ are two straight geodesic rays in $X$, then either $\angle_{T}(l_{1},l_{2})=0$, or $\angle_{T}(l_{1},l_{2})\geq \frac{\pi}{2}$.
\end{lem}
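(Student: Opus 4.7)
The plan is to reduce to $1$--dimensional orthant subcomplexes, analyze the coarse intersection via Lemma~\ref{2.10}, and then use the fact that directions of straight rays correspond to vertices of all-right spherical complexes, which in the CAT(1) flag setting are necessarily at angular distance $0$ or $\ge \pi/2$ from one another.

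By Remark~\ref{2.16}, we may replace each $l_i$ by a Hausdorff-close $1$--dimensional orthant subcomplex, so without loss of generality $l_1,l_2$ are convex edge-paths in the $1$--skeleton of $X$. Apply Lemma~\ref{2.10} with $C_i:=l_i$ to produce $(Y_1,Y_2):=\inc(l_1,l_2)$. Each $Y_i$ is a convex subcomplex of $l_i$, hence is either a sub-ray of $l_i$ or bounded. If both $Y_i$ are sub-rays, Lemma~\ref{2.10}(2) gives an isometric product splitting of the convex hull of $Y_1 \cup Y_2$ as $Y_1 \times [0,\Delta]$, so $l_1$ and $l_2$ contain parallel sub-rays and are asymptotic; hence $\angle_T(l_1,l_2) = 0$.

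In the remaining case both $Y_i$ are bounded, and the goal is $\angle_T(l_1,l_2) \ge \pi/2$. Reparametrize so that $p_i := l_i(0)$ is the outermost vertex of $Y_i$, with $p_2 = \pi_{l_2}(p_1)$ and $p_1 = \pi_{l_1}(p_2)$. Straightness of $l_1$ makes $v := l_1^+(0)$ a vertex of the all-right spherical complex $\Sigma_{p_1}X$, and Lemma~\ref{2.6}(2) combined with continuity gives $d_{\Sigma_{p_1}X}(v, \log_{p_1}(p_2)) \ge \pi/2$. The argument then proceeds by observing that $\log_{p_1}(\xi_2)$ is itself a vertex of $\Sigma_{p_1}X$: this uses that $\xi_2$ is the endpoint of a straight $1$--dimensional orthant subcomplex, hence a vertex of the natural all-right spherical structure on $\partial_T X$, and the $1$--Lipschitz map $\log_{p_1}\co \partial_T X \to \Sigma_{p_1}X$ sends it to a vertex. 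Since distinct vertices of a flag all-right CAT(1) spherical complex lie at angular distance $\ge \pi/2$, either $\log_{p_1}(\xi_2) = v$, which forces the ray from $p_1$ to $\xi_2$ to coincide with $l_1$ and hence $\xi_1 = \xi_2$, $\angle_T(l_1,l_2) = 0$; or $\angle_{p_1}(\xi_1,\xi_2) \ge \pi/2$, whence $\angle_T(l_1,l_2) \ge \pi/2$ by Lemma~\ref{2.4}.

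The main obstacle I anticipate is making precise why $\log_{p_1}(\xi_2)$ is a vertex of $\Sigma_{p_1}X$. An alternative route avoiding this step uses Lemma~\ref{2.4}(2) together with the monotonicity of $t \mapsto \angle_{l_1(t)}(\xi_1,\xi_2)$ (a consequence of the convexity of the Busemann function along $l_1$) and then Lemma~\ref{2.5} applied to an attained supremum to extract a Euclidean flat sector containing a subray of $l_1$; combining this with the straightness of $l_1$ and the CAT(1) rigidity at antipodal-vertex pairs in $\Sigma_{l_1(t)}X$ (which forces $d(\log_{l_1(t)}(\xi_2), l_1^+(t)) + d(\log_{l_1(t)}(\xi_2), l_1^-(t)) = \pi$) then compels the opening angle of the sector to be at least $\pi/2$.
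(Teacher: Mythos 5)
Your overall strategy—reducing to the $1$--skeleton via Remark~\ref{2.16}, splitting on whether $\inc(l_1,l_2)$ is bounded or a subray, and then trying to force a $\pi/2$--angle from the vertex structure of $\Sigma_{p_1}X$—is genuinely different from the paper's. The paper argues directly with the hyperplanes $b_m$ dual to the edges of $l_1$: if $l_2$ crosses infinitely many $b_m$, then $d(l_2(t),l_1)$ is a convex function constant on infinitely many unit intervals, hence eventually constant, giving $\angle_T=0$; if $l_2$ crosses only finitely many, then some halfspace containing a subray of $l_1$ misses $l_2$ entirely, so $\pi_{l_1}(l_2)$ is bounded, contradicting $\angle_T<\pi/2$ via Lemma~\ref{2.4}. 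Your asymptotic case (both $Y_i$ subrays $\Rightarrow \angle_T=0$) is fine and roughly parallels their Case~1, but it does not align cleanly with the hyperplane-count dichotomy, and the remaining case has real gaps.

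The central claim of your first route---that $\log_{p_1}(\xi_2)$ is a vertex of $\Sigma_{p_1}X$ because $\xi_2$ is the endpoint of a straight ray---is false. The geodesic from $p_1$ to $\xi_2$ can bend around a convex corner, and its initial direction then need not be parallel to any edge. Concretely, let $X$ be the union of the strip $\Bbb R\times[0,1]$ and the quadrant $(-\infty,0]\times[1,\infty)$ with the standard unit-square structure (this is $CAT(0)$: links of vertices are arcs of length $\pi$ or $3\pi/2$, or circles of length $2\pi$). Take $l_2(t)=(t,0)$ and $p_1=(-1,2)$; the geodesic ray from $p_1$ asymptotic to $\xi_2=$ ``east'' goes from $(-1,2)$ to $(0,1)$ and then east, so $\log_{p_1}(\xi_2)=\tfrac{1}{\sqrt 2}(1,-1)$, which is not a vertex of $\Sigma_{p_1}X$. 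There is also a secondary flaw: even if $\log_{p_1}(\xi_2)=\log_{p_1}(\xi_1)$, two geodesic rays from a common point with the same initial direction can diverge (think of two rays from a branch point of a tree seen from a point further along one of them), so this case does not by itself give $\angle_T=0$. Finally, the subcase $\Delta=0$ with bounded overlap (the rays share a common segment but branch) is not addressed by the $\log_{p_1}(p_2)$ computation at all, though it follows quickly from straightness.

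Your ``alternative route'' is not yet a proof either. Even granting the monotonicity of $t\mapsto\angle_{l_1(t)}(\xi_1,\xi_2)$, Lemma~\ref{2.5} requires the supremum $\angle_T$ to be \emph{attained} at some finite $t_0$, which is exactly the thing that needs an argument in a cube complex, not an assumption; and once one has the flat sector, the identity $d(\log_{l_1(t)}(\xi_2),l_1^+(t))+d(\log_{l_1(t)}(\xi_2),l_1^-(t))=\pi$ just restates that the sector's link is a geodesic of length $\pi$---it records no obstruction. You would still need a further argument explaining why a point on a geodesic between antipodal vertices of an all-right $CAT(1)$ complex cannot lie at distance $<\pi/2$ from one of them, which is what the hyperplane-crossing count in the paper achieves cleanly. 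I would recommend abandoning the link-vertex heuristic here and arguing as the paper does, with the halfspaces $h_m$: that argument is elementary, does not require $Y_i$ to be bounded (it only needs the projection $\pi_{l_1}(l_2)$, which is a different and more robust set), and avoids the false structural claim about $\log_{p_1}(\xi_2)$.
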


\begin{proof}
We can assume without loss of generality that $l_{1}$ and $l_{2}$ are in $1$--skeleton and $l_{1}(0)$ is a vertex of $X$. We parametrize these two geodesic rays by unit speed. Let $\{b_{m}\}_{m=1}^{\infty}$ be the collection of hyperplanes in $X$ such that $b_{m}\cap l_{1}=l_{1}(\frac{1}{2}+m)$, and let $h_{m}$ be the halfspace bounded by $b_{m}$ which contains $l_{1}$ up to a finite segment. Suppose $N_{m}$ is the carrier of $b_{m}$.

Suppose $l_{2}\cap b_{m}\neq\emptyset$ for infinitely many $m$. Since each $b_m$ separates $X$, there exists $m_0$ such that $l_{2}\cap b_{m}\neq\emptyset$ for all $m\ge m_{0}$. Recall that $l_{2}$ is in 1--skeleton, so for each $m\ge m_0$, there exists an edge $e_{m}$ such that $e_{m}\subset l_{2}$, $e_{m}\subset N_{m}$ and $e_{m}\cap b_{m}$ is a point. Consider the function $f(t)=d(l_{2}(t),l_{1})$ for $t\ge 0$, then $f$ is convex and there exists infinitely many intervals of unit length (they come from $e_{m}$ for $m\ge m_{0}$) such that $f$ restricted each interval is constant, so there exists $t_{0}$ such that $f|_{[t_{0},\infty)}$ is constant, which implies $\angle_{T}(l_{1},l_{2})=0$.

If $l_{2}\cap b_{m}\neq\emptyset$ for finitely many $m$, then there exists $m_{0}$ such that $h_{m_{0}}\cap l_{2}=\emptyset$, which implies the $CAT(0)$ projection of $l_{2}$ to $l_{1}$ is an finite segment. If $\angle_{T}(l_{1},l_{2})<\frac{\pi}{2}$, then $\pi_{l_{1}}(l_{2})$ is an infinite segment by Lemma \ref{2.4}, which is a contradiction, so $\angle_{T}(l_{1},l_{2})\geq \frac{\pi}{2}$.
\end{proof}

We will see later on that the subset of $\partial_{T}X$ which is responsible for the behavior of top dimensional quasiflats is spanned by those points represented by straight geodesic rays. The following lemma make the word \textquotedblleft span\textquotedblright\ precise.

\begin{lem}
\label{2.18}
Let $X$ be a $CAT(0)$ cube complex, and let $\{l_{i}\}_{i=1}^{k}$ be a collection of straight geodesic rays in $X$ emanating from the same base point $p$ such that $\angle_{T}(l_{i},l_{j})=\angle_{p}(l_{i},l_{j})=\frac{\pi}{2}$ for $i\neq j$. Then the convex hull of $\{l_{i}\}_{i=1}^{k}$ is a k--dimensional straight orthant.
\end{lem}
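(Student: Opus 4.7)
The plan is induction on $k$, the base case $k=1$ being immediate. For $k\ge 2$, I would first analyze the link at $p$. Since each $l_i$ is straight, the direction $v_i:=\log_p(l_i)$ is a vertex of the natural all-right spherical cell structure on $\Sigma_p X$ (Definition~\ref{cell structure}), and the hypothesis $\angle_p(l_i,l_j)=\pi/2$ translates to $d_{\Sigma_p X}(v_i,v_j)=\pi/2$ for $i\neq j$. Because $\Sigma_p X$ decomposes as the spherical join $Lk(p,Supp(p))\ast Lk(Supp(p),X)$ and each factor is an all-right spherical flag complex (the second factor is flag because $X$ is CAT(0), and the octahedral first factor is trivially flag), the join is also an all-right flag complex. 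In such a complex two distinct vertices lie at distance $\pi/2$ if and only if they span an edge, and by flagness a set of vertices pairwise spanning edges spans a full simplex. Hence $v_1,\ldots,v_k$ span an isometrically embedded all-right spherical $(k-1)$-simplex $\sigma\subset\Sigma_p X$.

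Next, by the inductive hypothesis $O':=\mathrm{conv}(l_1,\ldots,l_{k-1})$ is a straight $(k-1)$-orthant isometric to $\Bbb R^{k-1}_{\ge 0}$. For each $i<k$, Lemma~\ref{2.5} applied to the pair $(l_i,l_k)$ gives a flat quarter-plane $S_{i,k}:=\mathrm{conv}(l_i,l_k)\cong\Bbb R^2_{\ge 0}$. For each $x\in O'$ I would construct a ray $l_k^{(x)}$ from $x$ parallel to $l_k$ by parallel transport through these sectors: for $x=l_i(s)$ take the ray perpendicular to $l_i$ in $S_{i,k}$; for general $x\in O'$, chain such transports along the coordinate axes of $O'$. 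These parallel rays then assemble into a map $\phi\co O'\times\Bbb R_{\ge 0}\to X$, $(x,t)\mapsto l_k^{(x)}(t)$, which I expect to be an isometric embedding onto the desired $k$-orthant (of isometry type $\Bbb R^k_{\ge 0}$) by gluing the flat sectors along common edges. Straightness then follows because at every point the coordinate directions remain vertices of the ambient link and span an all-right simplex there, so the link of the orthant is a subcomplex of the ambient link.

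The hard part is justifying that the parallel transport is consistent, i.e.\ that at every intermediate basepoint $x$ one has $\angle_x=\pi/2$ between the relevant transported directions so that Lemma~\ref{2.5} can be reinvoked at $x$. The cleanest way to handle this is to establish, before attempting the global construction, the link-theoretic fact that the transported directions form vertices of an all-right $(k-1)$-simplex in $\Sigma_x X$ at every $x$ in the putative orthant; once this is in place the required Alexandrov angle equalities follow automatically from the all-right simplex structure, and the flat sector lemma applies uniformly. This reduction of the analytic angle question to a combinatorial link question is the essential role played by straightness and the cubical structure of $X$.
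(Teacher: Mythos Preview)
Your overall strategy (induction on $k$, using the flat sectors from Lemma~\ref{2.5}) matches the paper's, and your flagness observation at $p$ is correct. However, your inductive step goes in the harder direction, and this creates a genuine gap you have not closed.

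You fix $O'=\mathrm{conv}(l_1,\dots,l_{k-1})$ and try to transport $l_k$ to every point $x\in O'$. For $x$ off the coordinate axes you propose to ``chain transports along the coordinate axes,'' but the result a priori depends on the path chosen, and your suggested fix --- establish first that the transported directions form an all-right simplex in $\Sigma_x X$ at every $x$ in the putative orthant --- is circular: the orthant does not yet exist, and the ``transported direction'' at $x$ is exactly what you need path-independence to define. Nothing in your outline breaks this circularity.

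The paper avoids the problem by reversing the roles: it transports $l_1,\dots,l_{k-1}$ along the \emph{one-dimensional} ray $l_k$, so no path-consistency issue arises. At each $l_k(s)$ it gets rays $c_1,\dots,c_{k-1}$ (each $c_i$ the parallel of $l_i$ inside the sector $S_{i,k}$) and argues they satisfy the hypotheses of the lemma: since $c_i$ is parallel to $l_i$ one has $\angle_T(c_i,c_j)=\pi/2$, hence $\angle_{l_k(s)}(c_i,c_j)\le\pi/2$; on the other hand the $c_i$ are straight, so the $\log_{l_k(s)}c_i$ are \emph{distinct vertices} of the all-right complex $\Sigma_{l_k(s)}X$, forcing pairwise distance $\ge\pi/2$. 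This sandwich gives equality, and the inductive hypothesis now yields a straight $(k-1)$-orthant $O_s$ through $l_k(s)$. The proof finishes not by assembling a product map as you propose, but by showing $O_s$ is \emph{parallel} to $O_0$ (the Tits boundaries agree by Lemma~\ref{2.8}, and an angle argument shows $d(\cdot,O_0)$ is constant $=s$ on $O_s$), so the convex hull of $O_0\cup O_s$ is $O_0\times[0,s]$ by standard $CAT(0)$ facts; letting $s\to\infty$ gives the orthant.

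In short: swap which rays you transport and which ray you transport along. Transporting a family along a line avoids holonomy; transporting a line over a higher-dimensional base does not.
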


One may compare this lemma with Proposition 2.10 and Proposition 2.11 of \cite{behrstock2012cubulated}.
\begin{proof}
By Lemma \ref{2.5}, $l_{i}$ and $l_{j}$ together bound an isometrically embedded quarter plane for $i\neq j$. We prove the lemma by induction and assume the claim is true for $\{l_{i}\}_{i=1}^{k-1}$. We parametrize $l_{k}$ by arc length and denote by $O_{0}$ the straight orthant spanned by $\{l_{i}\}_{i=1}^{k-1}$, note that $O_{0}\cap l_{k}=p$. 

For $s>0$ and $1\leq i\leq k-1$, let $c_{i}$ be the geodesic ray such that (1) $c_{i}$ is in the quarter plane spanned by $l_{k}$ and $l_{i}$; (2) $c_{i}$ starts at $l_{k}(s)$; (3) $c_{i}$ is parallel to $l_{i}$. So $\angle_{T}(c_{i},c_{j})=\frac{\pi}{2}$ and $\angle_{l_{k}(s)}(c_{i},c_{j})\leq\frac{\pi}{2}$ for $i\neq j$. Note that $\{c_{i}\}_{i=1}^{k-1}$ are also straight geodesic rays, and $\{\log_{l_{k}(s)}c_{i}\}_{i=1}^{k-1}$ are distinct points in the 0--skeleton of $\Sigma_{l_{k}(s)}X$. It follows that actually $\angle_{l_{k}(s)}(c_{i},c_{j})=\frac{\pi}{2}$ for $i\neq j$. Hence by the induction assumption, there is a straight orthant $O_{s}$ spanned by $\{c_{i}\}_{i=1}^{k-1}$. 

By Lemma \ref{2.8}, $\partial_{T}O_{0}=\partial_{T}O_{s}$. Let $l\subset O_{s}$ be a unit speed geodesic ray emanating from $l_{k}(s)$. Then $d(l(t),O_{0})$ is a bounded convex function. Since $\Sigma_{l_k(s)}O_s$ is an all-right spherical simplex in $\Sigma_{l_k(s)}X$ spanned by $\{\log_{l_{k}(s)}c_{i}\}_{i=1}^{k-1}$, we have $\angle_{l_{k}(s)}(l(t),l_k(0))=\pi/2$ for any $t>0$. Similarly, $\angle_{l_{k}(0)}(y,l_k(s))=\pi/2$ for any $y\in O_{0}\setminus\{l_s(0)\}$. Hence by triangle comparison, $d(l(t),O_{0})$ attains its minimum at $t=0$. Thus $d(l(t),O_{0})$ has to be a constant function. Thus $d(x,O_{0})\equiv s$ for any $x\in O_{s}$, similarly $d(x,O_{s})\equiv s$ for any $x\in O_{0}$, which implies the convex hull of $O_{0}$ and $O_{s}$ is isometric to $O_{0}\times [0,s]$ (see e.g. Chapter II.2 of \cite{MR1744486}). Moreover, the convex hull of $O_{0}$ and $O_{s}$ is contained in the convex hull of convex hull of $O_{0}$ and $O_{s'}$ for $s\leq s'$. So the convex hull of $\{l_{i}\}_{i=1}^{k}$ is a straight orthant $O$.
\end{proof}

\section{Proper homology classes of bounded growth}
\label{support of homology class}
In this section we summary some results from \cite{bestvina2008quasiflats} about locally finite homology classes of certain polynomial growth and make some generalizations to adjust the results to our situation. 

\subsection{Proper homology and supports of homology classes}
\label{proper homology}
For arbitrary metric space $Z$, we define the proper (singular) homology of $Z$ with coefficients in Abelian group $G$, denoted $H^{\textmd{p}}_{\ast}(Z;G)$, as follows. Elements in the proper $n$--chain group $C^{\textmd{p}}_{n}(Z;G)$ are of form $\Sigma_{\lambda\in\Lambda}g_{\lambda}\sigma_{\lambda}$ ($\Lambda$ may be infinite, $g_{\lambda}\in G$ and $\sigma_{\lambda}$'s are singular $n$--simplices) such that for every bounded set $K\subset Z$, the set $\{\lambda\in\Lambda|g_{\lambda}\neq  Id\ and\  \sigma_{\lambda}(\bigtriangleup^{n})\cap K\neq\emptyset\}$ is finite. The usual boundary map gives rise to group homomorphism $\partial\co C^{\textmd{p}}_{n}(Z;G)\to C^{\textmd{p}}_{n-1}(Z;G)$, which yields a chain complex $C^{\textmd{p}}_{\ast}(Z;G)$. $H^{\textmd{p}}_{\ast}(Z;G)$ is defined to be the homology of this chain complex. 

We will use Greek letters $\alpha$, $\beta\cdots$ to denote (proper) singular chains. We denote the union of images of singular simplices in a (proper) singular chain $\alpha$ by Im $\alpha$. If $\alpha$ is a (proper) cycle, we denote the corresponding (proper) homology class by $[\alpha]$.

We also define the relative version of proper homology $H^{\textmd{p}}_{\ast}(Z,Y)$ for $Y\subset Z$ in a similar way ($Y$ is endowed with the induced metric). Then there is a long exact sequence
\begin{equation*}
\cdots \to H^{\textmd{p}}_{n}(Y)\to H^{\textmd{p}}_{n}(Z)\to H^{\textmd{p}}_{n}(Z,Y)\to H^{\textmd{p}}_{n-1}(Y)\to H^{\textmd{p}}_{n-1}(Z)\to\cdots
\end{equation*}
Moreover, by the usual procedure of subdividing the chains, we know excision holds. Namely for subspace $W$ such that the closure of $W$ is in the interior of $Y$, the map $H^{\textmd{p}}_{\ast}(Z-W,Y-W)\to H^{\textmd{p}}_{\ast}(Z,Y)$ induced by inclusion is an isomorphism. As a corollary, if $B\subset Z$ is bounded, then there is a natural isomorphism $H^{\textmd{p}}_{\ast}(Z,Z-A)\cong H_{\ast}(Z,Z-A)$, since we can replace the pair $(Z,Z-A)$ by $(O,O-B)$ by excision, where $O$ is a bounded open neighbourhood of $B$. Pick a point $z\in Z\setminus Y$, there is a homomorphism $i\co H^{\textmd{p}}_{k}(Z,Y)\to H^{\textmd{p}}_{k}(Z,Z\setminus\{z\})\cong H_{k}(Z,Z\setminus\{z\})$ induced by the inclusion of pairs $(Z,Y)\to (Z,Z-\{z\})$. The map $i$ is called the \textit{inclusion homomorphism}.

If $Z$ is also a simplicial complex or polyhedral complex, we can similarly define the proper simplicial (or cellular) homology by considering the former sum of simplexes or cells such that for every bounded subset $K\subset Z$, we have only finitely many terms which intersect $K$ non-trivially. The simplicial version (or the cellular version) of the homology theory is isomorphic to the singular version in a simplicial complex (or polyhedral complex) by the usual proof in algebraic topology.

The proper homology depends on the metric of the space, so it is not a topological invariant. By definition, every proper chain is locally finite and we have a group homomorphism $H^{\textmd{p}}_{\ast}(Z,Y)\to H^{\textmd{lf}}_{\ast}(Z,Y)$, where $H^{\textmd{lf}}_{\ast}(Z,Y)$ is the locally finite homology defined in \cite{bestvina2008quasiflats}. If $Z$ is a proper metric space, then these two homology theories are the same.

A continuous map $f\co Z_{1}\to Z_{2}$ is \textit{(metrically) proper} if the inverse image of every bounded subset is bounded. In this case, we have an induced map on proper homology $f_{\ast}\co H^{\textmd{p}}_{k}(Z_{1},G)\to H^{\textmd{p}}_{k}(Z_{2},G)$.

In the rest of this paper, we will always take $G=\Bbb Z/2\Bbb Z$ and omit $G$ when we write the homology. 

\begin{definition}
\label{3.1}
For $z\in Z\setminus Y$, let $i\co H^{\textmd{p}}_{k}(Z,Y)\to H_{k}(Z,Z\setminus\{z\})$ be the inclusion homomorphism defined as above. For $[\sigma]\in H^{\textmd{p}}_{k}(Z,Y)$, we define the \textit{support set} of $[\sigma]$, denoted $S_{[\sigma],Z,Y}$, to be $\{z\in Z\setminus Y\mid i_{\ast}[\sigma]\neq \textmd{Id}\}$. We will write $S_{[\sigma],Z}$ if $Y$ is empty. We will also use $S_{[\sigma]}$ to denote the support set if the underlying space $Z$ and $Y$ is clear.
\end{definition} 

It follows that $S_{[\sigma]}=(\cap_{[\sigma']=[\sigma]\in H^{\textmd{p}}_{k}(Z,Y)}\textmd{Im}\ \sigma')\setminus Y$.

If $Z\subset Z_{1}$, then $S_{[\sigma],Z,Y}\supseteq S_{[\sigma],Z_{1},Y}$. These two sets are equal if $Z$ is open in $Z_{1}$. If $Z$ is a polyhedral complex and $Y=\emptyset$, then the support set is always a subcomplex. In particular, if $[\sigma]\in H^{\textmd{p}}_{n}(Z)$ is a nontrivial top dimensional class, then $[\sigma]$ can be represented by a top dimensional polyhedral cycle, which implies the support set $S_{[\sigma]}\neq\emptyset$. But the support of a non-trivial class can be trivial if it is not top dimensional.

The support sets of (proper) homology class behave like the support sets of currents in the following situation:

\begin{lem}
\label{3.2}
Let $Z_{1}$ be a metric space of homological dimension $\leq n$, and let $Y_{1}\subset Z_{1}$ be a subspace. Pick $[\sigma]\in H^{\textmd{p}}_{n}(Z_{1},Y_{1})$. If $f\co(Z_{1},Y_{1})\to(Z_{2},Y_{2})$ is a proper map, then $S_{f_{\ast}[\sigma]}\subset f(S_{[\sigma]})$.
\end{lem}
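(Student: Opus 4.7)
The plan is to argue by contrapositive: fix $w \in Z_2 \setminus Y_2$ with $w \notin f(S_{[\sigma]})$, and deduce that $w \notin S_{f_{\ast}[\sigma]}$. Set $A := f^{-1}(w)$; properness of $f$ forces $A$ to be bounded, continuity forces $A$ to be closed, and since $f(Y_1) \subset Y_2$ but $w \notin Y_2$, we have $A \cap Y_1 = \emptyset$. The assumption $w \notin f(S_{[\sigma]})$ is equivalent to $A \cap S_{[\sigma]} = \emptyset$. It suffices to exhibit a single representative $\sigma'$ of $[\sigma] \in H^{\textmd{p}}_{n}(Z_1, Y_1)$ with $\textrm{Im}(\sigma') \cap A = \emptyset$, because $f_{\ast}\sigma'$ then represents $f_{\ast}[\sigma]$ and has image missing $w$.

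Producing such $\sigma'$ is equivalent to showing that $[\sigma]$ maps to zero under the natural map $H^{\textmd{p}}_{n}(Z_1, Y_1) \to H^{\textmd{p}}_{n}(Z_1, Z_1 \setminus A)$, which is defined because $Y_1 \subset Z_1 \setminus A$. Indeed, if $\sigma = \partial \tau + \gamma$ with $\gamma \in C^{\textmd{p}}_{n}(Z_1 \setminus A)$, then $\partial \gamma = \partial \sigma \in C^{\textmd{p}}_{n-1}(Y_1)$ and $\gamma$ differs from $\sigma$ by the relative boundary $\partial \tau$, so $\gamma$ is a representative of $[\sigma]$ whose image is disjoint from $A$. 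Since $A$ is bounded, the excision result recalled immediately before the lemma identifies $H^{\textmd{p}}_{n}(Z_1, Z_1 \setminus A)$ with the ordinary singular group $H_n(U, U \setminus A)$ for any bounded open $U \supset A$, bringing the question into classical singular homology.

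For the local input, each $z \in A$ satisfies $z \notin S_{[\sigma]}$, so there is a representative $\gamma_z$ of $[\sigma]$ with $z \notin \textrm{Im}(\gamma_z)$. Because $\gamma_z$ is locally finite, $\textrm{Im}(\gamma_z)$ is closed, and I can pick a bounded open neighborhood $W_z$ of $z$ contained in $Z_1 \setminus \textrm{Im}(\gamma_z)$; the same chain $\gamma_z$ witnesses that the image of $[\sigma]$ in $H^{\textmd{p}}_{n}(Z_1, Z_1 \setminus W_z)$ already vanishes. The main obstacle is to promote these pointwise vanishings into a single global vanishing in $H^{\textmd{p}}_{n}(Z_1, Z_1 \setminus A)$, and this is the one place where the dimension hypothesis enters.

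In the settings of interest (e.g., closed bounded subsets of a proper $CAT(0)$ space), $A$ is compact, so one may pass to a finite subcover $A \subset W_{z_1} \cup \cdots \cup W_{z_k}$ and then induct on $k$ using the relative Mayer--Vietoris sequence
\[
H^{\textmd{p}}_{n+1}(Z_1, Z_1 \setminus (W_i \cap W_j)) \to H^{\textmd{p}}_{n}(Z_1, Z_1 \setminus (W_i \cup W_j)) \to H^{\textmd{p}}_{n}(Z_1, Z_1 \setminus W_i) \oplus H^{\textmd{p}}_{n}(Z_1, Z_1 \setminus W_j).
\]
Because $W_i \cap W_j$ is bounded, excision converts the leftmost group to ordinary singular $H_{n+1}$ of a relative pair, which vanishes because $Z_1$ has homological dimension $\leq n$. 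Iterating the two-set case produces a vanishing in $H^{\textmd{p}}_{n}(Z_1, Z_1 \setminus (W_{z_1} \cup \cdots \cup W_{z_k}))$, and functoriality along the inclusion $Z_1 \setminus \bigcup W_{z_i} \subset Z_1 \setminus A$ transports this vanishing to $H^{\textmd{p}}_{n}(Z_1, Z_1 \setminus A)$, closing the argument.
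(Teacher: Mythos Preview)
Your approach is essentially the paper's: reduce to showing the image of $[\sigma]$ vanishes in $H_n(Z_1,Z_1\setminus f^{-1}(w))$, use local vanishing at each preimage point, and glue via relative Mayer--Vietoris where the homological dimension hypothesis kills the $H_{n+1}$ term. That skeleton is correct.

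There is, however, a genuine gap at the compactness step. You write that ``in the settings of interest \ldots\ $A$ is compact,'' but the lemma is stated for an arbitrary metric space $Z_1$, so $A=f^{-1}(w)$ is only bounded and closed, not compact, and your cover $\{W_z\}_{z\in A}$ need not admit a finite subcover. The paper avoids this as follows: after passing via excision to ordinary homology $H_n(Z_1,Z_1\setminus A)$, fix a single \emph{finite} singular chain $\sigma'$ representing the class, and work with $K:=A\cap \textmd{Im}\,\sigma'$ instead of $A$. Since $\textmd{Im}\,\sigma'$ is compact (finite union of continuous images of simplices) and $A$ is closed, $K$ is compact and the finite subcover exists. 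The balls are then chosen small enough to miss $\textmd{Im}\,\partial\sigma'$, and one observes that $\sigma'$ already lies in $C_n\bigl((Z_1\setminus A)\cup \bigcup_i B(x_i,\epsilon_i)\bigr)$, so it represents zero in the corresponding relative group; Mayer--Vietoris then pushes this vanishing down to $H_n(Z_1,U'')$ with $U''\subset Z_1\setminus A$. Incorporating this one trick (replace $A$ by $A\cap\textmd{Im}\,\sigma'$) repairs your argument and makes it match the paper's.

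A minor technical point: in your Mayer--Vietoris the set $Z_1\setminus(W_i\cap W_j)$ is closed, not open, so the homological dimension hypothesis (stated for open pairs) does not apply directly; the paper arranges its sets so that the relevant second argument is a union of open sets.
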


Recall that $Z_{1}$ has ($\Bbb Z/2\Bbb Z$)--homological dimension $\le n$ if $H_{r}(U,V)=0$ for any $r>n$ and $U,V$ open in $Z_{1}$.
\begin{proof}
Pick $y\in S_{f_{\ast}[\sigma]}$, since $f^{-1}(y)$ is bounded, we have the following commutative diagram:
\begin{center}
$\begin{CD}
H^{\textmd{p}}_{n}(Z_{1},Y_{1})                            @>f_{\ast}>>        H^{\textmd{p}}_{n}(Z_{2},Y_{2})\\
@VVi_{\ast}V                                                              @VVi_{\ast}V\\
H_{n}(Z_{1},Z_{1}\setminus f^{-1}(y))          @>f_{\ast}>>        H_{n}(Z_{2},Z_{2}\setminus \{y\})
\end{CD}$
\end{center}

Thus if $S_{f_{\ast}[\sigma]}\neq\emptyset$, $[\sigma']=i_{\ast}[\sigma]\in H_{n}(Z_{1},Z_{1}\setminus f^{-1}(y))$ is nontrivial. It suffices to show there exists $x\in f^{-1}(y)$ such that $[\sigma']$ is nontrivial when viewed as an element in $H_{n}(Z_{1},Z_{1}\setminus \{x\})$.

Fix a singular chain $\sigma'\in C_{n}(Z_{1},Z_{1}\setminus f^{-1}(y))$ which represents $[\sigma']$. We argue by contradiction and assume $[\sigma']$ is trivial in $H_{n}(Z_{1},Z_{1}\setminus \{x\})$ for any $x\in f^{-1}(y)$. Let $K=f^{-1}(y)\cap \textmd{Im}\ \sigma'$. For each $x\in K$, there exists $\epsilon(x)>0$ such that $\bar{B}(x,2\epsilon(x))\cap \textmd{Im}\ \partial\sigma'=\emptyset$ and $[\sigma']$ is trivial in $H_{n}(Z_{1},Z_{1}\setminus \bar{B}(x,2\epsilon(x)))$. Since $f^{-1}(y)$ is bounded and closed, $K$ is compact and we can find finite points $\{x_{i}\}_{i=1}^{N}$ in $K$ such that $K\subset\cup_{i=1}^{N} B(x_{i},\epsilon(x_{i}))$. Let $U=\cup_{i=1}^{N} B(x_{i},\epsilon(x_{i}))$ and let $U'=(Z_{1}\setminus f^{-1}(y))\cup U$. Then $\textmd{Im}\ \sigma'\subset U'$ and $[\sigma']$ is trivial in $H_{n}(Z_{1},U')$. Put $U''=U'\setminus(\cup_{i=1}^{N}\bar{B}(x_{i},2\epsilon(x_{i})))$, then $\textmd{Im}\ \partial\sigma'\subset U''$ and $U''\subset Z_{1}\setminus f^{-1}(y)$. So if we can show $[\sigma']$ is trivial in $H_{n}(Z_{1},U'')$, then $[\sigma']$ must be trivial in $H_{n}(Z_{1},Z_{1}\setminus f^{-1}(y))$, which yields a contradiction.  

Let us assume $N=1$. Then there is a Mayer--Vietoris sequence:
\begin{align*}
&H_{n+1}(Z_{1},U'\cup (Z_{1}\setminus \bar{B}(x_{1},2\epsilon(x_{1})))\to H_{n}(Z_{1},U'') \\
&\to H_{n}(Z_{1},U')\oplus H_{n}(Z_{1},Z_{1}\setminus \bar{B}(x_{1},2\epsilon(x_{1})))
\end{align*}
The first term is trivial since $Z_{1}$ has homological dimension $\leq n$ and $[\sigma']$ is trivial in the last term by construction, so $[\sigma']$ has to be trivial in the second term. Using an induction argument, we can obtain the contradiction similarly for $N\ge 2$.
\end{proof}

\begin{remark}
\label{3.3}
\
\begin{enumerate}
\item The assumption on $Z_{1}$ is satisfied if $Z_{1}$ is a $CAT(\kappa)$ space of geometric dimension $\le n$, see \cite[Theorem A]{kleiner1999local}.
\item The assumption on $Z_{1}$ is satisfied if $Z_{1}$ is a locally-finite $n$--dimensional polyhedral complex (with topology of a cell complex) or $M_{k}$--polyhedral complex of finite shape, since such space supports a $CAT(1)$ metric which induces the same topology as its original metric.
\end{enumerate}
\end{remark}

\subsection{The growth condition}
\label{growth condition}
In Section \ref{growth condition} and Section \ref{splitting}, $Y$ will be a piecewise Euclidean $CAT(0)$ complex of dimension $n$. The following result shows every top dimensional quasiflat in $Y$ is Hausdorff close to the support set of some proper homology class. Therefore to understand quasiflats, we could focus on the support sets, which have nice local and asymptotic properties.
\begin{lem}
\label{3.4}
(Lemma 4.3 of \cite{bestvina2008quasiflats}) If $Q\subset Y$ be an $(L,A)$--quasiflat of dimension $n$, then there exists $[\sigma] \in H^{\textmd{p}}_{n}(Y)$ satisfying the following conditions:
\begin{enumerate}
\item There exists constant $D=D(L,A)$ such that $d_{H}(S,Q)\le D$, where $S$ is the support set of $[\sigma]$.
\item There exists $a=a(L,A)$ such that for every $p\in Y$, 
\begin{equation}
\label{3.5}
\mathcal{H}^{n}(B(p,r)\cap S)\leq a(1+r)^{n}\,.
\end{equation} 
\end{enumerate}
\end{lem}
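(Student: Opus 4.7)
The plan is to realize $[\sigma]$ as the pushforward of the fundamental class of $\Bbb E^n$ under a continuous quasi-isometric model for $Q$.

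First I would replace $Q$ by a continuous $(L',A')$--quasi-isometric embedding $\phi\co\Bbb E^n\to Y$ with $d_{H}(\phi(\Bbb E^n),Q(\Bbb E^n))\le D_0=D_0(L,A)$. Fix a simplicial triangulation $\mathcal{T}$ of $\Bbb E^n$ of bounded geometry, set $\phi:=Q$ on the $0$--skeleton, and extend inductively over the skeleta using unique $CAT(0)$ geodesics in $Y$. Standard estimates show $\phi$ is Lipschitz on each simplex with constant $K=K(L,A)$, hence globally $(L',A')$--quasi-isometric with constants depending only on $(L,A)$; in particular $\phi$ is metrically proper. Let $[\Bbb E^n]\in H^{\textmd{p}}_n(\Bbb E^n)$ denote the fundamental class, represented by the sum $\sum_{\Delta\in\mathcal{T}^{(n)}}\iota_\Delta$ of all top-dimensional simplices. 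Set $[\sigma]:=\phi_{\ast}[\Bbb E^n]\in H^{\textmd{p}}_n(Y)$, with natural representative $\sigma=\sum_\Delta\phi\circ\iota_\Delta$ satisfying $\textmd{Im}(\sigma)=\phi(\Bbb E^n)$.

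For the growth bound, Lemma \ref{3.2} (applicable by Remark \ref{3.3}) gives $S\subset\textmd{Im}(\sigma)=\phi(\Bbb E^n)$. The simplices $\Delta\in\mathcal{T}$ whose $\phi$--image meets $B(p,r)$ satisfy $\Delta\subset\phi^{-1}(B(p,r+K))$, and the upper quasi-isometry inequality places this preimage in a Euclidean ball of radius $L(r+K+A)$. Hence there are at most $O(r^n)$ such simplices, each contributing $\mathcal{H}^n$--measure at most $K^n$ times the volume of $\Delta$, which yields the desired bound with $a=a(L,A)$.

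The inclusion $S\subset N_{D_0}(Q)$ is immediate, so the Hausdorff-distance bound reduces to showing $\phi(\Bbb E^n)\subset N_{r_0}(S)$ for some $r_0=r_0(L,A)$. Suppose for contradiction that $B(\phi(x),r_0)\cap S=\emptyset$. Covering $\overline{B(\phi(x),r_0)}$ by finitely many small balls on each of which the local class of $[\sigma]$ vanishes, a Mayer--Vietoris argument using that $Y$ has homological dimension $\le n$ (exactly as in the proof of Lemma \ref{3.2}) shows that the image of $[\sigma]$ in $H_n(Y,Y\setminus\overline{B(\phi(x),r_0)})$ is trivial. On the other hand, by the lower quasi-isometry bound, for $R\ge L(r_0+A)$ the set $\phi(\partial B(x,R))$ is disjoint from $\overline{B(\phi(x),r_0)}$; naturality of pushforward and restriction then identifies this image of $[\sigma]$ with $\phi_{\ast}$ of the nontrivial local fundamental class in $H_n(\Bbb E^n,\Bbb E^n\setminus\phi^{-1}(\overline{B(\phi(x),r_0)}))$, and a local degree argument shows the pushforward does not vanish, a contradiction. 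The main obstacle is precisely this degree-theoretic nonvanishing: in a manifold it is routine, but in a general piecewise Euclidean $CAT(0)$ complex $Y$ one must extract it from the quasi-isometric embedding bounds together with the bound on homological dimension, essentially running a converse of the Mayer--Vietoris argument of Lemma \ref{3.2}, with careful bookkeeping of $r_0$, $R$, and the size of the cover producing the uniform constant $D=D(L,A)$.
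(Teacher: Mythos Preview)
Your construction of $[\sigma]$ and the growth bound are correct and match the paper's approach; the paper simply says one approximates the quasi-isometric embedding by a Lipschitz one and pushes forward the fundamental class.

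The gap is in the other half of the Hausdorff estimate, and it is exactly the point you flag as the ``main obstacle''. Your proposed fix---a local degree argument showing that $\phi_{\ast}$ of the local fundamental class is nonzero in $H_n(Y,Y\setminus\overline{B(\phi(x),r_0)})$---does not go through as stated: $Y$ is not a manifold, $H_n(Y,Y\setminus\{y\})$ need not be $\Bbb Z/2\Bbb Z$, and a quasi-isometric embedding into a branched $CAT(0)$ complex has no a priori nonzero ``degree''. Running the Mayer--Vietoris argument of Lemma~\ref{3.2} in reverse does not produce nontriviality; that argument only ever kills classes.

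The paper's argument (spelled out in Lemma~\ref{6.2}) sidesteps this entirely by going the other way. One builds a continuous quasi-inverse $\varphi\co N_1(\operatorname{Im}\phi)\to\Bbb E^n$ via the nerve of a bounded-diameter open cover, so that $d(\varphi\circ\phi(x),x)<C$ for all $x$. Then $\varphi_{\ast}[\sigma]=[\Bbb E^n]$, and now Lemma~\ref{3.2} applied to $\varphi$ (with domain of homological dimension $\le n$) yields $\Bbb E^n=S_{[\Bbb E^n]}\subset\varphi(S)$. Thus every $x\in\Bbb E^n$ is $\varphi(y)$ for some $y\in S$, whence $d(\phi(x),y)$ is uniformly bounded. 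This is the missing ingredient: instead of proving $\phi_{\ast}$ is locally nonzero, you prove $\varphi_{\ast}$ is globally the identity and invoke Lemma~\ref{3.2} in the direction where the homological-dimension hypothesis actually bites.
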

Here $\mathcal{H}^{n}$ denotes the $n$--dimensional Hausdorff measure and $d_{H}$ denotes the Hausdorff distance.

Since $Y$ is uniformly contractible, we can approximate the $(L,A)$--quasi-isometric embedding $q\co\Bbb R^{n}\to Y$ by a Lipschitz $(L,A)$--quasi-isometric embedding $\widetilde{q}$, which is proper. Then $[\sigma]$ is chosen to be the pushforward of the fundamental class of $\Bbb R^{n}$ under $\widetilde{q}$.

The support set of top dimensional support set enjoys the following geodesic extension property.
\begin{lem}
\label{3.6}
(Lemma 3.1 of \cite{bestvina2008quasiflats}) Let $S$ be the support set of some top dimensional proper homology class in $Y$. Pick arbitrary $p\in Y$ and $y\in S$. Then there is a geodesic ray $\overline{y\xi}\subset S$ which fits together with the geodesic segment $\overline{py}$ to form a geodesic ray $\overline{p\xi}$.
\end{lem}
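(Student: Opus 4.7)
The plan is to reduce the lemma to a \emph{local extension property} at points of $S$ and then iterate by maximality. Specifically, I would prove the following local claim: for every $y\in S$ and every $p\in Y\setminus\{y\}$, there exist $\delta>0$ and a geodesic segment $\overline{yz}\subset S$ with $d(y,z)=\delta$ such that $\overline{py}\cup\overline{yz}$ is a geodesic, equivalently $d_{\Sigma_y Y}(\log_y p,\log_y z)=\pi$. Granting this, Zorn's lemma applied to the family of geodesic extensions of $\overline{py}$ past $y$ whose portion beyond $y$ lies in $S$ (ordered by inclusion) produces a maximal extension. If its length were finite, its endpoint $y'$ would lie in $S$ (which is closed, by Definition \ref{3.1}), and the local claim at $(p,y')$ would contradict maximality; so the maximal extension is infinite, yielding the desired ray.

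For the local claim I would work in the link $\Sigma_y Y$, an $(n-1)$-dimensional $\mathrm{CAT}(1)$ piecewise spherical complex (Definition \ref{cell structure}). Because $[\sigma]\in H^{\mathrm{p}}_n(Y)$ is top-dimensional on the $n$-dimensional complex $Y$, it admits a unique cellular representative, and $S$ coincides with the union of those top-dimensional cells that appear with nonzero coefficient; in particular $S$ is a subcomplex, and $\Sigma_y S$ is a subcomplex of $\Sigma_y Y$. Choose a cone neighbourhood $B(y,\epsilon)$ via Theorem \ref{2.1}. Since $y\in S$, the class $i_*[\sigma]\in H_n(Y,Y\setminus\{y\})$ is nontrivial; excision and the connecting homomorphism of the long exact sequence of $(\bar B(y,\epsilon),S(y,\epsilon))$ transport it to a nontrivial $\tau\in\tilde H_{n-1}(S(y,\epsilon))\cong\tilde H_{n-1}(\Sigma_y Y)$. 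Under the cellular identification of $S(y,\epsilon)$ with $\Sigma_y Y$, the class $\tau$ becomes the top-dimensional cellular cycle on $\Sigma_y S$ carried by the link of $y$ in $S$, so $\operatorname{supp}(\tau)=\Sigma_y S$ is precisely the set of directions along which a geodesic from $y$ remains in $S$ for some positive length.

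Set $v_p=\log_y p\in\Sigma_y Y$. I claim some $v\in\Sigma_y S$ satisfies $d_{\Sigma_y Y}(v,v_p)=\pi$. Suppose not; then every point of $\Sigma_y S$ is joined to $v_p$ by a unique geodesic of length $<\pi$ lying in the $\mathrm{CAT}(1)$ space $\Sigma_y Y$. Coning a cellular representative of $\tau$ to $v_p$ along these geodesics and subdividing produces a singular $n$-chain in $\Sigma_y Y$ with boundary $\tau$, contradicting the nontriviality of $\tau$. Hence such a $v$ exists, and the initial segment of the geodesic from $y$ in direction $v$ is the required $\overline{yz}\subset S$, completing the local claim.

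The main obstacle I anticipate is the identification $\operatorname{supp}(\tau)=\Sigma_y S$, which requires matching the polyhedral structure of $Y$ near $y$ with that of $S(y,\epsilon)$ and verifying naturality of the connecting homomorphism under excision; without this, one only extracts an antipodal direction in $\Sigma_y Y$ rather than one witnessing a positive-length entry into $S$. A secondary technical point is the CAT(1) cone-off construction, which must be triangulated to yield a genuine singular chain, but this is routine given the $<\pi$-uniqueness of geodesics in $\Sigma_y Y$.
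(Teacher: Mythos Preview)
Your proposal is correct and follows essentially the same strategy as the paper. The paper cites this lemma from \cite{bestvina2008quasiflats} without reproving it, but it does prove the link identification you need as Lemma~\ref{link of support set} (that $\Sigma_y S$ is the support of a top-dimensional class in $\Sigma_y Y$), and it carries out the antipode argument in the $CAT(1)$ link exactly as you describe when generalizing the result in Lemma~\ref{6.12} and Corollary~\ref{6.7}. Your iteration via Zorn's lemma is a slight variant of the paper's treatment in Lemma~\ref{6.12}, which instead builds a sequence of geodesics hitting $S$ at dyadic times and passes to a limit; in the polyhedral setting your version is cleaner, because the cone neighbourhood guarantees a segment of definite positive length in $S$ at each step, so maximality suffices without the limiting construction needed in the non-polyhedral generality of the appendix.
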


Note that this lemma does not imply $S$ is convex (see Remark 3.2 in \cite{bestvina2008quasiflats}), however, we still can define the Tits boundary of $S$.

\begin{definition}
\label{3.7}
Let $Z$ be a $CAT(0)$ space and let $A\subset Z$ be any subset. We define the \textit{Tits boundary} of $A$, denoted $\partial_{T}A$, to be the set of points $\xi\in\partial_{T}Z$ such that there exists a geodesic ray $\overline{x\xi}$ such that $\overline{x\xi}\subset A$. $\partial_{T}A$ is endowed with the usual Tits metric. We define the \textit{Tits cone} of $A$, denoted $C_{T}A$, to be the Euclidean cone over $\partial_{T}A$.
\end{definition} 

Let $S$ be as in Lemma \ref{3.6}. Then $\partial_{T}S$ is nonempty if $S$ is nonempty.

We can state a similar version of geodesic extension property for the link $\Sigma_{y}S\subset\Sigma_{y}Y$ ($y\in S$).

\begin{lem}
\label{link of support set}
Let $S$ be as in Lemma \ref{3.6}. Then for any point $y\in Y$, $\Sigma_{y}S$ is the support set of some top dimensional homology class in $\Sigma_{y}Y$.
\end{lem}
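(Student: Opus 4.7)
The plan is to construct, for $y\in S$, a class $\tau\in H_{n-1}(\Sigma_{y}Y)$ which serves as the ``link'' of $[\sigma]$ at $y$, built from the local homology of $[\sigma]$ near $y$ via a cone neighbourhood. I would then prove that $S_{\tau}=\Sigma_{y}S$ by matching, for each $v\in\Sigma_{y}Y$, the local homology of $[\sigma]$ at a nearby point $x=\gamma_{v}(t)$ on the geodesic from $y$ in direction $v$ with the local homology of $\tau$ at $v$. The case $y\notin S$ is trivial: by the definition of the support set $i_{\ast}[\sigma]=0\in H_{n}(Y,Y\setminus\{y\})$, so one takes $\tau=0$ and observes that then no geodesic ray in $S$ can emanate from $y$.

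So assume $y\in S$. By Theorem~\ref{2.1} choose $\epsilon>0$ so that $B:=B(y,\epsilon)$ is a cone neighbourhood of $y$, isometric to the $\epsilon$--ball around the cone point of the Euclidean cone $C\Sigma_{y}Y$. Excision, the long exact sequence of the pair $(B,B\setminus\{y\})$, the contractibility of $B$, and the deformation retraction of $B\setminus\{y\}$ onto a small sphere homeomorphic to $\Sigma_{y}Y$ combine to produce natural isomorphisms
\begin{equation*}
H^{\textmd{p}}_{n}(Y,Y\setminus\{y\})\;\cong\;H_{n}(B,B\setminus\{y\})\;\cong\;H_{n-1}(B\setminus\{y\})\;\cong\;H_{n-1}(\Sigma_{y}Y).
\end{equation*}
Define $\tau$ to be the image of $[\sigma]$ under the composition of these isomorphisms with the inclusion homomorphism $H^{\textmd{p}}_{n}(Y)\to H^{\textmd{p}}_{n}(Y,Y\setminus\{y\})$. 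Since $\dim\Sigma_{y}Y\leq n-1$, the class $\tau$ is top dimensional in $\Sigma_{y}Y$.

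To identify $S_{\tau}$ with $\Sigma_{y}S$, fix $v\in\Sigma_{y}Y$ and let $\gamma_{v}\co[0,\epsilon)\to B$ be the unit speed geodesic with $\gamma_{v}(0)=y$ and initial direction $v$; in the cone picture, $\gamma_{v}(t)=(t,v)$. For $t\in(0,\epsilon)$ and $x=\gamma_{v}(t)$, a second application of the cone neighbourhood isomorphism at $x$, together with the identification of $\Sigma_{x}(C\Sigma_{y}Y)$ with the spherical suspension $\Bbb S^{0}\ast\Sigma_{v}\Sigma_{y}Y$ and the suspension isomorphism, produces a natural identification
\begin{equation*}
H_{n}(Y,Y\setminus\{x\})\;\cong\;H_{n-1}(\Sigma_{y}Y,\Sigma_{y}Y\setminus\{v\})
\end{equation*}
that carries the local class of $[\sigma]$ at $x$ to the local class of $\tau$ at $v$. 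Hence the condition ``$\gamma_{v}(t)\in S$ for all sufficiently small $t>0$'' is equivalent to ``$v\in S_{\tau}$''. The equivalence of the former condition with ``$v\in\Sigma_{y}S$'' follows, in one direction, from the uniqueness of $CAT(0)$ geodesic rays, and in the other direction from Lemma~\ref{3.6}: if $\gamma_{v}([0,t_{0}])\subset S$, then the extension ray $\overline{\gamma_{v}(t_{0})\xi}\subset S$ supplied by the lemma concatenates with $\gamma_{v}([0,t_{0}])$ to give a ray in $S$ from $y$ with initial direction $v$.

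The main technical hurdle I expect is the naturality check that glues the cone--link and suspension isomorphisms at the two nested scales around $y$ and $x$ into a single commutative diagram sending the local class of $[\sigma]$ at $x$ to the local class of $\tau$ at $v$. Everything else in the argument is routine once that diagram is in place.
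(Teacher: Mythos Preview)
Your argument is correct in outline, and the naturality you flag as the main hurdle does hold (it is the standard compatibility between the link isomorphism $H_n(B,B\setminus\{y\})\cong H_{n-1}(\Sigma_yY)$ and local homology at a point $x=(t,v)$ of the punctured cone $B\setminus\{y\}\cong(0,\epsilon)\times\Sigma_yY$; one checks it by factoring both local maps through $H_n(B,B\setminus\overline{yx})$). So the proposal succeeds.

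That said, the paper's proof is substantially shorter because it exploits the polyhedral structure directly rather than passing through singular local homology. After subdividing so that $y$ is a vertex, one represents $[\sigma]$ by a cellular $n$--cycle $\sigma=\sum_{\lambda}\eta_\lambda$ (a formal sum of top cells with $\Bbb Z/2$ coefficients), so that $S=\bigcup_\lambda\eta_\lambda$. The link at $y$ of this cellular cycle, namely $\eta_y=\sum_{y\in\eta_\lambda}Lk(y,\eta_\lambda)$, is then manifestly an $(n-1)$--cycle in $\Sigma_yY=Lk(y,Y)$, and its support is $\bigcup_{y\in\eta_\lambda}Lk(y,\eta_\lambda)=Lk(y,S)=\Sigma_yS$ by inspection. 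No diagram chase is needed. Your route, by contrast, is the one that generalizes: it does not use a cell structure and is essentially the argument one needs in the appendix for $CAT(0)$ spaces of finite geometric dimension (compare Lemma~\ref{6.8} and Lemma~\ref{6.12}). So your approach buys generality at the cost of the naturality verification, while the paper's buys brevity by using cellular chains. Also note that $\Sigma_yS$ in this paper means $Lk(y,S)$, the set of directions of short segments in $S$ from $y$; your appeal to Lemma~\ref{3.6} to produce a full ray is therefore unnecessary (though harmless, since by that lemma the two descriptions coincide).
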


\begin{proof}
By subdividing $Y$ in an appropriate way, we assume $y$ is a vertex of $Y$. Suppose $S=S_{[\sigma]}$. We can represent $[\sigma]$ as a cellular cycle $\sigma=\Sigma_{\lambda\in\Lambda}\eta_{\lambda}$, where $\eta_{\lambda}$'s are closed top dimensional cells in $Y$ (recall that we are using $\Bbb Z/2\Bbb Z$ coefficients, so all the coefficients are either $0$ or $1$). Then $S=\cup_{\lambda\in\Lambda}\eta_{\lambda}$. Let $\Lambda_y=\{\lambda\in\Lambda\mid y\in\eta_{\lambda}\}$. Since $\eta$ is a cycle, $\eta_y=\Sigma_{\lambda\in\Lambda_{y}}Lk(y,\eta_{\lambda})$ is a top dimensional cycle in $Lk(y,Y)\cong \Sigma_{y}Y$. Moreover, $S_{[\eta_y]}=\cup_{\lambda\in\Lambda_{y}}Lk(y,\eta_{\lambda})=Lk(y,S)$.
\end{proof}

\begin{lem}
\label{3.8}
Let $K$ be a $k$--dimensional $CAT(1)$ piecewise spherical complex, and let $K'\subset K$ be the support set of a top dimensional homology class. Pick arbitrary $w\in K$, $v\in K'$, and suppose $\overline{wv}$ is a local geodesic joining $v$ and $w$. Then there is a (nontrivial) local geodesic segment $\overline{vv'}\subset K'$ which fits together with $\overline{wv}$ to form a local geodesic segment $\overline{wv'}$, moreover, $length(\overline{vv'})$ can be as large as we want.
\end{lem}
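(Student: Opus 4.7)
The plan is to mimic the proof of Lemma \ref{3.6} in the spherical setting, reducing the geodesic-extension question to the existence of an antipode in $\Sigma_{v}K'$ to the incoming direction of $\overline{wv}$ at $v$, and then to iterate to achieve arbitrary length. I assume throughout that $K$ has finitely many isometry types of cells, as per the standing convention of Section \ref{prelim}.

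First I would observe that $\Sigma_{v}K'$ is the support of a top-dimensional homology class in $\Sigma_{v}K$, in direct analogy with Lemma \ref{link of support set}. Representing the top-dimensional class on $K$ by a cellular cycle $\eta=\sum_{\lambda}\eta_{\lambda}$ with $\Bbb Z/2\Bbb Z$ coefficients, so that $K'=\bigcup_{\lambda}\eta_{\lambda}$, the sum
\[
\eta_{v}:=\sum_{\lambda\,:\,v\in\eta_{\lambda}}Lk(v,\eta_{\lambda})
\]
is a top-dimensional cellular cycle in $\Sigma_{v}K$ whose support is precisely $Lk(v,K')=\Sigma_{v}K'$, and this is nonempty because $v\in K'$.

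Next, let $\xi_{w}\in\Sigma_{v}K$ denote the direction at $v$ along $\overline{wv}$ pointing towards $w$. I claim there is some $\xi'\in\Sigma_{v}K'$ with $d_{\Sigma_{v}K}(\xi_{w},\xi')\geq\pi$; this antipodal condition is exactly what is needed for $\overline{wv}$ concatenated with any initial segment in direction $\xi'$ to be a local geodesic at $v$. Suppose not: then $\Sigma_{v}K'\subseteq B(\xi_{w},\pi)$. Because $\Sigma_{v}K$ is $CAT(1)$, any two points at distance strictly less than $\pi$ are joined by a unique geodesic which varies continuously with its endpoints, so the rule
\[
\phi(\xi,t)=\text{the point at fraction }t\text{ along the unique geodesic from }\xi\text{ to }\xi_{w}
\]
defines a continuous homotopy $\phi\co\Sigma_{v}K'\times[0,1]\to\Sigma_{v}K$ from the inclusion to the constant map at $\xi_{w}$. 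Composing a singular representative of $\eta_{v}$ with $\phi$ would exhibit $\eta_{v}$ as a boundary in $\Sigma_{v}K$, forcing $[\eta_{v}]=0$ and hence empty support, contradicting $\Sigma_{v}K'\neq\emptyset$.

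Once the antipode $\xi'$ is in hand, any closed cell $\sigma\subset K'$ at $v$ whose link contains $\xi'$ yields a nontrivial initial extension $\overline{vv_{1}}\subset\sigma\subset K'$ emanating from $v$ in direction $\xi'$, and the antipodal condition makes $\overline{wv}\cup\overline{vv_{1}}$ a local geodesic at $v$. To produce extensions of arbitrary length, I would iterate by a standard sup argument: let $L^{\ast}$ be the supremum of lengths of such local-geodesic extensions of $\overline{wv}$ staying inside $K'$. If $L^{\ast}<\infty$, completeness of $K$ from Theorem \ref{2.2} together with closedness of the subcomplex $K'$ gives a limiting local geodesic of length $L^{\ast}$ ending at some $v^{\ast}\in K'$, and applying the antipode argument at $v^{\ast}$ produces a strict further extension, contradicting the definition of $L^{\ast}$.

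The main obstacle is the antipode step: the contraction $\phi$ must remain continuous up to and including $t=1$, which rests on the $CAT(1)$ fact that geodesics depend continuously on their endpoints as long as those endpoints stay at distance strictly less than $\pi$ from the fixed center $\xi_{w}$. This continuity fails precisely when an antipode of $\xi_{w}$ is present in $\Sigma_{v}K'$, which is exactly the configuration the lemma aims to produce; the argument turns this tension into a contradiction through the nontriviality of the top-dimensional class on the link.
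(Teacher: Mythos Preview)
Your approach is correct and is precisely the intended one: the paper states Lemma~\ref{3.8} without proof because it is the $CAT(1)$ analogue of Lemma~\ref{3.6}, and your reduction via the spherical version of Lemma~\ref{link of support set} together with the antipode argument (contractibility of $B(\xi_w,\pi)$ forcing $[\eta_v]=0$, hence $\eta_v=0$ cellularly since $\Sigma_vK$ has no $k$--cells) is exactly how one proves it.

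One small imprecision in your iteration step is worth tightening. Defining $L^\ast$ as the supremum over \emph{all} extensions and then speaking of ``a limiting local geodesic of length $L^\ast$'' is not quite justified: distinct extensions need not be nested, and without local finiteness you cannot extract a convergent subsequence. The clean fix is Zorn's lemma on the poset of extensions $\gamma\co[0,L]\to K'$ ordered by restriction. Chains have upper bounds: take the union on $[0,\sup_\alpha L_\alpha)$, and if this sup is finite extend to the closed interval using completeness of $K$ (Theorem~\ref{2.2}) and closedness of $K'$; the result is still locally geodesic at the new endpoint because in $CAT(1)$ a local geodesic of length $<\pi$ is a geodesic, so $d(\gamma(s),\gamma(t))=|s-t|$ on $(L-\pi,L)$ passes to $t=L$ by continuity. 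A maximal element must then have infinite length by your antipode step.
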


Now we turn to the global properties of $S$. Since we are in a $CAT(0)$ space, for any $p\in Y$ and $0<r\leq R$, we have a map $\Phi_{r,R}\co B(p,R)\to B(p,r)$ by contracting points toward $p$ by a factor of $\frac{r}{R}$. This contracting map together with Lemma \ref{3.6} implies $B(p,r)\cap S\subset \Phi_{r,R}(B(p,R)\cap S)$ (Corollary 3.3, item 1 of \cite{bestvina2008quasiflats}).

Since $\Phi_{r,R}$ is $\frac{r}{R}$--Lipschitz, we have the following result:

\begin{thm}
\label{3.9}
(Corollary 3.3 of \cite{bestvina2008quasiflats}) Let $S$ be the support set of some top dimensional proper homology class in $Y$, and let $n=\dim (Y)$. Then:
\begin{enumerate}
\item (Monotonicity of density) For all $0\leq r\leq R$, 
\begin{equation}
\label{3.10}
\frac{\mathcal{H}^{n}(B(p,r)\cap S)}{r^{n}}\leq \frac{\mathcal{H}^{n}(B(p,R)\cap S)}{R^{n}}\,.
\end{equation}
\item (Lower density bound) For all $p\in S$, $r>0$,
\begin{equation}
\label{3.11}
\mathcal{H}^{n}(B(p,r)\cap S)\geq \omega_{n} r^{n}\,,
\end{equation}
\end{enumerate}
with equality only if $B(p,r)\cap S$ is isometric to an $r$--ball in $\Bbb E^{n}$, here $\omega_{n}$ is the volume of an $n$--dimensional Euclidean ball of radius $1$.
\end{thm}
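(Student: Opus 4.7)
\textbf{Proof plan for Theorem \ref{3.9}.}

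For part (1), the key ingredient is already handed to us: the paragraph preceding the statement tells us that the radial contraction $\Phi_{r,R}\co B(p,R)\to B(p,r)$ satisfies $B(p,r)\cap S\subset \Phi_{r,R}(B(p,R)\cap S)$, and $\Phi_{r,R}$ is $(r/R)$--Lipschitz. Since any $\lambda$--Lipschitz map between metric spaces scales $n$--dimensional Hausdorff measure by at most $\lambda^n$, we immediately get
\begin{equation*}
\mathcal{H}^{n}(B(p,r)\cap S)\le \mathcal{H}^{n}\bigl(\Phi_{r,R}(B(p,R)\cap S)\bigr)\le (r/R)^{n}\,\mathcal{H}^{n}(B(p,R)\cap S),
\end{equation*}
which is exactly (\ref{3.10}).

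For part (2), the plan is to let $r\to 0^{+}$ in the monotonicity formula, read off the infinitesimal density at $p$, and match it against $\omega_n$. Because $Y$ is piecewise Euclidean with finitely many shapes, Theorem \ref{2.1} gives a cone neighbourhood $B(p,\epsilon_{0})$ isometric to the $\epsilon_{0}$--ball at the cone point of $T_{p}Y=C(\Sigma_{p}Y)$. By Lemma \ref{3.6}, every point of $S$ near $p$ lies on a geodesic ray in $S$ emanating from $p$, so under this identification $B(p,r)\cap S$ corresponds to the $r$--ball of the Euclidean subcone $C(\Sigma_{p}S)$ for every $r\le\epsilon_{0}$. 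A direct integration in polar coordinates then yields $\mathcal{H}^{n}(B(p,r)\cap S)=\frac{r^{n}}{n}\mathcal{H}^{n-1}(\Sigma_{p}S)$ for $r$ small. Combined with the monotonicity from part (1), the desired lower bound (\ref{3.11}) reduces to the spherical inequality
\begin{equation*}
\mathcal{H}^{n-1}(\Sigma_{p}S)\ge n\omega_{n}=\mathcal{H}^{n-1}(\mathbb{S}^{n-1}).
\end{equation*}

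The main obstacle is this last spherical estimate, which I expect to be the heart of the matter. By Lemma \ref{link of support set}, $\Sigma_{p}S$ is the support of a top--dimensional homology class in the $CAT(1)$ piecewise spherical complex $\Sigma_{p}Y$, and Lemma \ref{3.8} equips $\Sigma_{p}S$ with a spherical geodesic extension property: every local geodesic issuing from any $v\in\Sigma_{p}S$ can be prolonged arbitrarily far inside $\Sigma_{p}S$. The plan is to exploit this together with $CAT(1)$ triangle comparison: fix $v\in\Sigma_{p}S$, push out local geodesics in every direction to length just less than $\pi$, and compare the induced exponential--type map with the exponential map on $\mathbb{S}^{n-1}$ via the spherical Rauch/Bishop--Gromov inequality. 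This yields $\mathcal{H}^{n-1}(\Sigma_{p}S)\ge\mathcal{H}^{n-1}(\mathbb{S}^{n-1})$, and the equality case of the comparison forces all distances from $v$ to realise their spherical upper bounds, which in turn forces $\Sigma_{p}S$ to be isometric to $\mathbb{S}^{n-1}$.

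Finally, to pin down the rigidity claim in (2), suppose equality holds at radius $r$ for some $p\in S$. Monotonicity forces equality for every $0<s\le r$, so taking $s\to 0$ gives $\mathcal{H}^{n-1}(\Sigma_{p}S)=n\omega_{n}$, and the rigidity of the spherical comparison above produces $\Sigma_{p}S=\mathbb{S}^{n-1}$. The Euclidean cone of radius $r$ over $\mathbb{S}^{n-1}$ is an isometric copy of the Euclidean $r$--ball, which (via the cone neighbourhood identification and the scaling $\Phi_{s,r}$) is isometric to $B(p,r)\cap S$, completing the argument.
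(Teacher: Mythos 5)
Your argument for part (1) is correct and is exactly the one the paper alludes to: the geodesic extension property gives $B(p,r)\cap S\subset\Phi_{r,R}(B(p,R)\cap S)$, $\Phi_{r,R}$ is $(r/R)$--Lipschitz, and $n$--dimensional Hausdorff measure scales by at most $\lambda^n$ under a $\lambda$--Lipschitz map.

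For part (2) you correctly reduce the question to the spherical link inequality $\mathcal{H}^{n-1}(\Sigma_pS)\ge\mathcal{H}^{n-1}(\Bbb S^{n-1})$: near $p$ the cone neighbourhood identifies $B(p,r)\cap S$ with $C(\Sigma_pS)\cap B(o,r)$, and $\Sigma_pS$ is the support of a top-dimensional class in the $CAT(1)$ complex $\Sigma_pY$ by Lemma~\ref{link of support set}. But the mechanism you propose for this estimate --- an ``exponential-type map'' compared with $\exp_{\Bbb S^{n-1}}$ via ``Rauch/Bishop--Gromov'' --- does not produce the lower bound. In a singular $CAT(1)$ complex the tangent cone at $v\in\Sigma_pS$ is $C(\Sigma_v(\Sigma_pS))$, which is in general not $\Bbb R^{n-1}$; the geodesic parametrisation from it into $\Sigma_pS$ is only $1$--Lipschitz by $CAT(1)$ comparison, so it bounds the measure of its image from \emph{above}, not below, and a G\"unther-type lower bound would require smoothness, injectivity of $\exp$, and a Euclidean tangent space, none of which are available here. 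The argument that actually closes this gap goes in the opposite direction, and is the one this paper spells out (and generalises) in Lemma~\ref{6.17} of the appendix: by induction on dimension one constructs a $1$--Lipschitz map from a subset of $\Sigma_pS$ \emph{onto} a full-measure subset of $\Bbb S^{n-1}$, starting from the $1$--Lipschitz map $\log_v\co B(v,\pi)\cap\Sigma_pS\to\Bbb S^0\ast\Sigma_v(\Sigma_pS)$, using the geodesic extension property (Lemma~\ref{3.8}) to get surjectivity onto the suspension ball, and composing with the inductively produced full-measure $1$--Lipschitz map $\Sigma_v(\Sigma_pS)\to\Bbb S^{n-2}$. Since $1$--Lipschitz maps do not increase Hausdorff measure, surjectivity gives the lower bound, and the rigidity statement in the equality case is treated as in Lemma~\ref{6.17}(3). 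The missing idea in your proposal is this reorientation --- $\log$ rather than $\exp$, and induction on the dimension of the link rather than a Jacobian comparison.
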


From $(\ref{3.10})$ we know the quantity 
\begin{equation}
\label{3.12}
\frac{\mathcal{H}^{n}(B(p,r)\cap S)}{r^{n}}
\end{equation} 
is monotone increasing with respect to $r$, and $(\ref{3.5})$ tells us if $S$ comes from a top dimensional quasiflat, then $(\ref{3.12})$ is bounded above by some constant. Thus the limit exists and is finite as $r\to\infty$. More generally, we will consider those top dimensional proper homology classes whose support sets $S$ satisfy 
\begin{equation}
\label{3.13}
\lim_{r\to +\infty}\frac{\mathcal{H}^{n}(B(p,r)\cap S)}{r^{n}}<\infty\,.
\end{equation}
Here $n=\dim(Y)$. We call them \textit{proper homology classes of $Cr^{n}$ growth}. These classes form a subgroup of $H^{\textmd{p}}_{n}(Y)$, which will be denoted by $H^{\textmd{p}}_{n,n}(Y)$.

The following lemma can be proved by a packing argument:

\begin{lem}
\label{3.14}
(Lemma 3.12 of \cite{bestvina2008quasiflats}) Pick $[\sigma]\in H^{\textmd{p}}_{n,n}(Y)$ and let $S=S_{[\sigma]}$. Then given base point $p\in Y$, for all $\epsilon>0$ there is an $N$ such that for all $r\geq 0$, $B(p,r)\cap S$ does not contain an $\epsilon r$--separated subset of cardinality greater than $N$.
\end{lem}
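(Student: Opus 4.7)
The plan is to run a standard volume packing argument, using the lower density bound from Theorem \ref{3.9}(2) against a global upper density bound that comes from combining monotonicity (Theorem \ref{3.9}(1)) with the growth hypothesis (\ref{3.13}).

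First I would extract a global upper bound on the density at the base point $p$. Monotonicity tells us that $r\mapsto \mathcal{H}^{n}(B(p,r)\cap S)/r^{n}$ is non-decreasing, and (\ref{3.13}) says its limit as $r\to\infty$ is finite. Hence there is a constant $C=C(p,[\sigma])$ with
\begin{equation*}
\mathcal{H}^{n}(B(p,r)\cap S)\le C r^{n}\qquad\text{for all } r>0.
\end{equation*}
(The case $r=0$ in the statement of the lemma is vacuous.)

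Next, suppose $\{q_{1},\dots,q_{N}\}\subset B(p,r)\cap S$ is $\epsilon r$--separated. Then the balls $B(q_{i},\epsilon r/2)$ are pairwise disjoint and each is contained in $B(p,(1+\epsilon/2)r)$. Since each $q_{i}\in S$, the lower density bound (\ref{3.11}) gives
\begin{equation*}
\mathcal{H}^{n}(B(q_{i},\epsilon r/2)\cap S)\ge \omega_{n}(\epsilon r/2)^{n}.
\end{equation*}
Summing and using disjointness, then applying the upper bound above to the ambient ball $B(p,(1+\epsilon/2)r)$, I get
\begin{equation*}
N\,\omega_{n}(\epsilon r/2)^{n}\le \mathcal{H}^{n}(B(p,(1+\epsilon/2)r)\cap S)\le C(1+\epsilon/2)^{n}r^{n}.
\end{equation*}
Cancelling $r^{n}$ yields $N\le C(1+\epsilon/2)^{n}/(\omega_{n}(\epsilon/2)^{n})$, which depends only on $\epsilon$, $C$, and $n$.

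There is no real obstacle here beyond being careful that the two inputs are genuinely available: monotonicity plus finiteness of the limit really does promote the growth hypothesis from an asymptotic statement to a uniform-in-$r$ bound (so it is safe to apply the upper bound at the enlarged radius $(1+\epsilon/2)r$), and the lower density bound applies to each $q_{i}$ precisely because $q_{i}\in S$. Once those two ingredients are in hand, the disjoint-balls packing estimate gives the conclusion mechanically.
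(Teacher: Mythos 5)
Your proof is correct and is precisely the packing argument the paper alludes to (the paper cites Bestvina--Kleiner--Sageev and says only that the lemma "can be proved by a packing argument," without spelling it out). You combine monotonicity with the growth condition to get the uniform upper density bound at $p$, apply the lower density bound at each $q_i\in S$, and conclude by disjointness of the balls $B(q_i,\epsilon r/2)$ inside $B(p,(1+\epsilon/2)r)$; all the required inputs are available exactly as you say, and the resulting bound on $N$ is independent of $r$.
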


\begin{lem}
\label{3.15}
Let $S$ and $p$ be as in Lemma \ref{3.14}. Denote the cone point in $C_{T}S$ by $o$. Then 
\begin{equation}
\label{3.16}
\lim_{r\to +\infty}d_{GH}(\frac{1}{r}(B(p,r)\cap S),B(o,1))=0\,.
\end{equation}
\end{lem}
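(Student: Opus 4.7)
The plan is to construct a surjective, asymptotically isometric map $L_r\co B(o,1)\cap C_T S\to B(p,r)\cap S$ whose graph then provides a correspondence with distortion tending to zero. Define $L_r(t\xi) = l_\xi(rt)$, where $l_\xi\co[0,\infty)\to S$ is a geodesic ray from $p$ realizing $\xi\in\partial_T S$ (such a ray exists by Definition \ref{3.7}), and $L_r(o)=p$. Since the logarithmic map $\log_p\co C_T X\to X$ is $1$--Lipschitz (Section \ref{cat space}), $L_r$ is $r$--Lipschitz. The geodesic extension property (Lemma \ref{3.6}) ensures $L_r$ is surjective: every $y\in B(p,r)\cap S$ lies on some geodesic ray in $S$ emanating from $p$.

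Next, the goal is to show that
\begin{equation*}
f_r(v_1,v_2):=\frac{1}{r}d_X(L_r(v_1),L_r(v_2))\longrightarrow g(v_1,v_2):=d_{C_T X}(v_1,v_2)
\end{equation*}
uniformly over $v_1,v_2\in B(o,1)\cap C_T S$. Writing $v_i=t_i\xi_i$, the $CAT(0)$ law of cosines yields
\begin{equation*}
f_r(v_1,v_2)^2 = t_1^2+t_2^2-2t_1t_2\cos\overline{\angle}_p(L_r(v_1),L_r(v_2)),
\end{equation*}
while $g(v_1,v_2)^2 = t_1^2+t_2^2-2t_1t_2\cos\angle_T(\xi_1,\xi_2)$. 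By the $CAT(0)$ monotonicity of comparison angles and Lemma \ref{2.4}(1), $\overline{\angle}_p(l_{\xi_1}(s),l_{\xi_2}(s'))\nearrow \angle_T(\xi_1,\xi_2)$ as $s,s'\to\infty$, so $f_r\nearrow g$ pointwise on $B(o,1)\cap C_T S$ (with equality when some $t_i=0$).

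The principal obstacle is upgrading this pointwise monotone convergence to uniform convergence; I plan to do this via Dini's theorem, which requires $B(o,1)\cap C_T S$ to be compact. For completeness, $\partial_T S$ is closed in the complete $CAT(1)$ space $\partial_T X$: if $\xi_n\to\xi$ in $\partial_T X$, the bound $d_{C_T X}(t\xi_n,t\xi)\le 2t\sin(\angle_T(\xi_n,\xi)/2)$ combined with the $1$--Lipschitz property of $\log_p$ forces $l_{\xi_n}(t)\to l_\xi(t)\in S$ for each $t$ (using that $S$ is closed, being a subcomplex), so $\xi\in\partial_T S$. For total boundedness, any $\epsilon$--separated subset $\{\xi_\alpha\}\subset\partial_T S$ would yield, via Lemma \ref{2.4}(3), a subset $\{l_{\xi_\alpha}(r)\}\subset B(p,r)\cap S$ which is roughly $2r\sin(\epsilon/2)$--separated for all sufficiently large $r$; Lemma \ref{3.14} bounds its cardinality independently of $r$, so $|\{\xi_\alpha\}|$ is bounded in terms of $\epsilon$. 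Hence $B(o,1)\cap C_T S$ is compact. Since $f_r$ and $g$ are continuous (the former because $L_r$ is Lipschitz), Dini's theorem gives uniform convergence, and therefore the graph $\{(L_r(v),v)\co v\in B(o,1)\cap C_T S\}$ is a correspondence of distortion $o(1)$, yielding (\ref{3.16}).
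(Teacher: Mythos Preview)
Your argument has a genuine gap: you assume that for every $\xi\in\partial_T S$ the geodesic ray $l_\xi$ \emph{from $p$} lies in $S$, citing Definition~\ref{3.7}. But Definition~\ref{3.7} only guarantees a ray $\overline{x\xi}\subset S$ starting at \emph{some} point $x$ (depending on $\xi$), not at the fixed basepoint $p$. Since $S$ is not convex (see the remark immediately after Lemma~\ref{3.6}), segments $\overline{py}$ with $y\in S$ need not lie in $S$, and neither need the rays $\overline{p\xi}$. Likewise, Lemma~\ref{3.6} says that for $y\in S$ the ray $\overline{y\xi}$ lies in $S$, but the initial segment $\overline{py}$ of $\overline{p\xi}$ may leave $S$.

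This breaks the argument in two places. First, $L_r$ is well-defined as a map $B(o,1)\cap C_TS\to B(p,r)\subset X$, and its image does contain $B(p,r)\cap S$ by Lemma~\ref{3.6}, but the image may be strictly larger; hence the graph of $L_r$ is not a correspondence between $B(o,1)\cap C_TS$ and $B(p,r)\cap S$, and the distortion bound you prove (which is correct, via Dini) controls the wrong pair of spaces. Second, your closedness argument for $\partial_TS$ assumes $l_{\xi_n}(t)\in S$, which is exactly the unjustified claim. (The paper's Remark~\ref{3.22}(1), which is a consequence of the proof of Lemma~\ref{3.15}, shows only that $\partial_{p,r}S$ becomes $\epsilon$-dense in $\partial_TS$; it never asserts equality.)

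The paper's proof avoids this by working with a \emph{finite} maximal $\epsilon r$-net $\{x_i\}\subset B(p,R_1)\cap S$, extending each $\overline{px_i}$ via Lemma~\ref{3.6} to a ray $l_i$ that lies in $S$ \emph{beyond} $x_i$, and then arguing separately (using monotonicity of $d(l_i(t),l_j(t))/t$ and a limiting argument) that the image points $y_i\in B(o,1)$ form an $\epsilon$-net there. The finiteness lets one choose $R$ so large that the comparison angles are uniformly close to the Tits angles on this finite set, bypassing both the need for compactness of $\partial_TS$ and the need for rays from $p$ to lie in $S$.
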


Here $d_{GH}$ denotes the Gromov--Hausdorff distance, $B(o,1)$ is the ball of radius 1 in $C_{T}S$ centered at $o$ and $\frac{1}{r}(S\cap B(p,r))$ means we rescale the metric on $S\cap B(x,r)$ by a factor $\frac{1}{r}$.
\begin{proof}
We follow the argument in \cite{bestvina2008quasiflats}. It suffices to prove for any $\epsilon>0$, there exists $R>0$ such that for any $r>R$, we can find an $\epsilon$--isometry between $\frac{1}{r}(B(p,r)\cap S)$ and $B(o,1)$.

For $r>0$, we denote the maximal cardinality of $\epsilon r$--separated net in $B(p,r)\cap S$ by $m_{r}$. By Lemma \ref{3.14}, there exists $N_{0}$ such that $m_{r}\le N_{0}$ for all $r$. Pick $R_{1}$ such that $m_{r}\le m_{R_{1}}$ for all $r\neq R_{1}$ and denote the corresponding $\epsilon R_{1}$--net in $B(p,R_{1})\cap S$ by $\{x_{i}\}_{i=1}^{N}$. By Lemma \ref{3.6}, for each $i$, we can extent the geodesic $\overline{px_{i}}$ to obtain a geodesic ray $\overline{p\xi_{i}}$ such that $\overline{x_{i}\xi_{i}}\subset S$. Let $l_{i}\co[0,\infty)\to Y$ be a constant speed geodesic ray joining $p$ and $\xi_{i}$ such that $l_{i}(R_{1})=x_{i}$ and $l_{i}(0)=p$. 

Since the quantity $d(l_{i}(t),l_{j}(t))/t$ is monotone increasing, $\{l_{i}(t)\}_{i=1}^{N}$ is a maximal $\epsilon t$--separated net in $B(p,R_{1})\cap S$ for $t\geq R_{1}$. We pick $R>R_{1}$ such that for all $t>R$ and $1\leq i,j\leq N$, we have
\begin{equation}
\label{3.17}
|\frac{d(l_{i}(t),l_{j}(t))}{t}-\lim_{t\to +\infty}\frac{d(l_{i}(t),l_{j}(t))}{t}|<\epsilon.
\end{equation}
Now we fix $t>R$ and define a map such that for each $i$, $l_{i}(t)\in B(p,t)\cap S$ is mapped to the point $y_{i}\in B(o,1)\subset C_{T}S$ such that $y_{i}\in\overline{o\xi_{i}}$ and $d(y_{i},o)=d(l_{i}(t),p)/t$. It follows from (\ref{3.17})
\begin{equation}
\label{3.18}
|\frac{d(l_{i}(t),l_{j}(t))}{t}-d(y_{i},y_{j})|<\epsilon\,.
\end{equation}
We claim $\{y_{i}\}_{i=1}^{N}$ is an $\epsilon$--net in $B(o,1)$.

Pick arbitrary $y\in B(o,1)$ and suppose $y\in\overline{o\xi}$ for $\xi\in\partial_{T}S$. We parametrize the geodesic ray $\overline{p\xi}$ by constant speed $=d(y,o)$ and denote this ray by $l$. Since there exists a geodesic $\overline{p'\xi}\subset S$ such that $d_{H}(\overline{p\xi},\overline{p'\xi})=C<\infty$, we can find $x\in\overline{p'\xi}\subset S$ with $d(x,l(t))<C$ for every $t$. Thus $x\in B(p,td(y,o)+C)\cap S\subset B(p,t+C)\cap S$, which implies there exists some $i$ such that $d(l_{i}(t+C),x)\leq\epsilon(t+C)$. These estimates together with $d(l_{i}(t+C),l_{i}(t))\leq C$ ($l_{i}$ has speed $\leq 1$) implies:
\begin{equation}
\label{3.19}
d(l(t),l_{i}(t))\leq \epsilon t+(2+\epsilon)C\,.
\end{equation}
Here $i$ might depend on $t$, but we can choose a sequence $\{t_{k}\}_{k=1}^{\infty}$ such that $t_{k}\to+\infty$ and 
\begin{equation}
\label{3.20}
d(l(t_{k}),l_{i_{0}}(t_{k}))\leq \epsilon t+(2+\epsilon)C
\end{equation}
for every $k$ with $i_{0}$ fixed, thus
\begin{equation}
\label{3.21}
d(y,y_{i_{0}})=\lim_{k\to+\infty}\frac{d(l(t_{k}),l_{i_{0}}(t_{k}))}{t_{k}}\leq\epsilon\,.
\end{equation}
So $\{y_{i}\}_{i=1}^{N}$ is an $\epsilon$--net in $B(o,1)$, this fact together with (\ref{3.18}) give us the $\epsilon$--isometry as required.
\end{proof}

\begin{remark}
\label{3.22}
\
\begin{enumerate}
\item Define $\partial_{p,r}S=\{\xi\in \partial_{T}S|\ \overline{p\xi}\subset B(p,r)\cup S\}$, then the above proof shows 
\begin{equation}
\label{3.23}
\lim_{r\to+\infty}d_{H}(\partial_{p,r}S,\partial_{T}S)=0\,.
\end{equation}
\item $\partial_{T}S$ has similar behavior to the Tits boundary of a convex subset in the following aspect. Let $l\co[0,\infty)\to Y$ be a constant speed geodesic ray. If there exist a constant $C<\infty$ and a sequence $t_{i}\to+\infty$ such that $d(l(t_{i}),S)<C$, then $\partial_{T}l$ is an accumulation point of $\partial_{T}S$. The proof is similar to the above argument.
\end{enumerate}
\end{remark}

\subsection{$\epsilon$--splitting}
\label{splitting}
As we have seen from Lemma \ref{3.15}, the growth bound (\ref{3.13}) implies that $S$ looks more and more like a cone if one observes $S$ from a more and more far way point (this is called asymptotic conicality in \cite{bestvina2008quasiflats}). So one would expect some regularity of $S$ near infinity, the following key lemma from \cite{bestvina2008quasiflats} will be our starting point.
\begin{lem}
\label{3.24}
(Lemma 3.13 of \cite{bestvina2008quasiflats}) Let $S$ and $p$ be as in Lemma \ref{3.14}. Then for all $\beta>0$ there is an $r<\infty$ such that if $x\in S\setminus B(p,r)$, then the antipodal set
\begin{equation}
\label{3.25}
diam(Ant(\log_{x}p,\Sigma_{x}S))<\beta\,.
\end{equation}
\end{lem}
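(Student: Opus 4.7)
The plan is to argue by contradiction, combining the asymptotic conicality of $S$ from Lemma~\ref{3.15} with the geodesic extension property of Lemma~\ref{3.6}, and then deriving a contradiction inside the Euclidean cone $C_TS$ via the cone distance formula.

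Suppose the conclusion fails. Then there exist $\beta>0$, a sequence $x_n\in S$ with $r_n:=d(p,x_n)\to\infty$, and directions $v_1^n,v_2^n\in Ant(\log_{x_n}p,\Sigma_{x_n}S)$ with $d_{\Sigma_{x_n}Y}(v_1^n,v_2^n)\ge\beta$. Since $S$ is a subcomplex, each $v_i^n$ points into a closed cell of $S$ at $x_n$, so I can pick $z_i^n\in S$ close to $x_n$ with $\log_{x_n}z_i^n=v_i^n$. The antipodal condition $d(v_i^n,\log_{x_n}p)=\pi$ makes $\overline{px_n}\cup\overline{x_nz_i^n}$ a local geodesic, hence a geodesic, in the $CAT(0)$ space $Y$. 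Applying Lemma~\ref{3.6} to the pair $(p,z_i^n)$ produces a geodesic ray $\overline{z_i^n\xi_i^n}\subset S$ which fits together with $\overline{pz_i^n}$ to form a geodesic ray of $Y$. Concatenating the segment $\overline{x_nz_i^n}\subset S$ with this ray yields a unit speed geodesic ray $\rho_i^n\co[0,\infty)\to S$ starting at $x_n$ with initial direction $v_i^n$, and with the property that $\overline{px_n}\cup\rho_i^n$ is a geodesic ray in $Y$.

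Set $y_i^n:=\rho_i^n(r_n)$, so $d(x_n,y_i^n)=r_n$ and $d(p,y_i^n)=2r_n$. The Alexandrov angle at $x_n$ between $\rho_1^n$ and $\rho_2^n$ equals $d_{\Sigma_{x_n}Y}(v_1^n,v_2^n)\ge\beta$, so $CAT(0)$ comparison gives
\[
d(y_1^n,y_2^n)\ \ge\ 2r_n\sin(\beta/2).
\]
Rescale the metric on $Y$ by the factor $1/(3r_n)$. By Lemma~\ref{3.15}, $\tfrac{1}{3r_n}(B(p,3r_n)\cap S)$ converges in Gromov--Hausdorff distance to the unit ball $B(o,1)\subset C_TS$ about the cone point $o$. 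Using the near-isometries implementing this GH convergence and passing to a subsequence, the rescaled images of $x_n,y_1^n,y_2^n$ converge to points $x_\infty,y_1^\infty,y_2^\infty\in B(o,1)\cap C_TS$ with $d(o,x_\infty)=1/3$, $d(o,y_i^\infty)=2/3$, $d(x_\infty,y_i^\infty)=1/3$, and $d(y_1^\infty,y_2^\infty)\ge\tfrac{2}{3}\sin(\beta/2)$.

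The identity $d(o,y_i^\infty)=d(o,x_\infty)+d(x_\infty,y_i^\infty)$ forces each $y_i^\infty$ to lie on a geodesic from $o$ through $x_\infty$. Writing $x_\infty=(1/3,\xi)$ for some $\xi\in\partial_TS$, the Euclidean cone distance formula shows that the equation $d((1/3,\xi),(2/3,\eta))=1/3$ for $\eta\in\partial_TS$ forces $\cos d_{\partial_TY}(\xi,\eta)=1$, i.e.\ $\eta=\xi$. Hence $y_1^\infty=y_2^\infty=(2/3,\xi)$, contradicting $d(y_1^\infty,y_2^\infty)\ge\tfrac{2}{3}\sin(\beta/2)>0$. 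The main obstacle lies in the ray-construction step: upgrading an abstract direction $v_i^n\in\Sigma_{x_n}S$ to a genuine geodesic ray inside $S$ that extends $\overline{px_n}$; this relies crucially on $S$ being a subcomplex (to realize $v_i^n$ by a nondegenerate segment in $S$) together with Lemma~\ref{3.6} (to globalize the extension). Once this is done, the remaining blow-down argument is immediate from the uniqueness of geodesic extensions inside the Euclidean cone over $\partial_TS$.
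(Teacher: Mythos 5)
Your argument is correct in its essentials, and it is a legitimately self-contained proof (the paper simply cites \cite{bestvina2008quasiflats} for this lemma rather than re-proving it, so there is nothing internal to compare against). Your two ingredients — realizing an antipodal direction in $\Sigma_{x_n}S$ by a nondegenerate segment in $S$ (using the subcomplex structure and a cone neighbourhood of $x_n$), then applying Lemma~\ref{3.6} to globalize it to a ray $\rho_i^n\subset S$ so that $\overline{px_n}\cup\rho_i^n$ is a global geodesic ray — are exactly the right devices, and the resulting lower bound $d(y_1^n,y_2^n)\ge 2r_n\sin(\beta/2)$ is correct by angle comparison. The blow-down via Lemma~\ref{3.15} then forces a contradiction with the Euclidean-cone distance law. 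One technical point worth making explicit: Gromov--Hausdorff convergence as stated in Lemma~\ref{3.15} does not by itself say that the approximating maps carry the base point $p$ (rescaled) to a point near the cone tip $o$, nor that the images of $x_n,y_1^n,y_2^n$ subconverge; you are implicitly invoking the \emph{specific} maps constructed in the proof of Lemma~\ref{3.15}, which send a point at distance $d$ from $p$ to a point at distance $\approx d/r$ from $o$ (compare the estimate analogous to~(\ref{4.17})). With those specific maps the argument can even be run at the level of $\epsilon$--approximations without passing to a limit, so the loose appeal to a subsequential GH limit is harmless. Also note that, since $C_TS$ need not be $CAT(0)$, the step ``$y_i^\infty$ lies on a geodesic from $o$ through $x_\infty$'' is better bypassed entirely: you only need the cone distance formula $d((r,\xi),(s,\eta))^2=r^2+s^2-2rs\cos(\min(\pi,\angle_T(\xi,\eta)))$ applied to the data $d(o,x_\infty)=1/3$, $d(o,y_i^\infty)=2/3$, $d(x_\infty,y_i^\infty)=1/3$, which directly forces $\eta_i=\xi$ as you compute. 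With those clarifications the proof is complete.
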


The proof of this lemma in \cite{bestvina2008quasiflats} actually shows something more. Given base point $p\in Y$ and $x\in S$, we define the \textit{antipode at $\infty$} of $\log_{x}p$ in $S$, denoted $Ant_{\infty}(\log_{x}p,S)$, to be $\{\xi\in\partial_{T}S|\ \overline{x\xi}\subset S\ and\ x\in\overline{p\xi}\}$. Then we have:

\begin{lem}
\label{3.26}
Let $S$ and $p$ be as in Lemma \ref{3.14}. Then for all $\beta>0$ there is an $r<\infty$ such that if $x\in S\setminus B(p,r)$, then 
\begin{equation}
\label{3.27}
diam(Ant_{\infty}(\log_{x}p,S))<\beta\,.
\end{equation}
The diameter here is with respect to the angular metric on $\partial_{T}X$.
\end{lem}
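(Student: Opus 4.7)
The plan is to adapt the argument used for Lemma \ref{3.24} (= Lemma 3.13 of \cite{bestvina2008quasiflats}). The key observation is that by Lemma \ref{3.6} the extensions of $\overline{px_{n}}$ produced in that proof can be chosen to be genuine geodesic rays, hence correspond to actual points of $Ant_{\infty}(\log_{x_{n}}p, S)$; the asymptotic conicality of Lemma \ref{3.15} then controls their pairwise Tits separation in $\partial_T S$, not merely their Alexandrov separation in $\Sigma_{x_{n}}S$.

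Suppose for contradiction there exist $\beta>0$ and sequences $x_{n}\in S$ with $d_{n}:=d(p,x_{n})\to\infty$ and $\xi_{n,1},\xi_{n,2}\in Ant_{\infty}(\log_{x_{n}}p,S)$ with $\angle_{T}(\xi_{n,1},\xi_{n,2})\ge\beta$. Let $l_{n,i}\co[0,\infty)\to S$ be the unit-speed ray with $l_{n,i}(0)=x_{n}$ and $l_{n,i}(\infty)=\xi_{n,i}$, so that $\overline{px_{n}}\cup l_{n,i}$ is a geodesic ray from $p$. Rescale the metric on $Y$ by $1/d_{n}$; by (the rescaled form of) Lemma \ref{3.15}, $(B(p,Rd_{n})\cap S,\,d/d_{n},\,p)$ converges in pointed Gromov--Hausdorff sense to $B(o,R)\cap C_{T}S$ for every $R>0$. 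After passing to subsequences, $x_{n}\to\bar{x}$ at rescaled distance $1$ from the cone point $o$, and (on bounded intervals) the rescaled rays $l_{n,i}$ converge to geodesic segments $\gamma_{i}$ emanating from $\bar{x}$ in $C_{T}S$ such that $\overline{o\bar{x}}\cup\gamma_{i}$ is a geodesic ray from $o$.

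In a Euclidean cone over a $CAT(1)$ space, however, a geodesic ray from the apex is uniquely determined by any one of its non-apex points (its radial parameter is read off from that point). Hence necessarily $\gamma_{1}=\gamma_{2}$. To derive a contradiction we show $\gamma_{1}\neq\gamma_{2}$: by Lemma \ref{2.4}(3) applied to the rays $l_{n,i}$ from $x_{n}$,
\begin{equation*}
\lim_{t\to\infty}\frac{d(l_{n,1}(t),l_{n,2}(t))}{t}=2\sin(\angle_{T}(\xi_{n,1},\xi_{n,2})/2)\ge 2\sin(\beta/2),
\end{equation*}
and using the monotonicity of this quotient together with the growth bound (\ref{3.13}) --- exactly as in the proof of Lemma \ref{3.24} --- one can select $t_{n}\asymp d_{n}$ with $d(l_{n,1}(t_{n}),l_{n,2}(t_{n}))/d_{n}\ge c(\beta)>0$, which in the rescaled limit forces $\gamma_{1}(t_{n}/d_{n})\neq\gamma_{2}(t_{n}/d_{n})$. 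The main obstacle is precisely this uniform-in-$n$ lower bound at a finite scale comparable to $d_{n}$: the asymptotic slope of $d(l_{n,1}(t),l_{n,2}(t))/t$ is uniformly bounded below by $2\sin(\beta/2)$, but the rate at which the quotient approaches its limit a priori depends on $n$, and it is the growth condition on $S$ that rules out a slow approach. This is the technical heart of the BKK argument, and the present statement is obtained with no essential modification beyond recording that the extensions produced there already belong to $Ant_{\infty}$.
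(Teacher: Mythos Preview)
Your proposal is correct and takes essentially the same approach as the paper: the paper does not give an independent proof of Lemma~\ref{3.26} at all, but simply remarks that the proof of Lemma~\ref{3.24} in \cite{bestvina2008quasiflats} ``actually shows something more,'' namely that the extensions of $\overline{px}$ produced there are genuine rays in $S$ (by Lemma~\ref{3.6}) and that the same asymptotic-conicality/packing argument bounds their Tits separation, not just their Alexandrov separation in $\Sigma_{x}S$. Your sketch expands exactly this observation and correctly identifies the one nontrivial point---obtaining a uniform-in-$n$ separation at scale $\asymp d_{n}$ from the asymptotic Tits bound---and correctly defers it to the growth/packing argument of \cite{bestvina2008quasiflats}.
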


Lemma \ref{3.24} tells us $\Sigma_{y}S$ looks more and more like a suspension as $d(y,p)\to\infty$ ($y\in S$). If we also assume $Shape(Y)$ is finite, then for all $y\in S$, $\Sigma_{y}S$ is built from cells of finitely many isometry type, moreover, by Theorem \ref{3.9}, there is a positive constant $N$, such that $\Sigma_{y}S$ has at most $N$ cells for any $y\in S$. Thus the family $\{\Sigma_{y}S\}_{y\in S}$ has only finite possible combinatorics. As $\beta\to 0$, one may expect $\Sigma_{y}S$ is actually a suspension (this is called isolated suspension in \cite{bestvina2008quasiflats}).

Now we restrict ourselves to the case of finite dimensional $CAT(0)$ cube complexes of finite dimension. Then the spaces of directions are finite dimensional all-right spherical $CAT(1)$ complexes (see Definition \ref{cell structure} for the definition of cell structure on the spaces of directions). 

\begin{lem}
\label{3.28}
Suppose $\mathcal{F}$ is a family of all-right spherical $CAT(1)$ complexes with dimension at most $d$. Then for every $\alpha>0$ and $N>0$, there is a constant $\beta=\beta(d,N,\alpha)>0$ such that if $K'$ satisfies:
\begin{enumerate}
\item $K'\subset K$ is a subcomplex of some $K\in\mathcal{F}$.
\item The number of cells in $K'$ is bounded above by $N$.
\item $K'$ has the geodesic extension property in the sense of Lemma \ref{3.8}.
\item There exists $v\in K$ such that $diam(Ant(v,K'))<\beta$.
\end{enumerate} 
Then $K'$ is a metric suspension (in the induced metric from $K$) and $v$ lies at distance $<\alpha$ from a suspension point of $K'$.
\end{lem}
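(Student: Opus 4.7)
Argue by contradiction and compactness. Suppose the conclusion fails for some $d, N, \alpha$: then there exist $\beta_i \downarrow 0$, subcomplexes $K'_i \subset K_i \in \mathcal{F}$ and points $v_i \in K_i$ with $diam(Ant(v_i, K'_i)) < \beta_i$ but no suspension point of $K'_i$ within $\alpha$ of $v_i$. Since all-right spherical complexes with at most $N$ cells of dimension $\le d$ have only finitely many combinatorial types, pass to a subsequence so all $K'_i$ are combinatorially identified with a fixed finite complex $K'_\infty$ via $\phi_i \co K'_\infty \to K'_i$. Lemma~\ref{3.8} lets one extend any geodesic $\overline{v_i w}$ with $w \in K'_i$ inside $K'_i$ to a local geodesic of length $\pi$; this is a global geodesic in the $CAT(1)$ space $K_i$, so $Ant(v_i, K'_i) \neq \emptyset$.

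\textbf{Limit objects.} Define pullback functions $g_i(u) = d_{K_i}(v_i, \phi_i(u))$ and $d_i(u, u') = d_{K_i}(\phi_i(u), \phi_i(u'))$ on $K'_\infty$; both are $1$-Lipschitz in the intrinsic metric of $K'_\infty$, which is a compact piecewise spherical complex, so Arzela--Ascoli gives uniform limits $g_\infty$ and $d_\infty$ along a subsequence. I first show $g_\infty^{-1}(\pi)$ is a single point $w$: non-emptiness comes from convergent subsequences of points in $Ant(v_i, K'_i)$; uniqueness, because extending $\overline{v_i \phi_i(u)}$ beyond $\phi_i(u)$ inside $K'_i$ to length $\pi$ yields an endpoint $w^*_u \in Ant(v_i, K'_i)$ at distance $\pi - g_i(u)$ from $\phi_i(u)$, so if $u, u' \in g_\infty^{-1}(\pi)$ then $d_i(u, u') \le (\pi - g_i(u)) + \beta_i + (\pi - g_i(u')) \to 0$.

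\textbf{Key identity.} The same extension argument together with the triangle inequality yields
\begin{equation*}
g_\infty(u) + d_\infty(u, w) = \pi \quad \text{for all } u \in K'_\infty,
\end{equation*}
the lower bound from $g_i(u) + d_i(u, w) \ge g_i(w) \to \pi$, the upper bound from $d_i(u, w) \le d_{K_i}(\phi_i(u), w^*_u) + d_{K_i}(w^*_u, \phi_i(w)) \le (\pi - g_i(u)) + \beta_i$ since $w^*_u$ and $\phi_i(w)$ both lie in the small-diameter set $Ant(v_i, K'_i)$. Applying geodesic extension inside $K'_\infty$ from $w$ through any $u$ to length $\pi$ produces $u^* \in K'_\infty$ with $d_\infty(u^*, w) = \pi$ and $d_\infty(u, u^*) = g_\infty(u)$, so $g_\infty^{-1}(0) \neq \emptyset$. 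Any $p_1, p_2 \in g_\infty^{-1}(0)$ have $d_{K_i}(v_i, \phi_i(p_j)) \to 0$, hence $d_\infty(p_1, p_2) = 0$, so $g_\infty^{-1}(0) = \{p\}$ is a singleton.

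\textbf{Suspension and contradiction.} Thus $u^* = p$ for every $u$, giving
\begin{equation*}
d_\infty(u, w) + d_\infty(u, p) = \pi, \qquad d_\infty(w, p) = \pi,
\end{equation*}
so $(K'_\infty, d_\infty)$ is a spherical suspension with poles $w, p$. The intrinsic-length-$\pi$ geodesics in $K'_\infty$ from $p$ through $u$ to $w$ remain geodesics in $K_i$ (since $K_i$ is $CAT(1)$ and such paths arise from the geodesic extension), so the suspension structure transfers to $(K'_i, d_{K_i})$ for large $i$ with $\phi_i(w), \phi_i(p)$ as poles; and $g_i(p) \to 0$ places $v_i$ within $\alpha$ of the suspension point $\phi_i(p)$, contradicting the counterexample hypothesis. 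The main obstacle is the bookkeeping between the intrinsic all-right spherical metric on $K'_\infty$ and the varying pullback metrics $d_i$ from $K_i$: one must ensure the limit $d_\infty$ is a genuine metric (the uniqueness of $w$ and $p$ above is exactly what rules out pseudometric collapse) and that the suspension structure on $(K'_\infty, d_\infty)$ transfers exactly, not just approximately, to each $(K'_i, d_{K_i})$ for $i$ large.
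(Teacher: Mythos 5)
Your overall blueprint (argue by contradiction, pass to a subsequence with fixed combinatorics, take a limit, conclude suspension structure) is the same as the paper's, but the point at which you take the limit is different, and this is exactly where your argument breaks. The paper does not take a limit of the \emph{pullback pseudometrics} on an abstract complex $K'_\infty$. Instead, it first enlarges $K'_i$ to the full subcomplex $M_i \subset K_i$ spanned by $K'_i$ together with the geodesic from $v_i$ to its nearest point $w_i\in K'_i$ (this geodesic has uniformly bounded length by $(\ref{3.29})$ and geodesic extension). The key fact — that a full subcomplex of an all-right spherical $CAT(1)$ complex is $\pi$-convex, hence itself $CAT(1)$, with induced metric on $K'_i$ equal to the one coming from $K_i$ — means $M_i$ ``freezes'' the ambient geometry of $K'_i$ at a uniformly bounded combinatorial complexity. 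Passing to a subsequence, all the $M_i$ become simplicially isomorphic to, and hence \emph{isometric} to, a single $CAT(1)$ complex $M$ containing a fixed $K' \cong K'_i$; the whole argument then runs inside $M$, where the antipode, geodesic-extension, and $CAT(1)$ properties are exact, and the suspension structure obtained for $K'$ is automatically the suspension structure of every $K'_i$ in the subsequence.

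Your proposal skips this step, and the gap you flag at the end as a ``bookkeeping obstacle'' is in fact fatal as things stand. Concretely: (i) Your limit $d_\infty$ is just a $1$-Lipschitz pointwise limit of pseudometrics on a compact set; nothing forces it to be a genuine metric, and your derivation of uniqueness of $w$ and $p$ from $d_\infty(u,u')=0$ is circular (it already assumes non-degeneracy). (ii) Even granting that $(K'_\infty, d_\infty)$ is a metric space satisfying $d_\infty(u,w)+d_\infty(u,p)=\pi$ and $d_\infty(w,p)=\pi$, concluding it is a spherical suspension requires $CAT(1)$-ness and geodesic completeness of $(K'_\infty,d_\infty)$, neither of which you establish — Lemma \ref{3.8} gives geodesic extension in $K'_i\subset K_i$, not in a limit space. (iii) Most importantly, the suspension structure on the \emph{limit} $(K'_\infty,d_\infty)$ does not transfer to the finite-$i$ complexes $(K'_i,d_{K_i})$: for finite $i$ you only have $g_i(u)+d_i(u,w)\approx\pi$ and $d_i(w,p)\approx\pi$, which is compatible with $K'_i$ failing to be a suspension for every $i$. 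Your assertion that the ``intrinsic-length-$\pi$ geodesics from $p$ through $u$ to $w$ remain geodesics in $K_i$'' has no justification — the endpoint of the geodesic extension of $\overline{\phi_i(p)\phi_i(u)}$ to length $\pi$ in $K'_i$ lands somewhere in $Ant(\phi_i(p),K'_i)$, but you have no argument that it lands at $\phi_i(w)$, nor that all such endpoints agree. The $M_i$ construction is precisely the device that eliminates all of these problems at once by giving you an honest isometric identification rather than a metric limit.
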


\begin{proof}
We prove by contradiction, suppose there exist $\alpha>0$, $N>0$ and a sequence $\{K'_{i}\}_{i=1}^{\infty}$ such that for each $i$, $K'_{i}$ satisfies (2) and (3), $K'_{i}$ is a subcomplex of some $K_{i}\in\mathcal{F}$, and there exists $v_{i}\in K_{i}$ such that  
\begin{equation}
\label{3.29}
diam(Ant(v_{i},K'_{i}))<\frac{1}{i}\,.
\end{equation} 
but no point in the $\alpha$-neighbourhood of $v_{i}$ is suspension point of $K'_{i}$. 

Let $w_{i}$ be the point in $K'_{i}$ which realizes the minimal distance to $v_{i}$ in the length metric of $K_{i}$ (note that the original metric on $K_{i}$ is the length metric truncated by $\pi$). If $l_{i}$ is the geodesic segment (in the length metric) joining $v_{i}$ and $w_{i}$, then by (\ref{3.29}) and the geodesic extension property of $K'_{i}$, there exists $C$ such that $length(l_{i})<C$ for all $i$. So for any $i$, there exists a subcomplex $L_{i}\subset K_{i}$ such that $l_{i}\subset L_{i}$ and the number of cells in $L_{i}$ are uniformly bounded by constant $N_{1}$ (by Lemma \ref{2.3}). 

Let $M_{i}$ be the full subcomplex spanned by $K'_{i}\cup L_{i}$, i.e. $M_{i}$ is the union of simplexes in $K_{i}$ whose vertex sets are in $K'_{i}\cup L_{i}$. Then $M_{i}$ is a $\pi$--convex (hence $CAT(1)$) subcomplex of $K_{i}$, and the number of cells in $M_{i}$ is uniformly bounded above by some constant $N_{2}$. Without loss of generality, we can replace $K_{i}$ by $M_{i}$. Since $M_{i}$ has only finite possible isometry types, after passing to a subsequence, we can assume there exist a finite $CAT(1)$ complex $M$ and a subcomplex $K'\subset M$ such that for every $i$, there is a simplicial isomorphism $\phi_{i}\co M_{i}\to M$ mapping $K'_{i}$ onto $K'$ ($\phi_{i}$ is also an isometry).

Since $M$ is compact, there is a subsequence of $\{\phi_{i}(v_{i})\}_{i=1}^{\infty}$ converging to a point $v\in M$. We claim $Ant(v,K')$ is exactly one point. First $Ant(v,K')\neq\emptyset$ by the geodesic extension property of $K'$. If there are two distinct points $u,u'\in Ant(v,K')$, then we can extend the geodesic segment $\overline{v_iu}$, $\overline{v_{i}u'}$ into $K'$, which yields a contradiction with $(\ref{3.29})$ for large $i$. 

Suppose $Ant(v,K')=\{v'\}$. Then $Ant(v',K')=v$. In fact, if this is not true, then we have some point $w\in Ant(v',K')$ such that $0<d(v,w)<\pi$. Then we can extent the geodesic segment $\overline{vw}$ into $K'$ to get a local geodesic $\overline{vw'}$ with $w'\in K'$ and $length(\overline{vw'})=\pi$. This is actually a geodesic since we are in a $CAT(1)$ space, thus $w'\in Ant(v,K')$ and $w'\neq v'$, contradictory to $Ant(v,K')=v'$. 

Now pick point $k\in K'$, $k\neq v$ and $k\neq v'$, then $d(k,v')<\pi$ and $d(k,v)<\pi$. We extend the geodesic segment $\overline{v'k}$ into $K'$ to get a geodesic of length $\pi$, then the other end must hit $v$ since $Ant(v',K')=v$. Thus $\overline{kv}\subset K'$ by the uniqueness of geodesic joining $k$ and $v$. Similarly we know $\overline{kv'}\subset K'$, thus there is a geodesic segment in $K'$ passing through $k$ and joining $v$ and $v'$. By $CAT(1)$ geometry, $K'$ (with the induced metric from $M$) splits as metric suspension and $v$, $v'$ are suspension points. However, by the assumption at the beginning of the proof, $\{\phi_{i}(v_{i})\}_{i=1}^{\infty}$ should have distance at least $\alpha$ from a suspension point for every $i$, so $v$ should also be $\alpha$--away from a suspension point, this contradiction finishes the proof.
\end{proof}

\begin{remark}
\label{3.30}
\
\begin{enumerate}
\item The above proof also shows the following result. Let $K$ be a piecewise spherical $CAT(1)$ complex, and let $K'\subset K$ be a subcomplex with geodesic extension property in the sense of Lemma \ref{3.8}. Pick $v\in K$. If $Ant(v,K')$ is exactly one point, then $v\in K'$ and $v$ is a suspension point of $K'$.
\item By the same proof, it is not hard to see Lemma \ref{3.28} is also true when $\mathcal{F}$ is a finite family of finite piecewise spherical $CAT(0)$ complexes (not necessarily all-right).

\end{enumerate}
\end{remark}
From lemma \ref{3.4}, Lemma \ref{3.24} and Lemma \ref{3.28}, we have the following analogue of \cite[Theorem 3.11]{bestvina2008quasiflats}:
\begin{thm}
\label{3.31}
Let $X$ be an $n$--dimensional $CAT(0)$ cube complex, and let $S=S_{[\sigma]}$ where $\sigma\in H^{p}_{n,n}(X)$. Then for every $p\in X$ and every $\epsilon>0$, there is an $r<\infty$ such that if $x\in S\setminus B(p,r)$, then $\Sigma_{x}S$ is a suspension and $\log_{x}p$ is $\epsilon$--close to a suspension point of $\Sigma_{x}S$.
\end{thm}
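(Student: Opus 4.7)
The plan is to obtain Theorem \ref{3.31} as a direct concatenation of Lemma \ref{3.24} (or the refined Lemma \ref{3.26}) with the $\epsilon$--splitting criterion Lemma \ref{3.28}, applied to the family $\mathcal{F}$ of all-right spherical $CAT(1)$ links of a finite-dimensional $CAT(0)$ cube complex, taking $K=\Sigma_{x}X$, $K'=\Sigma_{x}S$ and $v=\log_{x}p$. To do so I must verify the hypotheses of Lemma \ref{3.28} \emph{uniformly} in $x\in S$.

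First I would check the elementary hypotheses. By the discussion after Definition \ref{cell structure}, each link $\Sigma_{x}X$ is canonically an all-right spherical $CAT(1)$ complex of dimension at most $n-1$, and by Lemma \ref{link of support set} the set $\Sigma_{x}S$ is the support set of a top-dimensional cellular homology class in $\Sigma_{x}X$, hence is a subcomplex --- this gives hypotheses (1) and the underlying ambient structure. Hypothesis (3), the geodesic extension property in the sense of Lemma \ref{3.8}, then follows because $\Sigma_{x}S$ is itself a top-dimensional support set in a piecewise spherical complex.

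The substantive step is hypothesis (2): a uniform bound on the number of cells of $\Sigma_{x}S$. This cannot be extracted from the cubical geometry alone, since $X$ need not be locally finite; it must come from the growth assumption $[\sigma]\in H^{\textmd{p}}_{n,n}(X)$. The idea is to apply monotonicity of density (Theorem \ref{3.9}, \eqref{3.10}) centered at an arbitrary $x\in S$: since $B(x,R)\subset B(p,R+d(p,x))$ and the density limit at $p$ is finite, one gets
\begin{equation*}
\frac{\mathcal{H}^{n}(B(x,r)\cap S)}{r^{n}}\;\le\;\limsup_{R\to\infty}\frac{\mathcal{H}^{n}(B(x,R)\cap S)}{R^{n}}\;\le\;\lim_{R\to\infty}\frac{\mathcal{H}^{n}(B(p,R)\cap S)}{R^{n}}\;=\;C<\infty,
\end{equation*}
with $C$ independent of $x$. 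Taking $r$ smaller than the diameter of any cube in $X$, each top-dimensional cube of $S$ meeting $x$ contributes at least a fixed dimensional constant times $r^{n}$ to $\mathcal{H}^{n}(B(x,r)\cap S)$, so the number of top-dimensional cubes of $S$ containing $x$ is bounded by some $N_{0}=N_{0}(C,n)$. Since every cell of $\Sigma_{x}S$ is a face of some top-dimensional cell, we obtain a uniform bound $N=N(C,n)$ on the total number of cells of $\Sigma_{x}S$, independently of $x\in S$.

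Once hypotheses (1)--(3) are established with uniform constants $d=n-1$ and $N$, given $\epsilon>0$ I feed $\alpha=\epsilon$, $d=n-1$, $N$ into Lemma \ref{3.28} to produce $\beta=\beta(n-1,N,\epsilon)>0$. Lemma \ref{3.26} then furnishes $r<\infty$ so that $\text{diam}(Ant_{\infty}(\log_{x}p,S))<\beta$ for every $x\in S\setminus B(p,r)$, which by geodesic extension (Lemma \ref{3.8}) forces $\text{diam}(\text{Ant}(\log_{x}p,\Sigma_{x}S))<\beta$ and so verifies hypothesis (4) of Lemma \ref{3.28}. Applying that lemma at each such $x$ yields the conclusion: $\Sigma_{x}S$ is a metric suspension and $\log_{x}p$ is within angular distance $\epsilon$ of a suspension point. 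The main obstacle throughout is (2); every other ingredient is a bookkeeping assembly of earlier results in the section.
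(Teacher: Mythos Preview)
Your proposal is correct and follows essentially the same route as the paper: verify the hypotheses of Lemma~\ref{3.28} uniformly using Lemma~\ref{link of support set} and Lemma~\ref{3.8} for (1) and (3), the monotonicity/growth bound from Theorem~\ref{3.9} for (2), and the antipodal diameter estimate for (4), then conclude. Two small points: the paper invokes Lemma~\ref{3.24} directly for hypothesis (4), which already bounds $\mathrm{diam}(\mathrm{Ant}(\log_{x}p,\Sigma_{x}S))$, so your detour through Lemma~\ref{3.26} is unnecessary; and the inference from the $\mathrm{Ant}_{\infty}$ bound to the $\mathrm{Ant}$ bound requires the geodesic extension property of $S$ in $X$ (Lemma~\ref{3.6}), not the link version Lemma~\ref{3.8} that you cite.
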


\begin{remark}
\label{3.32}
By the same proof, the conclusion of theorem \ref{3.31} is also true if $X$ is a proper $n$--dimensional $CAT(0)$ complex with a cocompact (cellular) isometry group.
\end{remark}

\section{The structure of top dimensional support set}
\label{topdim support set}
Throughout this section, $X$ is an $n$--dimensional $CAT(0)$ cube complex. Pick $[\sigma]\in H^{\textmd{p}}_{n,n}(X)$ and let $S=S_{[\sigma]}$. Also recall that $\Sigma_{x}X$ is an all-right spherical $CAT(1)$ complex for each $x\in X$ (see Definition \ref{cell structure}). 

Let $\Delta^{k}$ be the $k$--dimensional all-right spherical simplex, and let $\Delta^{k}_{mod}$ be the quotient of $\Delta^{k}$ by the action of its isometry group ($\Delta^{k}_{mod}$ is endowed with the quotient metric). Define the function $\chi\co\Delta^{k}\to (0,+\infty)$ by
\begin{equation}
\label{4.1}
\chi(v)=\inf\{d(v,v')\ |\ v'\in\Delta^{k}\ and\ Supp(v')\cap v=\emptyset\}\,.
\end{equation}
Recall that $Supp(v')$ denotes the unique closed face of $\Delta^{k}$ which contains $v'$ as an interior point. By symmetry of $\Delta^{k}$, $\chi$ descends to a function $\chi\co\Delta^{k}_{mod}\to(0,+\infty)$.

For any $k'\ge k$, we have a canonical isometric embedding $i\co\Delta^{k}_{mod}\hookrightarrow\Delta^{k'}_{mod}$ with $\chi=\chi\circ i$. Let $\Delta_{mod}=\varinjlim \Delta^{k}_{mod}$ be the corresponding direct limit of metric spaces. 

Let $Y$ be an all-right spherical $CAT(1)$ complex. Then there is a well-defined 1--Lipschitz map:
\begin{center}
$\theta\co Y\to\Delta_{mod}$
\end{center}
such that $\theta$ restricted to any $k$--face $\Delta^{k}\subset Y$ is the map $\Delta^{k}\to\Delta^{k}_{mod}\hookrightarrow\Delta_{mod}$. Moreover, for $ v\in Y$, 
\begin{enumerate}
\item $v\in Supp(v')$ if $d(v,v')<\chi(\theta(v))$.
\item $\chi\circ\theta$ is continuous on the interior of each face of $Y$.
\end{enumerate}

When $Y=\Sigma_{x}X$ for some $x\in X$ and $v\in \Sigma_{x}X$, we also call $\theta(v)$ the \textit{$\Delta_{mod}$ direction of $v$}.

\subsection{Producing orthants}
\label{orthants}
In this section, we study geodesic ray with constant $\Delta_{mod}$ direction, i.e. a unit speed geodesic ray $l\co[0,\infty)\to S$ with $\theta(l^{-}(t))=\theta(l^{+}(t))=\theta(l^{-}(t'))=\theta(l^{+}(t'))$ for any $t\neq t'$. Here are two examples.
\begin{enumerate}
\item If a geodesic ray $l$ stays inside an orthant subcomplex of $O\subset Y$ (or more generally a straight orthant), then it has constant $\Delta_{mod}$ direction. Moreover, the $\Delta_{mod}$ direction of $\partial_{T}l$ in $\partial_T O$ is equal to $\theta(l^{\pm}(t))$.
\item If $Y$ is a product of trees, then each geodesic ray in $l\in Y$ has constant $\Delta_{mod}$ direction. Again, the $\Delta_{mod}$ direction of $\partial_{T}l$ in $\partial_T Y$ (in this case $\partial_T Y$ is an all-right spherical complex) is equal to $\theta(l^{\pm}(t))$.
\end{enumerate}
 
Later geodesic rays with constant $\Delta_{mod}$ direction will play an important role in the construction of orthants (see Lemma \ref{4.9}). First we show such geodesics exist in the support set of a top dimensional proper cycle and there are plenty of them.
\begin{lem}
\label{4.2}
If $Y$ is a $k$--dimensional all-right spherical $CAT(1)$ complex and if $K\subset Y$ is the support set of some top dimensional homology class, then for any $v\in K$, there exists $v'\in K$ such that $d(v,v')=\pi$ and $\theta(v)=\theta(v')$.
\end{lem}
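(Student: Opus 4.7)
The plan is to build a local geodesic $\gamma : [0,\pi] \to K$ starting at $v$ and read off from a spherical developing map that $v' := \gamma(\pi)$ is the desired antipode with the same $\Delta_{mod}$-direction.

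First I construct $\gamma$. Pick any short nontrivial local geodesic $\overline{wv}$ with $w$ close to $v$ inside a cell containing $v$. By the geodesic-extension property (Lemma \ref{3.8}), there is a nontrivial local-geodesic extension $\overline{vv_{1}} \subset K$ whose length can be chosen as large as we wish; iterating and concatenating produces a local geodesic $\gamma : [0,\pi] \to K$ with $\gamma(0) = v$. Since $Y$ is $CAT(1)$, any local geodesic of length $\pi$ in $Y$ is a genuine geodesic, so automatically $d(v,v') = \pi$.

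Next I compare $\theta(v')$ with $\theta(v)$ via a developing map into the standard sphere. Let $\tau_0, \tau_1, \ldots, \tau_n$ be the top-dimensional cells of $K$ traversed by $\gamma$ in order, with $Supp(v)$ chosen to be a face of $\tau_0$. Identify $\tau_0$ isometrically with the standard all-right spherical orthant $\Delta^k \subset \mathbb{S}^k$ (where $k = \dim Y$) so that $v$ has coordinates $(x_0,\ldots,x_k)$ with zeros in the positions corresponding to vertices outside $Supp(v)$; inductively identify $\tau_{i+1}$ with the reflection of $\tau_i$'s image across the coordinate hyperplane of $\mathbb{S}^k$ that represents the shared codimension-$1$ face along which $\gamma$ crosses. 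At the isolated instants where $\gamma$ passes through a face of codimension $\ge 2$, the outgoing direction of $\gamma$ in $\Sigma_{\gamma(t)} Y$ uniquely determines the next top-dimensional cell to develop into, so the construction is unambiguous. Since the developing map is an isometry on each $\tau_i$, the curve $\gamma$ develops to a great-circle arc $\widetilde\gamma \subset \mathbb{S}^k$ of length exactly $\pi$, hence $\widetilde\gamma(\pi) = -(x_0,\ldots,x_k)$.

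This antipode lies in the orthant of $\mathbb{S}^k$ whose sign pattern has $\epsilon_i = -1$ wherever $x_i > 0$, and in the natural non-negative coordinates of that orthant it has coordinates $(x_0,\ldots,x_k)$. Since $\theta$ of a point equals the multiset of its non-negative coordinates in its containing cell, we conclude $\theta(v') = \{x_0,\ldots,x_k\} = \theta(v)$. The main technical point is making the development well-defined across codimension-$\ge 2$ crossings; this is handled either by the outgoing-direction argument above, or alternatively by using the freedom in Lemma \ref{3.8} to perturb $\gamma$ into general position so that only codimension-$1$ transitions occur, with the cycle property of $K$ ensuring the perturbed geodesic still stays in $K$.
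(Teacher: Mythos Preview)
Your overall idea---construct a length-$\pi$ local geodesic in $K$ from $v$, develop it into $\mathbb{S}^k$, and read off $\theta(v')=\theta(-\tilde v)=\theta(v)$---is exactly the mechanism behind the paper's proof. The gap is in how you handle crossings through faces of codimension $\ge 2$.

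At such a crossing through a face $\sigma$ with $\dim\sigma=m$, write the incoming and outgoing directions as $(\cos\alpha)(\pm x)+(\sin\alpha)y,\,(\cos\alpha)(\mp x)+(\sin\alpha)y'$ in $\Sigma_{\gamma(t)}K=\mathbb{S}^{m-1}\ast Lk(\sigma,K)$. For your development to send $\gamma$ to a straight great-circle arc, you need an isometry of the next top cell $\tau_{i+1}$ onto the antipodal orthant in $\mathbb{S}^k$ that agrees on $\tilde\sigma$ \emph{and} carries the outgoing direction to the continuation of $\tilde\gamma$. Such an isometry exists only if $\theta(y')=\theta(y)$ in $Lk(\sigma,K)$. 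But Lemma~\ref{3.8} hands you an \emph{arbitrary} antipode $y'$, and in a $CAT(1)$ link with intrinsic diameter $>\pi$ (e.g.\ a $5$-cycle), antipodes with $\theta(y')\neq\theta(y)$ genuinely occur. So your sentence ``the outgoing direction uniquely determines the next cell, so the construction is unambiguous'' determines $\tau_{i+1}$ but does \emph{not} define an embedding making $\tilde\gamma$ straight; as stated it is circular, since it presupposes the very equality $\theta(y')=\theta(y)$ that is the inductive form of the lemma.

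The paper avoids this by doing the induction explicitly: at each crossing it applies the lemma in dimension $k-m-1$ to $Lk(\sigma,K)$ (a support set by Lemma~\ref{link of support set}) to \emph{choose} $y'$ with $\theta(y')=\theta(y)$, and then continues. Your perturbation alternative can also be made to work, but not as written: Lemma~\ref{3.8} gives no control over the extension direction, so you cannot ``perturb $\gamma$'' after the fact. What does work is to pick a generic initial direction $\xi$ in $\Sigma_v\tau_0$, observe that the great circle $\cos(t)\tilde v+\sin(t)\xi$ misses every codimension-$\ge 2$ orthant face of $\mathbb{S}^k$ for $t\in(0,\pi)$, and then build $\gamma$ cell by cell using only that $K$ is a top-dimensional cycle (so every codimension-$1$ face lies in at least two top cells of $K$). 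That argument is correct and avoids induction, but none of its ingredients appear in your write-up.
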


Recall that the metric on $Y$ is the length metric on $Y$ truncated by $\pi$.
\begin{proof}
The lemma is clear when $k=1$ by Lemma \ref{3.8}. We assume it is true for $i\le k-1$. Denote $k'=\dim(Supp(v))$. We endow $\Bbb S^{k'}$ with the structure of all-right spherical complex and pick $w\in \Bbb S^{k'}$ such that $\theta(v)=\theta(w)$. Suppose $w'=Ant(w,\Bbb S^{k'})$ and suppose $\gamma'\co[0,\pi]\to \Bbb S^{k'}$ is a unit speed geodesic joining $w$ and $w'$. It is clear that $\theta(w)=\theta(w')$. Our goal is to construct a unit speed local geodesic $\gamma\co[0,\pi]\to K$ such that $\gamma(0)=v$ and $\theta(\gamma(s))=\theta(\gamma'(s))$ for all $s\in [0,\pi]$ (see the following diagram), then $\gamma$ is actually a geodesic and we can take $v'=\gamma(\pi)$ to finish the proof.
\begin{center}
$\begin{CD}
[0,\pi]                        @>\gamma'>>       \Bbb S^{k'}\\
@V\gamma VV                              @VV\theta V\\
K                          @>\theta>>       \Delta_{mod}
\end{CD}$
\end{center}

There exists a sequence of faces $\{\Delta'_{j}\}_{j=1}^{N}$ in $\Bbb S^{k'}$ and $0=t_{0}<t_{1}<...<t_{N-1}<t_{N}=\pi$ such that each $\Delta'_{j}$ contains $\{\gamma'(t)\mid t_{j-1}<t<t_{j}\}$ as interior points. Let $\Delta_{1}=Supp(v)$. Since $\theta(v)=\theta(w)$, we can find $v_{1}\in \Delta_{1}$ such that there exists an isometry $\Phi\co\Delta_{1}\to\Delta'_{1}$ with $\Phi(v)=w$ and $\Phi(v_{1})=\gamma'(t_{1})$, in particular $\theta(v_{1})=\theta(\gamma'(t_{1}))$. Define $\gamma\co[0,t_{1}]\to K$ to be the geodesic segment $\overline{vv_{1}}$.

Recall that we have identified $\Sigma_{v_{1}}Y$ with $Lk(v_1,Y)$ (see Definition \ref{cell structure}). Let $\sigma_1=Supp(v_1)$ and let $k_1=\dim(\sigma)-1$. Then $\Sigma_{v_{1}}Y=Lk(v_1,\sigma_1)\ast Lk(\sigma_1,Y)=\Bbb S^{k_1}\ast Lk(\sigma_1,Y)$. Similarly, $\Sigma_{v_{1}}K=Lk(v_1,\sigma_1)\ast Lk(\sigma_1,K)=\Bbb S^{k_1}\ast Lk(\sigma_1,K)$. Let $K_1=Lk(\sigma_1,K)$ and $Y_1=Lk(\sigma_1,Y)$. Then they are all-right spherical complexes, $K_1$ is a subcomplex of $Y_1$, and $Y_1$ is $CAT(1)$. Moreover, since $\Sigma_{v_{1}}K$ is the support set of some top dimensional homology class in $\Sigma_{v_{1}}Y$ (Lemma \ref{link of support set}), so is $K_{1}$ in $Y_{1}$. As $\gamma^{-}(t_{1})\in \Sigma_{v_{1}}K=\Bbb S^{k_1}\ast K_1$, we write
\begin{equation}
\label{4.3}
\gamma^{-}(t_{1})=(\cos\alpha_{1})x_{1}+(\sin\alpha_{1})y_{1}
\end{equation}
for $x_{1}\in \Bbb S^{k_{1}}$ and $y_{1}\in K_{1}$. By induction assumption, we can find $y'_{1}\in Ant(y_{1},K_{1})$ such that $\theta(y'_{1})=\theta(y_{1})$. Let $x'_{1}=Ant(x_{1},\Bbb S^{k_{1}})$. Suppose $\Delta_{2}\subset K$ is the unique face ($v_{1}\in\Delta_{2}$) such that $Supp((\cos\alpha_{1})x'_{1}+(\sin\alpha_{1})y'_{1})=\Sigma_{v_{1}}\Delta_{2}$. Let $\overline{v_{1}v_{2}}\subset\Delta_{2}$ be the geodesic segment such that it starts at $v_{1}$, and goes along the direction $(\cos\alpha_{1})x'_{1}+(\sin\alpha_{1})y'_{1}$ until it hits some boundary point $v_{2}$ of $\Delta_{2}$. Note that $\overline{vv_{1}}$ and $\overline{v_{1}v_{2}}$ fit together to form a local geodesic in $K$.

On the other hand, at $\gamma'(t_{1})\in \Bbb S^{k'}$, we have $\Sigma_{\gamma'(t_{1})}\Bbb S^{k'}=Lk(\gamma'(t_{1}),\sigma'_1)\ast Lk(\sigma'_1,\Bbb S^{k'})=\Bbb S^{k_{1}}\ast Lk(\sigma'_1,\Bbb S^{k'})$ where $\sigma'_1=Supp(\gamma'(t_{1}))$ and $k_{1}$ is the same as the previous paragraph. Write $\gamma'^{-}(t_{1})=(\cos\alpha_{1})u_{1}+(\sin\alpha_{1})v_{1}$ (here we have the same $\alpha_{1}$ as (\ref{4.3}) since $\Phi$ is an isometry) for $u_{1}\in \Bbb S^{k_{1}}$ and $v_{1}\in Lk(\sigma'_1,\Bbb S^{k'})$, then $\gamma'^{+}(t_{1})=(\cos\alpha_{1})u'_{1}+(\sin\alpha_{1})v'_{1}$ for $u'_{1}=Ant(u_{1},\Bbb S^{k_{1}})$ and $v'_{1}=Ant(v_{1},Lk(\sigma'_1,\Bbb S^{k'}))$. Note that $\theta(v'_{1})=\theta(v_{1})=\theta(y_{1})=\theta(y'_{1})$, so we can extent the isometry $\Phi$ to get $\Phi'\co\Delta_{1}\cup\Delta_{2}\to\Delta'_{1}\cup\Delta'_{2}$ such that $\Phi'$ is an isometry with respect to the length metric on both sides and $\Phi'(\overline{v_{1}v_{2}})=\gamma([t_{1},t_{2}])$. Thus $d(v_{1},v_{2})=t_{2}-t_{1}$ and we can define $\gamma\co[t_{1},t_{2}]\to K$ to be the geodesic segment $\overline{v_{1}v_{2}}$. It is clear that $\theta(\gamma(s))=\theta(\gamma'(s))$ for all $s\in [0,t_{2}]$. We can repeat this process to define the required local geodesic $\gamma\co[0,\pi]\to K$.
\end{proof}

\begin{cor}
\label{4.4}
For any $x\in S$ and $v\in \Sigma_{x}S$, there exists a geodesic ray $\overline{x\xi}\subset S$ which has constant $\Delta_{mod}$ direction and $\log_{x}\xi=v$. 
\end{cor}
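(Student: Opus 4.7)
The plan is an iterative extension: I will use Lemma~\ref{4.2} at each cell-transition point to choose an outgoing direction whose $\theta$-value matches the incoming one, and combine this with the local geodesic extension property of $S$ to push the construction to $+\infty$.

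Two observations drive the argument. First, if a unit-speed geodesic $l$ traverses the interior of a single open cell $C$ of $X$ on some interval $(a,b)$, then $\theta(l^{+}(t))$ is constant there: the spaces $\Sigma_{l(t)}X=\Sigma_{l(t)}C\ast Lk(C,X)$ are canonically isometric via parallel transport in the Euclidean cell $C$, and the tangent vector $l^{+}(t)$ corresponds to the same element under these identifications. Second, $S$ is a subcomplex of $X$ (with $\mathbb{Z}/2\mathbb{Z}$ coefficients the minimal representative of the top-dimensional class $[\sigma]$ is a union of closed top-dimensional cells), so $\Sigma_{y}S$ is a subcomplex of the all-right spherical $CAT(1)$ complex $\Sigma_{y}X$; moreover by Lemma~\ref{link of support set}, $\Sigma_{y}S$ is the support of a top-dimensional homology class, so Lemma~\ref{4.2} applies to it.

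With these in hand I would run a continuation argument. Let $T$ be the supremum of $t>0$ such that a unit-speed geodesic $l\co[0,t]\to S$ exists with $l(0)=x$, $l^{+}(0)=v$, and $\theta(l^{\pm}(s))=\theta(v)$ for all $s\in[0,t]$. The second observation at $x$, combined with the geodesic extension in links (Lemma~\ref{3.8}) and with $v\in\Sigma_{x}S$, yields an initial admissible segment, so $T>0$; completeness of $X$ together with the closedness of $S$ in $X$ extends the construction to the supremum, so if $T<\infty$ then $l\co[0,T]\to S$ with the constancy property is well-defined and $l(T)\in S$. Now apply Lemma~\ref{4.2} to $K=\Sigma_{l(T)}S$ at the incoming direction $l^{-}(T)\in K$: one obtains $v'\in\Sigma_{l(T)}S$ with $d(l^{-}(T),v')=\pi$ and $\theta(v')=\theta(l^{-}(T))=\theta(v)$. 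Because $v'\in\Sigma_{l(T)}S$ and $S$ is a subcomplex, the geodesic from $l(T)$ in direction $v'$ immediately enters $S$ and stays in the interior of a single open cell on some interval $(T,T+\delta]$; by the first observation $\theta(l^{+}(s))=\theta(v')=\theta(v)$ there. The angle $\pi$ between $l^{-}(T)$ and $v'$ makes the concatenation a local geodesic, hence (by the $CAT(0)$ hypothesis) a genuine geodesic. Thus $l$ extends to $[0,T+\delta]$ with the constancy property, contradicting the definition of $T$. Therefore $T=\infty$, and $\xi=l(\infty)\in\partial_{T}X$ is the desired point, satisfying $\log_{x}\xi=v$ with $\overline{x\xi}\subset S$ of constant $\Delta_{mod}$ direction.

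The main obstacle is the bookkeeping of $\theta$-values across cell boundaries: the cell-interior observation only gives constancy of $\theta(l^{+}(t))$ on an open cell, while Lemma~\ref{4.2} matches $\theta$ of the incoming and outgoing tangents at a transition point. These two pieces must fit together to give a single global $\theta$-value. This reduces to the fact that $\theta$ is defined face by face and that the canonical identification between tangent directions in adjacent open cells meeting along a common face preserves the face type in $\Delta_{mod}$, which is a standard feature of cube complex geometry.
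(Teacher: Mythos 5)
Your proof is correct and follows essentially the same strategy as the paper's: start a segment in the cube of $S$ determined by $v$, at each exit point apply Lemma~\ref{4.2} to $\Sigma_{\cdot}S$ (which is the support of a top-dimensional class in $\Sigma_{\cdot}X$ by Lemma~\ref{link of support set}) to choose an antipodal outgoing direction with matching $\theta$-value, and note the resulting concatenation is a local geodesic, hence a geodesic. The paper phrases the final step as an iterative construction that ``cannot terminate because $S$ is closed,'' while you package the same idea as a supremum/continuation argument; this is only a cosmetic difference and both are valid.
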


\begin{proof}
Since $x$ has a cone neighbourhood in $S$, we can find a short geodesic segment $xx'$ in the cone neighbourhood such that $\log_{x}x'=v$. There is a unique closed cube $C_{1}\subset S$ such that $\overline{xx'}\subset C_{1}$ and $v$ is an interior point of $\Sigma_{x}C_{1}$. We extend $\overline{xx'}$ in $C_{1}$ until it hits the boundary of $C_{1}$ at $x_{1}$. By Lemma \ref{4.2}, there exists $v_{1}\in Ant(\log_{x_{1}}x,\Sigma_{x_{1}}S)$ with $\theta(v_{1})=\theta(\log_{x_{1}}x)=\theta(v)$. Now we choose cube $C_{2}\subset S$ and segment $\overline{x_{1}x_{2}}\subset C_{2}$ with $\log_{x_{1}}x_{2}=v_{1}$ as before. Note that $\overline{xx_{1}}$ and $\overline{x_{1}x_{2}}$ together form a local geodesic segment (hence a geodesic segment). We repeat the previous process to extend the geodesic. Since $S$ is a closed set, the extension can not terminate, which will give us the geodesic ray $\overline{x\xi}$ as required.
\end{proof}

\begin{cor}
\label{4.5}
The set of points in $\partial_{T}S$ which can be represented by a geodesic ray in $S$ with constant $\Delta_{mod}$ direction is dense.
\end{cor}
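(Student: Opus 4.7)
The plan is to combine Corollary~\ref{4.4} (which produces, at any given point of $S$ and in any prescribed direction, a ray in $S$ with constant $\Delta_{mod}$ direction) with the antipodal diameter control of Lemma~\ref{3.26} (which says that $Ant_{\infty}(\log_{x}p,S)$ has small diameter once $x$ is far from the base point $p$). The point is that any direction $\eta\in\partial_{T}S$ automatically lies in an antipodal set at infinity based at a point far along the ray to $\eta$, and Corollary~\ref{4.4} lets us produce, in the \emph{same} antipodal set, a ray with constant $\Delta_{mod}$ direction; the two must then be close at infinity.

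In detail, fix $\eta\in\partial_{T}S$ and $\epsilon>0$, a base point $p\in X$, and a unit speed geodesic ray $l\co[0,\infty)\to S$ with $l(0)=p$ and $l(\infty)=\eta$. By Lemma~\ref{3.26} there exists $r<\infty$ so that $diam(Ant_{\infty}(\log_{x}p,S))<\epsilon$ for every $x\in S\setminus B(p,r)$. Choose $t>r$ and set $x=l(t)$. Since $l^{-}(t)=\log_{x}p$ and $l^{+}(t)$ are antipodal in $\Sigma_{x}X$, Corollary~\ref{4.4} applied at $x$ with $v=l^{+}(t)$ yields a geodesic ray $\overline{x\xi}\subset S$ with constant $\Delta_{mod}$ direction such that $\log_{x}\xi=l^{+}(t)$. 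Consequently $\angle_{x}(\log_{x}p,\log_{x}\xi)=\pi$, so the concatenation of $\overline{px}$ with $\overline{x\xi}$ is a geodesic ray, giving $x\in\overline{p\xi}$ and hence $\xi\in Ant_{\infty}(\log_{x}p,S)$.

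On the other hand, $\overline{x\eta}\subset l\subset S$ and $x=l(t)\in\overline{p\eta}$, so also $\eta\in Ant_{\infty}(\log_{x}p,S)$. Therefore
\begin{equation*}
\angle_{T}(\eta,\xi)\le diam(Ant_{\infty}(\log_{x}p,S))<\epsilon,
\end{equation*}
which exhibits $\xi\in\partial_{T}S$ arbitrarily close to $\eta$ and represented by a geodesic ray in $S$ of constant $\Delta_{mod}$ direction. The only slightly delicate point in this argument is the verification that the new ray $\overline{x\xi}$ from Corollary~\ref{4.4} really lies in $Ant_{\infty}(\log_{x}p,S)$, but this is immediate from the antipodality of $l^{-}(t)$ and $l^{+}(t)$ inside $\Sigma_{x}X$ together with $CAT(0)$ uniqueness of geodesics; no further control on how $\overline{x\xi}$ and $l|_{[t,\infty)}$ might diverge in the interior is needed, since the diameter bound is applied directly in $\partial_{T}S$.
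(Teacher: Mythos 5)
Your proof is correct. The approach is essentially the paper's---both use Corollary~\ref{4.4} to produce a ray of constant $\Delta_{mod}$ direction emanating from a far-out point $x\in S$, and Lemma~\ref{3.26} to control the diameter of $Ant_{\infty}(\log_{x}p,S)$. The one genuine streamlining is in how you arrange to have two points of $\partial_{T}S$ sitting in the \emph{same} antipodal set at infinity: the paper fixes a base point $p$ up front, then invokes equation~(\ref{3.23}) (Remark~\ref{3.22}) to approximate $\eta$ by a $\xi_{1}$ for which $\overline{p\xi_{1}}$ eventually lies in $S$, and pays an $\epsilon/2+\epsilon/2$ triangle-inequality cost. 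You instead choose $p$ \emph{after} $\eta$ is fixed, taking it on a ray in $S$ representing $\eta$; this makes $\eta$ itself land in $Ant_{\infty}(\log_{x}p,S)$, eliminating the $\xi_{1}$ middleman and the appeal to~(\ref{3.23}) entirely. Since Lemmas~\ref{3.14} and~\ref{3.26} hold for any choice of base point, letting $p$ depend on $\eta$ is harmless for a density statement. One small wording nit: you write ``a base point $p\in X$'' and then posit a ray $l\subset S$ from $p$ to $\eta$; this forces $p\in S$ and, more specifically, $p$ to lie on a ray in $S$ asymptotic to $\eta$ (which exists by Definition~\ref{3.7}), so it would be cleaner to say that explicitly rather than implying $p$ is arbitrary in $X$.
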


\begin{proof}
Fix a base point $p$, pick some $\xi\in\partial_{T}S$. For any $\epsilon>0$, by (\ref{3.23}), we can find $r_{1}$ such that 
\begin{equation}
\label{4.6}
d_{H}(\partial_{p,r}S,\partial_{T}S)<\frac{\epsilon}{2}
\end{equation} 
for all $r>r_{1}$. By Lemma \ref{3.26}, we can find $r_{2}$ such that for $r>r_{2}$, 
\begin{equation}
\label{4.7}
diam(Ant_{\infty}(\log_{x}p,S))<\frac{\epsilon}{2}
\end{equation}
for any $x\in S\setminus B(p,r)$.

If $r_{0}=\max\{r_{1},r_{2}\}$, then we can find $\overline{p\xi_{1}}\subset B(p,r_{0})\cup S$ such that $\angle_{T}(\xi_{1},\xi)\le\frac{\epsilon}{2}$ by (\ref{4.6}). Pick $x\in\overline{p\xi_{1}}$ such that $d(p,x)>r_{0}$. By corollary \ref{4.4}, we can find geodesic ray $\overline{x\xi_{2}}\subset S$ of constant $\Delta_{mod}$ direction such that $\overline{x\xi_{2}}$ fits together with $\overline{px}$ to form a geodesic ray $\overline{p\xi_{2}}$, thus $\angle_{T}(\xi_{1},\xi_{2})<\frac{\epsilon}{2}$ by (\ref{4.7}). Then $\angle_{T}(\xi,\xi_{2})<\epsilon$, which finishes the proof since $\epsilon$ and $\xi$ are arbitrary.
\end{proof}

Let $l$ be a geodesic ray of constant $\Delta_{mod}$ direction. Then we define $\theta(l)$ to be $\theta(l^{\pm}(t))$. The definition does not depend on the choice of sign $\pm$ and $t$.

\begin{lem}
\label{4.8}
If $l\subset S$ is a unit speed geodesic ray of constant $\Delta_{mod}$ direction, then there exists $t_{0}<\infty$ $(t_{0}$ depends on the position of $l(0)$ and $\theta(l))$ such that for any $t>t_{0}$, $\Sigma_{l(t)}S=\Sigma_{l(t)}l\ast Y_{t}$ for some $Y_{t}\subset \Sigma_{l(t)}S$.
\end{lem}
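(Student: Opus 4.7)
The plan is to apply Theorem \ref{3.31} to get suspension structure on $\Sigma_{l(t)}S$ with suspension point near $l^{-}(t)$, and then use the function $\chi$ together with Lemma \ref{2.9} to upgrade ``near a suspension point'' to ``$l^{\pm}(t)$ are themselves suspension points.'' Throughout, choose the base point to be $p:=l(0)$, so that $\log_{l(t)}p=l^{-}(t)$ for every $t>0$.

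First I would apply Theorem \ref{3.31} with $\epsilon:=\tfrac12\chi(\theta(l))>0$ (the latter quantity is positive and well-defined because $l$ has constant $\Delta_{mod}$ direction, so $\theta(l^{-}(t))\equiv\theta(l)$). This produces $t_0<\infty$ (depending on $l(0)$ and $\theta(l)$) such that for every $t>t_0$, $\Sigma_{l(t)}S$ is a spherical suspension and there is a suspension point $v_t\in\Sigma_{l(t)}S$ with $d(v_t,l^{-}(t))<\chi(\theta(l))$. By the defining property of $\chi$ recorded at the start of Section \ref{topdim support set}, this forces $l^{-}(t)\in Supp(v_t)$.

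Next I would invoke Lemma \ref{2.9}: since $v_t$ is a suspension point of $\Sigma_{l(t)}S$, every point of $Supp(v_t)$ is a suspension point, and there is a join decomposition
\begin{equation*}
\Sigma_{l(t)}S=\mathbb{S}^{k}\ast K_t'
\end{equation*}
with $k=\dim Supp(v_t)$ and $l^{-}(t)\in\mathbb{S}^{k}$. To locate $l^{+}(t)$ inside this splitting I use that $d(l^{-}(t),l^{+}(t))=\pi$, together with the standard fact that in a spherical join $\mathbb{S}^{k}\ast K_t'$ the only points at distance $\pi$ from a given $u\in\mathbb{S}^{k}$ are its antipode within $\mathbb{S}^{k}$. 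Hence $l^{+}(t)\in\mathbb{S}^{k}$ and is antipodal to $l^{-}(t)$ there; writing $\mathbb{S}^{k}=\{l^{-}(t),l^{+}(t)\}\ast\mathbb{S}^{k-1}$ yields
\begin{equation*}
\Sigma_{l(t)}S=\{l^{-}(t),l^{+}(t)\}\ast(\mathbb{S}^{k-1}\ast K_t')=\Sigma_{l(t)}l\ast Y_t,
\end{equation*}
with $Y_t:=\mathbb{S}^{k-1}\ast K_t'$, which is the desired splitting.

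The only genuine content is the first paragraph: using the positivity and locality of $\chi$ (valid precisely because $\theta(l)$ is constant along $l$) to convert the approximate suspension-point conclusion of Theorem \ref{3.31} into exact containment in the support of a true suspension point. Once that is done, the rest is purely formal manipulation of join decompositions of $CAT(1)$ spaces, so I do not anticipate any serious obstacle beyond that discreteness step.
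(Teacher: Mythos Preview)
Your proof is correct and follows essentially the same approach as the paper's: apply Theorem \ref{3.31} with base point $l(0)$ and $\epsilon$ a positive multiple of $\chi(\theta(l))$, use the property of $\chi$ to place $l^{-}(t)$ in $Supp(v_t)$, and invoke Lemma \ref{2.9} to conclude $l^{\pm}(t)$ are suspension points. The paper's proof is terser (it takes $\epsilon=\chi(\theta(l))$ and omits the explicit antipodal argument you give for $l^{+}(t)$), but your added detail is correct and the factor of $\tfrac12$ is harmless.
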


\begin{proof}
We apply Theorem \ref{3.31} where $p=l(0)$ and $\epsilon=\chi(\theta(l))$ (see (\ref{4.1}) for the definition of $\chi$) to get $t_{0}<\infty$ such that $\Sigma_{l(t)}S$ is a metric suspension and the suspension point is $\chi(\theta(l))$--close to $l^{+}(t)$ (or $l^{-}(t)$) for all $t>t_{0}$. By Lemma \ref{2.9}, $l^{+}(t)$ and $l^{-}(t)$ are suspension points, thus $\Sigma_{l(t)}S=\Sigma_{l(t)}l\ast Y_{t}$.
\end{proof}

Based on Lemma \ref{4.8}, we define a parallel transport of $\Sigma_{l(t)}S$ along $l$ as follows. Let $t_0$ be as in Lemma \ref{4.8}. For any $t>t_{0}$, $l(t)$ has a product neighbourhood in $S$ of form $X_{t}\times (t-\epsilon,t+\epsilon)$, here $X_{t}$ is some subset of $X$ with the induced metric. So for any $|t'-t|<\epsilon$, we can identify $\Sigma_{l(t)}S$ and $\Sigma_{l(t')}S$. Moreover, for any $t_{1}>t_{0}$ and $t_{2}>t_{0}$, we can cover the geodesic segment $\overline{l(t_{1})l(t_{2})}$ by finitely many product neighbourhoods, which will induce an identification of $\Sigma_{l(t_{1})}S$ and $\Sigma_{l(t_{2})}S$. This identification does not depend on the covering we choose.

To see this identification more concretely, take $t>t_{0}$, a product neighbourhood $X_{t}\times (t-\epsilon,t+\epsilon)$ of $l(t)$ in $S$, and $v\in\Sigma_{l(t)}S$, we construct a short geodesic $\overline{l(t)x_{t}}\subset S$ in the cone neighbourhood of $l(t)$ going along the direction $v$. If $|t'-t|<\epsilon$, we can find an isometrically embedded parallelogram in the product neighbourhood such that $\overline{l(t)x_{t}}$ and $\overline{l(t')x_{t'}}$ are opposite sides of the parallelogram and $\overline{l(t)l(t')}$ is one of the remaining sides (we might have to shorten $\overline{l(t)x_{t}}$ a little).

In general, for any $t'>t_{0}$, we can cover the geodesic segment $\overline{l(t)l(t')}$ by finitely many product neighbourhoods as before and construct a local isometric embedding $\phi$ from a parallelogram to $X$ such that two opposite sides of the parallelogram is mapped to some geodesic segments $\overline{l(t)x_{t}}$ and $\overline{l(t')x_{t'}}$ and one of the remaining sides is mapped to $\overline{l(t)l(t')}$. Since $X$ is $CAT(0)$, $\phi$ is actually an isometric embedding. So we have a well-defined \textit{parallel transport} of $\Sigma_{l(t)}S$ along $l(t)$ for $t>t_{0}$.

In the construction of the above parallelogram, the length of $\overline{l(t)x_{t}}$ (or $\overline{l(t')x_{t'}}$) may go to $0$ as $|t'-t|\to\infty$. However, in the special case where there exists $s_0>0$ such that $\Sigma_{l(t)}l$ is contained in the $0$--skeleton of $\Sigma_{l(t)}S$ for all $t>s_{0}$ (or equivalently $l([s_{0},\infty))$ is parallel to some geodesic ray in the $1$--skeleton of $X$), $l([t_{0},\infty))$ has a product neighbourhood of form $X'\times [t_{0},\infty)$ in $S$, here $X'$ is a subcomplex of $X$ with the induced metric. Therefore for any $t>t_{0}$ and a segment $\overline{l(t)x_{t}}\subset S$ short enough, we can parallel transport $\overline{l(t)x_{t}}$ along $l$ to infinity, i.e. there is an isometrically embedded \textquotedblleft infinite parallelogram\textquotedblright\ with one side $\overline{l(t)x_{t}}$ and one side $l([t,\infty))$.

\begin{lem}
\label{4.9}
If $l\co[0,\infty)\to S$ is a unit speed geodesic ray of constant $\Delta_{mod}$ direction, then there exists an orthant subcomplex $O\subset X$ satisfying
\begin{enumerate}
\item $\partial_{T}l\in \partial_{T}O$.
\item If $\dim(O)=k$ and if $\{l_{i}\}_{i=1}^{k}$ are the geodesic rays emanating from the tip of the orthant such that $O$ is the convex hull of $\{l_{i}\}_{i=1}^{k}$, then $\partial_{T}l_{i}\in\partial_{T}S$ for all $i$.
\end{enumerate} 
\end{lem}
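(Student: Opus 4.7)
The plan is to produce the orthant by constructing $k$ pairwise orthogonal straight geodesic rays $l_1,\dots,l_k\subset S$ emanating from a common base point on $l$, and then invoking Lemma \ref{2.18} together with Remark \ref{2.16}. First I would apply Lemma \ref{4.8} to fix $t_0$ large so that $\Sigma_{l(t_0)}S=\Sigma_{l(t_0)}l\ast Y_{t_0}$. Because $l$ has constant $\Delta_{mod}$ direction, $l^{+}(t_0)$ lies in the interior of a spherical simplex $\sigma_0\subset\Sigma_{l(t_0)}X$ of some dimension $k-1$; by Lemma \ref{2.9} the suspension refines to $\Sigma_{l(t_0)}S=\mathbb{S}^{k-1}\ast K'$ with $\sigma_0\subset\mathbb{S}^{k-1}$, so the $k$ vertices $v_1,\dots,v_k$ of $\sigma_0$ all lie in $\Sigma_{l(t_0)}S$ and are pairwise at distance $\pi/2$. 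For each $i$, Corollary \ref{4.4} then produces a geodesic ray $l_i\subset S$ starting at $l(t_0)$ with initial direction $v_i$ and constant $\Delta_{mod}$ direction $\theta(v_i)$. Since each $\theta(v_i)$ is a vertex of $\Delta_{mod}$, $l_i$ is straight in the sense of Definition \ref{2.15}, and $\partial_{T}l_i\in\partial_{T}S$ is immediate from $l_i\subset S$.

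The main obstacle is verifying the hypothesis $\angle_{T}(l_i,l_j)=\pi/2$ of Lemma \ref{2.18}. The local angle at $l(t_0)$ is $\pi/2$ since $v_i,v_j$ are distinct vertices of an all-right simplex, and Lemma \ref{2.17} gives $\angle_{T}(l_i,l_j)\ge\pi/2$, so the task is the reverse inequality. I would establish it by constructing a flat Euclidean quarter plane in $X$ with $l_i$ and $l_j$ as bounding rays. Because $l_i$ is straight, $\Sigma_{l_i(s)}l_i$ is a single vertex of $\Sigma_{l_i(s)}S$ for every $s$, placing us in the \emph{special case} of the parallel transport construction described just before this lemma, in which short segments orthogonal to $l_i^{\pm}(s)$ can be transported along $l_i$ all the way to infinity. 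Re-decomposing $\Sigma_{l(t_0)}S=\mathbb{S}^{k-1}\ast K'$ as $\mathbb{S}^{0}_{v_i}\ast(\mathbb{S}^{k-2}\ast K')$ places $v_j$ in the factor orthogonal to $l_i^{\pm}(0)$, so transporting a short initial segment of $l_j$ along $l_i$ produces a flat half-strip in $S$ bounded on one side by $l_i$. Iterating the construction with the opposite boundary of this half-strip (itself a straight ray parallel to $l_i$) in place of $l_i$ yields half-strips of arbitrarily large width, whose union is the desired flat Euclidean quarter plane; its existence forces $\angle_{T}(l_i,l_j)=\pi/2$ via Lemma \ref{2.4}.

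With the Tits-angle condition in hand, Lemma \ref{2.18} gives that the convex hull of $\{l_i\}_{i=1}^{k}$ is a $k$-dimensional straight orthant $O'\subset X$, and Remark \ref{2.16} upgrades $O'$ to an orthant subcomplex $O\subset X$ with $d_{H}(O,O')<\infty$. Then $\partial_{T}O=\partial_{T}O'$ is the all-right spherical $(k-1)$-simplex at infinity spanned by $\partial_{T}l_1,\dots,\partial_{T}l_k$, and since $\theta(l)=\theta(l^{+}(t_0))\in\mathrm{int}(\sigma_0)$, the asymptotic direction $\partial_{T}l$ lies in the interior of $\partial_{T}O$, proving (1). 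Conclusion (2) follows from $\partial_{T}l_i\in\partial_{T}S$ together with the fact that the edge rays emanating from the tip of $O$ are parallel to, hence share Tits-boundary points with, $l_1,\dots,l_k$.
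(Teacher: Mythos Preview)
Your argument has a genuine gap in the step where you invoke the parallel transport along $l_i$. The ``special case'' described just before Lemma~\ref{4.9} presupposes that one already has, for all $s$ beyond some threshold, the suspension $\Sigma_{l_i(s)}S=\Sigma_{l_i(s)}l_i\ast Y_s$; straightness of $l_i$ (i.e.\ $l_i^{\pm}(s)$ lying in the $0$--skeleton of $\Sigma_{l_i(s)}X$) is an \emph{additional} hypothesis layered on top of that, not a substitute for it. Applying Lemma~\ref{4.8} to $l_i$ only gives the suspension for $s$ beyond some possibly large $t_0'$, so you cannot transport the initial segment of $l_j$ (which sits at $l_i(0)$) along $l_i$. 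The iteration step has the same defect: the opposite boundary of your first half-strip is a straight ray in $S$, but you have no control over the suspension structure of $\Sigma S$ along it, so you cannot transport further. Without the quarter-plane, neither $\angle_T(l_i,l_j)=\pi/2$ nor $\partial_Tl\in\partial_TO$ is justified; your final sentence about $\partial_Tl$ conflates the local $\Delta_{mod}$ direction with the Tits point and silently assumes $\angle_T(l,l_i)=\alpha_i$, which you have not shown.

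The paper's proof fixes exactly this by establishing (see (\ref{4.10})) that $\Sigma_yS=\Sigma_yl_i^t\ast Y$ at \emph{every} point $y\in l_i^t$, via a contradiction argument using Theorem~\ref{3.31} with $\epsilon=\min_i\tfrac12(\tfrac\pi2-\alpha_i)$: if $y_i$ were the first failure point, then $d(y_i,l(0))>r'$ forces a suspension point of $\Sigma_{y_i}S$ within $\pi/2$ of the vertex $\log_{y_i}l(t)$, whence that vertex is itself a suspension point by Lemma~\ref{2.9}. This yields uniqueness of $l_i^t$, hence the family $\{l_i^t\}_{t>r'}$ is obtained by parallel transport \emph{along $l$}, which immediately gives $\angle_T(l_i^{t_0},l)=\alpha_i$ via Lemma~\ref{2.4}. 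The paper then gets $\angle_T(l_i^{t_0},l_j^{t_0})=\pi/2$ by a hyperplane argument rather than strip-iteration: projecting $l$ onto $l_i^{t_0}$ produces crossing points $l(t_m)\in h_m$, and since $v_j^{t_m}\perp v_i^{t_m}$ the ray $l_j^{t_m}$ lies entirely in $h_m$, so asymptotic rays based on $l_i^{t_0}$ make angle $\pi/2$ with it; letting $m\to\infty$ finishes. Finally $\partial_Tl\in\partial_TO$ follows from $\angle_T(l_i^{t_0},l)=\alpha_i$ and Lemma~\ref{2.8}.
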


\begin{proof}
By the previous lemma, we can choose some $r>0$ such that for $t>r$, $l^{\pm}(t)$ are suspension points in $\Sigma_{l(t)}S$. Pick some $t>r$ and let $k$ be the dimension of $Supp(l^{+}(t))$. Let $\{v_{i}^{t}\}_{i=1}^{k}$ be vertices of $Supp(l^{+}(t))$. Suppose $\alpha_{i}=d_{\Sigma_{l(t)}S}(v_{i}^{t},l^{+}(t))$ (the values of $k$ and $\alpha_{i}$ are the same for all $t>r$ by the splitting in Lemma \ref{4.8} and Lemma \ref{2.9}). Moreover, we would like the labels $v_{i}^{t}$ to be consistent under parallel transportation, i.e. for $t'\neq t$ ($t'>r$), $v_{i}^{t'}$ is the parallel transport of $v_{i}^{t}$ along $l$. By Theorem \ref{3.31}, we can choose $r'\geq r$ such that if $x\in S\setminus B(l(0),r')$, then $\log_{x}l(0)$ is $\epsilon$--close to some suspension point in $\Sigma_{x}S$ for $\epsilon=\min_{1\leq i\leq k}\{\frac{1}{2}(\frac{\pi}{2}-\alpha_{i})\}$.

Now we pick some $t>r'$, and construct a short geodesic segment $\overline{l(t)x_{i}}\subset S$ going along the direction of $v_{i}^{t}$ in the cone neighbourhood of $l(t)$. We choose an arbitrary extension of $\overline{l(t)x_{i}}$ into $S$ and call the geodesic ray $l_{i}^{t}$ for $1\leq i\leq k$. We claim for any $y\in l_{i}^{t}$ ($y\neq l(t)$),
\begin{equation}
\label{4.10}
\Sigma_{y}S=\Sigma_{y}l_{i}^{t}\ast Y
\end{equation} 
for some $Y\subset\Sigma_{y}S$, hence the extension is unique and $l_{i}^{t}$ is a straight geodesic.

If the claim is not true, pick the first point $y_{i}\in l_{i}^{t}$ such that (\ref{4.10}) is not satisfied. Since $\angle_{l(t)}(y_{i},l(0))\geq \pi-\alpha_{i}>\pi/2$, hence $d(y_{i},l(0))>d(l(0),l(t))>r'$. By our choice of $r'$, there is a suspension point in $\Sigma_{y_{i}}S$ which has distance less than $\frac{1}{2}(\frac{\pi}{2}-\alpha_{i})$ from $\log_{y_{i}}l(0)$. Since $\angle_{y_i}(l(0),l(t))<\alpha_i$, $\log_{y_{i}}l(t)$ has distance less than $\alpha_i+\frac{1}{2}(\frac{\pi}{2}-\alpha_{i})<\frac{\pi}{2}$ from a suspension point. Since all points in $l_{i}^{t}$ between $l(t)$ and $y_{i}$ satisfy (\ref{4.10}), $\log_{y_{i}}l(t)$ is a vertex in the all-right spherical complex $\Sigma_{y_{i}}S$. Thus $\log_{y_{i}}l(t)$ is also a suspension point and (\ref{4.10}) must hold at $y=y_i$, which is a contradiction. 

We claim next $l_{i}^{t}$ is parallel to $l_{i}^{t'}$ for any $t>r'$ and $t'>r'$. In fact, let us fix $t$, by the discussion before Lemma \ref{4.9} and the uniqueness of $l_{i}^{t'}$, we know the claim is true for $|t'-t|<\epsilon$ ($\epsilon$ depends on $t$). For the general case, we can apply a covering argument as before. 

Fix $t_{0}>r'$. By Lemma \ref{2.4}, for all $i$,
\begin{equation}
\label{4.11}
\angle_{T}(l_{i}^{t_{0}},l)=\lim_{t\to +\infty}\angle_{l(t)} (l_{i}^{t},l)=\alpha_{i}<\frac{\pi}{2}\,.
\end{equation} 
Thus $\angle_{T}(l_{i}^{t_{0}},l)=\angle_{l(t_{0})}(l_{i}^{t_{0}},l)=\alpha_{i}$ for all $i$. It follows that $l$ and $l_{i}^{t_{0}}$ bound a flat sector by Lemma \ref{2.5}.

We fix a pair $i,j$ ($i\neq j$) and parametrize $l_{i}^{t_{0}}$ by arc length. We can assume without loss of generality that $l(t_{0})$ is in the $0$--skeleton. Let $\{h_{m}\}_{m=1}^{\infty}$ be the collection of hyperplanes such that $h_{m}\cap l_{i}^{t_{0}}=l_{i}^{t_{0}}(m+\frac{1}{2})$. (\ref{4.11}) and Lemma \ref{2.4} imply the $CAT(0)$ projection of $l$ onto $l_{i}^{t_{0}}$ is surjective, thus there exists a sequence $\{t_{m}\}_{m=1}^{\infty}$ such that $l(t_{m})\in h_{m}$. Note that $l_{j}^{t_{m}}$ (recall that $j\neq i$) starts at $l(t_{m})$, $\angle_{l(t_{m})}(l_{i}^{t_{m}},l_{j}^{t_{m}})=\frac{\pi}{2}$ and $l_{i}^{t_{m}}$ is orthogonal to $h_{m}$, so $l_{j}^{t_{m}}\subset h_{m}$. 

By convexity of $h_{m}$, we can find a geodesic ray $c_{m}$ which starts at $l_{i}^{t_{0}}(m+\frac{1}{2})$, stays inside $h_{m}$ and is asymptotic to $l_{j}^{t_{m}}$ for every $m$, thus by Lemma \ref{2.4},
\begin{equation}
\label{4.12}
\angle_{T}(l_{i}^{t_{0}},l_{j}^{t_{0}})=\lim_{m\to+\infty}\angle_{l_{i}^{t_{0}}(m+\frac{1}{2})}(l_{i}^{t_{0}},c_{m})=\frac{\pi}{2}\,.
\end{equation}

By (\ref{4.12}), $\angle_{T}(l_{i}^{t_{0}},l_{j}^{t_{0}})=\angle_{l(t_{0})}(l_{i}^{t_{0}},l_{j}^{t_{0}})=\frac{\pi}{2}$ for $i\neq j$. By Lemma \ref{2.18}, we know the geodesic rays  $\{l_{i}^{t_{0}}\}_{i=1}^{k}$ span a straight orthant $O$. Moreover, Lemma \ref{2.8} together with (\ref{4.11}) implies $\partial_{T}l\in \partial_{T}O$. By Remark \ref{2.16}, we can replace $O$ by an orthant subcomplex which is Hausdorff close to $O$.
\end{proof}

\subsection{Cycle at infinity}
\label{cycle at infinity}
By Lemma \ref{4.9} and Corollary \ref{4.5}, there exists a dense subset $G$ of $\partial_{T}S$ such that for any $v\in G$, there exists an orthant subcomplex $O_{v}\in X$ such that $v\in\bigtriangleup_{v}=\partial_{T}O_{v}$. Denote the vertices of $\bigtriangleup_{v}$ by $Fr(v)$, then $Fr(v)\subset \partial_{T}S$ by Lemma \ref{4.9}.

It is clear that $G\subset\cup_{v\in G}\bigtriangleup_{v}$. We claim $\cup_{v\in G}\bigtriangleup_{v}$ is a finite union of all-right spherical simplexes. In fact, it suffices to show $\cup_{v\in G}Fr(v)$ is a finite set, which follows from Lemma \ref{2.17}, Lemma \ref{3.14} and Lemma \ref{4.9} (note that each point in $\cup_{v\in G}Fr(v)$ is represented by a straight geodesic contained in $S$). 

Moreover, $\cup_{v\in G}\Delta_{v}$ has the structure of a finite simplicial complex. Take two simplexes $\Delta_{v_{1}}$ and $\Delta_{v_{2}}$, we know $\Delta_{v_{i}}=\partial_{T}O_{v_{i}}$ for orthant subcomplex $O_{v_{i}}$, and Remark \ref{2.13} implies $\Delta_{v_{1}}\cap \Delta_{v_{2}}$ is a face of $\Delta_{1}$ (or $\Delta_{2}$).

We endow $K=\cup_{v\in G}\bigtriangleup_{v}\subset \partial_{T}Y$ with the angular metric and denote the Euclidean cone over $K$ by $CK$, which is a subset of $C_{T}X$.

\begin{lem}\
\label{4.13}
\begin{enumerate}
\item $K$ is a topologically embedded finite simplicial complex in $\partial_{T}X$.
\item $CK$ is linearly contractible.
\end{enumerate}
\end{lem}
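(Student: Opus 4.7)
The plan breaks into two parts. For (1), the simplicial complex structure on $K$ is essentially already exhibited by the discussion preceding the lemma; I will package this into an abstract finite simplicial complex $K^{\mathrm{abs}}$ together with a tautological map to $\partial_T X$, and check via a standard compactness argument that this map is a topological embedding onto $K$. For (2), the natural radial retraction of $CK$ toward the cone point $o\in C_TX$ will serve, and the Euclidean cone distance formula will immediately give that each time-slice is $(1-t)$-Lipschitz.

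For (1), recall from the discussion before the lemma that $V:=\bigcup_{v\in G} Fr(v)\subset\partial_T X$ is finite and that, via Remark \ref{2.13}, $\Delta_{v_1}\cap\Delta_{v_2}$ is a common face of $\Delta_{v_1}$ and $\Delta_{v_2}$ spanned by $Fr(v_1)\cap Fr(v_2)$. Taking $K^{\mathrm{abs}}$ to be the abstract simplicial complex with vertex set $V$ whose simplices are the subsets spanning a face of some $\Delta_v$ gives a well-defined finite simplicial complex. The tautological map $j\co K^{\mathrm{abs}}\to\partial_T X$ is continuous on each closed simplex because $\Delta_v=\partial_T O_v$ is isometrically embedded in $\partial_T X$ as the Tits boundary of the orthant $O_v$. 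For injectivity, if $j(p)=j(q)$ with $p\in\operatorname{int}(\sigma_p)\subset\Delta_{v_1}$ and $q\in\operatorname{int}(\sigma_q)\subset\Delta_{v_2}$, then $j(p)$ lies in the common face $F=\Delta_{v_1}\cap\Delta_{v_2}$, and both $\sigma_p,\sigma_q$ must coincide with the minimal face of $F$ containing $j(p)$; hence $\sigma_p=\sigma_q$ and $p=q$. Since $K^{\mathrm{abs}}$ is compact and $\partial_T X$ is Hausdorff, the continuous bijection $j\co K^{\mathrm{abs}}\to K$ is a homeomorphism, giving the topological embedding claimed.

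For (2), every $p\in CK\setminus\{o\}$ is uniquely of the form $p=(\xi,r)$ with $\xi\in K$ and $r=d(o,p)>0$, and by definition of the Euclidean cone the segment $\{(\xi,s):0\le s\le r\}$ lies entirely in $CK$. Define
\begin{equation*}
H\co CK\times[0,1]\to CK,\qquad H((\xi,r),t):=(\xi,(1-t)r),\quad H(o,t):=o.
\end{equation*}
The Euclidean cone distance formula
\begin{equation*}
d\bigl((\xi_1,r_1),(\xi_2,r_2)\bigr)^{2}=r_1^{2}+r_2^{2}-2r_1r_2\cos d(\xi_1,\xi_2),
\end{equation*}
where $d(\xi_1,\xi_2)$ denotes the angular distance (taken in $[0,\pi]$), immediately yields $d(H(p,t),H(q,t))=(1-t)\,d(p,q)$ for all $p,q\in CK$ and all $t\in[0,1]$. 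Thus each slice $H(\cdot,t)$ is $(1-t)$-Lipschitz and $H$ is the desired linear contraction of $CK$ to $o$. The only delicate point in the whole proof is the face-intersection claim underlying (1), which is the content of Remark \ref{2.13} applied to pairs of orthant subcomplexes; everything else is routine bookkeeping.
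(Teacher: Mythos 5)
Your argument for part (1) is correct but takes a different route from the paper's: you check that the tautological map from an abstract finite simplicial complex to $\partial_T X$ is a continuous bijection from a compact space onto $K$ in a Hausdorff space, hence a homeomorphism. The paper instead shows that the angular metric $\angle_T$ and the intrinsic length metric $d_l$ on $K$ are bi-Lipschitz equivalent via the estimate (\ref{4.15}); this quantitative strengthening is not merely a different proof of part (1) but is the input both for the paper's own proof of part (2) and for Lemma \ref{4.39}.

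Your argument for part (2) has a genuine gap. The paper's definition of \textquotedblleft linearly contractible\textquotedblright\ (stated right after the lemma) requires a constant $C$ so that every cycle of diameter $\le d$ can be filled by a chain of diameter $\le Cd$, and this uniform local filling is exactly what is used for the skeleton-by-skeleton extension in Lemma \ref{4.16}. The radial contraction $H$ you propose does not produce such fillings: a cycle $z$ of small diameter $d$ sitting at radius $r\gg d$ from the cone point $o$ is swept all the way to $o$, so the homotopy track $\bigcup_{t\in[0,1]}H(z\times\{t\})$ has diameter of order $r$, not $Cd$. The $(1-t)$-Lipschitz bound on each time slice controls the diameter of $H(z,t)$ for a fixed $t$, but says nothing about the diameter of the full track. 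Nor can one fix this by geodesically coning a small cycle to a nearby point of $z$, since $CK$ is in general not convex in $C_T X$ and hence not itself $CAT(0)$. The paper's route --- transfer the question to $(K,d_l)$ via the bi-Lipschitz comparison from part (1), then use compactness and cone neighbourhoods of the piecewise spherical complex $K$ together with an induction on dimension --- is what actually produces the local filling estimate; your version of part (1) does not supply the bi-Lipschitz input this needs, and your part (2) skips the local analysis entirely.
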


Recall that linearly contractible means there exists a constant $C$ such that for any $d>0$, every cycle of diameter $\le d$ can be filled in by a chain of diameter $\le Cd$.
\begin{proof}
Let $\angle_{T}$ be the angular metric on $K$ and $d_{l}$ be the length metric on $K$ as an all-right spherical complex. Our goal is to show $\textmd{Id}\co(K,\angle_{T})\to(K,d_{l})$ is a bi-Lipschitz homeomorphism. Let $\{\Delta_{i}\}$ be the collection of faces of $K$ (each $\Delta_{i}$ is an all-right spherical simplex). Suppose $\{O_{i}\}_{i=1}^{N}$ are orthant subcomplexes of $X$ such that $\partial_{T}O_{i}=\Delta_{i}$. If points $x$ and $y$ are in the same $\Delta_{i}$ for some $i$, then 
\begin{equation}
\label{4.14}
d_{l}(x,y)=\angle_{T}(x,y)\,.
\end{equation}
If $x$ and $y$ are not in the same simplex, then we put $\Delta_{i}=Supp(x)$, $\Delta_{j}=Supp(y)$ and $\Delta_{k}=\Delta_{i}\cap\Delta_{j}$. Assume without loss of generality that $d_{l}(x,\Delta_{k})\ge\frac{1}{2}d_{l}(x,y)$. Let $(Y_{1},Y_{2})=\inc (O_{i},O_{j})$. Then $\partial_{T}Y_{1}=\partial_{T}Y_{2}=\Delta_{k}$, moreover, it follows from (\ref{2.11}) and Lemma \ref{2.4} that: 
\begin{equation}
\label{4.15}
\angle_{T}(x,y)\ge 2\arcsin(\frac{A}{2}\sin(d_{l}(x,\Delta_{k})))\ge 2\arcsin(\frac{A}{2}\sin(\frac{1}{2}d_{l}(x,y)))\,,
\end{equation}
here $A$ can be chosen to be independent of $i$ and $j$ since $\{O_{i}\}_{i=1}^{N}$ is a finite collection. (\ref{4.14}) and (\ref{4.15}) imply $\textmd{Id}\co(K,\angle_{T})\to(K,d_{l})$ is a bi-Lipschitz homeomorphism, thus (1) is true.

To see (2), it suffices to prove $(K,\angle_{T})$ is linearly locally contractible, i.e. there exists $C<\infty$ and $R>0$ such that for any $d<R$, every cycle of diameter $\le d$ can be filled in by a chain of diameter $\le Cd$. By above discussion, we only need to prove $(K,d_{l})$ is locally linearly contractible. 

Since $(K,d_{l})$ is compact and can be covered by finitely many cone neighbourhood (see Theorem \ref{2.1}), it suffices to show each cone neighbourhood is linearly contractible, but any cone neighbourhood is isometric to a metric ball in the spherical cone of some lower dimensional finite piecewise spherical complex, thus we can finish the proof by induction on dimension.
\end{proof}

Since $G$ is a dense subset of $\partial_{T}S$ and $K$ is compact, then $\partial_{T}S\subset K$ and $C_{T}S\subset CK\subset C_{T}X$. We denote the base point of $C_{T}X$ by $o$. 

\begin{lem}
\label{4.16}
$K$ has the structure of a $(n-1)$--simplicial cycle.
\end{lem}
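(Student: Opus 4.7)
The plan is to realize the formal $\Bbb Z/2$-sum $[K]:=\sum_\Delta \Delta$ over top-dimensional simplices of $K$ as the asymptotic boundary cycle of $[\sigma]$. Since such an asymptotic boundary is tautologically a cycle, this identification gives the lemma.

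To produce the asymptotic cycle, I would fix $p\in S$ and, for each $r>0$, consider the relative class $[\sigma_r]\in H_n(S\cap \bar B(p,r), S\cap S(p,r))$ obtained by restricting $\sigma$ to $\bar B(p,r)$; its image $[\partial\sigma_r]\in H_{n-1}(S\cap S(p,r))$ under the connecting homomorphism is a cycle on the sphere slice. By Lemma \ref{3.15} (which uses the growth condition $[\sigma]\in H^{\textmd{p}}_{n,n}(X)$ via the packing Lemma \ref{3.14}), the rescaled pairs $\tfrac{1}{r}(S\cap \bar B(p,r),\, S\cap S(p,r))$ converge in Gromov--Hausdorff sense to $(\bar B(o,1)\cap C_T S,\,\partial_T S)$. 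Combined with the isolated suspension property of links far from $p$ (Theorem \ref{3.31}) and the finiteness of the combinatorial family $\{\Sigma_y S\}$, this allows the rescaled cycles $[\partial\sigma_r]$ to stabilize, for large $r$, as a single cellular $\Bbb Z/2$-cycle $[\beta]$ on the simplicial complex $K$.

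It then remains to identify $[\beta]$ with $[K]$, i.e.\ to show every top simplex $\Delta$ of $K$ appears with coefficient $1$. Each $\Delta=\partial_T O$ for a straight orthant subcomplex $O$ furnished by Lemma \ref{4.9} together with Remark \ref{2.16}, and iterated use of the geodesic extension property of $S$ (Lemma \ref{3.6}, together with Lemma \ref{link of support set} at intermediate points) forces $O\subset S$. Picking a point $q$ deep inside $O$ at large distance from $p$, the fact $q\in S$ gives $i_*[\sigma]\neq 0$ in $H_n(X,X\setminus\{q\})$; the splitting $\Sigma_{l(t)}S=\Sigma_{l(t)}l \ast Y_t$ from Lemma \ref{4.8} applied along rays of $O$ further shows that outside a large ball the chain $\sigma$ restricts, in a progressively thick neighbourhood of $O$, to the fundamental chain of $O$. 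Radial projection of this contribution to the unit sphere at infinity produces exactly the simplex $\Delta$ with coefficient $1$. Since by Corollary \ref{4.5} the constant-$\Delta_{mod}$ rays are dense in $\partial_T S$, every top simplex of $K$ arises this way, so $[\beta]=[K]$.

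The main obstacle I anticipate is the chain-level passage to the limit: turning the Gromov--Hausdorff convergence of $\tfrac{1}{r}(S\cap \bar B(p,r))$ into a stable cellular identification between the $(n-1)$-cycles $[\partial\sigma_r]$ and cellular cycles on $K$. This is precisely the step that Kleiner--Lang \cite{quasimini} handle via metric currents; here one would instead exploit the combinatorial finiteness from Lemma \ref{3.14} together with the eventual suspension structure of links (Theorem \ref{3.31}) to reduce the limit to matching finitely many fixed cell types, at which point the identification becomes algebraic.
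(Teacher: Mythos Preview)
Your approach is genuinely different from the paper's, and it has a real gap beyond the one you already flag.

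The paper does not take a limit of sphere-slice cycles. Instead it builds, for each small $\epsilon>0$, a single continuous proper map $f_\epsilon\co S\to CK$: first a discrete map using a finite family of rays $\{l_i\}$ giving $\epsilon t$-nets in $B(p,t)\cap S$, then a continuous approximation via a nerve of a covering of $S$, using the linear contractibility of $CK$ (Lemma~\ref{4.13}) to extend over skeleta. Pushing $[\sigma]$ forward gives $[\alpha]=f_{\epsilon*}[\sigma]\in H^{\textmd{p}}_n(CK)$, and the lemma reduces to showing $S_{[\alpha]}=CK$. This is done indirectly: compose with the $1$-Lipschitz $\log_p\co C_TX\to X$; by a straight-line homotopy estimate, $\log_p\circ f_\epsilon$ is properly homotopic to the inclusion $S\hookrightarrow X$, so $(\log_p)_*[\alpha]=[\sigma]$ and hence $\log_p(S_{[\alpha]})\supset S$ by Lemma~\ref{3.2}. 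Feeding this back through the distortion estimates for $f_\epsilon$ forces the subcomplex $K'\subset K$ with $S_{[\alpha]}=CK'$ to contain $\partial_T S$; since $K$ is by construction the minimal subcomplex containing $\partial_T S$, one gets $K'=K$. No Gromov--Hausdorff limit of chains is ever taken: the only limit in the argument is the density estimate~(\ref{4.26}), which is a metric inequality, not a homological one.

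Your second step contains an unjustified claim: ``iterated use of the geodesic extension property \dots\ forces $O\subset S$.'' Lemma~\ref{3.6} gives \emph{some} extension of $\overline{py}$ into $S$, not that a \emph{prescribed} extension (the one lying along a face of $O$) remains in $S$; and the splitting of Lemma~\ref{4.8} is along a single ray, not simultaneously along all rays of an orthant. In the paper the inclusion of orthants into $S$ is the content of Theorem~\ref{4.48}, proved \emph{after} Lemma~\ref{4.16} via the cubical coning map of Section~\ref{cubical filling} and the isomorphism of Corollary~\ref{4.33}. Assuming it here is circular. Even if you weaken the claim to ``$S$ agrees with $O$ in a thickening neighbourhood of $O$ far from $p$,'' you would need a product-neighbourhood statement for $O$ inside $S$, and the paper provides this only along individual rays (the discussion before Lemma~\ref{4.9}), not over the full orthant.

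What the paper's route buys is precisely the avoidance of both obstacles: no chain-level limit, and no need to know $O\subset S$---only that $\partial_T S$ is dense in $K$ and that support sets behave well under the single map $\log_p\circ f_\epsilon$.
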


\begin{proof}
In the following proof, we will use $d$ to denote the metric on $X$, and use $\bar{d}$ to denote the metric on $C_TX$.

Pick a base point $p\in X$, by the proof of Lemma \ref{3.15}, we know for any $\epsilon>0$, there exists a finite collection of constant speed geodesic rays $\{l_{i}\}_{i=1}^{N}$ and $R_{\epsilon}<\infty$ such that $l_{i}(t)\in S$ and $\{l_{i}(t)\}_{i=1}^{N}$ is a $\epsilon t$--net in $B(p,t)\cap S$ for $t\ge R_{\epsilon}$. Denote $\xi_{i}=\partial_{T}l_{i}$ and define $f_{\epsilon}\co S\to C_{T}S\subset CK$ by sending $l_{i}(t)$ to the point in $\overline{o\xi_{i}}\subset C_{T}S$ which has distance $d(l_{i}(t),p)$ from $o$ ($t\ge R_{\epsilon}$). For $x\notin\cup_{i=1}^{N}l_{i}[R_{\epsilon},\infty)$, we pick a point $y\in\cup_{i=1}^{N}l_{i}[R_{\epsilon},\infty)$ which is nearest to $x$ and define $f_{\epsilon}(x)=f_{\epsilon}(y)$.

It is clear that
\begin{equation}
\label{4.17}
|d(x,p)-\bar{d}(f_{\epsilon}(x),o)|\le \epsilon\max\{d(x,p),R_{\epsilon}\}
\end{equation}
for any $x\in S$ and 
\begin{equation}
\label{4.18}
|d(x,y)-\bar{d}(f_{\epsilon}(x),f_{\epsilon}(y))|\le\epsilon\max\{d(p,x),d(p,y),R_{\epsilon}\}.
\end{equation}
for any $x\in S$ and $y\in S$. Moreover,
\begin{equation}
\label{4.19}
\bar{d}_{H}(f_{\epsilon}(B(p,r)\cap S),B(o,r)\cap C_{T}S)\le \epsilon\max\{r,R_{\epsilon}\}\,.
\end{equation}
We might need to pick a larger $R_{\epsilon}$ for (\ref{4.19}).

We want to approximate $f_{\epsilon}$ by a continuous map. Let us cover $S$ by collection of open sets $\{B(x,r_{x})\cap S\}_{x\in S}$, here $r_{x}=\epsilon\max\{d(x,p),R_{\epsilon}\}$. Since $S$ has topological dimension $\le n$, this covering has a refinement $\{U_{i}\}_{i=1}^{\infty}$ of order $\le n$ (see Chapter V of \cite{MR0006493}). Note that $diam(U_{i})\le 2\epsilon\max\{d(p,U_{i}),R_{\epsilon}\}$. Denote the nerve of $\{U_{i}\}_{i=1}^{\infty}$ by $N$, which is a simplicial complex of dimension $\le n$. 

Now we define a map $b'\co N\to CK$ as follows: for any vertex $v_{i}\in L$, pick $x_{i}\in U_{i}$ where $U_{i}$ is the open set associated with vertex $v_{i}$, then set $b'(v_{i})=f_{\epsilon}(x_{i})$. Then we use the linear contractibility of $CK$ to extend the map skeleton by skeleton to get $b'$. By choosing a partition of unity subordinate to the covering $\{U_{i}\}_{i=1}^{\infty}$, we obtain a barycentric map $b$ from $S$ to the nerve $N$ (see Chapter V of \cite{MR0006493}), then the continuous map $b'\circ b\co S\to CK$ also satisfies (\ref{4.17})--(\ref{4.19}) with $\epsilon$ replaced by $L'\epsilon$, $L'$ is some constant which only depends on the linear contractibility constant of $CK$. So we can assume without loss of generality that $f_{\epsilon}\co S\to CK$ is continuous and (\ref{4.17})--(\ref{4.19}) still holds for $f_{\epsilon}$.

Recall that $S$ is the support set of some top dimensional proper homology class $[\sigma]$. We can also view $[\sigma]$ as the fundamental class of $S$ and assume $\sigma$ is the proper singular cycle representing this class. If $\alpha=f_{\epsilon}(\sigma)$, then $[\alpha]\in H^{\textmd{p}}_{n}(CK)$ since $f_{\epsilon}$ is a proper map by (\ref{4.17}). Our next goal is to show
\begin{equation}
\label{4.20}
S_{[\alpha]}=CK
\end{equation}
for $\epsilon$ small enough. Since $K$ is a simplicial complex, (\ref{4.20}) would imply $K$ also has a fundamental class whose support set is exactly $K$ itself, hence Lemma \ref{4.16} follows.

Recall that we have a 1--Lipschitz logarithmic map $\log_{p}\co C_{T}X\to X$ sending base point $o$ to $p$. By (\ref{4.17}) and (\ref{4.18}), there exists a constant $L<\infty$ such that:
\begin{equation}
\label{4.21}
\bar{d}(z,o)=d(\log_{p}(z),p)
\end{equation}
for all $z\in\textmd{Im}\ f_{\epsilon}$ and 
\begin{equation}
\label{4.22}
|\bar{d}(z,w)-d(\log_{p}(z),\log_{p}(w))|\le L\epsilon\max\{\bar{d}(o,z),\bar{d}(o,w),R_{\epsilon}\}
\end{equation}
for all $z,w\in\textmd{Im}\ f_{\epsilon}$. Moreover,
\begin{equation}
\label{4.23}
d(\log_{p}\circ f_{\epsilon}(x),x)\le L\epsilon\max\{d(x,p),R_{\epsilon}\}
\end{equation}
for all $x\in S$. 

By (\ref{4.21}), $\log_{p}$ is proper. Let $\beta=\log_{p}(\alpha)=\log_{p}\circ f_{\epsilon}(\sigma)$. By (\ref{4.23}), the geodesic homotopy between $\log_{p}\circ f_{\epsilon}\co S\to X$ and the inclusion map $i\co S\to X$ is proper, thus $[\beta]=[\sigma]$ and $S_{[\beta]}=S_{[\sigma]}=S$. By Lemma \ref{3.2},
\begin{equation}
\label{4.24}
\log_{p}(S_{[\alpha]})\supset S_{[\beta]}=S\,.
\end{equation}
(\ref{4.24}), (\ref{4.22}) and (\ref{4.23}) imply there exists $L<\infty$ such that
\begin{equation}
\label{4.25}
\bar{d}_{H}(B(o,r)\cap S_{[\alpha]},B(o,r)\cap\textmd{Im}\ f_{\epsilon})\le L\epsilon\max\{r,R_{\epsilon}\}\,.
\end{equation}
This together with (\ref{4.19}) imply
\begin{equation}
\label{4.26}
\bar{d}_{H}(B(o,r)\cap S_{[\alpha]},B(o,r)\cap C_{T}S)\le L\epsilon\max\{r,R_{\epsilon}\}\,.
\end{equation}

Since $K$ is a simplicial complex, $S_{[\alpha]}=CK'$ where $K'$ is some subcomplex of $K$. Recall that by the construction of $K$, the only subcomplex of $K$ that contains $\partial_{T}S$ is $K$ itself. (\ref{4.26}) implies the Hausdorff distance between $\partial_{T}S$ and $K'$ is bounded above by $L\epsilon$, thus for $\epsilon$ small enough, $K'=K$ and (\ref{4.20}) holds. We also know $\partial_{T}S$ is dense in $K$ from this. 
\end{proof}

We actually defined a boundary map 
\begin{equation}
\label{4.27}
\partial\co H^{\textmd{p}}_{n,n}(X)\to H_{n-1}(\partial_{T}X)
\end{equation}
in the proof of above lemma, namely we send $[\sigma]\in H^{\textmd{p}}_{n,n}(X)$ to $f_{\epsilon\ast}[\sigma]\in H^{\textmd{p}}_{n}(C_{T}X)$ (for $\epsilon$ small enough), which would pass to an element in $H_{n-1}(\partial_{T}X)$ via $H^{\textmd{p}}_{n}(C_{T}X)\to H_{n}(C_{T}X,C_{T}X\setminus\{o\})\cong H_{n-1}(\partial_{T}X)$.

In the construction of $f_{\epsilon}$, we have to choose a base point, the geodesic rays $\{l_{i}(t)\}_{i=1}^{N}$, the covering $\{U_{i}\}_{i=1}^{\infty}$ and the maps $b$, $b'$, but different choices give maps in the same proper homotopy class if the corresponding $\epsilon$ is small enough. Also the geodesic homotopy from $f_{\epsilon_{1}}$ to $f_{\epsilon_{2}}$ is proper if $\epsilon_{1}$ and $\epsilon_{2}$ are small enough, so the above boundary map is well-defined.

Next we construct a map in the opposite direction as follows. Let $\eta'$ be a Lipschitz ($n-1$)-cycle in $\partial_{T}X$. Let $\alpha'$ be the cone over $\eta'$ (note that one can cone off elements in $C_{n-1}(\partial_{T}X)$ to obtain elements in $C^{\textmd{p}}_{n}(C_{T}X)$, which induces a homomorphism $H_{n-1}(\partial_{T}X)\to H^{\textmd{p}}_{n}(C_{T}X)$). Actually $[\alpha']\in H^{\textmd{p}}_{n,n}(C_{T}X)$ since the cone over a Lipschitz cycle would satisfy the required growth condition. If $\sigma'=\log(\alpha')$, then $[\sigma']\in H^{\textmd{p}}_{n,n}(X)$ since $\log$ is 1--Lipschitz. Now we define the \textquotedblleft conning off\textquotedblright\ map
\begin{equation}
\label{4.28}
c\co H_{n-1}(\partial_{T}X)\to H^{\textmd{p}}_{n,n}(X)
\end{equation}
by sending $[\eta']$ to $[\sigma']$. The base point in the definition of $\log$ does not matter because different base points give maps which are of bounded distance from each other. It is easy to see $c$ is a group homomorphism.

For $\epsilon>0$, pick a finite $\epsilon$--net of $\textmd{Im}\ \eta'$ and denote it by $\{\xi_{i}\}_{i=1}^{N}$. Suppose $p=\log(o)$ and suppose $\{l_{i}\}_{i=1}^{N}$ are the unit speed geodesic rays emanating from $p$ with $\partial_{T}l_{i}=\xi_{i}$. Pick $R_{\epsilon}>0$ such that 
\begin{equation}
\label{4.29}
|\frac{d(l_{i}(t),l_{j}(t))}{t}-\lim_{t\to +\infty}\frac{d(l_{i}(t),l_{j}(t))}{t}|<\epsilon
\end{equation}
for $t>R_{\epsilon}$. Let $I_{\sigma'}$ be the smallest subcomplex of $X$ which contains $\textmd{Im}\ \sigma'$. By using the rays $\{l_{i}\}_{i=1}^{N}$ as in the proof of Lemma \ref{4.16}, we can construct a continuous proper map $g_{\epsilon}\co I_{\sigma'}\to C_{T}X$ skeleton by skeleton such that
\begin{equation}
\label{4.30}
d(g_{\epsilon}\circ\log(x),x)\le L\epsilon\max\{d(x,o),R_{\epsilon}\}
\end{equation}
for $x\in\textmd{Im}\ \alpha'$, which implies $g_{\epsilon\ast}[\sigma']=[\alpha']$ for $\epsilon$ small. 

Let $[\sigma'']$ be the fundamental class of $S_{[\sigma']}$ and let $f_{\epsilon\ast}\co S_{[\sigma']}\to C_{T}X$ be the map in Lemma \ref{4.16}. We claim that $g_{\epsilon\ast}[\sigma']=f_{\epsilon\ast}[\sigma'']$ for $\epsilon$ small, which would imply
\begin{equation}
\label{4.31}
\partial\circ c=\textmd{Id}\,.
\end{equation}
To see the claim, note that $[\sigma']=[\sigma'']$ in $H^{\textmd{p}}_{n}(I_{\sigma'})$. For $\epsilon$ small, there is a proper geodesic homotopy between $g_{\epsilon}|_{S_{[\sigma']}}$ and $f_{\epsilon}$ by $(\ref{4.23})$ and $(\ref{4.30})$, thus $g_{\epsilon\ast}[\sigma'']=f_{\epsilon\ast}[\sigma'']$, moreover $g_{\epsilon\ast}[\sigma'']=g_{\epsilon\ast}[\sigma']$, so $f_{\epsilon\ast}[\sigma'']=g_{\epsilon\ast}[\sigma']=[\alpha']$.

From $(\ref{4.23})$ and the discussion after we know 
\begin{equation}
\label{4.32}
c\circ\partial=\textmd{Id}\,.
\end{equation}
Thus $\partial$ is also a group homomorphism and we have the following

\begin{cor}
\label{4.33}
If $X$ is an $n$--dimensional $CAT(0)$ cube complex, then:
\begin{enumerate}
\item $\partial\co H^{\textmd{p}}_{n,n}(X)\to H_{n-1}(\partial_{T}X)$ is a group isomorphism, and the inverse is given by $ c\co H_{n-1}(\partial_{T}X)\to H^{\textmd{p}}_{n,n}(X)$.
\item If $q\co X\to X'$ is a quasi-isometric embedding from $X$ to another $n$--dimensional $CAT(0)$ cube complex $X'$, then $q$ induces a monomorphism $q_{\ast}\co H_{n-1}(\partial_{T}X)\to H_{n-1}(\partial_{T}X')$. If $q$ is a quasi-isometry, then $q_{\ast}$ is an isomorphism.
\end{enumerate}
\end{cor}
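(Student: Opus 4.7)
Part~(1) requires essentially no new work. The coning map $c$ is manifestly additive by construction, and the preceding discussion has already established equations~(\ref{4.31}) and~(\ref{4.32}) via proper geodesic homotopies between the auxiliary maps $f_\epsilon$ and $g_\epsilon$, giving $\partial\circ c=\mathrm{Id}$ and $c\circ\partial=\mathrm{Id}$. Collecting these observations forces $c$ and $\partial$ to be mutually inverse group isomorphisms.

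For part~(2), the plan is to transfer the question from Tits boundary homology to $H^{\mathrm{p}}_{n,n}$ via part~(1). Using uniform contractibility of $X'$, I would first replace $q$ by a proper Lipschitz approximation $\tilde q\co X\to X'$; any two such approximations lie at bounded distance, so their $CAT(0)$ geodesic interpolation is a proper homotopy, making the induced pushforward on proper singular homology canonical. This pushforward preserves the growth class $H^{\mathrm{p}}_{n,n}$: by Lemma~\ref{3.2}, $S_{\tilde q_\ast[\sigma]}\subseteq \tilde q(S_{[\sigma]})$, and the $(L,A)$--quasi-isometric embedding estimate, which forces $\tilde q^{-1}(B(p',r))$ to lie in a ball of radius $L(r+A)$, combined with the Lipschitz bound upgrades the $Cr^n$ Hausdorff-measure bound on $S_{[\sigma]}$ to a corresponding bound on $S_{\tilde q_\ast[\sigma]}$. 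Then set
\begin{equation*}
q_\ast:=\partial\circ\tilde q_\ast\circ c\co H_{n-1}(\partial_T X)\to H_{n-1}(\partial_T X'),
\end{equation*}
so by part~(1), $q_\ast$ is injective (resp.\ an isomorphism) precisely when $\tilde q_\ast$ on $H^{\mathrm{p}}_{n,n}$ is.

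The main content is the injectivity of $\tilde q_\ast$. For $[\sigma]\ne 0$ with support $S\ne\emptyset$, the plan is to track the cycle-at-infinity construction on both sides of $\tilde q$. The geodesic extension property (Lemma~\ref{3.6}) produces geodesic rays in $S$ pointing in directions dense in $\partial_T S$; choosing the $\epsilon$-dense family $\{l_i\}$ of such rays that witnesses the approximation $f_\epsilon\co S\to C_T X$ used to define $\partial[\sigma]$, the quasi-geodesic images $\{\tilde q\circ l_i\}$ in $X'$ should produce, via a parallel application of Lemma~\ref{3.15} to $\tilde q(S)$, a compatible approximation $f'_{\epsilon'}$ on the image side. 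A proper homotopy between these two approximations would then identify $\partial\tilde q_\ast[\sigma]$ with a non-trivial pushforward of $\partial[\sigma]$. When $q$ is additionally a quasi-isometry, a coarse inverse $\bar q$ yields $\bar q_\ast$ satisfying $\bar q_\ast\circ\tilde q_\ast=\mathrm{Id}=\tilde q_\ast\circ\bar q_\ast$ through the proper geodesic homotopies coming from $\bar q\circ\tilde q$ and $\tilde q\circ\bar q$ being at bounded distance from the respective identities, whence $q_\ast$ is an isomorphism.

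The delicate point --- and the main obstacle --- is making this naturality of $\partial$ with respect to $\tilde q_\ast$ precise. One must carefully match the two cone approximations under the reparametrization induced by $\tilde q$ (the $\tilde q$-images of the rays $l_i$ do not run at unit speed), and, since Lemma~\ref{3.2} only gives the one-sided containment $S_{\tilde q_\ast[\sigma]}\subseteq\tilde q(S_{[\sigma]})$, extra work is required to rule out mod~$2$ cancellation in the pushforward and to show that $\tilde q(S_{[\sigma]})$ is not much larger than $S_{\tilde q_\ast[\sigma]}$, all without relying on any \emph{a~priori} Morse-type stability of quasi-geodesic rays in $CAT(0)$ spaces, which fails in general.
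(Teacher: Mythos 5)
Part~(1) of your proposal matches the paper: it follows directly from~(\ref{4.31}) and~(\ref{4.32}). For part~(2), the isomorphism statement (when $q$ is a quasi-isometry) is also handled the same way as the paper, via a coarse inverse and proper geodesic homotopies. The problem is the monomorphism statement when $q$ is merely a quasi-isometric embedding: you correctly identify this as the ``main content,'' sketch an approach through cycle-at-infinity bookkeeping, and then honestly flag it as an unresolved obstacle. That flag is warranted --- the sketch does not close, for exactly the reason you name: there is no a~priori Morse-type stability for the quasi-geodesic rays $\tilde q\circ l_i$, so one cannot match the two cone approximations $f_\epsilon$ and $f'_{\epsilon'}$ by a controlled proper homotopy, and the one-sided containment from Lemma~\ref{3.2} does not by itself rule out mod~$2$ cancellation.

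The paper's proof of injectivity is both different and much shorter. Instead of working at infinity, it restricts the codomain. Let $I_q$ be the smallest subcomplex of $X'$ containing $\operatorname{Im}\tilde q$; then $\tilde q\co X\to I_q$ is a genuine quasi-isometry (not just an embedding), since $I_q$ lies within bounded Hausdorff distance of $\operatorname{Im}\tilde q$. The coarse-inverse argument you already gave applies, so $\tilde q_\ast\co H^{\mathrm{p}}_{n,n}(X)\to H^{\mathrm{p}}_{n,n}(I_q)$ is an isomorphism. The injectivity of the composite then reduces to the injectivity of the inclusion-induced map $H^{\mathrm{p}}_{n,n}(I_q)\hookrightarrow H^{\mathrm{p}}_{n,n}(X')$, which is a purely dimensional fact: $I_q$ is a subcomplex of the $n$-dimensional cube complex $X'$, so a nontrivial proper cellular $n$-cycle in $I_q$ (which has nonempty support, being top dimensional) cannot become a boundary in $X'$, since $X'$ has no $(n+1)$-cells. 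This factorization eliminates the need to track cycles at infinity across a quasi-isometric embedding and entirely sidesteps the issue of Morse stability that blocks your route. You should adopt this factor-through-$I_q$ step; the rest of your argument is sound and can then transfer to $H_{n-1}(\partial_T)$ via part~(1) exactly as you proposed.
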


\begin{proof}
We only need to prove $(2)$, let us approximate $q$ by a Lipschitz quasi-isometric embedding and denote the smallest subcomplex of $X'$ that contains Im $q$ by $I_{q}$, now we have a homomorphism 
\begin{equation}
\label{4.34}
q_{\ast}\co H^{\textmd{p}}_{n,n}(X)\to H^{\textmd{p}}_{n,n}(I_{q})\hookrightarrow H^{\textmd{p}}_{n,n}(X')\,.
\end{equation}
We can define a continuous map $p\co I_{q}\to X$ skeleton by skeleton such that $d(x,p\circ q(x))<D$ for all $x\in X$ ($D$ is some positive constant), which induces $p_{\ast}\co H^{\textmd{p}}_{n,n}(I_{q})\to H^{\textmd{p}}_{n,n}(X)$. It is easy to see $p_{\ast}\circ q_{\ast}=\textmd{Id}$ and $q_{\ast}\circ p_{\ast}=\textmd{Id}$, so the first map in $(\ref{4.34})$ is an isomorphism. Note that the second map in $(\ref{4.34})$ is a monomorphism, thus $q_{\ast}$ is injective and $(2)$ follows from $(1)$.
\end{proof}

We refer to Theorem \ref{6.20} and the remarks after for generalizations of the above corollary.

\begin{remark}
\label{4.36}
Though we are working with $\Bbb Z/2$ coefficients, it is easy to check the analogue of Corollary \ref{4.33} for arbitrary coefficient is also true (the same proof goes through).
\end{remark}

\begin{remark}
\label{4.37}
By the above proof and the argument in Lemma \ref{4.16}, there exists a positive $D'$ which depends on the quasi-isometry constant of $q$ such that
\begin{equation}
\label{4.38}
d_{H}(q(S_{[\tilde{\sigma}]}),S_{q_{\ast}[\tilde{\sigma}]})<D'
\end{equation}
for any $[\tilde\sigma]\in H^{\textmd{p}}_{n,n}(X)$.
\end{remark}

\subsection{Cubical coning}
\label{cubical filling}
Note that the above coning map $c$ does not give us much information about the combinatorial structure of the support set, now we introduce an alternative coning procedure based on the cubical structure. By Lemma \ref{4.16}, we can assume $K=\cup_{i=1}^{N}\Delta_{i}$ where each $\Delta_{i}$ is an all-right spherical $(n-1)$--simplex. Let $\{O_{i}\}_{i=1}^{N}$ be the collection of top-dimensional orthant subcomplexes in $X$ such that $\partial_{T}O_{i}=\Delta_{i}$. By (\ref{2.11}), we can pass to sub-orthants and assume $\{O_{i}\}_{i=1}^{N}$ is a disjoint collection. The nature quotient map $\sqcup_{i=1}^{N}\Delta_{i}\to K$ induces a quotient map $Q\co \sqcup_{i=1}^{N}O_{i}\to CK$ sending the tip of each $O_{i}$ to the cone point of $CK$. We define an inverse map $F\co CK\to\sqcup_{i=1}^{N}O_{i}\subset X$ by sending each $x\in CK$ to some point in $Q^{-1}(x)$.

\begin{lem}
\label{4.39}
$F\co CK\to X$ is a quasi-isometric embedding.
\end{lem}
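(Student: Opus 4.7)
The plan is to verify both quasi-isometric embedding inequalities for $F\co CK\to X$ by exploiting that $F$ restricts to an isometry from each Euclidean cone $C\Delta_i\subset CK$ onto the orthant subcomplex $O_i\subset X$, which is convex in $X$. The general strategy is to decompose a $CK$-geodesic according to the cone structure and analyze its $F$-image piece by piece, using the uniform coarse-intersection data from Lemma~\ref{2.10} and Remark~\ref{2.13}.

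For the upper bound $d_X(F(x),F(y))\le L\,d_{CK}(x,y)+A$, I will take a geodesic $\gamma$ in $CK$ from $x$ to $y$ and subdivide it into consecutive maximal segments $\gamma_1,\dots,\gamma_m$ each lying in a single top-dimensional cone $C\Delta_{i_k}$, with transition points $p_k\in C(\Delta_{i_k}\cap\Delta_{i_{k+1}})$. Since $K$ is a finite simplicial complex and the $O_i$ are $N$ in number, $m$ is bounded in terms of $K$ alone. On each segment $F$ preserves length, and at each transition the two possible lifts of $p_k$ into $O_{i_k}$ and $O_{i_{k+1}}$ lie in parallel sub-orthants and differ by at most the coarse distance $d(O_{i_k},O_{i_{k+1}})$, uniformly bounded over the finite collection $\{O_i\}$. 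Summing segment lengths and jumps yields the upper bound. The case where $\gamma$ passes through the cone point $o$ is handled using the finitely many orthant tips $\{t_i\}$, which have uniformly bounded pairwise distances in $X$.

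For the lower bound $d_X(F(x),F(y))\ge L^{-1}d_{CK}(x,y)-A$, the case $x,y\in C\Delta_i$ reduces to the convexity of $O_i$, giving $d_X(F(x),F(y))=d_{O_i}(F(x),F(y))=d_{C\Delta_i}(x,y)\ge d_{CK}(x,y)$. For $x\in C\Delta_i$ and $y\in C\Delta_j$ with $i\ne j$, I will apply the estimate (\ref{2.11}) to the convex subcomplexes $O_i,O_j$: with $(Y_i,Y_j)=\inc(O_i,O_j)$ and $\Delta_{ij}=d(Y_i,Y_j)$, this gives
\begin{equation*}
d_X(F(x),F(y))\ge \tfrac{A_0}{2}\bigl(d(F(x),Y_i)+d(F(y),Y_j)\bigr)
\end{equation*}
for a uniform constant $A_0>0$. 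On the $CK$ side, the geodesic from $x$ to $y$ factors through the cone $C(\Delta_i\cap\Delta_j)$ (or through $o$), and using that the canonical sub-orthant $S_i\subset O_i$ with Tits boundary $\Delta_i\cap\Delta_j$ is Hausdorff-close to $Y_i$ by Remark~\ref{2.13}(2), I can bound
\begin{equation*}
d_{CK}(x,y)\le d(F(x),Y_i)+d(F(y),Y_j)+d_X(F(x),F(y))+O(1).
\end{equation*}
Combining the two estimates gives $d_{CK}(x,y)\le (1+2/A_0)\,d_X(F(x),F(y))+O(1)$. For geodesics traversing more than two top-dimensional cones, the same argument iterates along consecutive pairs; since the number of such pairs is bounded in terms of $K$, the constants remain uniform.

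The main obstacle will be controlling constants uniformly across all pairs in $CK$, especially for geodesics passing through many cones. The finiteness of $\{O_i\}_{i=1}^N$ and the uniform coarse-intersection control from Lemma~\ref{2.10} and Remark~\ref{2.13} are what make the per-transition costs absorbable into a single additive constant $A$, so that the quasi-isometry constants $(L,A)$ depend only on $X$ and the chosen collection $\{O_i\}$ rather than on the individual pair $(x,y)$.
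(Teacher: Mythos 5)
Your proposal takes a genuinely different route from the paper's, and the upper-bound half has a real gap. You propose to take a geodesic $\gamma$ in $CK$ from $x$ to $y$ and decompose it cone by cone. But $CK$ is equipped with the metric induced from $C_T X$, i.e.\ the angular cone metric, and this is \emph{not} a length metric: a geodesic in $\partial_T X$ joining two points of $K$ can leave $K$, so $(K,\angle_T)$ is not a geodesic (or even length) space, and neither is its Euclidean cone. A path in $CK$ from $x$ to $y$ has length controlled by the length-metric cone distance, which a priori exceeds $d_{CK}(x,y)$; so the segment-by-segment bookkeeping bounds $d_X(F(x),F(y))$ only against the wrong quantity. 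This can in principle be repaired by running your argument in the length metric and then invoking the bi-Lipschitz comparison of the two metrics established in the proof of Lemma~\ref{4.13} (inequalities (\ref{4.14})--(\ref{4.15})), but you don't invoke that, and the bound on the number $m$ of cones a geodesic crosses is also only available for length-metric geodesics (via Lemma~\ref{2.3} applied to the compact complex $K$). The paper avoids geodesics in $CK$ entirely: (\ref{4.40}) gets the upper bound directly from Lemma~\ref{2.4}(3), writing $d_{CK}(Q(x),Q(y))$ as $\lim_t d(c_{i,x}(t),c'_j(t))/t$ for rays $c_{i,x}, c'_j$ emanating from the \emph{same} tip $o_i$ (with $c'_j$ asymptotic to $c_{j,y}$), and then using monotonicity of $d(c_{i,x}(t),c'_j(t))/t$ in $t$ to drop to $t=1$. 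No decomposition of a path is needed, and the additive constant is just $\max_{i\ne j}d(o_i,o_j)$.

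For the lower bound your plan is closer in spirit to the paper (both ultimately rest on the coarse-intersection estimate (\ref{2.11}), the paper via its corollary (\ref{4.15})). Two caveats: the claim that a $CK$-geodesic ``factors through $C(\Delta_i\cap\Delta_j)$'' is both unnecessary and, given the non-geodesic issue above, ill-posed --- what you actually need is just the triangle inequality $d_{CK}(x,y)\le d_{CK}(x,p_1)+d_{CK}(p_1,p_2)+d_{CK}(p_2,y)$ with $p_1,p_2\in C(\Delta_i\cap\Delta_j)$ chosen as nearest points to $x$ and $y$, together with Remark~\ref{2.13}(2)(3) to identify $\partial_T Y_i=\Delta_i\cap\Delta_j$ and compare $Y_i$ with the sub-orthant $S_i\subset O_i$; no iteration over consecutive cone pairs is required once you work directly with the pair $(O_i,O_j)$. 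You should also note that (\ref{2.11}) only applies when $d(F(x),Y_i)\ge\epsilon$ for some fixed $\epsilon>0$, with the constant depending on $\epsilon$; the complementary case where $F(x)$ is near $Y_i$ (and $F(y)$ near $Y_j$) has to be absorbed into the additive constant. With these repairs the lower bound goes through, but the upper bound as written needs the length-metric detour or, more cleanly, the paper's direct computation via Lemma~\ref{2.4}.
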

Recall that $CK$ is endowed with the induced metric from $C_{T}X$.

\begin{proof}
Let $o_{i}$ be the tip of $O_{i}$ and $L=\max_{i\neq j}d(o_{i},o_{j})$. For $x\in O_{i}$ and $y\in O_{j}$, let $c_{i,x}\subset O_{i}$ be the constant speed geodesic ray with $c_{i,x}(0)=o_{i}$ and $c_{i,x}(1)=x$. We can define $c_{j,y}\subset O_{j}$ similarly. Let $c'_{j}$ be the geodesic ray such that (1) it is asymptotic to $c_{j,y}$; (2) it has the same speed as $c_{j,y}$; (3) $c'_{j}(0)=o_{i}$. Then by Lemma \ref{2.4} and convexity of $d(c_{i}(t),c'_{j}(t))$,
\begin{align}
\label{4.40}
d(Q(x),Q(y)) &=\lim_{t\to\infty}\frac{d(c_{i,x}(t),c_{j,y}(t))}{t}=\lim_{t\to\infty}\frac{d(c_{i,x}(t),c'_{j}(t))}{t} \\
&\ge d(c_{i,x}(1),c'_{j}(1))\ge d(c_{i,x}(1),c_{j,y}(1))-d(c'_{j}(1),c_{j,y}(1)) \nonumber \\
&\ge d(x,y)-d(o_{i},o_{j})\ge d(x,y)-L \nonumber
\end{align}
It follows that
\begin{equation}
\label{4.41}
d(F(x),F(y))\leq d(x,y)+L
\end{equation}
for any $x,y\in CK$.

For the other direction, pick $x\in O_{i}$ and $y\in O_{j}$, let us assume without loss of generality that $i\neq j$ and $x,y$ are interior points of $O_{i}$ and $O_{j}$. We extend $\overline{o_{i}x}$ (or $\overline{o_{j}y}$) to get a ray $\overline{o_{i}\xi_{1}}\subset O_{i}$ (or $\overline{o_{j}\xi_{2}}\subset O_{j}$). Let $(Y_{1},Y_{2})=\inc (O_{i},O_{j})$. Since
$d(x,y)\le d(x,Y_{1})+d(Y_{1},Y_{2})+d(y,Y_{2})\le d(x,Y_{1})+d(y,Y_{2})+L$, we can assume with out loss of generality that 
\begin{equation}
\label{4.42}
d(x,Y_{1})\ge \frac{1}{2}(d(x,y)-L)\,.
\end{equation}
From (\ref{4.15}), we have
\begin{align}
\label{4.43}
d(F(x),F(y))&\ge d(x,o_{i})\sin(\angle_{T}(\xi_{1},\xi_{2}))\ge\frac{A}{2}d(x,o_{i})\sin(\angle_{T}(\xi_{1},\partial_{T}Y_{1}))\\ 
&\ge\frac{A}{2}d(x,Y_{1})-L'\ge\frac{A}{4}d(x,y)-L'-\frac{1}{2}L \nonumber
\end{align}
if $\angle_{T}(\xi_{1},\xi_{2})<\pi/2$ and
\begin{align}
\label{4.44}
d(F(x),F(y))\ge d(x,o_{i})\ge d(x,Y_{1})-L'\ge d(x,y)-L'-\frac{1}{2}L
\end{align}
if $\angle_{T}(\xi_{1},\xi_{2})\ge\pi/2$, here $A$ and $L'$ depends on $O_{i}$ and $O_{j}$, but there are finitely many orthants, we can make $A$ and $L'$ uniform.
\end{proof}

Since $X$ is linearly contractible, we can approximate $F$ by a continuous quasi-isometry embedding $F'$ such that $d(F(x),F'(x))\leq L$ for some constant $L$ and any $x\in CK$. Let $K^{(n-2)}$ be the $(n-2)$--skeleton of $K$ and define $\rho\co  CK\to [0,1]$ to be
\begin{center}
$\rho(x)$=$\begin{cases}
1 & \text{if $d(x,CK^{(n-2)})\le 1$}\\
2-d(x,CK^{(n-2)}) & \text{if $1<d(x,CK^{(n-2)})<2$}\\
0 & \text{if $d(x,CK^{(n-2)})\ge 2$}
\end{cases}$
\end{center}
Let
\begin{center}
$F_{1}(x)=\rho(x)F'(x)+(1-\rho(x))F(x)$
\end{center}
for $x\in CK$, here $\rho(x)F'(x)+(1-\rho(x))F(x)$ denotes the point in the geodesic segment $\overline{F'(x)F(x)}$ which has distance $\rho(x)d(F'(x),F(x))$ from $F(x)$. Though $F$ may not be continuous, $F_1$ is continuous. Since the only discontinuity points of $F$ are in the $1$--neighbourhood of $CK^{n-2}$, however, inside such neighbourhood we have $F_1=F'$ by definition. Also note that $d(F(x),F_{1}(x))\le L'$ for all $x\in CK$.

Since $F_{1}=F$ outside the $2$--neighbourhood of $CK^{(n-2)}$, there exists an orthant subcomplex $O'_{i}\subset O_{i}$ such that $F_{1}^{-1}(O'_{i})$ is an orthant in $CK$ for $1\le i\le N$ and
\begin{equation}
\label{4.45}
d_{H}(\textmd{Im}\ F_{1},\cup_{i=1}^{N}O'_{i})<\infty\,.
\end{equation}
Let $[CK]\in H^{\textmd{p}}_{n}(CK)$ be the fundamental class. If $[\tau]=(F_{1})_{\ast}[CK]\in H^{\textmd{p}}_{n,n}(X)$, then 
\begin{equation}
\label{4.46}
\cup_{i=1}^{N}O'_{i}\subset S_{[\tau]}\subset \textmd{Im}\ F_{1}\,.
\end{equation}
The first inclusion follows from the construction of $O'_{i}$ and the second follows from Lemma \ref{3.2}. (\ref{4.45}) and (\ref{4.46}) immediately imply:

\begin{lem}
\label{4.47}
$d_{H}(S_{[\tau]},\cup_{i=1}^{N}O'_{i})<\infty.$
\end{lem}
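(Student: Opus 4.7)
My plan is to derive the bound directly from the two facts established just above the lemma: the geometric Hausdorff estimate (\ref{4.45}) on $\textmd{Im}\,F_1$, and the two-sided inclusion (\ref{4.46}) sandwiching the support set $S_{[\tau]}$ between $\cup_{i=1}^N O'_i$ and $\textmd{Im}\,F_1$. Granted these, the lemma is a one-line assembly.

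First I would pin down the two containments in (\ref{4.46}). The upper inclusion $S_{[\tau]}\subset\textmd{Im}\,F_1$ is an instance of Lemma \ref{3.2} applied to $F_1\co CK\to X$: the map $F_1$ is continuous (built into the definition via the $\rho$-cutoff) and proper (as a bounded perturbation of the quasi-isometric embedding $F$), and the fundamental class $[CK]\in H^{\textmd{p}}_n(CK)$ has support set equal to $CK$ itself, so $S_{[\tau]}=S_{(F_1)_*[CK]}\subset F_1(CK)=\textmd{Im}\,F_1$. The lower inclusion $\cup_{i=1}^N O'_i\subset S_{[\tau]}$ uses the construction of $O'_i$: outside the $2$-neighborhood of $CK^{(n-2)}$ the interpolating function $\rho$ vanishes, so $F_1$ coincides there with $F$, which is a section of the quotient $Q\co\sqcup O_i\to CK$. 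Hence $F_1^{-1}(O'_i)$ is an orthant in $CK$ on which $F_1$ is a local homeomorphism, and the inclusion homomorphism $H^{\textmd{p}}_n(X)\to H_n(X,X\setminus\{x\})$ sends $[\tau]$ to the pushforward of a local generator of $[CK]$ near $F_1^{-1}(x)$ for every interior point $x$ of $O'_i$, which is nontrivial. Closedness of the support set then promotes this to $O'_i\subset S_{[\tau]}$.

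With (\ref{4.46}) in hand the conclusion is straightforward: the containment $\cup O'_i\subset S_{[\tau]}$ gives $\sup_{x\in\cup O'_i}d(x,S_{[\tau]})=0$, and the containment $S_{[\tau]}\subset\textmd{Im}\,F_1$ combined with (\ref{4.45}) gives $\sup_{y\in S_{[\tau]}}d(y,\cup O'_i)\le d_H(\textmd{Im}\,F_1,\cup O'_i)<\infty$. Together these yield $d_H(S_{[\tau]},\cup_{i=1}^N O'_i)<\infty$, as required.

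I do not anticipate any substantive obstacle at this stage: all of the real work has been absorbed into the construction of $F$ as a quasi-isometric embedding (Lemma \ref{4.39}), the continuity-and-degree-preserving perturbation to $F_1$, and the support-localization principle for proper maps (Lemma \ref{3.2}). The only new check required is the local-homeomorphism argument underlying the lower containment in (\ref{4.46}), which is a routine excision-and-pushforward diagram chase against the fact that $F$ restricts to the identity (via $Q^{-1}$) on each $O_i$.
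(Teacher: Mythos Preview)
Your proposal is correct and follows essentially the same approach as the paper: the lemma is stated immediately after (\ref{4.45}) and (\ref{4.46}) with the remark that it follows directly from those two facts, and your assembly of them (one containment gives distance zero in one direction, the other combined with (\ref{4.45}) bounds the other direction) is exactly the intended one-line deduction. Your additional paragraph re-deriving (\ref{4.46}) is consistent with the paper's justification given just before the lemma (construction of $O'_i$ for the lower inclusion, Lemma \ref{3.2} for the upper), so there is nothing new or divergent here.
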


Now we are ready to prove the main result.
\begin{thm}
\label{4.48}
Let $X$ be a $CAT(0)$ cube complex of dimension $n$. Pick $[\sigma]\in H^{\textmd{p}}_{n,n}(X)$ and suppose $S=S_{[\sigma]}$. Then there is a finite collection {$O_{1},...,O_{k}$} of n--dimensional orthant subcomplexes of $S$ such that
\begin{center}
$d_{H}(S, \cup_{i=1}^{k}O_k)<\infty$.
\end{center}
\end{thm}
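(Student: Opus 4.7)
The plan is to identify the given class $[\sigma]$ with a class $[\tau]$ constructed explicitly by cubical coning, so that $S = S_{[\sigma]} = S_{[\tau]}$, and then to invoke Lemma \ref{4.47} which bounds the Hausdorff distance from $S_{[\tau]}$ to a finite union of orthant subcomplexes.

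To carry this out, I would first extract the combinatorial data at infinity. By Corollary \ref{4.5} and Lemma \ref{4.9}, the points of $\partial_T S$ represented by geodesic rays of constant $\Delta_{mod}$ direction in $S$ form a dense subset $G$, and each $v \in G$ lies in a spherical simplex $\Delta_v = \partial_T O_v$ associated to a top-dimensional orthant subcomplex $O_v \subset X$. The discussion preceding Lemma \ref{4.13} then produces a finite simplicial complex $K = \bigcup_{i=1}^N \Delta_i \subset \partial_T X$ containing $\partial_T S$, and Lemma \ref{4.16} promotes $K$ to an $(n-1)$-cycle representing $\partial[\sigma] \in H_{n-1}(\partial_T X)$ under the boundary isomorphism of Corollary \ref{4.33}.

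Next I would feed $K$ into the cubical coning of Section \ref{cubical filling}: after passing to disjoint sub-orthants $\{O_i\}_{i=1}^N$ with $\partial_T O_i = \Delta_i$, build the section $F\co CK \to \bigsqcup_i O_i$ of the quotient map $Q$, smooth it to the continuous quasi-isometric embedding $F_1\co CK \to X$ (Lemma \ref{4.39}), and set $[\tau] = (F_1)_{\ast}[CK] \in H^{\textmd{p}}_{n,n}(X)$. Lemma \ref{4.47} then gives sub-orthants $O'_i \subset O_i$ with $d_H(S_{[\tau]}, \bigcup_{i=1}^N O'_i) < \infty$. To conclude $[\sigma] = [\tau]$, by Corollary \ref{4.33} it suffices to show $\partial[\tau] = [K] = \partial[\sigma]$. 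Since $S_{[\tau]}$ is Hausdorff close to $\bigcup_{i=1}^N O'_i$, its Tits boundary is again $K$, and running the construction in the proof of Lemma \ref{4.16} on $[\tau]$ produces the fundamental class of $K$; equivalently, because $F_1$ agrees with $F$ outside a bounded neighborhood of $CK^{(n-2)}$ and the map on Tits boundaries induced by $F_1$ is the inclusion $K \hookrightarrow \partial_T X$, the pushforward $(F_1)_{\ast}[CK]$ agrees with the geodesic coning $c([K])$ up to a proper homotopy, so $\partial[\tau] = [K]$ via $\partial \circ c = \textmd{Id}$.

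The main obstacle is this last comparison $\partial[\tau] = [K]$: one must verify that the geodesic homotopy between the cubical coning $F_1$ and the geodesic coning map underlying $c$ is metrically proper, so that it induces an equality in $H^{\textmd{p}}_{n,n}(X)$. This parallels the argument around equations (\ref{4.29})--(\ref{4.32}) that was used to establish $\partial \circ c = \textmd{Id}$, but now applied between two different representatives of the same cone on $K$. Once this identification is secured, $S = S_{[\sigma]} = S_{[\tau]}$ and Lemma \ref{4.47} conclude the proof.
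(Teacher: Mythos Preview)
Your proposal is correct and follows the same strategy as the paper: reduce to $[\sigma]=[\tau]$ via the injectivity of $\partial$ (Corollary \ref{4.33}), then invoke Lemma \ref{4.47}. The one place you over-engineer is the verification of $\partial[\tau]=[K]$: the paper does not compare $F_1$ with the geodesic coning by a proper homotopy, but simply reads off from (\ref{4.46}) that $\partial_T S_{[\tau]}=K$ and then reruns the construction of $\partial$ (i.e.\ equation (\ref{4.20}) with $[\tau]$ in place of $[\sigma]$) to get $\partial[\tau]=[K]$ directly. This sidesteps the properness check you flagged as the main obstacle.
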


\begin{proof}
By Lemma \ref{4.47}, it suffices to show $[\sigma]=[\tau]$ in $H^{\textmd{p}}_{n}(X)$. Note that (\ref{4.46}) implies $\partial_{T}S_{[\tau]}=K$, so $\partial([\tau])=[K]=\partial([\sigma])$, here $[K]$ is the fundamental class of $K$ and $\partial$ is the map in (\ref{4.27}). Thus $[\sigma]=[\tau]$ by Corollary \ref{4.33}.
\end{proof}

In particular, by Lemma \ref{3.4} and Theorem \ref{4.48}, we have: 

\begin{thm}
\label{4.49}
If $X$ is a $CAT(0)$ cube complex of dimension n, then for every $n$--quasiflat Q in X, there is a finite collection {$O_{1},...,O_{k}$} of $n$--dimensional orthant subcomplexes in X such that 
\begin{center}
$d_{H}(Q, \cup_{i=1}^{k}O_k)<\infty$.
\end{center}
\end{thm}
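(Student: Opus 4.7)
The plan is to reduce this statement immediately to Theorem \ref{4.48} via Lemma \ref{3.4}. Given the $(L,A)$--quasiflat $Q\co \Bbb E^{n}\to X$, Lemma \ref{3.4} produces a proper homology class $[\sigma]\in H^{\textmd{p}}_{n}(X)$ whose support set $S=S_{[\sigma]}$ satisfies both $d_{H}(S,Q)\le D(L,A)<\infty$ and the polynomial growth bound $\mathcal{H}^{n}(B(p,r)\cap S)\le a(1+r)^{n}$. By the monotonicity of density (Theorem \ref{3.9}, item (1)), the growth bound implies $\lim_{r\to\infty}\mathcal{H}^{n}(B(p,r)\cap S)/r^{n}<\infty$, so $[\sigma]\in H^{\textmd{p}}_{n,n}(X)$.

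Now I would invoke Theorem \ref{4.48} directly: since $[\sigma]\in H^{\textmd{p}}_{n,n}(X)$, there is a finite collection $\{O_{1},\ldots,O_{k}\}$ of $n$--dimensional orthant subcomplexes of $X$ with $d_{H}(S,\cup_{i=1}^{k}O_{i})<\infty$. Combining this with $d_{H}(Q,S)\le D$ via the triangle inequality for Hausdorff distance yields
\begin{equation*}
d_{H}\bigl(Q,\cup_{i=1}^{k}O_{i}\bigr)\le d_{H}(Q,S)+d_{H}\bigl(S,\cup_{i=1}^{k}O_{i}\bigr)<\infty\,,
\end{equation*}
which is exactly the conclusion.

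At this stage of the paper there is no real obstacle left: the entire difficulty has been absorbed into the two cited results. The conceptually hard inputs are, on the one hand, the construction of the representing homology class with controlled growth (which required approximating the quasi-isometric embedding by a proper Lipschitz map and pushing forward the fundamental class of $\Bbb E^{n}$), and on the other hand the structure theorem \ref{4.48} itself, which rested on identifying the Tits boundary $\partial_{T}S$ with a simplicial cycle $K$ built from boundaries of orthants (Sections \ref{orthants} and \ref{cycle at infinity}) and then reconstructing $S$ up to finite Hausdorff distance by a cubical coning argument (Lemma \ref{4.47}) together with the isomorphism $\partial\co H^{\textmd{p}}_{n,n}(X)\to H_{n-1}(\partial_{T}X)$ of Corollary \ref{4.33}. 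Once these are in place, Theorem \ref{4.49} is a formal consequence.
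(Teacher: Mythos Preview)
Your proof is correct and matches the paper's own argument exactly: the paper simply states that Theorem \ref{4.49} follows from Lemma \ref{3.4} and Theorem \ref{4.48}, which is precisely the reduction you carry out.
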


\section{Preservation of top dimensional flats}
\label{topdim flats}
\subsection{The lattice generated by top dimensional quasiflats}
\label{lattice}
We investigate the coarse intersection of the top dimensional quasiflats in this section. 

Let $X$ be a finite dimensional $CAT(0)$ cube complex. For two subsets $A$ and $B$, we say they are \textit{coarsely equivalent} (denoted $A\sim B$) if $d_{H}(A,B)<\infty$. We assume the empty subset is coarsely equivalent to any bounded subset. Denote by $[A]$ the coarse equivalence class contains $A$, we say $[A]\subset[B]$ if there exists $r<\infty$ such that $A\subset N_{r}(B)$. If $[A]\subset[B]$ and $[A]\neq[B]$, we will write $[A]\subsetneq[B]$. Also we define the union $[A]\cup [B]$ to be $[A\cup B]$, but intersection is not well-defined in general. 

The class $[A]$ is \textit{admissible} if it can be represented by a subset which is a finite union of (not necessarily top dimensional) orthant subcomplexes in $X$ ($A$ is allowed to be empty). Let $\mathcal{A}(X)$ be the collection of admissible classes of subsets in $X$. Pick $[A_{1}],[A_2]\in \mathcal{A}(X)$. We define another two operations between $[A_1]$ and $[A_2]$ as follows.
\begin{enumerate}
\item By Lemma \ref{2.10} (4), there exists $r<\infty$ such that $[N_{r_{1}}(A_{1})\cap N_{r_1}(A_2)]=[N_{r_{2}}(A_{1})\cap N_{r_2}(A_2)]$ for any $r_{1}\ge r$ and $r_{2}\ge r$. We define the intersection $[A_{1}]\cap[A_2]$ to be $[N_{r}(A_{1})\cap N_{r}(A_2)]$, which is also admissible. 
\item By Lemma \ref{2.10} (4), there exists $r<\infty$ such that $[A_1\setminus N_{r_1}(A_2)]=[A_1\setminus N_{r_2}(A_2)]$ for any $r_{1}\ge r$ and $r_{2}\ge r$. We define the subtraction $[A_{1}]-[A_2]$ to be $[A_{1}\setminus N_{r}(A_2)]$, which is also admissible. 
\end{enumerate}

If $Y$ is another $CAT(0)$ cube complex with $\dim(Y)=\dim(X)$ and there is a quasi-isometry $f\co X\to Y$, then we define $f_{\sharp}([A])$ to be $[f(A)]$ (this is well-defined since $A\sim B$ implies $f(A)\sim f(B)$). Note that 
\begin{enumerate}
\item $f_{\sharp}([A])\cup f_{\sharp}([B])=f_{\sharp}([A]\cup[B])$;
\item if $[A],[B],[f(A)]$ and $[f(B)]$ are all admissible, then $f_{\sharp}([A])\cap f_{\sharp}([B])=f_{\sharp}([A]\cap[B])$ and $f_{\sharp}([A])- f_{\sharp}([B])=f_{\sharp}([A]-[B])$.
\end{enumerate}
We only verify the last equality. Since $f$ is a quasi-isometry, there exist constants $a>1,b>0$ such that for $r$ large enough, we have 
\begin{equation*}
f(A)\setminus N_{ar+b}(f(B))\subset f(A\setminus N_r(B))\subset f(A)\setminus N_{\frac{r}{a}-b}(f(B)).
\end{equation*}
Since $[f(A)]$ and $[f(B)]$ are admissible, the first term and the last term of the above inequality are in the same coarse class for $r$ large enough. This finishes the proof.

Let $\mathcal{Q}(X)$ be the collection of top dimensional quasiflats in $X$, modulo the above equivalence relation. Theorem \ref{4.49} implies $\mathcal{Q}(X)\subset\mathcal{A}(X)$. Let $\mathcal{KQ}(X)$ be the smallest subset of $\mathcal{A}(X)$ which contains $\mathcal{Q}(X)$, and is closed under union, intersection and subtraction defined above. Moreover precisely, each element $\mathcal{KQ}(X)$ can be written as a finite string of elements of $\mathcal{Q}(X)$ with union, intersection or subtraction between adjacent terms and braces which indicate the order of these operations. Let $f\co  X\to Y$ be a quasi-isometry. Then by induction on the length of the string, one can show $[f(A)]$ is admissible and $[f(A)]\in \mathcal{KQ}(Y)$ for each $[A]\in \mathcal{KQ}(X)$. By considering the quasi-isometry inverse of $f$, we have the following:

\begin{thm}
\label{5.1}
Let $X$ and $Y$ be $n$--dimensional $CAT(0)$ cube complexes. If $f\co X\to Y$ is a quasi-isometry, then $f$ induces a bijection $f_{\sharp}\co \mathcal{KQ}(X)\to\mathcal{KQ}(Y)$. Moreover, for $[A],[B]\in \mathcal{KQ}(X)$, we have $f_{\sharp}([A])\cup f_{\sharp}([B])=f_{\sharp}([A]\cup[B])$, $f_{\sharp}([A])\cap f_{\sharp}([B])=f_{\sharp}([A]\cap[B])$ and $f_{\sharp}([A])- f_{\sharp}([B])=f_{\sharp}([A]-[B])$.
\end{thm}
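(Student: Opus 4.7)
The plan is to formalize the two remarks immediately preceding the statement: run an induction on the length of the string defining an element of $\mathcal{KQ}(X)$ to show $f_\sharp(\mathcal{KQ}(X))\subseteq \mathcal{KQ}(Y)$, and then appeal to a quasi-inverse of $f$ to promote containment to a bijection.

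For the base case, I would observe that if $Q\co \Bbb E^{n}\to X$ is an $(L,A)$--quasi-isometric embedding and $f\co X\to Y$ is an $(L',A')$--quasi-isometry, then $f\circ Q$ is itself a quasi-isometric embedding of $\Bbb E^{n}$ with constants depending only on $(L,A,L',A')$. Hence $f(Q(\Bbb E^{n}))$ is coarsely equivalent to an $n$--quasiflat in $Y$, so $f_\sharp$ sends $\mathcal{Q}(X)$ into $\mathcal{Q}(Y)$; by Theorem \ref{4.49} these images are admissible and hence lie in $\mathcal{KQ}(Y)\subseteq \mathcal{A}(Y)$.

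For the inductive step, a general $[A]\in \mathcal{KQ}(X)$ has the form $[A]=[A_1]\star [A_2]$ with $\star\in\{\cup,\cap,-\}$ and $[A_1],[A_2]$ given by strictly shorter strings. By the induction hypothesis, $f_\sharp([A_1])$ and $f_\sharp([A_2])$ lie in $\mathcal{KQ}(Y)\subseteq \mathcal{A}(Y)$, so all four classes $[A_1],[A_2],f_\sharp([A_1]),f_\sharp([A_2])$ are admissible. In the case $\star=\cup$, the identity $f_\sharp([A_1]\cup[A_2])=f_\sharp([A_1])\cup f_\sharp([A_2])$ is immediate from $f(A_1\cup A_2)=f(A_1)\cup f(A_2)$. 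In the cases $\star\in\{\cap,-\}$, the commutation $f_\sharp([A_1]\star [A_2])=f_\sharp([A_1])\star f_\sharp([A_2])$ is exactly the equality verified just before the theorem under the admissibility hypothesis we have now secured. In every case the right-hand side lies in $\mathcal{KQ}(Y)$, closing the induction.

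To upgrade to a bijection, let $g\co Y\to X$ be a quasi-inverse of $f$. Then $g\circ f$ and $f\circ g$ are at bounded distance from $\mathrm{id}_X$ and $\mathrm{id}_Y$ respectively, so $g(f(A))\sim A$ and $f(g(B))\sim B$ on the level of subsets, and hence $g_\sharp\circ f_\sharp=\mathrm{id}$ and $f_\sharp\circ g_\sharp=\mathrm{id}$ on coarse classes. Applying the induction above to $g$ gives $g_\sharp(\mathcal{KQ}(Y))\subseteq \mathcal{KQ}(X)$, so $f_\sharp$ restricts to a bijection $\mathcal{KQ}(X)\to \mathcal{KQ}(Y)$, and the preservation of $\cup$, $\cap$, $-$ is exactly what was checked during the induction. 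There is no serious obstacle here; the one subtlety is that the commutation formulas for $\cap$ and $-$ require admissibility of both the class and its image under $f$, which is exactly why one sets up the induction inside $\mathcal{KQ}$, where admissibility is automatic, rather than inside the larger collection $\mathcal{A}$.
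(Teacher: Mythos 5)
Your proposal is correct and follows essentially the same route as the paper: the paper also explicitly reduces the theorem to an induction on the length of the defining string (using Theorem~\ref{4.49} for the base case and the commutation identities for $\cup$, $\cap$, $-$ stated just before the theorem, whose admissibility hypotheses are guaranteed inductively), and then invokes a quasi-inverse of $f$ to upgrade the inclusion $f_\sharp(\mathcal{KQ}(X))\subseteq\mathcal{KQ}(Y)$ to a bijection. You have filled in exactly the details the paper leaves implicit, and no gaps remain.
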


For $[A]$ admissible, pick a representative in $[A]$ which is a finite union of orthant complexes, we define the \textit{order} of $[A]$, denoted $|[A]|$, to be the number of top dimensional orthant complexes in the representative. By Lemma \ref{2.10}, this definition does not depend on the choice of the representative. Since every element in $\mathcal{KQ}(X)$ is admissible, we have a map $\mathcal{KQ}(X)\to \{0\}\cup \Bbb Z^{+}$ with the following properties:
\begin{enumerate}
\item $|[Q]|\ge 2^{dimX}$ for $[Q]\in\mathcal{Q}(X)$.
\item $|[A]\cup[B]|=|[A]|+|[B]|-|[A]\cap[B]|$ for $[A],[B]\in\mathcal{KQ}(X)$.
\item Let $f$ be as in Theorem \ref{5.1}. Then $|[A]|=0$ if and only if $|f_{\sharp}([A])|=0$ for $[A]\in\mathcal{KQ}(X)$.
\end{enumerate}
The first assertion follows from (\ref{3.11}).

We say an element $[A]\in\mathcal{KQ}(X)$ is \textit{essential} if $|[A]|>0$. $[A]$ is a \textit{minimal essential element} if for any $[B]\in\mathcal{KQ}(X)$ with $[B]\subsetneq[A]$, we have $|[B]|=0$. Minimal essential elements have the following properties.
\begin{enumerate}
\item For any $[A]\in\mathcal{KQ}(X)$, there is a decomposition $[A]=(\cup_{i=1}^{N}[A_{i}])\cup [B]$ such that each $[A_{i}]$ is a minimal essential element and $|[B]|=0$. We also require $[B]$ and each $[A_i]$ are in $\mathcal{KQ}(X)$.
\item For two different minimal essential element $[A_{1}],[A_{2}]\in\mathcal{KQ}(X)$, $|[A_{1}]\cap[A_{2}]|=0$, thus $|[A_{1}]\cup[A_{2}]|=|[A_{1}]|+|[A_{2}]|$.
\item Let $f$ be as above. If $[A]$ is a minimal essential element in $\mathcal{KQ}(X)$, then $f_{\sharp}([A])$ is also a minimal essential element.
\end{enumerate}
We only prove (1). For each top dimensional orthant subcomplex $[O_i]$ such that $[O_i]\subset[A]$, let $[A_i]$ be the minimal element in $\mathcal{KQ}(X)$ which contains $[O_i]$. We claim $[A_i]$ is minimal essential. Suppose the contrary is true, i.e. there exists $[A'_i]\in \mathcal{KQ}(X)$ such that $|[A'_i]|\neq 0$ and $[A'_i]\subsetneq[A_i]$. The minimality of $[A_i]$ implies $[O_i]\subset [A'_i]$ does not hold. However, in such case $[O_i]\subset [A_i]-[A'_i]\subsetneq [A_i]$, which contradicts the minimality of $[A_i]$. We choose $[B]=[A]-[\cup_{i=1}^{N}A_{i}]$.

\begin{lem}
\label{5.2}
Let $X$ and $Y$ be $n$--dimensional $CAT(0)$ cube complexes and let $f\co X\to Y$ be a $(L',A')$--quasi-isometry. If $|f_{\sharp}([A])|=|[A]|$ for any minimal essential element $[A]\in\mathcal{KQ}(X)$, then there exists a constant $C=C(L',A')$ such that for any top dimensional flat $F\subset X$, there exists a top dimensional flat $F'\subset Y$ such that $d_{H}(f(F),F')<C$.
\end{lem}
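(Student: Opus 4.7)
The plan is to use the hypothesis on minimal essential elements to pin down the coarse shape of $f(F)$ as a union of exactly $2^n$ top-dimensional orthant subcomplexes in $Y$, and then to combine these orthants into an honest top-dimensional flat.

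First, I verify $|[F]| = 2^n$. Writing $F \cong \Bbb E^n$ and fixing an origin together with an orthonormal frame gives the decomposition $F = \bigcup_{i=1}^{2^n} O_i$ into the $2^n$ coordinate orthants. Theorem \ref{4.49} applied to $F$ itself shows each $O_i$ is Hausdorff-close to a top-dimensional orthant subcomplex of $X$, and Remark \ref{2.13}(2) gives that the pairwise coarse intersections are lower-dimensional, so these $2^n$ orthants realize the order of $[F]$. Then I use property (1) from the list preceding the lemma to write $[F] = [A_1] \cup \cdots \cup [A_N] \cup [B]$ with each $[A_j] \in \mathcal{KQ}(X)$ minimal essential and $|[B]| = 0$, and properties (2)--(3) together with the hypothesis $|f_\sharp([A_j])| = |[A_j]|$ give
\[
|f_\sharp([F])| = \sum_{j=1}^N |f_\sharp([A_j])| = \sum_{j=1}^N |[A_j]| = |[F]| = 2^n.
\]
Since $f(F)$ is a top-dimensional quasiflat in $Y$ with constants depending only on $(L',A')$, Theorem \ref{4.49} supplies top-dimensional orthant subcomplexes $O'_1,\ldots,O'_k \subset Y$ with $d_H(f(F), \bigcup_i O'_i) \le C_1(L',A')$; the order identity forces $k = 2^n$.

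The remaining step, which I expect to be the main obstacle, is to show $\bigcup_{i=1}^{2^n} O'_i$ is Hausdorff-close to a top-dimensional flat $F' \subset Y$ with distance bound depending only on $(L',A')$. I would proceed via the cycle at infinity. The Tits boundary $\partial_T F$ is a round $(n-1)$-sphere whose $2^n$ top-dimensional all-right spherical simplices are the $\partial_T O_i$, glued in the octahedral pattern with antipodal vertex pairs at Tits distance exactly $\pi$. Under the isomorphism $\partial\colon H^{\textmd{p}}_{n,n}(X) \to H_{n-1}(\partial_T X)$ of Corollary \ref{4.33}, together with its $Y$-analog and Remark \ref{4.37}, the class $f_\sharp([F])$ is represented in $\partial_T Y$ by the simplicial $(n-1)$-cycle $\bigcup_i \partial_T O'_i$, built from exactly $2^n$ all-right top-dimensional simplices. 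The key metric claim is that opposite vertex pairs in this cycle must be at Tits distance exactly $\pi$: any excess slack would either force the cycle to require more than $2^n$ top simplices, contradicting the count, or violate the CAT(1) geometry of $\partial_T Y$ via Lemma \ref{2.8}. From the resulting $n$ antipodal pairs one extracts $n$ bi-infinite geodesic lines in $Y$; Lemma \ref{2.18} with the flat-sector lemma (Lemma \ref{2.5}) then produces mutual orthogonality through a common point and a top-dimensional flat $F'$ with $\partial_T F' = \bigcup_i \partial_T O'_i$. A uniform bound $d_H(\bigcup_i O'_i, F') \le C(L',A')$ follows from a quantitative coning-off argument modelled on Lemma \ref{4.39}.

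The hard part is this last step: the passage from the combinatorial cycle structure of $\bigcup_i \partial_T O'_i$ in $\partial_T Y$ to the Tits-metric rigidity (antipodal pairs at Tits distance exactly $\pi$) required to span a flat rather than a merely skew collection of orthants. The count $|f_\sharp([F])| = |[F]|$ is the decisive input here, since it eliminates the metric slack that would otherwise permit non-flat configurations.
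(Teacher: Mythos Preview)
Your first paragraph, establishing $|f_\sharp([F])|=|[F]|=2^n$ from the hypothesis on minimal essential elements, is correct and essentially matches the paper.

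Where you diverge is in the passage from ``$f(F)$ is Hausdorff close to $2^n$ top-dimensional orthants'' to ``$f(F)$ is Hausdorff close to a flat''. The paper does \emph{not} argue combinatorially about antipodal pairs in $\partial_T Y$. Instead it uses a volume rigidity argument: take $[\sigma]\in H^{\textmd{p}}_{n,n}(Y)$ with $d_H(S_{[\sigma]},f(F))\le D(L',A')$ from Lemma~\ref{3.4}. Since $\partial_T S_{[\sigma]}$ is covered by $2^n$ all-right spherical $(n-1)$--simplices, $\mathcal{H}^{n-1}(\partial_T S_{[\sigma]})\le \mathcal{H}^{n-1}(\Bbb S^{n-1})$. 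Then for $p\in S_{[\sigma]}$, the geodesic extension property gives $S_{[\sigma]}\subset\log_p(C_T S_{[\sigma]})$, so
\[
\frac{\mathcal{H}^n(B(p,r)\cap S_{[\sigma]})}{r^n}\le \frac{\mathcal{H}^n(B(o,r)\cap C_T S_{[\sigma]})}{r^n}\le \omega_n,
\]
and the equality case of Theorem~\ref{3.9}(2) forces $S_{[\sigma]}$ to be isometric to $\Bbb E^n$. Taking $F'=S_{[\sigma]}$ gives $d_H(f(F),F')\le D(L',A')$ immediately.

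Your combinatorial route has a genuine gap. The assertion that ``excess slack would force more than $2^n$ top simplices'' is not justified: a simplicial $(n-1)$--cycle with exactly $2^n$ facets need not have the cross-polytope combinatorics (for $n=3$, stack two tetrahedral subdivisions onto a tetrahedron boundary to get $\Bbb S^2$ with $8$ triangles and vertices of degree $3$), and you have not used the CAT(1) constraint to rule such configurations out. Even granting the combinatorics, you would still need to show the antipodal vertex pairs sit at Tits distance exactly $\pi$, and your appeal to Lemma~\ref{2.8} does not do this. Finally, your claimed uniform bound via Lemma~\ref{4.39} is problematic: the constants there depend on the specific orthants, not just on $(L',A')$. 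The paper's volume argument sidesteps all of this by identifying the support set itself as the flat.
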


\begin{proof}
By Theorem \ref{5.1} and the above discussion, we know $|f_{\sharp}([A])|=|[A]|$ for any $[A]\in\mathcal{KQ}(X)$, in particular $|[f(F)]|=|[F]|=2^{n}$ ($n=\dim(X)=\dim(Y)$). By Lemma \ref{3.4}, let $[\sigma]\in H^{\textmd{p}}_n(Y)$ be the class such that $d_{H}(S_{[\sigma]},f(F))<\infty$. By Theorem \ref{4.48}, $S_{[\sigma]}$ is Hausdorff close to a union of $2^{n}$ orthant subcomplexes. Thus $\partial_{T}S_{[\sigma]}$ is contained in $2^{n}$ right-angled spherical simplexes of dimension $n-1$. Then $\mathcal{H}^{n-1}(\partial_{T}S_{[\sigma]})\le \mathcal{H}^{n-1}(\Bbb S^{n-1})$. Pick base point $p\in S_{[\sigma]}$ and we consider the logarithmic map $\log_{p}\co C_{T}Y\to Y$. Lemma \ref{3.6} implies $S_{[\sigma]}\subset \log_{p}(C_TS_{[\sigma]})$. Thus
\begin{equation*}
\frac{\mathcal{H}^{n}(B(p,r)\cap S_{[\sigma]})}{r^{n}}\le \frac{\mathcal{H}^{n}(B(p,r)\cap \log_{p}(C_TS_{[\sigma]}))}{r^{n}}\le \frac{\mathcal{H}^{n}(B(o,r)\cap C_TS_{[\sigma]})}{r^{n}}\le \omega_{n}.
\end{equation*}
Here $o$ is the cone point in $C_T Y$ and $\omega_n$ is the volume of unit ball in $\Bbb E^{n}$. The second inequality follows from that $\log_{p}$ is 1--Lipschitz and the third inequality follows from $\mathcal{H}^{n-1}(\partial_{T}S_{[\sigma]})\le \mathcal{H}^{n-1}(\Bbb S^{n-1})$. By Theorem \ref{3.9} (2), $S_{[\sigma]}$ is isometric to $\Bbb E^{n}$.
\end{proof}

\subsection{The weakly special cube complexes}
\label{weakly special}
It is shown in \cite{MR2421136} that the assumption of Lemma \ref{5.2} is satisfied for $2$--dimensional RAAGs. Our goal in this section is to find an appropriate class of cube complexes which shares some key properties of the canonical $CAT(0)$ cube complexes of RAAGs such that assumption of Lemma \ref{5.2} is satisfied. In \cite{MR2377497}, Haglund and Wise introduced a class of RAAG-like cube complexes, which are called special cube complexes. We adjust their definition for our purposes in the following way.

\begin{definition}
\label{5.3}
A cube complex $K$ is \textit{weakly special} if it satisfies:
\begin{enumerate}
\item $K$ is non-positively curved.
\item No hyperplane \textit{self-osculates} or \textit{self-intersect}.
\end{enumerate}
\end{definition}

The second condition means for any vertex $v$ and two distinct edges $e_{1}$ and $e_{2}$ such that $v\in e_{1}\cap e_{2}$, the hyperplanes dual to $e_{1}$ and $e_{2}$ are different.

If $K$ is compact, then there exists a finite sheet weakly special cover $\bar{K}$ of $K$ such that every hyperplane in $\bar{K}$ are \textit{two-sided}, i.e. there exists a small neighbourhood of the hyperplane which is a trivial interval bundle over the hyperplane. This follows from the argument in \cite[Proposition 3.10]{MR2377497}. 

In the rest of this section, we will denote by $W'$ a compact weakly special cube complex, and $W$ the universal cover of $W'$. Since we mainly care about $W$, so there is no loss of generality by assuming every hyperplane in $W'$ is two-sided. The goal of this section is to prove the following
\begin{thm}
\label{5.18}
Let $W'_{1}$ and $W'_{2}$ be two compact weakly special cube complexes with $\dim(W'_{1})=\dim(W'_{2})=n$. Suppose $W_{1}$, $W_{2}$ are the universal covers of $W'_{1}$, $W'_{2}$ respectively. If $f\co W_{1}\to W_{2}$ is a $(L,A)$--quasi-isometry, then there exists a constant $C=C(L,A)$ such that for any top dimensional flat $F\subset W_{1}$, there exists a top dimensional flat $F'\subset W_{2}$ with $d_{H}(f(F),F')<C$.
\end{thm}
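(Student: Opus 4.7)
The plan is to reduce Theorem \ref{5.18} to the criterion in Lemma \ref{5.2}, namely to prove that the induced bijection $f_\sharp$ preserves the order $|[A]|$ for every minimal essential class $[A] \in \mathcal{KQ}(W_1)$. Following the outline given after Definition \ref{5.3}, I would execute this in two stages: first, classify the minimal essential elements of $\mathcal{KQ}(W_i)$ when $W_i$ is the universal cover of a compact weakly special cube complex; second, show that the resulting invariant is preserved under $f_\sharp$.

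For the classification, I would start from an arbitrary $[A] \in \mathcal{KQ}(W_i)$. Expanding the defining expression and applying Theorem \ref{4.49} to each top dimensional quasiflat appearing in it, $[A]$ is admissible: it is represented by a finite union of orthant subcomplexes. If $[A]$ is minimal essential, the top dimensional orthants in this representative must glue together tightly along common hyperplanes. Here the weakly special hypothesis does real work: since no hyperplane self-osculates or self-intersects in $W'_i$, two top dimensional orthant subcomplexes in $W_i$ whose coarse intersection has codimension one are forced (via Lemma \ref{2.14} and the absence of branching at hyperplanes) to sit inside the parallel set of a common hyperplane and to combine into a coarsely convex half-flat. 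Iterating this gluing argument, bootstrapped by minimality, I expect every minimal essential class to be represented by a convex subcomplex of $W_i$ isometric to $(\mathbb{R}_{\ge 0})^k \times \mathbb{R}^{n-k}$ for some $0 \le k \le n$, and hence to satisfy $|[A]| = 2^{n-k}$.

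For the preservation, recall from the discussion preceding Lemma \ref{5.2} that $f_\sharp$ sends minimal essential classes to minimal essential classes and commutes with union, intersection and subtraction, and that order-zero classes correspond under $f_\sharp$. Applying the classification of the first stage inside $W_2$, the image $f_\sharp([A])$ is represented by a convex product subcomplex of type $(k', n-k')$. To see $k' = k$, one characterizes the integer $k$ by an internal property of $\mathcal{KQ}(W_i)$ that uses only the preserved operations: for instance, $2^{n-k}$ is the maximal number of pairwise order-zero-intersecting minimal essential subclasses contained in $[A]$, each of order $1$. Since this characterization is preserved by $f_\sharp$, we obtain $k' = k$, hence $|f_\sharp([A])| = |[A]|$, and Lemma \ref{5.2} yields Theorem \ref{5.18}.

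The hard part will be the first stage, specifically the claim that a minimal essential element really is coarsely a single convex product subcomplex, as opposed to a more complicated tree-like union of orthants that merely share codimension-one faces. Without the weakly special hypothesis, branching can occur at a self-osculation point of a hyperplane and produce minimal essential classes whose order is not of the form $2^{n-k}$; ruling this out is precisely what Definition \ref{5.3} is designed for. I expect a fair amount of bookkeeping to translate the combinatorial no-self-osculation/no-self-intersection conditions on $W'_i$ into the geometric assertion, via Lemma \ref{2.10}, Lemma \ref{2.14} and Lemma \ref{2.18}, that coarsely intersecting orthant subcomplexes of $W_i$ assemble into the desired $(\mathbb{R}_{\ge 0})^k \times \mathbb{R}^{n-k}$ piece.
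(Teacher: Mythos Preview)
Your overall reduction to Lemma \ref{5.2} is correct and matches the paper. However, both stages of your sketch have real gaps.

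\textbf{Stage 2 fails as written.} Your proposed internal characterization of $k$ cannot work: if $[A]$ is minimal essential then by definition every $[B]\in\mathcal{KQ}(W_i)$ with $[B]\subsetneq[A]$ has $|[B]|=0$, so $[A]$ contains no proper essential subclasses at all, let alone ``minimal essential subclasses of order $1$''. The paper instead argues by downward induction on the order. The base case $|[A]|=2^n$ forces $[A]=[F]$ for a flat $F$ (Corollary \ref{5.7}), and then $|f_\sharp([A])|\ge 2^n$ because $f(F)$ is a quasiflat, while $|f_\sharp([A])|\le 2^n$ by minimality on the other side. For the inductive step the paper uses not a characterization inside $[A]$ but the \emph{external} structure of Lemma \ref{5.11}(2): any minimal essential $[A]$ with $|[A]|=2^{n-i}$ sits inside a flat $[F]$ together with $2^i-1$ other minimal essential classes of the \emph{same} order. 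Pushing this decomposition through $f_\sharp$ and using $\sum_j|f_\sharp([B_j])|\ge 2^n$ together with the induction hypothesis $|f_\sharp([B_j])|<2^{n-i+1}$ pins down each $|f_\sharp([B_j])|=2^{n-i}$.

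\textbf{Stage 1 misidentifies the role of weakly special.} You write of ``the absence of branching at hyperplanes'', but branching of orthants along codimension-one faces \emph{does} occur in universal covers of weakly special complexes (already for the RAAG on a path of length two). The weakly special hypothesis is not used to forbid branching; it is used to produce a consistent edge-labelling on $W_i$ so that one can build a \emph{mirror} $\tilde O$ of a given orthant $O$ via deck transformations (Lemma \ref{5.4} and the construction before Lemma \ref{5.6}). The crucial payoff is Lemma \ref{5.9}: the branched/unbranched status of each codimension-one face is \emph{preserved} under mirroring. Remark \ref{5.10} shows this fails if one mirrors naively without tracking labels. With this in hand, the paper doubles $O$ to a flat $F$, lets $G_u\subset(\Bbb Z/2)^n$ be generated by the reflections across the unbranched faces, and proves (Claims 2 and 3 in the proof of Lemma \ref{5.11}) that $[A]=[\cup_{\gamma\in G_u}\gamma(O)]$; this is the $(\Bbb R_{\ge 0})^k\times\Bbb R^{n-k}$ you were aiming for, with $k$ the number of branched faces. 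Your sketch via Lemma \ref{2.14} and Lemma \ref{2.18} would at best recover the local gluing of two adjacent orthants (as in Lemma \ref{5.8}), but without the mirroring argument you have no mechanism to propagate the branching data across the whole flat and hence no way to rule out the ``tree-like union'' you yourself flag as the danger.
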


This theorem follows from Lemma \ref{5.2} and the following lemma.

\begin{lem}
\label{5.12}
Let $W_1,W_2$ and $f$ be as in Theorem \ref{5.18}. If $f_{\sharp}\co \mathcal{KQ}(W_{1})\to\mathcal{KQ}(W_{2})$ is the induced bijection in Theorem \ref{5.1}, then $|f_{\sharp}([A])|=|[A]|$ for any minimal essential element $[A]\in\mathcal{KQ}(W_{1})$.
\end{lem}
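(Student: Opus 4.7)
The plan is to reduce Lemma~\ref{5.12} to the structure theorem announced in the introduction's sketch for Theorem~\ref{1.3}, by identifying $|[A]|$ with a poset-theoretic invariant of the partial order $(\mathrm{MinEss}(W_i), \subset)$ of minimal essential elements under coarse containment. Recall the structure theorem asserts that every minimal essential $[A] \in \mathcal{KQ}(W_i)$ admits a representative $K \subset W_i$ which is a convex subcomplex isometric to $(\Bbb R_{\ge 0})^{k}\times \Bbb R^{n-k}$ for some $0 \le k \le n$. Splitting each of the $n-k$ line factors into its two half-lines exhibits $K$ as the union, up to finite Hausdorff distance, of $2^{n-k}$ top-dimensional orthant subcomplexes, so $|[A]| = 2^{n-k}$; by Lemma~\ref{2.10} this count is independent of the representative.

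The key observation is that $|[A]|$ admits a purely poset-theoretic description: it equals the number of poset-minimal elements of $(\mathrm{MinEss}(W_i), \subset)$ that lie below $[A]$. The poset-minimal elements are precisely the coarse classes of single top-dimensional orthant subcomplexes (the case $k=n$ of the structure theorem): such an orthant has no strictly smaller coarse sub-class of positive order and so is poset-minimal, while any $[K]$ with $k<n$ has a proper minimal essential predecessor obtained by replacing one $\Bbb R$-factor of $K$ by a half-line. The minimal essentials $[O] \subset [A]$ that are poset-minimal are then in bijection with the $2^{n-k}$ top-dimensional orthant subcomplexes partitioning $K$, yielding the asserted poset-theoretic description of $|[A]|$.

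Since $f_\sharp$ is a bijection $\mathcal{KQ}(W_1) \to \mathcal{KQ}(W_2)$ (Theorem~\ref{5.1}) preserving coarse containment, admissibility, and the vanishing of order (hence also essentiality and minimal essentiality, by properties $(1)$--$(3)$ listed just before Lemma~\ref{5.2}), its restriction to $\mathrm{MinEss}(W_1) \to \mathrm{MinEss}(W_2)$ is an order-preserving bijection between these two posets. It therefore sends poset-minimal elements to poset-minimal elements and preserves the cardinality of the lower-set of poset-minimals beneath any given class. The identification $|[A]| = \#\{\text{poset-minimal } [O]\le [A]\}$ of the previous paragraph, applied on both sides, then yields $|f_\sharp([A])| = |[A]|$. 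The main obstacle is the structure theorem itself; establishing that every minimal essential has the rigid product form $(\Bbb R_{\ge 0})^k \times \Bbb R^{n-k}$ requires the weakly special hypothesis (the non-self-osculation and non-self-intersection of hyperplanes) to coherently extend orthant directions of $K$ to bi-infinite geodesics, combined with the minimality of $[A]$ to rule out non-product configurations; granting that, everything else in the proof is formal poset-theoretic bookkeeping.
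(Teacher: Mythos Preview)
Your proposal has a fatal gap: the poset $(\mathrm{MinEss}(W_i),\subset)$ you build your argument on is in fact an \emph{antichain}. This follows immediately from property~(2) stated just before Lemma~\ref{5.2}: if $[A_1]\neq[A_2]$ are both minimal essential then $|[A_1]\cap[A_2]|=0$, so neither can contain the other (else $[A_1]\cap[A_2]$ equals one of them, contradicting essentiality). Hence every minimal essential element is poset-minimal, and the only poset-minimal element below $[A]$ is $[A]$ itself. Your proposed invariant is identically~$1$, not $|[A]|$.

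The underlying error is the assumption that the individual orthants comprising a minimal essential $[K]\cong(\Bbb R_{\ge 0})^k\times\Bbb R^{n-k}$ are themselves minimal essential elements, or even lie in $\mathcal{KQ}(W)$. They do not: if halving one $\Bbb R$-factor of $K$ produced a class $[K']\in\mathcal{KQ}(W)$, then $[K']\subsetneq[K]$ with $|[K']|>0$, contradicting the minimality of $[K]$. The whole content of the structure theorem (proved as Lemma~\ref{5.11}) is precisely that the $\Bbb R$-directions of $K$ correspond to \emph{unbranched} faces, meaning no operation in $\mathcal{KQ}(W)$ can separate the two halves.

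The paper's actual proof of Lemma~\ref{5.12} is a downward induction on $|[A]|$. The base case $|[A]|=2^n$ uses Corollary~\ref{5.7} to identify $[A]$ with a flat, whence $f_\sharp([A])$ is a quasiflat and so has order exactly $2^n$ (again by Corollary~\ref{5.7} applied on the $W_2$ side). For the inductive step, Lemma~\ref{5.11}(2) decomposes a flat $[F]$ as a union of $2^i$ minimal essential pieces each of order $2^{n-i}$; pushing forward by $f_\sharp$ and applying the induction hypothesis together with Lemma~\ref{5.11}(1) forces each piece to keep its order. This counting argument is what your poset shortcut was trying to bypass, but the bypass does not work.
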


In the rest of this section, we will prove Lemma \ref{5.12}. 

We label the vertices and edges of $W'$ by $\{\bar{v}_{i}\}_{i=1}^{N_{v}}$ and $\{\bar{e}_{i}\}_{i=1}^{N_{e}}$ such that (1) different vertices have different labels; (2) two edges have the same label if and only if they are dual to the same hyperplane. We also choose an orientation for each edge such that if two edges are dual to the same hyperplane, the their orientations are consistent with parallelism (this is possible since each hyperplane is two-sided). All the labellings and orientations lift to the universal cover $W$. The edges in $W$ dual to the same hyperplane also share the same label.

For every edge path $\omega$ in $W'$ or $W$, define $L(\omega)$ to be the word $\bar{v}_{i}\bar{e}^{\epsilon_{i_{1}}}_{i_{1}}\bar{e}^{\epsilon_{i_{2}}}_{i_{2}}\bar{e}^{\epsilon_{i_{3}}}_{i_{3}}\cdots$ where $\bar{v}_{i}$ is the label of the initial vertex of $\omega$, $\bar{e}_{i_{j}}$ is the label of the $j$--th edge and $\epsilon_{i_{j}}=\pm 1$ records the orientation of the $j$--th edge. 

Definition \ref{5.3} and the way we label $W'$ imply:
\begin{enumerate}
\item For two edges $e'_1$ and $e'_2$ in $W'$ dual to the same hyperplane, $e'_1$ is embedded if and only if $e'_2$ is embedded, i.e. its end points are distinct.
\item Pick any vertex $v'_{i}\in W'$, then two distinct edges $e'_1$ and $e'_2$ with $v'_{i}\in e'_1\cap e'_2$ have different labels.
\item If $\omega'_{1}$ and $\omega'_{2}$ are two edge paths in $W'$ such that $L(\omega'_{1})=L(\omega'_{2})$, then $\omega'_{1}=\omega'_{2}$. If $\omega_{1}$ and $\omega_{2}$ are two edge paths in $W$ such that $L(\omega_{1})=L(\omega_{2})$, then there exists a unique deck transformation $\gamma$ such that $\gamma(\omega_{1})=\omega_{2}$.
\end{enumerate}

We will be using the following simple observation repeatedly.
\begin{lem}
\label{5.4}
Pick vertices $v_{1}$ and $v_{2}$ in $W$ which has the same label. For $i=1,2$, let $\{l_{ij}\}_{j=1}^{k}$ be a collection such that each $l_{ij}$ is a geodesic ray, a geodesic segment or a complete geodesic that contains $v_{i}$. Suppose 
\begin{enumerate}
\item Each $l_{ij}$ is a subcomplex of $W$.
\item For each $j$, there is a graph isomorphism $\phi_{j}\co l_{1j}\to l_{2j}$ which preserves the labels of vertices and edges and the orientations of edges, moreover $\phi_{j}(v_{1})=v_{2}$.
\item The convex hull of $\{l_{1j}\}_{j=1}^{k}$, which we denote by $K_{1}$, is a subcomplex isometric to $\prod_{j=1}^{k}l_{1j}$.
\end{enumerate}
Then the convex hull of $\{l_{2j}\}_{j=1}^{k}$, which we denote by $K_{2}$, is a subcomplex isometric to $\prod_{j=1}^{k}l_{2j}$. Moreover, let $\gamma$ be the deck transformation such that $\gamma(v_{1})=v_{2}$. Then $\gamma(K_{1})=K_{2}$.
\end{lem}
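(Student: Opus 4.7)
The plan is to show that $K_2$ is exactly the image of $K_1$ under the unique deck transformation sending $v_1$ to $v_2$. Since $v_1$ and $v_2$ carry the same label, they project to the same vertex of $W'$, so by covering space theory there is a unique deck transformation $\gamma$ with $\gamma(v_1)=v_2$ (uniqueness uses that the deck group acts freely on the universal cover $W$). As a deck transformation, $\gamma$ is a cellular isometry of $W$ that preserves the vertex/edge labels and edge orientations, hence $L(\gamma(\omega))=L(\omega)$ for every edge path $\omega$ in $W$.

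The core step will be to verify $\gamma(l_{1j})=l_{2j}$ for each $j$. Each $l_{ij}$ is a geodesic subcomplex through $v_i$, so it is either a single labelled edge path based at $v_i$ (if $l_{ij}$ is a segment or ray) or a pair of edge rays at $v_i$ (if $l_{ij}$ is bi-infinite). Using hypothesis (2), since $\phi_j$ sends $v_1\mapsto v_2$ and preserves labels and orientations, the corresponding edge path(s) describing $l_{2j}$ at $v_2$ have the same label(s) as those describing $l_{1j}$ at $v_1$. On the other hand, $\gamma(l_{1j})$ is a geodesic subcomplex through $v_2$, and by the previous paragraph its defining edge path(s) at $v_2$ also have the same label(s) as those of $l_{1j}$ at $v_1$. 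Invoking property (3) listed after Definition~\ref{5.3}---two edge paths in $W$ with the same label differ by a (unique) deck transformation, and the only deck transformation fixing $v_2$ is the identity---I conclude $\gamma(l_{1j})=l_{2j}$, treating the two rays separately in the bi-infinite case.

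With the claim in hand, the rest is formal. Because $\gamma$ is a cellular isometry, $\gamma(K_1)$ is a convex subcomplex of $W$, isometric to $K_1\cong\prod_{j=1}^{k}l_{1j}\cong\prod_{j=1}^{k}l_{2j}$, and it contains every $l_{2j}=\gamma(l_{1j})$. Convexity then yields $K_2\subseteq\gamma(K_1)$, while the fact that $\gamma(K_1)$, being a product of the $l_{2j}$, is already the convex hull of its axes $l_{2j}$ gives the reverse inclusion $\gamma(K_1)\subseteq K_2$. Therefore $K_2=\gamma(K_1)$ is a subcomplex isometric to $\prod_{j=1}^{k}l_{2j}$, which is exactly the conclusion.

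I do not expect a serious obstacle here: the argument is essentially book-keeping driven by the labelling machinery afforded by weak specialness, and the only mild subtlety is handling the bi-infinite case of $l_{ij}$ by splitting it into two edge rays at $v_i$ before applying the label-uniqueness property.
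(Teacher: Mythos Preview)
Your proof is correct and is exactly the intended argument: the paper states this lemma as a ``simple observation'' without proof, and the machinery set up immediately before it (the labelling, the label-preservation of deck transformations, and property~(3) after Definition~\ref{5.3}) is precisely what you invoke. One small streamlining: since $\gamma$ is an isometry of the $CAT(0)$ space $W$, it sends convex hulls to convex hulls, so once you have $\gamma(l_{1j})=l_{2j}$ you get $\gamma(K_1)=K_2$ directly without needing the two-inclusion argument.
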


Let $\dim(W)=n$ and let $O$ be a top dimensional orthant subcomplex in $W$. We now construct a suitable doubling of $O$ which will serve as a basic move to analyze the minimal essential elements in $\mathcal{KQ}(W)$. 

Let $\{r_{j}\}_{i=1}^{n}$ be the geodesic rays emanating from the tip of $O$ such that $O$ is the convex hull of $\{r_{j}\}_{j=1}^{n}$. We parametrize $r_{1}$ by arc length. Since the labelling of $W$ is finite, we can find a sequence of non-negative integers $\{n_{j}\}_{j=1}^{\infty}$ with $n_{j}\to\infty$ such that the label and orientation of incoming edge at $r_{1}(n_{j})$, the label and orientation of outcoming edge at $r_{1}(n_{j})$ and the label of $r_{1}(n_{j})$ do not depend on $j$.

We identify $O$ with $[0,\infty)\times O'$ where $O'$ is a $(n-1)$--dimensional orthant orthogonal to $r_{1}$. By our choice of $r_{1}(n_{1})$ and $r_{1}(n_{2})$, we can extend $\overline{r_{1}(n_{2})r_{1}(n_{1})}$ over $r_{1}(n_{1})$ to reach a vertex $v$ such that $L(\overline{r_{1}(n_{1})v})=L(\overline{r_{1}(n_{2})r_{1}(n_{1})})$ ($v$ does not need to lie on $r_1$, see Figure \ref{fig:1}).
\begin{figure}[ht!]
	\labellist
	\small\hair 2pt
	\pinlabel $u$ at 11 137
	\pinlabel $v$ at 56 81
	\pinlabel $\overline{r_1(n_1)v}$ at 122 107
	\pinlabel $\overline{r_1(n_2)r_1(n_1)}$ at 195 88
	\pinlabel $\overline{r_1(n_3)r_1(n_2)}$ at 286 88
	\pinlabel $r_1(0)$ at 65 18
	\pinlabel $r_1(n_1)$ at 149 16
	\pinlabel $r_1(n_2)$ at 249 16
	\pinlabel $r_1(n_3)$ at 337 16
	\endlabellist
	\centering
\includegraphics[scale=0.8]{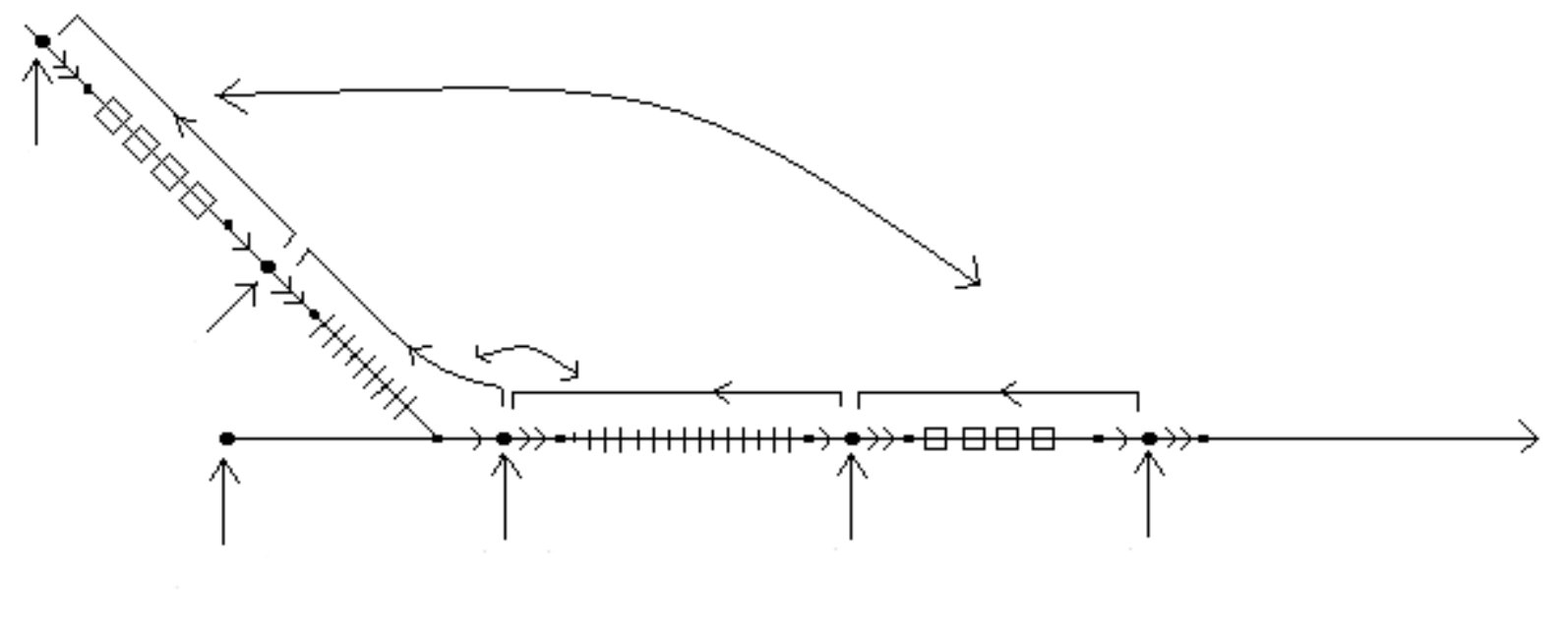}
\caption{}\label{fig:1}
\end{figure}

Let $K_{1}$ be the convex hull of $\{n_{2}\}\times O'$ and $\overline{r_{1}(n_{2})r_{1}(n_{1})}$. Then $K_1$ is of form $K_{1}=[n_{1},n_{2}]\times O'$. Note that the parallelism map between $\{n_1\}\times O'$ and $\{n_2\}\times O'$ preserves labelling and orientation of edges. Then it follows from Lemma \ref{5.4} that the convex hull of $\{n_{1}\}\times O'$ and $\overline{r_{1}(n_{1})v}$ is a subcomplex isometric to $\overline{r_{1}(n_{1})v}\times O'$ (actually if $\gamma\in\pi_1(W')$ is the deck transformation satisfying $\gamma(r_1(n_2))=r_1(n_1)$, then $\gamma(K_1)$ is the convex hull of $\{n_{1}\}\times O'$ and $\overline{r_{1}(n_{1})v}$). We call this subcomplex the \textit{mirror} of $K_{1}$ and denote it by $K'_{1}$. Since $K_1\cap K'_1=\{n_1\}\times O'$, $K'_{1}\cup ([n_1,\infty)\times O')$ is again an orthant, see Figure \ref{fig:2}.
\begin{figure}[ht!]
	\labellist
	\small\hair 2pt
	\pinlabel $u$ at 89 362
	\pinlabel $v$ at 142 321
	\pinlabel $u_1$ at 226 262
	\pinlabel $u_2$ at 315 262
	\pinlabel $u_3$ at 399 262
	\pinlabel $K_1$ at 277 103
	\pinlabel $K_2$ at 359 97
	\pinlabel $K_3$ at 452 92
	\pinlabel $K'_1$ at 162 228
	\pinlabel $K'_2$ at 108 273
	\pinlabel $K'_3$ at 53 321
	\endlabellist
	\centering
\includegraphics[scale=0.5]{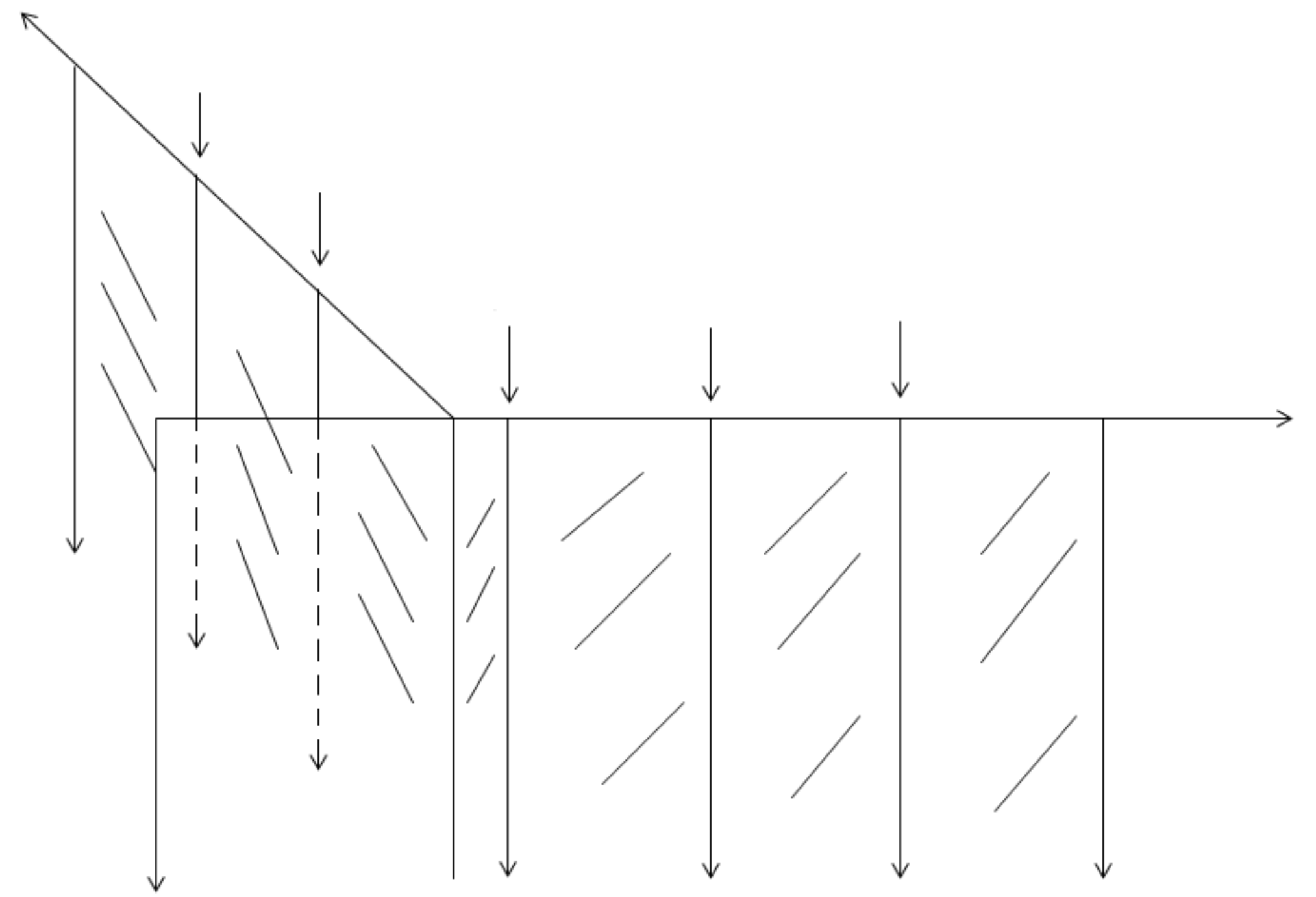}
\caption{}\label{fig:2}
\end{figure}

Let $K_{2}=[n_{2},n_{3}]\times O'$. We extend $\overline{r_1(n_1)v}$ over $v$ to reach a vertex $u$ such that $L(\overline{vu})=L(\overline{r_1(n_3)r_1(n_2)})$. Note that the parallelism map between $\{v\}\times O'$ and $\{n_3\}\times O'$ preserves labelling and orientation of edges. Then it follows from Lemma \ref{5.4} that the convex hull of $\{v\}\times O'$ and $\overline{vu}$ is a subcomplex isometric to $K_2$ (actually if $\gamma\in\pi_1(W')$ is the deck transformation satisfying $\gamma(r_1(n_3))=v$, then $\gamma(K_2)$ is the convex hull of $\{v\}\times O'$ and $\overline{vu}$). This convex hull is called the mirror of $K_2$, and it denoted by $K'_2$. Since $\overline{vu}$ and $\overline{r_{1}(n_{1})v}$ fit together to form a geodesic segment, $K'_1\cap K'_2=\{v\}\times O'$. Thus $K'_2\cup K'_{1}\cup ([n_1,\infty)\times O')$ is again an orthant. We can continue this process, and consecutively construct the mirror of $K_{i}=[n_{i},n_{i+1}]\times O'$ in $W$ (denoted $K'_{i}$) arranged in the pattern indicated in the above picture. Similarly one can verify that $K'_i$ is isometric to $K_i$ and $K'_i\cap K'_{i+1}$ is isometric to $O'$.

Now we obtain a subcomplex $K=(\cup_{i=1}^{\infty}K_{i})\cup(\cup_{i=1}^{\infty}K'_{i})$. It is clear that $[O]\subset[K]$. The discussion in the previous paragraph implies that $\cup_{i=1}^{\infty}K'_{i}$ is also a top dimensional orthant. We will call it the \textit{mirror} of $O$. Moreover, $K$ is isometric to $\Bbb R\times(\Bbb R_{\ge 0})^{n-1}$. More generally, by the same argument as above and Lemma \ref{5.4}, we have the following result.
\begin{lem}
\label{5.6}
If $K\subset W$ is a convex subcomplex isometric to $(\Bbb R_{\ge 0})^{k}\times \Bbb R^{n-k}$, then there exists a convex subcomplex $K'$ isometric to $(\Bbb R_{\ge 0})^{k-1}\times \Bbb R^{n-k+1}$ such that $[K]\subset[K']$.
\end{lem}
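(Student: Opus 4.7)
The plan is to mimic, verbatim, the mirror construction carried out just before the statement, but now applied to one of the half-line factors of $K$ rather than to a half-line factor of a top dimensional orthant. The key observation is that Lemma~\ref{5.4} does not require the ``base'' factor to be an orthant --- it only needs a convex product subcomplex --- so the same inductive reflection argument goes through with $K_0 \cong (\Bbb R_{\ge 0})^{k-1}\times \Bbb R^{n-k}$ replacing the factor $O'$ of the previous construction.

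Concretely, I would first choose one of the $\Bbb R_{\ge 0}$ factors of $K$, call the corresponding geodesic ray $r_1$, parametrize it by arc length, and write $K$ as the convex subcomplex $[0,\infty)\times K_0$, where $K_0$ is the product of the remaining factors. Using finiteness of the labelling of $W'$, I extract a sequence $n_1<n_2<\cdots$ with $n_j\to\infty$ such that the label of $r_1(n_j)$ together with the labels and orientations of the edges of $r_1$ incident to $r_1(n_j)$ are independent of $j$. Set $K_i=[n_i,n_{i+1}]\times K_0$.

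Next, extend the edge $\overline{r_1(n_2)r_1(n_1)}$ past $r_1(n_1)$ to a vertex $v_1$ with $L(\overline{r_1(n_1)v_1})=L(\overline{r_1(n_2)r_1(n_1)})$. Applying Lemma~\ref{5.4} to $K_1$ (viewed as the product $\overline{r_1(n_2)r_1(n_1)}\times K_0$, where the slice $\{n_2\}\times K_0$ plays the role of $l_{11}$) and to the pair consisting of the slice $\{n_1\}\times K_0$ and the new edge $\overline{r_1(n_1)v_1}$, one obtains a convex subcomplex $K_1'$ isometric to $K_1$ and sharing the slice $\{n_1\}\times K_0$ with $K_1$. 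Extending further past $v_1$ via $L(\overline{v_1v_2})=L(\overline{r_1(n_3)r_1(n_2)})$ and iterating exactly as in the construction preceding the lemma produces a sequence of mirror pieces $K_i'$, each isometric to $K_i$, such that $K_i'\cap K_{i+1}'$ is a slice isometric to $K_0$.

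Finally, set $K'=K\cup\bigcup_{i\ge 1}K_i'$. By construction $\bigcup_{i\ge 1}K_i'$ is isometric to $[0,\infty)\times K_0$ and is glued to $K$ along $\{0\}\times K_0$, so $K'$ is isometric to $\Bbb R\times K_0\cong (\Bbb R_{\ge 0})^{k-1}\times \Bbb R^{n-k+1}$; it is a subcomplex of $W$, and its local isometric embedding into $W$ together with $CAT(0)$-ness of $W$ forces $K'$ to be globally convex. Since $K\subset K'$, we have $[K]\subset [K']$ as required. The main point to check carefully is that Lemma~\ref{5.4}'s hypothesis on the parallelism $\phi_j$ preserving labels and orientations remains valid at every stage of the induction --- this is precisely what the uniform choice of the subsequence $\{n_j\}$ and the label-consistency of the cubical parallelism inside $K$ guarantee, so no essentially new difficulty arises beyond the orthant case already treated.
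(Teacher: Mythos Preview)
Your approach is exactly what the paper intends: it states Lemma~\ref{5.6} with the one-line justification ``by the same argument as above and Lemma~\ref{5.4}'', meaning the mirror construction with the orthogonal factor $O'$ replaced by $K_0\cong(\Bbb R_{\ge 0})^{k-1}\times\Bbb R^{n-k}$, precisely as you outline.

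There is one small slip in your final assembly. The mirror $\bigcup_{i\ge 1}K'_i$ is glued to $K$ along the slice $\{n_1\}\times K_0$, not $\{0\}\times K_0$; consequently $K\cup\bigcup_{i}K'_i$ may fail to be isometric to $\Bbb R\times K_0$, because the mirror ray emanating from $r_1(n_1)$ need not retrace the segment $r_1([0,n_1])$ (cf.\ Figure~\ref{fig:1}, where $v$ need not lie on $r_1$), so your union can have a branching tripod near $r_1(n_1-1)$. The fix is exactly what the paper does in the orthant case: set
\[
K'=\Bigl(\bigcup_{i\ge 1}K_i\Bigr)\cup\Bigl(\bigcup_{i\ge 1}K'_i\Bigr)=\bigl([n_1,\infty)\times K_0\bigr)\cup\bigcup_{i\ge 1}K'_i,
\]
which is genuinely isometric to $\Bbb R\times K_0$. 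Then $[K]=[[n_1,\infty)\times K_0]\subset[K']$ since $[0,n_1]\times K_0\subset N_{n_1}([n_1,\infty)\times K_0)$. With this correction your argument is complete and identical to the paper's.
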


Pick a minimal essential element $[A]\in\mathcal{KQ}(W)$, then there exists a top dimensional orthant $O$ with $[O]\subset[A]$ ($[O]$ may not be an element in $\mathcal{KQ}(W)$). Using Lemma \ref{5.6}, we can double the orthant $n$ times to get a top dimensional flat $F$ with $[O]\subset[F]$. Since $[A]$ is minimal, $[A]\cap [F]=[A]$, which implies the following result:
\begin{cor}
\label{5.7}
If $[A]\in\mathcal{KQ}(W)$ is a minimal essential element, then there exists a top dimensional flat $[F]$ such that $[A]\subset[F]$. In particular, $|[A]|\le 2^{\dim(W)}=2^{n}$.
\end{cor}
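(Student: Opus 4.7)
The plan is to use Lemma \ref{5.6} as a ``doubling'' procedure to enlarge an orthant inside $[A]$ into a top dimensional flat, and then invoke minimality of $[A]$ to force $[A]$ to sit inside this flat.

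More concretely, because $[A]$ is essential we have $|[A]|\ge 1$, so by definition of the order there is a top dimensional orthant subcomplex $O\subset W$ with $[O]\subset [A]$. This $O$ is convex and isometric to $(\mathbb{R}_{\ge 0})^{n}\times\mathbb{R}^{0}$, so Lemma \ref{5.6} applies with $k=n$. Iterating the lemma $n$ times, I obtain a sequence of convex subcomplexes
\begin{equation*}
K_{0}=O,\ K_{1},\ \ldots,\ K_{n}=F,
\end{equation*}
where $K_{j}$ is isometric to $(\mathbb{R}_{\ge 0})^{n-j}\times\mathbb{R}^{j}$ and $[K_{j}]\subset [K_{j+1}]$. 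The final $K_{n}=F$ is isometric to $\mathbb{R}^{n}$, hence is a top dimensional flat, and by transitivity $[O]\subset [F]$.

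Since $F$ is a top dimensional flat it is in particular a top dimensional quasiflat, so $[F]\in\mathcal{Q}(W)\subset\mathcal{KQ}(W)$. Then $[A]\cap [F]$ is defined and lies in $\mathcal{KQ}(W)$, with $[A]\cap [F]\subset [A]$. The inclusion $[O]\subset [A]\cap [F]$ forces $|[A]\cap [F]|\ge 1$, so $[A]\cap [F]$ is essential. Minimality of $[A]$ then yields $[A]\cap [F]=[A]$, that is, $[A]\subset [F]$. Finally, the flat $F\cong \mathbb{R}^{n}$ splits as a union of $2^{n}$ top dimensional orthant subcomplexes based at any chosen vertex (one for each choice of signs in the $n$ coordinate directions), and these orthants lie in pairwise distinct coarse classes by Lemma \ref{2.10}; hence $|[F]|=2^{n}$, giving $|[A]|\le 2^{n}$.

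I do not foresee a serious obstacle here: given Lemma \ref{5.6}, the proof is essentially a bookkeeping exercise combining iterated doubling with the minimality hypothesis and the lattice properties of $\mathcal{KQ}(W)$ recorded in Theorem \ref{5.1}. The only point that deserves a one-line check is the identification $|[F]|=2^{n}$ for a top dimensional flat, and that follows directly from Lemma \ref{2.10}.
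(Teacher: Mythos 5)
Your proof is correct and follows exactly the paper's own argument: extract a top-dimensional orthant $O$ with $[O]\subset[A]$, apply Lemma \ref{5.6} $n$ times to enlarge $O$ to a flat $F$, then use minimality of $[A]$ together with the lattice operations of Theorem \ref{5.1} to conclude $[A]\cap[F]=[A]$. The only (welcome) additions beyond the paper's terse presentation are the explicit verification that $[A]\cap[F]$ is essential and the direct computation $|[F]|=2^{n}$ via the orthant decomposition of $\mathbb{R}^{n}$.
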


Pick a top dimensional orthant subcomplex $O$ and denote the $(n-1)$--faces of $O$ by $\{O_{i}\}_{i=1}^{n}$. $[O_{i}]$ is \textit{branched} if there exist top dimensional orthant subcomplexes $O'$ and $O''$ such that $[O]$, $[O']$ and $[O'']$ are distinct elements and $[O]\cap[O']=[O]\cap[O'']=[O_{i}]$, otherwise $[O_{i}]$ is called \textit{unbranched}.

\begin{lem}
\label{5.8}
If $O$ and $O_{i}$ are as above, then $[O_{i}]$ is branched if and only if there exists a sub-orthant $O'_{i}\subset O_{i}$ and geodesic rays $l_{1}$, $l_{2}$ and $l_{3}$ emanating from the tip of $O'_{i}$ such that
\begin{enumerate}
\item $[O'_{i}]=[O_{i}]$.
\item $[l_{1}]$, $[l_{2}]$ and $[l_{3}]$ are distinct.
\item The convex hull of $l_{j}$ and $O'_{i}$ is a top dimensional orthant for $1\le j\le 3$.
\end{enumerate} 
\end{lem}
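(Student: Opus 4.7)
My plan is to prove the two directions of the biconditional separately. The $(\Leftarrow)$ direction is essentially a dimension count, while the $(\Rightarrow)$ direction requires a geometric construction using the parallel-set description of coarse intersections from Lemma~\ref{2.10} combined with the straight-orthant criterion of Lemma~\ref{2.18}.

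For the $(\Leftarrow)$ direction, assume $O'_i$ and rays $l_1,l_2,l_3$ satisfy (1)--(3). Let $H_j$ be the top-dimensional orthant equal to the convex hull of $l_j$ and $O'_i$. Since the $[l_j]$ are pairwise distinct and each $H_j$ has $O'_i$ as a codimension-one face, the classes $[H_j]$ are pairwise distinct; by Lemma~\ref{2.10} and Remark~\ref{2.13}(2), each pairwise coarse intersection is an $(n-1)$-dimensional sub-orthant class containing $[O'_i]=[O_i]$, hence equal to $[O_i]$. In the cubical splitting $O\cong O_i\times[0,\infty)$, let $l_0$ be the ray from the tip of $O'_i$ in the $[0,\infty)$--direction; then the convex hull of $l_0$ and $O'_i$ is a top-dimensional sub-orthant of $O$ coarsely equivalent to $O$. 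At most one of the $l_j$ is coarsely equivalent to $l_0$, so after relabeling $[H_2]\neq[O]\neq[H_3]$, and $\{[O],[H_2],[H_3]\}$ witnesses the branching of $[O_i]$.

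For the $(\Rightarrow)$ direction, assume $[O_i]$ is branched via distinct $[O],[O'],[O'']$ with $[O]\cap[O']=[O]\cap[O'']=[O_i]$. Applying Lemma~\ref{2.10} and Remark~\ref{2.13}(2) to each pair produces sub-orthant subcomplexes $Y_{O,O'},Y_{O,O''}\subset O$ (both coarsely $[O_i]$) together with parallel copies $Y_{O',O}\subset O'$ and $Y_{O'',O}\subset O''$. Since $Y_{O,O'}$ and $Y_{O,O''}$ are parallel within $O$ to the face $O_i$, we may choose a sub-orthant $O'_i\subset O_i$ with $[O'_i]=[O_i]$ deep enough that the translates of $O'_i$ along the perpendicular direction of $O$ lie inside both $Y_{O,O'}$ and $Y_{O,O''}$; let $p$ be the tip of $O'_i$. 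Define $l_1$ to be the ray in $O$ from $p$ perpendicular to $O_i$. For $l_2$, let $\eta_2\in\partial_TW$ denote the Tits class of the ray in $O'$ perpendicular to $Y_{O',O}$, and take $l_2$ to be the geodesic ray from $p$ realizing this boundary direction. Construct $l_3$ analogously from $O''$. The plan is to argue that each $l_j$ is a straight geodesic ray: it lies in the parallel set of a straight ray in the corresponding orthant, which in a CAT(0) cube complex is a convex subcomplex with a product structure, and the cubical product description then forces $l_j$ to be parallel to the $1$--skeleton. The Tits angle $\angle_T(l_j,r)=\pi/2$ for every generating ray $r$ of $O'_i$ follows because $\eta_j$ is Tits-perpendicular to $\partial_TO_i$ inside the orthant supplying $l_j$. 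Since $l_j$ is straight, $\Sigma_pl_j$ is a vertex of the all-right spherical complex $\Sigma_pW$ distinct from the generating vertices of $\Sigma_pO'_i$, which together with the Tits estimate forces $\angle_p(l_j,r)=\pi/2$. Lemma~\ref{2.18} then produces the top-dimensional orthant $\mathrm{conv}(l_j,O'_i)$. Distinctness of $[l_1],[l_2],[l_3]$ follows from distinctness of $[O],[O'],[O'']$: two asymptotic straight rays would yield coarsely equivalent orthants.

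The main technical obstacle is the construction of $l_2$ and $l_3$ as genuine straight geodesic rays from $p$. The delicate step is assembling the parallelism segment running from $Y_{O,O'}$ to $Y_{O',O}$ with the perpendicular ray from $Y_{O',O}$ into $O'$; these pieces fit together because both point perpendicularly into $O'$ at their common endpoint, and the product structure of the relevant parallel sets then lets one translate the combined ray back to a straight ray emanating from $p$. Once straightness is established, the all-right structure of $\Sigma_pW$ automatically promotes the Tits orthogonality to Alexandrov orthogonality at $p$, at which point Lemma~\ref{2.18} assembles the rays and $O'_i$ into the required orthants.
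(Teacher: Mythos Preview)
Your $(\Leftarrow)$ direction is fine and more explicit than the paper, which simply calls it trivial.

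Your $(\Rightarrow)$ direction has the right skeleton but contains a genuine gap at the step you yourself flag as ``delicate.'' You assert that the parallelism segment from $Y_{O,O'}$ to $Y_{O',O}$ and the perpendicular ray from $Y_{O',O}$ into $O'$ ``fit together because both point perpendicularly into $O'$ at their common endpoint.'' This is not correct: at a point $y\in Y_{O',O}$, the direction $v$ back along the parallelism segment points \emph{out} of $O'$ (toward $O$), while the direction $v'$ into $O'$ points the other way. What you actually know is that both $v$ and $v'$ are orthogonal to $\Sigma_y Y_{O',O}$, but two directions orthogonal to a codimension-one subspace of the link could a priori be at angle $\pi/2$ rather than $\pi$. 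The missing ingredient is a dimension count: $\Sigma_y Y_{O',O}$ is spanned by $n-1$ pairwise orthogonal vertices $v_1,\dots,v_{n-1}$ of the all-right spherical complex $\Sigma_y W$, and both $v$ and $v'$ are vertices in the (necessarily discrete) link of the simplex they span. If $d(v,v')=\pi/2$ then $v,v',v_1,\dots,v_{n-1}$ would be $n+1$ mutually orthogonal directions, contradicting $\dim W=n$; hence $d(v,v')=\pi$. Without this argument you have not shown the concatenation is a geodesic, so everything downstream (straightness, Lemma~\ref{2.18}) is unsupported.

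The paper's proof is organized differently and somewhat more efficiently: rather than constructing the ray $l_2$ first and then invoking Lemma~\ref{2.18} to build the orthant, it directly shows that $Y_1\times[0,d(O,O')]\cup Y_2\times[0,\infty)$ is itself a top-dimensional orthant subcomplex, using exactly the dimension argument above at the junction $Y_2$. This one-step construction avoids your separate verification of straightness and of the Alexandrov angles at $p$, and also makes the final ``translate back to a common tip $p$'' step cleaner, since one is comparing two $(n-1)$-orthants $Y_1,Y'_1\subset O$ both parallel to $O_i$ and can pass to a common sub-orthant.
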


\begin{proof}
If $O_{i}$ is branched, let $O'$ and $O''$ be the orthant subcomplexes as above, we can assume $O'\cap O=O''\cap O=\emptyset$. Let $(Y_{1},Y_{2})=\inc (O,O')$. Since $Y_{1}$ and $Y_{2}$ bound a copy of $Y_{1}\times [0,d(O,O')]$ inside $W$, $\dim(Y_{1})$=$\dim(Y_{2})\le n-1$. However, (\ref{2.11}) implies $[Y_{1}]=[O]\cap[O']=[O_{i}]$, so $Y_{1}$ and $Y_2$ are $(n-1)$--dimensional orthant subcomplexes. We can find a copy of $Y_{2}\times[0,\infty)$ inside $O'$ and we claim $Y_{1}\times [0,d(O,O')]\cup Y_{2}\times[0,\infty)$ is also a top dimensional orthant subcomplex.

To see this, note that $(Y_{1}\times [0,d(O,O')])\cap (Y_{2}\times[0,\infty))=Y_{2}$. Pick $y\in Y_{2}$, let $\{v_{i}\}_{i=1}^{n-1}$ be mutually orthogonal directions in $\Sigma_{y}Y_{2}$. Moreover, we can assume each $v_{i}$ is in the $0$--skeleton of $\Sigma_{y}Y_{2}\subset\Sigma_{y}W$. Let $v\in\Sigma_{y}(Y_{1}\times [0,d(O,O')])$ be the direction corresponding to the $[0,d(O,O')]$ factor and let $v'\in\Sigma_{y}(Y_{2}\times[0,\infty))$ be the direction corresponding to the $[0,\infty)$ factor. It is clear that $v$ and $v'$ are distinct points in the $0$--skeleton of $\Sigma_{y}W$. If $d(v,v')=\pi/2$, then $v$, $v'$ and $\{v_{i}\}_{i=1}^{n-1}$ would be mutually orthogonal directions, which yields a contradiction with the fact that $\dim(W)=n$. Thus $d(v,v')=\pi$ and $Y_{1}\times [0,d(O,O')]\cup Y_{2}\times[0,\infty)$ is indeed a top dimensional orthant subcomplex. 

Note that the orthant constructed above is the convex hull of $Y_{1}$ and some geodesic ray $l$ emanated from the tip of $Y_{1}$. We can repeat this argument for $O''$ to obtain the required sub-orthants and geodesic rays in the lemma. The other direction of the lemma is trivial.
\end{proof}

Let $O$, $\{r_{j}\}_{j=1}^{n}$, $\{n_{i}\}_{i=1}^{\infty}$, $K_{i}$ and $K'_{i}$ be as in the discussion before Lemma \ref{5.6}. Let $a_{j}=n_{j+1}-n_{1}$ for $j\ge 0$. We identify $(\cup_{i=1}^{\infty}K_{i})\cup(\cup_{i=1}^{\infty}K'_{i})$ with $\Bbb R\times\prod_{j=2}^{n}r_{j}$ such that $K_{i}=[a_{i-1},a_{i}]\times\prod_{j=2}^{n}r_{j}$. Thus $K'_{i}=[-a_{i},-a_{i-1}]\times\prod_{j=2}^{n}r_{j}$. Let $l$ be the unit speed complete geodesic line in $W$ such that $l(0)=r_1(n_1)$ and it is parallel to the $\Bbb R$ factor. For $x\in \Bbb R$, we denote the geodesic ray in $(\cup_{i=1}^{\infty}K_{i})\cup(\cup_{i=1}^{\infty}K'_{i})$ that starts at $l(x)$ and goes along the $r_{j}$ factor by $\{x\}\times r_{j}$.

Let $\gamma_i\in \pi_1(W')$ be the deck transformation satisfying $\gamma_i(l(a_i))=l(-a_{i-1})$. Then by our construction, $\gamma_i(K_i)=K'_{i}$. Moreover, under the product decomposition $K_{i}=[a_{i-1},a_{i}]\times\prod_{j=2}^{n}r_{j}$ and $K'_{i}=[-a_{i},-a_{i-1}]\times\prod_{j=2}^{n}r_{j}$, $\gamma_i$ maps $[a_{i-1},a_i]$ to $[-a_{i},-a_{i-1}]$ and fixes the factor $\prod_{j=2}^{n}r_{j}$ pointwise.

Let $\tilde{O}=\cup_{i=1}^{\infty}K'_{i}$ be the mirror of $O$. There is an isometry $\rho$ acting on $(\cup_{i=1}^{\infty}K_{i})\cup(\cup_{i=1}^{\infty}K'_{i})=\Bbb R\times\prod_{j=2}^{n}r_{j}$ by flipping the $\Bbb R$ factor (the other factors are fixed). For $1\le j\le n$, let $O_{j}$ be the $(n-1)$--face of $O$ which is orthogonal to $r_{j}$ and let $\tilde{O}_{j}$ be the $(n-1)$--face of $\tilde{O}$ such that $[\rho(\tilde{O}_{j})]=[O_{j}]$ (Recall that $[O]=[\cup_{i=1}^{\infty}K_{i}]$).

\begin{lem}
\label{5.9}
$[O_{j}]$ is branched if and only if $[\tilde{O}_{j}]$ is branched.
\end{lem}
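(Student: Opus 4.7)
By the symmetry of the mirror construction, it suffices to prove that branching of $[O_j]$ implies branching of $[\tilde O_j]$. The case $j=1$ is immediate: $O_1=\{r_1(0)\}\times O'$ and $\tilde O_1=\{l(0)\}\times O'$ are parallel $(n-1)$-orthants in the slice structure $\Bbb R\times O'$ at Hausdorff distance $n_1$, so $[O_1]=[\tilde O_1]$.

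Assume $j\ge 2$. Given branching of $[O_j]$, I apply Lemma~\ref{5.8} to obtain a sub-orthant $O'_j\subset O_j$, its tip $p$, and rays $l_1,l_2,l_3$ from $p$ such that the convex hull $O^{(s)}$ of $l_s$ and $O'_j$ is a top-dimensional orthant and the three classes $[O^{(s)}]$ are distinct. Using the product structures $O^{(s)}\cong O'_j\times l_s$ to parallel-transport, I may assume the tip is $p=l(a_i)\times(p_2,\ldots,p_n)$ (with the $p_j$ coordinate omitted) for some large $i$ and each $p_k$ a vertex of $r_k$, so $p\in K_{i+1}\cap O_j$. Let $\gamma_{i+1}$ be the deck transformation with $\gamma_{i+1}(K_{i+1})=K'_{i+1}$; by Lemma~\ref{5.4} and the mirror construction, on $K_{i+1}\cong[a_i,a_{i+1}]\times\prod_{k\ge 2} r_k$ the map $\gamma_{i+1}$ acts as the translation $t\mapsto t-(a_i+a_{i+1})$ on the $l$-coordinate composed with the identity on $\prod r_k$. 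Thus $\tilde p:=\gamma_{i+1}(p)=l(-a_{i+1})\times(p_2,\ldots,p_n)\in\tilde O_j\cap K'_{i+1}$, and the images $\tilde v_s:=\gamma_{i+1}(l_s^+(0))\in\Sigma_{\tilde p}W$ are three distinct vertices of the link. The spanning directions of $O'_j$ at $p$ are $+l$ and the $+r_k$'s, which $\gamma_{i+1}$ sends to $+l$ and the $+r_k$'s at $\tilde p$; since $+l$ and $-l$ are antipodal in $\Sigma_{\tilde p}W$, each $\tilde v_s$ is also locally orthogonal to the spanning directions $-l$, $+r_k$ of the sub-orthant $\tilde O''_j\subset\tilde O_j$ with tip $\tilde p$.

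It remains to promote the local $n$-cube configuration $(\tilde O''_j,\tilde v_s)$ at $\tilde p$ to a top-dimensional orthant $\tilde O^{(s)}$ of $W$ containing $\tilde O''_j$ as a face. For this I apply Lemma~\ref{5.4} iteratively along $\tilde O''_j$: at every vertex $\tilde p'\in\tilde O''_j$, the deck transformation $\gamma_{i'}$ (where $K'_{i'}\ni\tilde p'$) identifies $\tilde p'$ with a vertex $p'$ of $O'_j$, and the product structure $O^{(s)}\cong O'_j\times l_s$ furnishes an edge at $p'$ parallel to the initial edge of $l_s$ at $p$. Its $\gamma_{i'}$-image is an edge at $\tilde p'$ with matching label, parallel to the $\tilde v_s$-edge at $\tilde p$. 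The label data propagates consistently along $\tilde O''_j$, and Lemma~\ref{5.4} then supplies $\tilde O^{(s)}$. Since the three $\tilde v_s$ are distinct, the three orthants $\tilde O^{(s)}$ are pairwise coarsely distinct, so by Lemma~\ref{5.8} the class $[\tilde O_j]$ is branched.

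The main obstacle is this final label-propagation step. The single deck transformation $\gamma_{i+1}$ identifies only $O'_j\cap K_{i+1}$ with $\tilde O''_j\cap K'_{i+1}$, whereas the orthant structure of $O^{(s)}$ over all of $O'_j$ must be reproduced over all of $\tilde O''_j$. Verifying the required label compatibility amounts to stitching together the successive $\gamma_{i'}$'s used to define $\tilde O$---i.e.\ the content of the mirror construction itself---and must be done carefully for Lemma~\ref{5.4} to apply along the full $\tilde O''_j$.
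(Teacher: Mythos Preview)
Your overall strategy---transport the three branching rays to $\tilde O_j$ via the deck transformations $\gamma_{i}$ of the mirror construction---is the paper's, and your treatment of $j=1$ and the reduction through Lemma~\ref{5.8} are fine. But the step you yourself flag as the ``main obstacle'' is not resolved by your sketch, and resolving it is exactly the content of the paper's argument.

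The difficulty is that Lemma~\ref{5.4} cannot be applied in one shot to pass from $O^{(s)}$ to an orthant over $\tilde O''_j$. Its hypothesis would require a label-preserving isomorphism from the $+l$-ray (a spanning ray of $O'_j$) to the $-l$-ray (the corresponding spanning ray of $\tilde O''_j$). But each $\gamma_{i'}$ acts on $l$ as the \emph{translation} $t\mapsto t-(a_{i'-1}+a_{i'})$, so it carries $+l$-rays to $+l$-rays, never to $-l$-rays; indeed the first edge of the $-l$-ray at $\tilde p$ carries (via $\gamma_{i+2}$) the label of the \emph{incoming} edge at $l(a_{i+2})$, whereas the first edge of the $+l$-ray at $p$ carries the \emph{outgoing} label at $l(a_i)$, and these need not agree. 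There is thus no global label-matching, and your pointwise production of a single $\tilde v_s$-edge at each vertex of $\tilde O''_j$ does not by itself assemble into an orthant.

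The paper fills this gap by a slicing argument. After arranging the tip to lie on $l$ (equivalently, replacing $l$ by a parallel line and trimming the $K_i$), the branching orthant factors as $H_m\cong c_m\times l([a_{i_0},\infty))\times\prod_{k\ne 1,j}r_k$. One slices along the $K_i$-decomposition into blocks $L_{i'}=c_m\times l([a_{i'-1},a_{i'}])\times\prod_{k\ne 1,j}r_k$, sets $L'_{i'}=\gamma_{i'}(L_{i'})$, and checks that consecutive $L'_{i'}$ and $L'_{i'+1}$ share their face at $l$-coordinate $-a_{i'}$: both $\gamma_{i'}$ and $\gamma_{i'+1}$ send a face of their block there, and since parallelism and the $\gamma$'s preserve labels and orientations, the two images coincide. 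The union $\bigcup_{i'}L'_{i'}$ is then the orthant witnessing the branching of $[\tilde O_j]$. This matching of adjacent slices is the missing ingredient in your propagation sketch.

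One further caution: your opening ``by symmetry of the mirror construction'' is exactly what Remark~\ref{5.10} warns against. The construction is not abstractly symmetric---$\tilde O$ records the full label pattern of $O$, not conversely---and the reverse implication goes through only because the \emph{same slicing argument} runs with $\gamma_{i'}^{-1}$ in place of $\gamma_{i'}$, not because of any formal duality.
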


\begin{proof}
If $j=1$, then $[O_{1}]=[\tilde{O}_{1}]=[O]\cap[\tilde{O}]$ and the lemma is trivial, so we assume $j\neq 1$. If $[O_{j}]$ is branched, then by Lemma \ref{5.8}, we can assume without loss of generality (one might need to modify $K_{i}$ and $K'_{i}$ by cutting off suitable pieces and replace $l$ by a geodesic in $\Bbb R\times\prod_{j=2}^{n}r_{j}$ which is parallel to $l$) that there exist $i_{0}\ge 0$ and geodesic rays $c_{1}$, $c_{2}$, $c_{3}$ emanating from $l(a_{i_{0}})$ such that $[c_{1}]$, $[c_{2}]$, $[c_{3}]$ are distinct elements and the convex hull of $c_{m}$, $l([a_{i_{0}},\infty))$ and $\{a_{i_{0}}\}\times r_{k}$ ($k\neq 1,j$), which we denote by $H_{m}$, is a top dimensional orthant subcomplex for $1\le m\le 3$.

Let $\gamma$ be the deck transformation satisfying $\gamma(l(a_{i_{0}}))=l(-a_{i_{0}})$. Such $\gamma$ exists by the construction of $l$ (in the previous paragraph, we possibly replace the original $l$ by a geodesic parallel to $l$, however, the same $\gamma$ works). Let $\tilde{c}_{m}=\gamma(c_{m})$ for $1\le m\le 3$. Then $[\tilde{c}_{1}]$, $[\tilde{c}_{2}]$, $[\tilde{c}_{3}]$ are distinct since $\gamma$ is an isometry. Since $\gamma$ is label and orientation preserving, $c_{m}$ and $\tilde{c}_{m}$ correspond to the same word for $1\le m\le 3$, moreover $\gamma(\{a_i\}\times r_k)=\{-a_i\}\times r_k$ for $k\neq 1$. To prove $[\tilde{O}_{j}]$ is branched, it suffices to show the convex hull of $\tilde{c}_{m}$, $l((-\infty,-a_{i_{0}}])$ and $\{-a_{i_{0}}\}\times r_{k}$ ($k\neq 1,j$) is a top dimensional orthant subcomplex. 

For $m=1$, we chop up $H_{1}$ into pieces such that $H_{1}=\cup_{i=i_{0}+1}^{\infty} L_{i}$ and $L_{i}=c_{1}\times l([a_{i-1},a_{i}])\times\prod_{k\neq 1,j} r_{k}$. Let $\gamma_{i}$ be the deck transformation defined before Lemma \ref{5.9} and let $L'_{i}=\gamma(L_{i})$. We claim $\gamma_{i}(c_1\times \{a_{i-1}\}\times\prod_{k\neq 1,j} r_{k})=\gamma_{i+1}(c_1\times \{a_{i+1}\}\times \prod_{k\neq 1,j} r_{k})$ for $i\ge i_0+1$. This claim is a consequence of the following two observations: (1) both sides of the equality contain $l(-a_{i})$; (2) $\gamma_i$, $\gamma_{i+1}$ and the parallelism between $c_1\times \{a_{i-1}\}\times\prod_{k\neq 1,j} r_{k}$ and $c_1\times \{a_{i+1}\}\times\prod_{k\neq 1,j} r_{k}$ preserve labelling and orientation of edges. It follows from the claim that $H'_{1}=\cup_{i=i_{0}+1}^{\infty}L'_{i}$ is a top dimensional orthant subcomplex. By a similar argument as before, we know $\gamma(c_1\times \{a_{i}\}\times\prod_{k\neq 1,j} r_{k})=\gamma_{i_0+1}(c_1\times \{a_{i_0+1}\}\times \prod_{k\neq 1,j} r_{k})$, thus $H'_{1}$ is the convex hull of $\tilde{c}_{1}$, $l((-\infty,-a_{i}])$ and $\{-a_{i}\}\times r_{k}$ ($k\neq 1,j$). Moreover $[H'_{1}]\cap[\tilde{O}]=[\tilde{O}_{j}]$. We can repeat this construction for $\tilde{c}_{2}$ and $\tilde{c}_{3}$, which implies $[\tilde{O}_{j}]$ is branched. If $[\tilde{O}_{j}]$ is branched, by the same argument we can prove $[O_{j}]$ is branched.
\end{proof}

\begin{remark}
\label{5.10}
It is important that we keep track of information from the labels of $O$ while constructing the mirror of $O$, in other words, if we construct $\tilde{O}$ by the pattern indicated in Figure \ref{fig:3}:
\begin{figure}[ht!]
\begin{center}
\begin{tikzpicture}
\draw [dotted, thick] (0,1) -- (1,1);
\draw [thick] (1.5,0) -- (1.5,2);
\node at (2.25,1) {$K'_{1}$};
\draw [thick] (3,0) -- (3,2);
\node at (3.75,1) {$K'_{1}$};
\draw [thick] (4.5,0) -- (4.5,2);
\node at (5.25,1) {$K'_{1}$};
\draw [thick] (6,0) -- (6,2);
\node at (6.75,1) {$K_{1}$};
\draw [thick] (7.5,0) -- (7.5,2);
\node at (8.25,1) {$K_{2}$};
\draw [thick] (9,0) -- (9,2);
\node at (9.75,1) {$K_{3}$};
\draw [thick] (10.5,0) -- (10.5,2);
\draw [dotted, thick] (11,1) -- (12,1);
\end{tikzpicture}
\end{center} 
\caption{}\label{fig:3}
\end{figure}
we will not be able to conclude $[O_{j}]$ is branched from $[\tilde{O}_{j}]$ is branched.
\end{remark}

\begin{lem}
\label{5.11}
If $[A]\in\mathcal{KQ}(W)$ is a minimal essential element, then 
\begin{enumerate}
\item $|[A]|=2^{i}$ for some integer $i$ with $1\le i\le n$.
\item There exists a top dimensional flat $F$ and another $2^{n-i}-1$ minimal essential elements $\{A_{j}\}_{j=1}^{2^{n-i}-1}$ with $|[A_{j}]|=|[A]|$ such that $[F]=[A]\cup(\cup_{j=1}^{2^{n-i}-1}[A_{j}])$.
\end{enumerate}
\end{lem}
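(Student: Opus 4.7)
The plan is to exhibit $T_A := \{\epsilon \in \Bbb Z_2^n : [O_\epsilon] \subset [A]\}$ (indexing which orthants of an ambient flat lie in $[A]$) as a coset of a subgroup of $\Bbb Z_2^n$, which immediately yields $|[A]| = 2^i$ and the decomposition claim. First, apply Corollary \ref{5.7} to obtain a top-dimensional flat $F \subset W$ with $[A] \subset [F]$. Fix a vertex $v \in F$ and decompose $F = \bigcup_{\epsilon \in \Bbb Z_2^n} O_\epsilon$ into its $2^n$ orthants based at $v$. Since the $[O_\epsilon]$ are pairwise coarsely distinct (their Tits boundaries are distinct $(n-1)$-simplices in $\partial_T F$), the element $[A]$ picks out a unique set $T_A \subset \Bbb Z_2^n$ with $[A] = \bigcup_{\epsilon \in T_A}[O_\epsilon]$ and $|T_A| = |[A]|$. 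Minimality of $[A]$ yields a rigidity principle: for every top-dimensional flat $F' \subset W$, the class $[A] \cap [F'] \in \mathcal{KQ}(W)$ is coarsely contained in $[A]$, so it equals $[A]$ or has order zero; in particular, any flat that coarsely contains a single orthant of $A$ must coarsely contain all of $A$.

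The main step is to show $T_A$ is a coset of a subgroup. Pick $\epsilon_0 \in T_A$, let $\rho_j : \Bbb Z_2^n \to \Bbb Z_2^n$ denote the $j$-th coordinate flip, set $J = \{j : \rho_j(\epsilon_0) \in T_A\}$ and $H = \langle e_j : j \in J\rangle$; the goal is $T_A = \epsilon_0 + H$. For each $j \in J$, the union $[O_{\epsilon_0}] \cup [O_{\rho_j(\epsilon_0)}]$ is a half-flat in $F$ isometric to $\Bbb R \times (\Bbb R_{\ge 0})^{n-1}$. Applying Lemma \ref{5.6} iteratively to the remaining $n-1$ orthant factors in a label-consistent way, as in the mirror construction preceding Lemma \ref{5.9}, produces a flat $F_J' \subset W$ that shares with $F$ precisely the $2^{|J|}$ orthants $\{O_\epsilon : \epsilon \in \epsilon_0 + H\}$. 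Since $F_J'$ contains $O_{\epsilon_0}$, the rigidity principle forces $[A] \subset [F_J']$, which restricts $T_A \subset \epsilon_0 + H$. Conversely, $[A'] := \bigcup_{\epsilon \in \epsilon_0 + H}[O_\epsilon] = [F] \cap [F_J']$ is essential, lies in $\mathcal{KQ}(W)$, and is coarsely contained in $[A]$, so minimality forces $[A'] = [A]$ and hence $T_A = \epsilon_0 + H$.

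This proves (1): $|[A]| = 2^i$ with $i = |J|$; the lower bound $i \ge 1$ holds because a single orthant can always be properly enlarged within $\mathcal{KQ}(W)$ via the doubling of Lemma \ref{5.6}, contradicting minimality. For (2), the $2^{n-i}$ cosets of $H$ partition $\Bbb Z_2^n$; for each non-trivial coset $\delta + H$, set $[A_\delta] = \bigcup_{\epsilon \in \delta + H}[O_\epsilon]$. The construction of $F_J'$ applies verbatim with $\epsilon_0 + \delta$ replacing $\epsilon_0$, producing a flat $F_\delta'$ with $[A_\delta] = [F] \cap [F_\delta']$ in $\mathcal{KQ}(W)$ and minimal essential of order $2^i$; the coset partition then yields $[F] = [A] \cup \bigcup_{j=1}^{2^{n-i}-1}[A_j]$. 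The main obstacle is the construction of the auxiliary flats $F_J'$: one must verify that iterated label-consistent mirror moves along distinct coordinate directions can be assembled into a single convex subcomplex of $W$ whose orthants interact with those of $F$ precisely through the coset structure, which relies crucially on the weakly special hypothesis (no self-osculation, no self-intersection) and generalizes the two-step argument of Lemma \ref{5.9}.
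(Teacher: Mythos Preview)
Your overall strategy---show that the set $T_A$ of orthants of $F$ lying in $[A]$ is a coset of a subgroup of $(\Bbb Z/2)^n$---is exactly the paper's, and your rigidity principle (any flat containing one orthant of $[A]$ must contain all of $[A]$) is correct. But the argument has a genuine gap at the ``Conversely'' step.

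You assert that $[A']:=\bigcup_{\epsilon\in\epsilon_0+H}[O_\epsilon]$ is \emph{coarsely contained in} $[A]$. This is backwards: what you have established is $T_A\subset\epsilon_0+H$, i.e.\ $[A]\subset[A']$, and minimality of $[A]$ says nothing about essential elements \emph{containing} $[A]$. The inclusion $\epsilon_0+H\subset T_A$ is precisely the hard direction. In the paper this is Claim~3, and it requires a new ingredient you never introduce: the branched/unbranched dichotomy for the $(n-1)$--faces of $O_{\epsilon_0}$. One shows (i) if the face in direction $j$ is \emph{unbranched}, then every top-dimensional quasiflat containing $[O_{\epsilon_0}]$ automatically contains $[O_{\rho_j(\epsilon_0)}]$, and this persists for every element of $\mathcal{KQ}(W)$; (ii) if it is \emph{branched}, one can build an auxiliary flat separating the two. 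From (i) and (ii) one deduces that your set $J$ coincides with the set $\Lambda_u$ of unbranched directions, and (i) then gives $\rho_j(\epsilon_0)\in T_A$ for $j\in J$. To propagate this to all of $\epsilon_0+H$ you also need that the unbranched property is invariant under the reflection group $G$ (the paper's Claim~1, which rests on Lemma~\ref{5.9}); without this you cannot iterate.

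Two smaller points. Your construction of a \emph{single} flat $F'_J$ sharing with $F$ exactly the orthants in $\epsilon_0+H$ is never carried out (you flag it as the main obstacle); the paper sidesteps this by intersecting $[F]$ with one auxiliary flat for each branched direction, which is enough. And your argument for $i\ge 1$ is incorrect: minimal essential means $[A]$ cannot be \emph{shrunk} while staying essential, so the fact that an orthant can be \emph{enlarged} via Lemma~\ref{5.6} does not contradict minimality.
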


\begin{proof}
We find a top dimensional orthant subcomplex $O$ such that $[O]\subset [A]$. By the argument before Lemma \ref{5.6}, we can double this orthant $n$ times to a get top dimensional flat $F$ such that $[O]\subset[F]$. Assume without loss of generality $O\subset F$. Denote by $\{O_{i}\}_{i=1}^{n}$ the $(n-1)$--faces of $O$ and let $\rho_{i}\co F\to F$ be the isometry that fixes $O_{i}$ pointwise and flips the direction orthogonal to $O_{i}$.

Let $G$ be the group generated by $\{\rho_{i}\}_{i=1}^{n}$. Then $G\cong (\Bbb Z/2)^{n}$. We define $\Lambda_{b}=\{1\le i\le n\ |\ [O_{i}]\textmd{\ is\ branched}\}$ and $\Lambda_{u}=\{1\le i\le n\ |\ [O_{i}]\textmd{\ is\ unbranched}\}$. Let $G_{b}$ be the subgroup generated by $\{\rho_{i}\}_{i\in\Lambda_{b}}$ and let $G_{u}$ be the subgroup generated by $\{\rho_{i}\}_{i\in\Lambda_{u}}$. We denote by $G_{i}$ the subgroup generated by $\{\rho_{1}\cdots\rho_{i-1},\rho_{i+1}\cdots\rho_{n}\}$.

\begin{claim}
For any $\gamma\in G$, $[O_{i}]$ is branched if and only if $[\gamma(O_{i})]$ is branched.
\end{claim}

\begin{proof}
Write $\gamma=\rho_{i_{1}}\rho_{i_{2}}\cdots\rho_{i_{k}}$, we prove by induction on $k$. The case $k=0$ is trivial. In general, suppose $[O_{i}]$ is branched if and only if $[\rho_{i_{2}}\cdots\rho_{i_{k}}(O_{i})]$ is branched. It follows from the way we construct $F$ that $[\rho_{i_{1}}\rho_{i_{2}}\cdots\rho_{i_{k}}(O)]$ is the mirror of $[\rho_{i_{2}}\cdots\rho_{i_{k}}(O)]$. So by Lemma \ref{5.9}, $[\rho_{i_{1}}\rho_{i_{2}}\cdots\rho_{i_{k}}(O_{i})]$ is branched if and only if $[\rho_{i_{2}}\cdots\rho_{i_{k}}(O_{i})]$ is branched, thus the claim is true.
\end{proof}

\begin{claim}
$[A]\subset[\cup_{\gamma\in G_{u}}\gamma(O)]$.
\end{claim}

\begin{proof}
If $[O_{i}]$ is branched, by Lemma \ref{5.8}, there exists a subcomplex $M_{i}$ isometric to $(\Bbb R_{\ge 0})^{n-1}\times \Bbb R$ such that $[M_{i}]\cap[F]=[O]$. By Lemma \ref{5.6} we can find a top dimensional flat $F_{i}$ such that $[M_{i}]\subset[F_{i}]$. Since $F_{i}\cap F\neq\emptyset$, by Lemma \ref{2.10} $[F\cap F_{i}]=[F]\cap[F_{i}]$. Note that $F\cap F_{i}$ is a convex subcomplex of $F$ with $|[F\cap F_{i}]\cap[\rho_{i}(O)]|=0$, so $[F\cap F_{i}]\subset[\cup_{\gamma\in G_{i}}\gamma(O)]$. Recall that $[A]$ is a minimal essential element, so $[A]\subset [F]\cap(\cap_{i\in\Lambda_{b}}[F_{i}])=\cap_{i\in\Lambda_{b}}([F_{i}]\cap[F])\subset\cap_{i\in\Lambda_{b}}[\cup_{\gamma\in G_{i}}\gamma(O)]=[\cup_{\gamma\in G_{u}}\gamma(O)]$.
\end{proof}

\begin{claim}
$[\cup_{\gamma\in G_{u}}\gamma(O)]\subset[A]$.
\end{claim}

\begin{proof}
First we need the following observation. Let $[P_1]$ and $[P_2]$ be two different top dimensional orthant complexes. Suppose each $[Q]\in \mathcal{Q}(X)$ satisfies the property that either $[P_1]\subset[Q]$ and $[P_2]\subset[Q]$, or $[P_1]\nsubseteq[Q]$ and $[P_2]\nsubseteq[Q]$. Then this property is also true for each element in $\mathcal{KQ}(X)$. To see this, let $\mathcal{A}_{P_1,P_2}(X)$ be the collection of elements in $\mathcal{A}(X)$ which satisfies this property. Then one readily verifies that $\mathcal{A}_{P_1,P_2}(X)$ is closed under union, intersection and subtraction. Moreover, $\mathcal{Q}(X)\subset \mathcal{A}_{P_1,P_2}(X)$. Thus $\mathcal{KQ}(X)\subset \mathcal{A}_{P_1,P_2}(X)$.

Pick an unbranched face $[O_{i}]$. By Lemma \ref{4.16}, Lemma \ref{4.34} and Lemma \ref{4.37}, for every top dimensional quasiflat $Q$ with $[O]\subset[Q]$, there exists another top dimensional orthant complex $O'$ such that $[O']\subset[Q]$ and $\partial_{T}O'\cap\partial_{T}O=\partial_{T}O_{i}$. This together with Lemma \ref{2.10} (see also Remark \ref{2.13}) imply $[O]\cap[O']=[O_{i}]$, thus $[O']=[\rho_{i}(O)]$ and $[\rho_{i}(O)]\subset[Q]$ (recall that $[O_{i}]$ is unbranched). Similarly, one can prove if $[\rho_{i}(O)]\subset[Q]$ for a top dimensional quasiflat $Q$, then $[O]\subset[Q]$. It follows the above observation that $[\rho_{i}(O)]\subset[A]$ for $i\in\Lambda_{u}$. 

Let $\gamma\in G_{u}$. Write $\gamma=\rho_{i_{1}}\rho_{i_{2}}\cdots\rho_{i_{k}}$ with $i_{j}\in\Lambda_{u}$ for $1\le j\le n$. We will prove claim 3 by induction on $k$. The case $k=1$ is already done by the previous paragraph. In general, we assume $[\rho_{i_{1}}\rho_{i_{2}}\cdots\rho_{i_{k-1}}(O)]\subset[A]$. Note that $[O]\cap[\rho_{i_{k}}(O)]=[O_{i_{k}}]$ where $[O_{i_{k}}]$ is unbranched, so 
\begin{center}
$[\rho_{i_{1}}\rho_{i_{2}}\cdots\rho_{i_{k-1}}(O)]\cap[\rho_{i_{1}}\rho_{i_{2}}\cdots\rho_{i_{k}}(O)]=[\rho_{i_{1}}\rho_{i_{2}}\cdots\rho_{i_{k-1}}(O_{i_{k}})]$. 
\end{center}
Claim 1 implies $[\rho_{i_{1}}\rho_{i_{2}}\cdots\rho_{i_{k-1}}(O_{i_{k}})]$ is also unbranched, so $[\rho_{i_{1}}\rho_{i_{2}}\cdots\rho_{i_{k}}(O)]\subset[A]$ by the same argument as in the previous paragraph.
\end{proof}

Claim 2 and Claim 3 imply $[\cup_{\gamma\in G_{u}}\gamma(O)]=[A]$. So $|[A]|=|G_{u}|$ where $|G_{u}|$ is the order of $G_{u}$. Now the first assertion of the lemma follows. Moreover, for any $\gamma\in G$, let $[A_{\gamma}]\in\mathcal{KQ}(W)$ be the unique minimal essential element such that $[\gamma(O)]\subset[A_{\gamma}]$. Claim 1 implies $\{[\gamma(O_{i})]\}_{i\in\Lambda_{b}}$ and $\{[\gamma(O_{i})]\}_{i\in\Lambda_{u}}$ are the branched faces and unbranched faces of $[\gamma(O)]$ respectively. By the same argument as in Claim 2 and Claim 3, we can show $[A_{\gamma}]=[\cup_{\gamma'\in \gamma G_{u}}\gamma'(O)]$, here $\gamma G_{u}$ denotes the corresponding coset of $G_{u}$. Since there are $|G|/|G_{u}|$ cosets of $G_{u}$, the second assertion of the lemma also follows.
\end{proof}

\begin{proof}[Proof of Lemma \ref{5.12}]
If $|[A]|=2^{n}$, by Corollary \ref{5.7}, we know there exists a top dimensional flat $F$ such that $[A]\subset[F]$, so actually $[A]=[F]$. Then $f(A)$ is a top dimensional quasiflat, thus $|f_{\sharp}([A])|\ge 2^{n}$. However, $f_{\sharp}([A])$ is also minimal essential, so by Corollary \ref{5.7} we actually have $|f_{\sharp}([A])|=2^{n}=|[A]|$. Let $g$ be a quasi-isometry inverse of $f$. If $[A']\in\mathcal{KQ}(W_{2})$ is a minimal essential element, then by the same argument, we know that $|[A']|=2^{n}$ implies $|g_{\sharp}([A'])|=2^{n}=|[A']|$. So $|[A]|=2^{n}$ if and only if $|f_{\sharp}([A])|=2^{n}$ for minimal essential element $[A]\in\mathcal{KQ}(W_{1})$.

In general, we assume inductively $|[A]|=k$ if and only if $|f_{\sharp}([A])|=k$ for any $k\ge 2^{n-i+1}$ and any minimal essential element $[A]\in\mathcal{KQ}(W_{1})$ (we are doing induction on $i$). If $[B_{1}]\in\mathcal{KQ}(W_{1})$ is a minimal essential element with $|[B_{1}]|=2^{n-i}$, then by lemma \ref{5.11}, we can find a top dimensional flat $F$ and another $2^{i}-1$ minimal essential elements $\{[B_{j}]\}_{j=2}^{2^{i}}$ such that $|[B_{j}]|=|[B_1]|$ and
\begin{equation}
\label{5.13}
[F]=\cup_{j=1}^{2^{i}}[B_{j}]\,.
\end{equation}
Since $f(F)$ is a top dimensional flat, we have
\begin{equation}
\label{5.14}
|f_{\sharp}(F)|=|f_{\sharp}(\cup_{j=1}^{2^{i}}[B_{j}])|=|\cup_{j=1}^{2^{i}}f_{\sharp}([B_{j}])|=\sum_{j=1}^{2^{i}}|f_{\sharp}([B_{j}])|\ge 2^{n}\,.
\end{equation}
But our induction assumption implies
\begin{equation}
\label{5.15}
|f_{\sharp}([B_{j}])|<2^{n-i+1}\,.
\end{equation}
Since $f_{\sharp}([B_{j}])$ is minimal essential element for each $j$, (\ref{5.15}) together with assertion $(1)$ of Lemma \ref{5.11} imply
\begin{equation}
\label{5.16}
|f_{\sharp}([B_{j}])|\le 2^{n-i}\,.
\end{equation}
Now (\ref{5.14}) and (\ref{5.16}) imply 
\begin{equation}
\label{5.17}
|[B_{j}]|=|f_{\sharp}([B_{j}])|=2^{n-i}
\end{equation}
for all $j$. By considering the quasi-isometry inverse, we know $|[B]|=2^{n-i}$ if and only if $|f_{\sharp}([B])|=2^{n-i}$ for minimal essential element $[B]\in\mathcal{KQ}(W_{1})$. By Lemma \ref{5.11} $(1)$ and our induction assumption, we have actually proved that $|[B]|=k$ if and only if $|f_{\sharp}([B])|=k$ for any $k\ge 2^{n-i}$ and any minimal essential element $[B]\in\mathcal{KQ}(W_{1})$.
\end{proof}

\subsection{Application to right-angled Coxeter groups and Artin groups}
\label{applications}
\subsubsection{The right-angled Coxeter group case}
For finite simplicial graph $\Gamma$ with vertex set $\{v_{i}\}_{i\in I}$, there is an associated right-angled Artin group (RACG), denoted by $C(\Gamma)$, with the following presentation:
\begin{center}
\{$v_i$, for $i\in{I}\mid v^{2}_{i}=1$ for all $i$; $[v_i,v_j]=1$ if $v_{i}$ and $v_{j}$ are joined by an edge\}.
\end{center}

The group $C(\Gamma)$ has a nice geometric model $D(\Gamma)$, called the \textit{Davis complex}. The $1$--skeleton of $D(\Gamma)$ is the Cayley graph of $C(\Gamma)$ with edges corresponding to $v_i,v^{-1}_i$ identified. For $n\ge 2$, the $n$-skeleton $D^{(n)}(\Gamma)$ of $D(\Gamma)$ is obtained from $D^{(n-1)}(\Gamma)$ by attaching a $n$--cube whenever one finds a copy of the $(n-1)$--skeleton of a $n$--cube inside $D^{(n-1)}(\Gamma)$. This process will terminate after finitely many steps and one obtains a $CAT(0)$ cube complex where $C(\Gamma)$ acts properly and cocompactly. 

The action of $C(\Gamma)$ on $D(\Gamma)$ is not free, however, $D(\Gamma)$ can be realized as the universal cover of a compact cube complex. The following construction is from \cite{davis1999notes}. Let $\{e_{i}\}_{i\in I}$ be the standard basis of $\Bbb R^{I}$ and let $\Box^{I}=[0,1]^{I}\subset \Bbb R^{I}$ be the unit cube with the standard cubical structure. Let $F(\Gamma)$ be the flag complex of $\Gamma$. For each simplex $\Delta\subset F(\Gamma)$, let $\Bbb R^{\Delta}$ be the linear subspace spanned by $\{e_{i}\}_{v_{i}\in\Delta}$. Define $K(\Gamma)$=$\cup_{\Delta}\{$faces of $\Box^{I}$ parallel to $\Bbb R^{\Delta}\}$, here $\Delta$ varies among all simplexes in $F(\Gamma)$. Then the Davis complex $D(\Gamma)$ is exactly the universal cover of $K(\Gamma)$ (\cite[Proposition 3.2.3]{davis1999notes}). 

One can verify that $K(\Gamma)$ is weakly special. In order to apply Theorem \ref{5.18} in a non-trivial way, we need the following extra condition.
\begin{center}
$(\ast)$ There is an embedded top dimensional hyperoctahedron in $F(\Gamma)$.
\end{center} 
One can check there exists a top dimensional flat in $D(\Gamma)$ if and only if $(\ast)$ is true.

\begin{cor}
\label{5.20}
Let $\Gamma_{1}$ and $\Gamma_{2}$ be two finite simplicial graph satisfying $(\ast)$. If $\phi\co  D(\Gamma_{1})\to D(\Gamma_{2})$ is an $(L, A)$--quasi-isometry, then $\dim(D(\Gamma_{1}))=\dim(D(\Gamma_{2}))$. And there is a constant $D=D(L, A)$ such that for any top-dimensional flat $F_{1}$ in $D(\Gamma_{1})$, we can find a flat $F_{2}$ in $D(\Gamma_{2})$ such that
\begin{center}
$d_{H}(\phi(F_{1}),F_{2})< D.$
\end{center}
\end{cor}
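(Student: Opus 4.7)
The plan is to reduce directly to Theorem~\ref{5.18}, treating $K(\Gamma_1)$ and $K(\Gamma_2)$ as the compact weakly special cube complexes whose universal covers are $D(\Gamma_1)$ and $D(\Gamma_2)$ respectively.

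First, I would verify that $K(\Gamma)$ is a compact weakly special cube complex with universal cover $D(\Gamma)$. Compactness is immediate from the description of $K(\Gamma)$ as a finite union of faces of the single cube $[0,1]^I$. The link of the unique vertex of $K(\Gamma)$ is canonically identified with the flag complex $F(\Gamma)$ equipped with the all-right spherical metric, so $K(\Gamma)$ is non-positively curved. Hyperplanes in $K(\Gamma)$ correspond bijectively to the vertices of $\Gamma$: the hyperplane $h_i$ consists of all mid-cubes perpendicular to the coordinate direction $e_i$. Self-intersection is ruled out because distinct mid-cubes of a single face of $[0,1]^I$ are orthogonal to distinct coordinate directions, and self-osculation is ruled out because at the unique vertex of $K(\Gamma)$ all edges parallel to any fixed $e_i$ have been identified.

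Second, I would establish the dimension equality. Let $n_i = \dim D(\Gamma_i)$. Condition $(\ast)$ says that $F(\Gamma_1)$ contains an embedded top-dimensional hyperoctahedron; this produces (by lifting the associated flat torus in $K(\Gamma_1)$ to its universal cover) an isometrically embedded flat $F \subset D(\Gamma_1)$ of dimension $n_1$. The image $\phi(F)$ is then an $n_1$-quasiflat in $D(\Gamma_2)$, and Theorem~\ref{4.49} forces it to lie at finite Hausdorff distance from a finite union of $n_1$-dimensional orthant subcomplexes of $D(\Gamma_2)$. Since such subcomplexes can exist only when $n_1 \le n_2$, we obtain $n_1 \le n_2$. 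Applying the same argument to a quasi-isometric inverse of $\phi$ (which exists by standard facts) yields $n_2 \le n_1$, hence $n_1 = n_2$.

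Finally, having checked that $K(\Gamma_1)$ and $K(\Gamma_2)$ are compact weakly special cube complexes of equal dimension whose universal covers are $D(\Gamma_1)$ and $D(\Gamma_2)$, Theorem~\ref{5.18} applied to $\phi$ produces the constant $D = D(L,A)$ and, for each top-dimensional flat $F_1 \subset D(\Gamma_1)$, a top-dimensional flat $F_2 \subset D(\Gamma_2)$ with $d_H(\phi(F_1), F_2) < D$. The main (and really only) obstacle is the careful verification that $K(\Gamma)$ is weakly special; once that is in place, the corollary is an immediate consequence of the machinery developed earlier in the section.
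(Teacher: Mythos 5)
Your overall structure — verify that $K(\Gamma)$ is weakly special, establish the dimension equality, and then apply Theorem~\ref{5.18} — matches the paper exactly, and your more detailed verification of weak specialness is a welcome addition (the paper simply says ``one can verify''). A small inaccuracy there: $K(\Gamma)$ is a \emph{subcomplex} of $[0,1]^I$ containing all $2^{|I|}$ corner vertices, not a one-vertex complex; the correct statement is that the link of \emph{every} vertex is $F(\Gamma)$ with the all-right metric, and that at any one vertex the $|I|$ incident edges are dual to $|I|$ pairwise distinct hyperplanes. Your conclusions (non-positive curvature, no self-intersection, no self-osculation) are still right, just the justification should be phrased accordingly.

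The genuine gap is in the dimension argument. You apply Theorem~\ref{4.49} to the $n_1$-quasiflat $\phi(F)\subset D(\Gamma_2)$, but the hypothesis of Theorem~\ref{4.49} is that the quasiflat has dimension equal to $\dim D(\Gamma_2)=n_2$; it says nothing about $k$-quasiflats in an $n$-dimensional cube complex with $k\ne n$. So you may not invoke it until you already know $n_1=n_2$, which is what you are trying to prove — the argument is circular. To close the gap one needs a separate reason why an $n_1$-quasiflat forces $n_1\le n_2$. The paper supplies this via proper homology: after making $\phi$ and a quasi-inverse $\phi'$ continuous and proper, the geodesic homotopies give $\phi_\ast\circ\phi'_\ast=\mathrm{Id}$ and $\phi'_\ast\circ\phi_\ast=\mathrm{Id}$ on $H^{\mathrm{p}}_\ast$, so $\phi_\ast$ is an isomorphism; $(\ast)$ produces a top-dimensional flat in $D(\Gamma_2)$ and hence $H^{\mathrm{p}}_{n_2}(D(\Gamma_2))\ne 0$; but if $n_1<n_2$ then $D(\Gamma_1)$ has no $n_2$-cells and $H^{\mathrm{p}}_{n_2}(D(\Gamma_1))=0$, contradiction. (Equivalently, you could push the fundamental class of your flat $F$ forward and retract it back, as in the proof of Lemma~\ref{6.2}, but some homological input of this kind is unavoidable — Theorem~\ref{4.49} cannot do the job by itself.)
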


\begin{proof}
It suffices to show $\dim(D(\Gamma_{1}))=\dim(D(\Gamma_{2}))$. The rest follows from Theorem \ref{5.18} and the above discussion.

We can assume the quasi-isometry $\phi$ is defined on the $0$--skeleton of $D(\Gamma_1)$. Since $D(\Gamma_2)$ is $CAT(0)$, we can extend $\phi$ skeleton by skeleton to obtain a continuous quasi-isometry. Similarly, we assume quasi-isometry inverse $\phi'$ is also continuous. Since $\phi$ and $\phi'$ are proper, there are induced homomorphisms for the proper homology $\phi_{\ast}\co H^{\textmd{p}}_{\ast}(D(\Gamma_{1}))\to H^{\textmd{p}}_{\ast}(D(\Gamma_{2}))$ and $\phi'_{\ast}\co H^{\textmd{p}}_{\ast}(D(\Gamma_{2}))\to H^{\textmd{p}}_{\ast}(D(\Gamma_{1}))$ (see Section \ref{proper homology}). Note that the geodesic homotopy between $\phi'\circ\phi$ (or $\phi\circ\phi'$) and the identity map is proper, so $\phi_{\ast}\circ\phi'_{\ast}= \textmd{Id}$ and $\phi'_{\ast}\circ\phi_{\ast}= \textmd{Id}$. Hence $\phi_{\ast}$ is an isomorphism.

By symmetry, it suffices to show $\dim(D(\Gamma_{1}))\ge \dim(D(\Gamma_{2}))$. If $\dim(D(\Gamma_{1}))<\dim(D(\Gamma_{2}))$, then $H^{\textmd{p}}_{n}(D(\Gamma_{1}))$ is trivial ($n=\dim(D(\Gamma_{2}))$) since there are no $n$--dimensional cells in $D(\Gamma_1)$. On the other hand, $(\ast)$ implies there is a top dimensional flat in $D(\Gamma_{2})$, thus $H^{\textmd{p}}_{n}(D(\Gamma_{2}))$ is non-trivial, which yields a contradiction. 
\end{proof}

\subsubsection{The right-angled Artin group case} Recall that for every simplicial graph $\Gamma$, there is a corresponding RAAG $G(\Gamma)$. Suppose $\bar{X}(\Gamma)$ is the Salvetti complex of $G(\Gamma)$. Then the $1$--cells and $2$--cells of $\bar{X}(\Gamma)$ are in 1--1 correspondence with the vertices and edges in $\Gamma$ receptively. The closure of each $k$--cell in $\bar{X}(\Gamma)$ is a $k$--torus, which we call a \textit{standard $k$--torus}. One can verify that the Salvetti complex $\bar{X}(\Gamma)$ is a weakly special cube complex.

We label the vertices of $\Gamma$ by distinct letters (they correspond to the generators of $G(\Gamma)$), which induces a labelling of the edges of the Salvetti complex. We choose an orientation for each edge in the Salvetti complex and this would give us a directed labelling of the edges in $X(\Gamma)$. If we specify some base point $v\in X(\Gamma)$ ($v$ is a vertex), then there is a 1--1 correspondence between words in $G(\Gamma)$ and edge paths in $X(\Gamma)$ which starts at $v$. 

A subgraph $\Gamma'\subset\Gamma$ is a \textit{full subgraph} if there does not exist edge $e\subset\Gamma$ such that the two endpoints of $e$ belong to $\Gamma'$ but $e\nsubseteq\Gamma'$. In this case, there is an embedding $\bar{X}(\Gamma')\hookrightarrow\bar{X}(\Gamma)$ which is locally isometric. If $p\co X(\Gamma)\to\bar{X}(\Gamma)$ is the universal cover, then each connect component of $p^{-1}(\bar{X}(\Gamma'))$ is a convex subcomplex isometric to $X(\Gamma')$. Following \cite{MR2421136}, we call these components \textit{standard subcomplexes associated with $\Gamma'$}. Note that there is a 1--1 correspondence between standard subcomplexes associated with $\Gamma'$ and left cosets of $G(\Gamma')$ in $G(\Gamma)$. A \textit{standard $k$--flat} is the standard complex associated with a complete subgraph of $k$ vertices. When $k=1$, we also call it a \textit{standard geodesic}.

Given subcomplex $K\subset X(\Gamma)$, we denote the collection of labels of edges in $K$ by $label(K)$ and the corresponding collection of vertices in $\Gamma$ by $V(K)$. 

Let $V\subset\Gamma$ be a set of vertices. We define the \textit{orthogonal complement} of $V$, denoted $V^{\perp}$, to be the set $\{w\in\Gamma\mid d(w,v)=1$ for any $v\in V\}$. 

The following theorem follows from Theorem \ref{5.18}:

\begin{thm}
\label{5.21}
Let $\Gamma_{1}$, $\Gamma_{2}$ be finite simplicial graphs, and let $\phi\co  X(\Gamma_{1})\to X(\Gamma_{2})$ be an $(L, A)$--quasi-isometry. Then $\dim(X(\Gamma_{1}))=\dim(X(\Gamma_{2}))$. And there is a constant $D=D(L, A)$ such that for any top-dimensional flat $F_{1}$ in $X(\Gamma_{1})$, we can find a flat $F_{2}$ in $X(\Gamma_{2})$ such that
\begin{center}
$d_{H}(\phi(F_{1}),F_{2})< D\,.$
\end{center}
\end{thm}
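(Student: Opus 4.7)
The plan is to reduce Theorem \ref{5.21} to a direct application of Theorem \ref{5.18} via two preparatory verifications.

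First I would check that each Salvetti complex $\bar{X}(\Gamma_i)$ is compact and weakly special. Compactness is immediate from finiteness of $\Gamma_i$, and weak special-ness is already observed in the paragraph preceding the theorem: hyperplanes of $\bar{X}(\Gamma_i)$ are in bijection with vertices of $\Gamma_i$, and the Salvetti construction rules out self-intersection and self-osculation of hyperplanes by inspection.

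Second, I would establish $\dim(X(\Gamma_1)) = \dim(X(\Gamma_2))$, mimicking the argument used in Corollary \ref{5.20}. Approximating $\phi$ and a quasi-isometry inverse $\phi'$ by continuous proper maps using uniform contractibility of $X(\Gamma_i)$, and using that the geodesic homotopies from $\phi'\circ\phi$ and $\phi\circ\phi'$ to the identity are proper, one obtains an isomorphism
\begin{equation*}
\phi_{\ast}\co H^{\textmd{p}}_{\ast}(X(\Gamma_1)) \to H^{\textmd{p}}_{\ast}(X(\Gamma_2)).
\end{equation*}
The key observation, which distinguishes the RAAG case from the Coxeter case and explains why no analogue of condition $(\ast)$ is needed, is that $X(\Gamma)$ always contains a top-dimensional standard flat: any maximal clique $\Delta \subset F(\Gamma)$ has $|\Delta| = \dim(X(\Gamma))$, and the associated standard subcomplex is an isometrically embedded Euclidean flat of that dimension whose fundamental class is a nonzero element of $H^{\textmd{p}}_{n}(X(\Gamma))$ for $n=\dim(X(\Gamma))$. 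If $n_1 := \dim(X(\Gamma_1)) < n_2 := \dim(X(\Gamma_2))$, then $H^{\textmd{p}}_{n_2}(X(\Gamma_1)) = 0$ for cellular reasons while $H^{\textmd{p}}_{n_2}(X(\Gamma_2)) \neq 0$, contradicting the isomorphism; symmetry gives the reverse inequality.

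With both points in hand I would apply Theorem \ref{5.18} directly with $W'_i = \bar{X}(\Gamma_i)$ to obtain the constant $D = D(L,A)$ and, for every top-dimensional flat $F_1 \subset X(\Gamma_1)$, a top-dimensional flat $F_2 \subset X(\Gamma_2)$ with $d_H(\phi(F_1), F_2) < D$. I do not anticipate a genuine obstacle here: the substantive content has already been absorbed into Theorem \ref{5.18} (the $\mathcal{KQ}$ lattice, minimal essential elements, and the mirror doubling construction), so the two verifications above are essentially bookkeeping that parallels Corollary \ref{5.20}.
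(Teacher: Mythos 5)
Your proposal is correct and follows the same route as the paper: verify that Salvetti complexes are compact and weakly special, establish dimension invariance via proper homology (as in Corollary \ref{5.20}, noting that every RAAG carries a top-dimensional standard flat so no analogue of condition $(\ast)$ is needed), and then apply Theorem \ref{5.18}. One small wording slip: you write that \emph{any} maximal clique $\Delta$ satisfies $|\Delta| = \dim(X(\Gamma))$, which is false in general (a graph can have maximal cliques of different sizes); what you want is a clique of maximum size, i.e.\ a \emph{top-dimensional} simplex of $F(\Gamma)$, and the argument goes through with this correction.
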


One can argue as in Corollary \ref{5.20} or using the invariance of cohomological dimension to show $\dim(X(\Gamma_{1}))=\dim(X(\Gamma_{2}))$.

Using Theorem \ref{5.21}, we can set up some immediate quasi-isometry invariant for RAAGs. Let $F(\Gamma)$ be the flag complex of $\Gamma$. We will assume $n=\dim(F(\Gamma))$ in the following discussion, then $\dim(X(\Gamma))=n+1$.

We construct a family of new graphs $\{\mathcal{G}_{d}(\Gamma)\}_{d=1}^{n}$, where the vertices of $\mathcal{G}_{d}(\Gamma)$ are in 1--1 correspondence with the top dimensional flats in $X(\Gamma)$ and two vertices $v_{1}$ and $v_{2}$ are joined by an edge if and only if the associated flats $F_{1}$ and $F_{2}$ satisfy the condition that there exists $r>0$ such that $N_{r}(F_{1})\cap N_{r}(F_{2})$ contains a flat of dimension $d$. Let $\mathcal{G}^{s}_{d}(\Gamma)$ be the full subgraph of $\mathcal{G}_{d}(\Gamma)$ spanned by those vertices representing standard flats of top dimension.

Lemma \ref{2.10} and Theorem \ref{5.21} yield the following result:

\begin{cor}
\label{5.22}
Given finite simplicial graph $\Gamma_{1}$, $\Gamma_{2}$ and a quasi-isometry $q\co X(\Gamma_{1})\to X(\Gamma_{2})$, there is an induced graph isomorphism $q_{\ast}\co \mathcal{G}_{d}(\Gamma_{1})\to \mathcal{G}_{d}(\Gamma_{2})$ for $1\le d\le \dim(F(\Gamma_{1}))=\dim(F(\Gamma_{2}))$.
\end{cor}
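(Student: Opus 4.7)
The plan is to build $q_\ast$ on vertices via Theorem \ref{5.21} and then verify the three required properties: well-definedness of the vertex map, bijectivity, and edge preservation.

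To each vertex of $\mathcal{G}_d(\Gamma_1)$, represented by a top-dimensional flat $F\subset X(\Gamma_1)$, assign the top-dimensional flat $F'\subset X(\Gamma_2)$ produced by Theorem~\ref{5.21}, i.e.\ $d_H(q(F),F')<D$. To see this assignment is well-defined, I would show that two top-dimensional flats $F'_1,F'_2\subset X(\Gamma_2)$ with $d_H(F'_1,F'_2)<\infty$ must coincide. The function $x\mapsto d(x,F'_2)$ is a bounded convex function on $F'_1\cong\Bbb E^{n+1}$ (where $n+1=\dim X(\Gamma_2)$), hence constant, so $F'_1$ and $F'_2$ are parallel at some distance $c$; if $c>0$ their convex hull is isometric to $F'_1\times[0,c]$, violating $\dim X(\Gamma_2)=n+1$, so $c=0$ and $F'_1=F'_2$. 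Applying the same construction to a quasi-isometry inverse $\bar q$ of $q$, and using that $\bar q\circ q$ is at bounded distance from the identity (hence sends each flat to a flat within finite Hausdorff distance of itself, and by the uniqueness above back to itself), produces the two-sided inverse and shows $q_\ast$ is a bijection on vertices.

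For edge preservation, suppose $F_1,F'_1$ span an edge in $\mathcal{G}_d(\Gamma_1)$, witnessed by $r>0$ and a $d$-flat $E\subset N_r(F_1)\cap N_r(F'_1)$. Set $F_2=q_\ast(F_1)$, $F'_2=q_\ast(F'_1)$. Then $q(E)$ is a $d$-quasiflat contained in $N_{Lr+A+D}(F_2)\cap N_{Lr+A+D}(F'_2)$. Since $F_2,F'_2$ are convex, items (1), (2), (4) of Lemma~\ref{2.10} apply by Remark~\ref{2.13}(1), yielding convex subsets $Y_1\subset F_2$ and $Y_2\subset F'_2$ such that, by Remark~\ref{2.13}(3), the above intersection lies in an $r''$-neighbourhood of $Y_1$. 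Because $Y_1$ is a convex subset of the flat $F_2$, it is itself an affine subspace, i.e.\ a flat of some dimension $k$. A $d$-quasiflat cannot embed into a bounded neighbourhood of a $k$-flat with $k<d$ (by asymptotic dimension, or a direct volume argument), hence $k\geq d$; in particular $Y_1$ contains a $d$-flat lying in $N_{r'}(F_2)\cap N_{r'}(F'_2)$ for a suitable $r'$, so $F_2,F'_2$ span an edge in $\mathcal{G}_d(\Gamma_2)$. The reverse implication follows by applying the same argument to $\bar q$ and $F_2,F'_2$.

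The main technical point — and the only place where real care is needed — is the edge-preservation step: one must be sure that the coarse intersection of two top-dimensional flats in a $CAT(0)$ cube complex is itself a flat (not just a quasiflat), which is exactly what Lemma~\ref{2.10}(2) gives once one observes that a convex subset of an affine flat is an affine subspace, and one must cite the dimension obstruction that rules out a $d$-quasiflat sitting in a bounded neighbourhood of a lower-dimensional Euclidean space. The rest is bookkeeping with the quasi-isometry constants.
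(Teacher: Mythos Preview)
Your overall strategy matches the paper's: the corollary is stated as an immediate consequence of Lemma~\ref{2.10} and Theorem~\ref{5.21}, and you have correctly identified that Theorem~\ref{5.21} gives the vertex bijection while Lemma~\ref{2.10} controls the coarse intersection needed for edge preservation. The well-definedness and bijectivity arguments are fine.

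There is, however, one genuine false step. You write: ``Because $Y_1$ is a convex subset of the flat $F_2$, it is itself an affine subspace.'' This is not true. A closed convex subset of $\Bbb E^{n+1}$ need not be affine, and in the present setting $Y_1$ typically is not: take $X=T\times\Bbb R$ for a tree $T$ and two $2$--flats coming from two lines in $T$ that share only a half-line; then $Y_1$ is a half-plane. What Lemma~\ref{2.10}(3) actually gives (once you note that a top-dimensional flat in a $CAT(0)$ cube complex is a subcomplex, being the support set of its own fundamental class) is that $Y_1$ is a convex \emph{subcomplex} of $F_2$, hence a box $\prod_j I_j$ with each $I_j$ an interval, ray, or line. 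So your deduction ``$k\ge d$ hence $Y_1$ contains a $d$--flat'' does not follow from convexity alone.

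The fix is short and avoids analyzing $Y_1$ altogether. Compose $q|_E$ with the $CAT(0)$ projection $\pi_{F_2}$: since $q(E)\subset N_R(F_2)$, the map $\pi_{F_2}\circ q|_E\co\Bbb E^d\to F_2\cong\Bbb E^{n+1}$ is still an $(L,A')$--quasi-isometric embedding. A $d$--quasiflat in Euclidean space is at finite Hausdorff distance from a $d$--flat $E'\subset F_2$ (this is classical, or is the trivial case of Theorem~\ref{4.49}). Then $E'$ lies in a bounded neighbourhood of $q(E)$, hence of $F'_2$, so $E'\subset N_{r'}(F_2)\cap N_{r'}(F'_2)$ for suitable $r'$, and the edge is preserved. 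Also note that your citation of Remark~\ref{2.13}(1) is slightly off: that remark generalizes the ambient complex, not the hypothesis that $C_1,C_2$ are subcomplexes; you should instead observe that top-dimensional flats are subcomplexes and invoke Lemma~\ref{2.10} directly.
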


The relation between $\mathcal{G}_{d}(\Gamma)$ and $\Gamma$ is complicated, but several basic properties of $\mathcal{G}_{d}(\Gamma)$ can be directly read from $\Gamma$. We first investigate the connectivity of $\mathcal{G}_{d}(\Gamma)$.

\begin{lem}
\label{5.23}
Suppose $1\le d\le n$. Then $\mathcal{G}_{d}(\Gamma)$ is connected if and only if $\mathcal{G}^{s}_{d}(\Gamma)$ is connected.
\end{lem}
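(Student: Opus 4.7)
The plan is to establish both directions from a \textbf{Key Claim}: every top-dimensional flat $F$ in $X(\Gamma)$ is $\mathcal{G}_d$-connected to a standard top-dimensional flat. Given this, the $\Leftarrow$ direction is immediate: every top-dimensional flat then lies in the $\mathcal{G}_d$-component of some standard top-dimensional flat, and by hypothesis all standard flats lie in a single $\mathcal{G}^s_d$-component, hence in a single $\mathcal{G}_d$-component. The $\Rightarrow$ direction additionally uses a Strong Key Claim described below.

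To prove the Key Claim, I would show that $F$ can be reduced by a finite sequence of $\mathcal{G}_d$-adjacencies to a standard top-dimensional flat. By Theorem \ref{4.49}, $F$ is coarsely a finite union of top-dimensional orthant subcomplexes, and each such orthant is contained in a unique standard top-dimensional flat determined by the labels of its generating rays. If all these orthants lie in a common standard flat then $F$ itself is coarsely standard. Otherwise, choose a hyperplane of $F$ at which the labels switch between the two sides. One of the two closed half-flats of $F$ bounded by this hyperplane is a convex subcomplex isometric to $(\Bbb R_{\geq 0}) \times \Bbb R^n$, so Lemma \ref{5.6} produces a top-dimensional flat $F' \cong \Bbb R^{n+1}$ coarsely containing this half. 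By construction, $F'$ has strictly fewer ``switching hyperplanes'' than $F$, and $F,F'$ share a top-dimensional half-flat coarsely; such a half-flat contains its boundary $n$-flat, so $F$ and $F'$ share a $d$-flat for every $d \leq n$ and hence are $\mathcal{G}_d$-adjacent. Iterating, $F$ is $\mathcal{G}_d$-connected to a standard flat.

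For the $\Rightarrow$ direction, I would use the following \textbf{Strong Key Claim}: for any $\mathcal{G}_d$-edge between flats $F, F'$ with shared $d$-flat $L$, any standard top-dimensional flat $F_s$ containing $L$ is $\mathcal{G}_d$-adjacent to both $F$ and $F'$. Such $F_s$ exists because $L$'s direction generators span a sub-simplex of $F$'s top-simplex type, which extends to a top simplex of $F(\Gamma)$ giving a standard flat. Starting from a $\mathcal{G}_d$-path between two standard flats, one inserts such standard associates at each edge; the expanded path alternates standard and non-standard flats, with the non-standard flats acting as ``hubs'' where several standard associates meet. Bridging between consecutive standard associates that share a common non-standard neighbor is the main obstacle, since the $d$-flats witnessing the adjacencies need not coincide; I expect to resolve this by applying the Key Claim reduction inside the non-standard flat to produce a finite sequence of standard intermediates joined pairwise by $d$-flat intersections, giving the desired $\mathcal{G}^s_d$-path.
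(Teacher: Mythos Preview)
Your Key Claim is the right target for the $\Leftarrow$ direction, but the proposed reduction via ``switching hyperplanes'' and Lemma \ref{5.6} does not terminate. A top-dimensional flat in $X(\Gamma)$ can have infinitely many label changes along a coordinate line: when $\Gamma$ is the path on $a,b,c$, a $2$-flat is $\ell\times\Bbb R$ with $\ell$ a line in the Cayley tree of $\langle a,c\rangle$, and $\ell$ may alternate $a,c$ forever. Cutting at one switch and doubling the half via Lemma \ref{5.6} yields a flat carrying a mirrored copy of all the remaining switches, so no progress is made. The paper avoids this entirely: fix a vertex $x\in F$ with orthogonal edges $e_1,\dots,e_{n+1}$ and replace the $e_i$-direction by the standard geodesic $l_i\supset e_i$ one coordinate at a time (Lemma \ref{5.4}); each step gives a flat sharing the $n$-flat $e_i^\perp$ with the previous one, and after exactly $n{+}1$ steps you reach the standard flat $F(x,e_1,\dots,e_{n+1})$.

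For the $\Rightarrow$ direction your Strong Key Claim is misstated: the shared $d$-flat $L$ need not be standard, so in general no standard top-dimensional flat contains $L$; the simplex you extract from a frame of $L$ produces a standard $d$-flat of the same \emph{type}, not $L$ itself. You also correctly flag the bridging problem but leave it open. The paper resolves both issues with the frame construction above. At an edge $F_i$--$F_{i+1}$ it takes $(Y,Y')=\inc(F_i,F_{i+1})$, picks a vertex $x\in Y$ with $d$ orthogonal edges in $Y$, transports them by the parallelism $Y\to Y'$ to a frame at $x'\in Y'$, completes both to full frames, and forms the standard flats $F_{i,i},F_{i+1,i}$; these are $\mathcal{G}^s_d$-adjacent because the transported edges carry the same labels and (via Lemma \ref{2.14}) the standard $d$-flats they span through $x$ and $x'$ are parallel. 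The bridging from $F_{i,i-1}$ to $F_{i,i}$ is then a separate claim, proved by an edge-path induction inside $F_i$ using two elementary moves (flip one frame edge; slide the base vertex across one edge).
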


\begin{proof}
For the $\Leftarrow$ direction, it suffices to show every point $v\in \mathcal{G}_{d}(\Gamma)$ is connected to some point in $\mathcal{G}^{s}_{d}(\Gamma)$. Let $F_{v}$ be the associated top dimensional flats. Pick vertex $x\in F_{v}$ and suppose $\{e_{i}\}_{i=1}^{n+1}$ are mutually orthogonal edges in $F_{v}$ emanating from $x$. Let $e^{\perp}_{1}$ be the subspace of $F_{v}$ orthogonal to $e_{1}$ and let $l_{i}$ be the unique standard geodesic such that $e_{i}\subset l_{i}$. Then by Lemma \ref{5.4}, the convex hull of $l_{1}$ and $e^{\perp}_{1}$ is a top dimensional flat $F_{v,1}$. By construction, $F_{v,1}$ is adjacent to $F_{v}$ in $\mathcal{G}_{d}(\Gamma)$. Now we can replace $F_v$ by $F_{v,1}$, and run the same argument with respect to $l_{2}$. After finitely many steps, we will reach at a standard flat $F$ which is the convex hull of $\{l_{i}\}_{i=1}^{n+1}$, moreover $F$ is connected to $F_v$ in $\mathcal{G}_d(\Gamma)$. Note that $F$ only depends on the choice of base vertex $x$ and the frame $\{e_{i}\}_{i=1}^{n+1}$ at $x$. So we also denote $F$ by $F=F_v(x,e_1,\cdots,e_{n+1})$.

Now we prove the other direction. Pick a different base point $x'\in F_{v}$ and frame $\{e'_{i}\}_{i=1}^{n+1}$ at $x'$, we claim $F_v(x,e_1,\cdots,e_{n+1})$ and $F_v(x',e'_1,\cdots,e'_{n+1})$ are connected in $\mathcal{G}^{s}_{d}(\Gamma)$. Note that
\begin{enumerate}
\item If $x'=x$, $e'_{i}=e_{i}$ for $2\le i\le n+1$ and $e'_{1}=-e_{1}$, then $F$ and $F'$ are adjacent inside $\mathcal{G}^{s}_{d}(\Gamma)$.
\item If $x'$ is the other end point of $e_{1}$, $e'_{1}=\overrightarrow{x'x}$ and $e'_{i}$ is parallel to $e_{i}$ for $2\le i\le n+1$, then $F=F'$.
\end{enumerate}
In general, we can connected $x$ and $x'$ by an edge path $\omega\subset F_{v}$ and use the previous two properties to induct on the combinatorial length of $\omega$.

Let $\{F_{i}\}_{i=1}^{m}$ be a chain of top dimensional flats representing an edge path in $\mathcal{G}_{d}(\Gamma)$ such that $F_{1}$ and $F_{m}$ are standard flats. Pick $i$, let $(Y,Y')=\inc (F_{i},F_{i+1})$ and let $\phi\co Y\to Y'$ be the isometry induced by $CAT(0)$ projection as in Lemma \ref{2.11} (2). Since $Y$ contains a $d$--dimensional flats, for vertex $x\in Y$, there are $d$ mutually orthogonal edges $\{e_{i}\}_{i=1}^{d}$ such that $x\in e_{i}\subset Y$. Let $x'=\phi(x)$ and let $e'_{i}=\phi(e_{i})$. We add more edges such that $\{e_{i}\}_{i=1}^{n+1}$ and $\{e'_{i}\}_{i=1}^{n+1}$ become basis in $F_{i}$ and $F_{i+1}$ respectively. Let $F_{i,i}=F_i(x,,e_1,\cdots,e_{n+1})$ and $F_{i+1,i}=F_{i+1}(x',e'_1,\cdots,e'_{n+1})$. By Lemma \ref{2.14}, $F_{i,i}$ and $F_{i+1,i}$ are adjacent in $\mathcal{G}^{s}_{d}(\Gamma)$ for $1\le i\le m-1$. Moreover, for $2\le i\le m-1$, $F_{i,i}$ and $F_{i,i-1}$ are connected by a path inside $\mathcal{G}^{s}_{d}(\Gamma)$ by the previous claim. Thus $F_{1}$ and $F_{m}$ are connected inside $\mathcal{G}^{s}_{d}(\Gamma)$. 
\end{proof}

Recall that the notion of $k$--gallery is defined in Definition \ref{1.5}.

\begin{lem}
\label{5.24}
$\mathcal{G}^{s}_{d}(\Gamma)$ is connected if and only if $\Gamma$ satisfies the following conditions:
\begin{enumerate}
\item For any vertex $v\in F(\Gamma)$, there is top dimensional simplex $\Delta\subset F(\Gamma)$ such that $\Delta\cap v^{\perp}$ contains at least $d$ vertices.
\item Any two top dimensional simplexes $\Delta_{1}$ and $\Delta_{2}$ in $F(\Gamma)$ are connected by a $(d-1)$--gallery.
\end{enumerate}
\end{lem}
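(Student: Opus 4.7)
The plan is to prove both implications, translating adjacency in $\mathcal{G}^{s}_{d}(\Gamma)$ into combinatorial facts about parabolic cosets in $G(\Gamma)$. The starting point is the identification of a vertex of $\mathcal{G}^{s}_{d}(\Gamma)$ with a coset $gG(\Sigma)$, where $\Sigma$ ranges over top dimensional simplices of $F(\Gamma)$. Using Lemma \ref{2.10} and Lemma \ref{2.14}, I would first verify the following adjacency characterization: two cosets $g_{1}G(\Sigma_{1})$ and $g_{2}G(\Sigma_{2})$ are joined by an edge in $\mathcal{G}^{s}_{d}(\Gamma)$ if and only if there exists a $(d-1)$--simplex $\tau\subseteq\Sigma_{1}\cap\Sigma_{2}$ and parallel cosets $h_{1}G(\tau)\subset g_{1}G(\Sigma_{1})$ and $h_{2}G(\tau)\subset g_{2}G(\Sigma_{2})$ with $h_{1}^{-1}h_{2}$ lying in the parabolic subgroup $G(V(\tau)\cup V(\tau)^{\perp})$ corresponding to the parallel set of $G(\tau)$.

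For the implication $(\Leftarrow)$, assume (1) and (2). Whenever two top simplices $\Sigma_{1},\Sigma_{2}$ share a $(d-1)$--subsimplex $\tau$, the standard $d$--flat $G(\tau)$ lies in both standard flats $G(\Sigma_{1})$ and $G(\Sigma_{2})$, so the corresponding vertices are adjacent in $\mathcal{G}^{s}_{d}(\Gamma)$. Thus (2) implies that all standard flats $G(\Delta)$ indexed by top simplices $\Delta$ (with trivial coset) are connected. Next, using the action of $G(\Gamma)$ by left multiplication on $\mathcal{G}^{s}_{d}(\Gamma)$, it suffices to show that $G(\Delta)$ and $vG(\Delta)$ are connected for every top simplex $\Delta$ and every generator $v$. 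If $v\in V(\Delta)$, they coincide. Otherwise, invoke (1) to find a top simplex $\Delta_{v}$ with $|V(\Delta_{v})\cap v^{\perp}|\ge d$; since $v$ commutes with every element of $V(\Delta_{v})\cap v^{\perp}$, the standard $d$--flat spanned by $V(\Delta_{v})\cap v^{\perp}$ inside $G(\Delta_{v})$ is parallel to its $v$--translate, which is a sub-flat of $vG(\Delta_{v})$, giving adjacency of $G(\Delta_{v})$ with $vG(\Delta_{v})$. Chaining this with the previous step produces the desired path.

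For the implication $(\Rightarrow)$, assume $\mathcal{G}^{s}_{d}(\Gamma)$ is connected. For (2), pick top simplices $\Delta_{1},\Delta_{2}$ and a path $F_{0}=G(\Delta_{1}),\ldots,F_{m}=G(\Delta_{2})$ in $\mathcal{G}^{s}_{d}(\Gamma)$, with $F_{i}=g_{i}G(\Sigma_{i})$. Consecutive $F_{i},F_{i+1}$ share a standard $d$--flat, so by the adjacency characterization $\Sigma_{i}\cap\Sigma_{i+1}$ contains a $(d-1)$--simplex; this yields the required $(d-1)$--gallery $\Delta_{1}=\Sigma_{0},\Sigma_{1},\ldots,\Sigma_{m}=\Delta_{2}$. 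For (1), let $v$ be a vertex. If $v$ lies in some top simplex $\Delta$, the remaining $n$ vertices of $\Delta$ are all adjacent to $v$, so $|V(\Delta)\cap v^{\perp}|=n\ge d$ and we are done. Otherwise, pick any top simplex $\Delta$ and take a path from $G(\Delta)$ to $vG(\Delta)$. Consider the homomorphism $\phi\co G(\Gamma)\to\Bbb Z$ sending $v\mapsto 1$ and every other generator to $0$, which is well-defined because the defining commutator relations have trivial $v$--count. Then $\phi(G(\Delta))=\{0\}$ while $\phi(vG(\Delta))=\{1\}$, so some step satisfies $\phi(g_{i})\ne\phi(g_{i+1})$. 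Writing $h_{i}=g_{i}s_{i}$ and $h_{i+1}=g_{i+1}s_{i+1}$ with $s_{i}\in G(\Sigma_{i}),\ s_{i+1}\in G(\Sigma_{i+1})$, and $h_{i+1}=h_{i}p$ with $p\in G(V(\tau_{i})\cup V(\tau_{i})^{\perp})$, the $\phi$--change forces $v$ into $V(\Sigma_{i})\cup V(\Sigma_{i+1})\cup V(\tau_{i})^{\perp}$. The first two options are excluded because $v$ lies in no top simplex, hence $V(\tau_{i})\subseteq v^{\perp}$, yielding $|V(\Sigma_{i})\cap v^{\perp}|\ge d$ and proving (1) with $\Delta=\Sigma_{i}$.

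The main obstacle is pinning down the adjacency characterization precisely, including the parallelism analysis showing that coarse sharing of a standard $d$--flat by two standard top flats is equivalent to their simplex labels sharing a $(d-1)$--subsimplex $\tau$ together with the two cosets differing by an element of the parallel parabolic $G(V(\tau)\cup V(\tau)^{\perp})$. Once this is established, the remainder is bookkeeping about parabolic cosets plus the $v$--exponent count used in the direction $(\Rightarrow)$ (1).
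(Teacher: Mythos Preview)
Your argument is correct and follows a genuinely different, more algebraic route than the paper's. Both directions of your proof translate adjacency in $\mathcal{G}^{s}_{d}(\Gamma)$ into statements about parabolic cosets, whereas the paper works geometrically with base vertices $x\in X(\Gamma)$ and hyperplanes. For the implication $(\Leftarrow)$ the two arguments are essentially reorganisations of each other: your ``connect $G(\Delta)$ to $vG(\Delta)$ via $G(\Delta_v)$'' step is the coset version of the paper's ``connect $\Gamma_{d,x_1}$ to $\Gamma_{d,x_2}$ for adjacent $x_1,x_2$''. The real divergence is in $(\Rightarrow)$ part (1): the paper argues by contradiction using a separating hyperplane $h$ dual to a $v$--edge, finds two adjacent flats on opposite sides of $h$, and invokes Lemma~\ref{2.14} to force $d$ labels in $Y_1$ into $v^{\perp}$; you instead use the $v$--exponent homomorphism $\phi\co G(\Gamma)\to\Bbb Z$ to detect a jump along the path and read off $V(\tau_i)\subseteq v^{\perp}$ directly from the parallel--set parabolic. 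Your approach is cleaner once the adjacency characterization is in hand, and the homomorphism trick avoids the hyperplane bookkeeping; the paper's approach has the advantage that it never needs to name the parallel set of a standard sub-flat explicitly, relying only on Lemma~\ref{2.10} and Lemma~\ref{2.14}. The one point you should not leave implicit is precisely the one you flag: that the coarse intersection $Y_1$ of two top standard flats, being a convex subcomplex of $\Bbb E^{n+1}$, contains a $d$--flat if and only if it contains a standard $d$--flat, and that the $CAT(0)$ projection to $Y_2$ preserves edge labels (this is where Lemma~\ref{2.14} enters). With that lemma stated, the rest of your outline goes through without difficulty.
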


\begin{proof}
For the only if part, pick vertex $x\in X(\Gamma)$ and let $\Gamma_{d,x}$ be the full subgraph of $\mathcal{G}^{s}_{d}(\Gamma)$ generated by those vertices representing top dimensional standard flats containing $x$. Then there is a canonical surjective simplicial map $\phi\co \mathcal{G}^{s}_{d}(\Gamma)\to \Gamma_{d,x}$ by sending any top dimensional standard flat $F$ to the unique standard flat $F'$ with $x\in F'$ and $label(F)=label(F')$. Since $\mathcal{G}^{s}_{d}(\Gamma)$ is connected, $\Gamma_{d,x}$ is also connected and (2) is true.

To see (1), suppose there exists a vertex $v\in F(\Gamma)$ such that for any top dimensional simplex $\Delta\subset F(\Gamma)$, $\Delta\cap v^{\perp}$ contains less than $d$ vertices. Pick vertex $x_{1}\in X(\Gamma)$, if $e\subset X(\Gamma)$ is the unique edge such that $V(e)=v$ and $x_{1}\in e$, then by our assumption, $e$ is not contained in any top dimensional standard flat. This is also true for any edge parallel to $e$. Let $h$ be the hyperplane dual to $e$. Suppose $x_{2}$ is the other endpoint of $e$. For $i=1,2$, let $F_{i}$ be the top dimensional standard flat such that $x_{i}\in F_{i}$. Then $F_{1}$ and $F_{2}$ are separated by $h$. Since $F_{1}$ and $F_{2}$ are joined by a chain of top dimensional standard flats such that each flat in the chain has trivial intersection with $h$ (otherwise some edge parallel to $e$ will be contained in a top dimensional standard flat), we can find $F'_{1}$ and $F'_{2}$ in this chain such that they are adjacent in $\mathcal{G}^{s}_{d}(\Gamma)$ and they are separated by $h$. Let $(Y_{1},Y_{2})=\inc (F'_{1},F'_{2})$. Then for vertex $y\in Y_{1}$, there are $d$ mutually orthogonal edges $\{e_{i}\}_{i=1}^{d}$ such that $y\in e_{i}\subset Y_{1}$. Let $h_{i}$ be the hyperplane dual to $e_{i}$. Then $h_{i}\cap h\neq\emptyset$ for $1\le i\le d$ by Lemma \ref{2.14}, hence in $\Gamma$ we have $d(V(e_{i}),V(e))=d(V(e_{i}),v)=1$ for $1\le i\le d$, which yields a contradiction.

For the other direction, note that (2) implies $\Gamma_{d,x}$ is connected for any vertex $x\in X(\Gamma)$ and (1) implies for adjacent vertices $x_{1},x_{2}\in X(\Gamma)$, there exist $v_{i}\in\Gamma_{d,x_{i}}$ for $i=1,2$ such that $v_{1}$ and $v_{2}$ are either adjacent or identical in $\mathcal{G}^{s}_{d}(\Gamma)$, thus $\mathcal{G}^{s}_{d}(\Gamma)$ is connected.
\end{proof}

The next result follows from Corollary \ref{5.22}, Lemma \ref{5.23} and Lemma \ref{5.24}.

\begin{thm}
\label{5.25}
Given $G(\Gamma_{1})$ and $G(\Gamma_{2})$ quasi-isometric to each other, for $1\le d\le \dim(F(\Gamma_{1}))$, $\Gamma_{1}$ satisfies condition (1) and (2) in Lemma \ref{5.24} if and only if $\Gamma_{2}$ also satisfies these conditions.
\end{thm}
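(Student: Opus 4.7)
The plan is to assemble Theorem 5.25 directly from the three preceding results: Theorem 5.21, Corollary 5.22, Lemma 5.23, and Lemma 5.24. The argument is essentially a transport argument via the induced graph isomorphism, so there is no genuine new content to prove — the work has all been done upstream.

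First I would check that the range of $d$ in the statement is well-posed on both sides. By Theorem 5.21, a quasi-isometry $X(\Gamma_1) \to X(\Gamma_2)$ forces $\dim(X(\Gamma_1)) = \dim(X(\Gamma_2))$, and since $\dim(X(\Gamma)) = \dim(F(\Gamma)) + 1$, we get $\dim(F(\Gamma_1)) = \dim(F(\Gamma_2))$. So a choice of $d$ with $1 \le d \le \dim(F(\Gamma_1))$ is equally a valid choice for $\Gamma_2$.

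Next, fix such a $d$ and a quasi-isometry $q \colon X(\Gamma_1) \to X(\Gamma_2)$ coming from the quasi-isometry of the two right-angled Artin groups (after passing to the Cayley graphs, which are quasi-isometric to the universal covers of the Salvetti complexes). By Corollary 5.22, $q$ induces a graph isomorphism $q_* \colon \mathcal{G}_d(\Gamma_1) \to \mathcal{G}_d(\Gamma_2)$. Since graph isomorphisms preserve connectedness, $\mathcal{G}_d(\Gamma_1)$ is connected if and only if $\mathcal{G}_d(\Gamma_2)$ is connected. By Lemma 5.23, this is equivalent to saying $\mathcal{G}^s_d(\Gamma_1)$ is connected if and only if $\mathcal{G}^s_d(\Gamma_2)$ is connected. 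Finally, Lemma 5.24 identifies connectedness of $\mathcal{G}^s_d(\Gamma_i)$ precisely with $\Gamma_i$ satisfying conditions (1) and (2). Chaining these equivalences yields the theorem.

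Since every step is quoted from an already-established result, I do not anticipate any real obstacle; the only thing to be careful about is matching the valid range of $d$ on the two sides, which is handled by the dimension-invariance in Theorem 5.21. The write-up will be a short paragraph consisting essentially of the chain of if-and-only-ifs above.
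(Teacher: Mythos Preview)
Your proposal is correct and matches the paper's own justification exactly: the paper simply states that Theorem~\ref{5.25} follows from Corollary~\ref{5.22}, Lemma~\ref{5.23}, and Lemma~\ref{5.24}, and your write-up spells out precisely that chain of equivalences (with the additional care of invoking Theorem~\ref{5.21} to align the range of $d$).
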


Now we turn to the diameter of $\mathcal{G}_{1}(\Gamma)$.

If $\Gamma$ admits a non-trivial join decomposition $\Gamma=\Gamma_{1}\circ\Gamma_{2}$, then $diam(\mathcal{G}_{1}(\Gamma))\le 2$. To see this, take two arbitrary top dimensional flats $F_{1}$ and $F_{2}$ in $X(\Gamma)$, then $F_{i}=A_{i}\times B_{i}$, here $A_{i}$ and $B_{i}$ are top dimensional flats in $X(\Gamma_{1})$ and $X(\Gamma_{2})$ respectively for $i=1,2$ (see \cite[Lemma 2.3.8]{kleiner1997rigidity}). Let $F=A_{1}\times B_{2}$. Then $diam(N_{r}(F_{i})\cap N_{r}(F))=\infty$ for some $r>0$ and $i=1,2$, thus $diam(\mathcal{G}_{1}(\Gamma))\le 2$. Our next goal is to prove the following converse.

\begin{lem}
\label{5.26}
If $diam(\mathcal{G}_{1}(\Gamma))\le 2$ and if $\Gamma$ is not one point, then $\Gamma$ admits a nontrivial join decomposition $\Gamma=\Gamma_{1}\circ\Gamma_{2}$.
\end{lem}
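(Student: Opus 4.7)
The plan is to prove the contrapositive: if $|V(\Gamma)|\ge 2$ and $\Gamma$ does not admit a nontrivial join decomposition (equivalently, the complement graph $\bar{\Gamma}$ is connected), then $\mathrm{diam}(\mathcal{G}_{1}(\Gamma))\ge 3$. First I will reduce to standard flats. By repeating the ``deformation to a standard flat'' construction in the proof of Lemma \ref{5.23}, every vertex of $\mathcal{G}_{1}(\Gamma)$ can be connected to a vertex of $\mathcal{G}_{1}^{s}(\Gamma)$ inside $\mathcal{G}_{1}(\Gamma)$ by a path of length bounded in terms of $n=\dim X(\Gamma)$, so it suffices to exhibit two standard top dimensional flats at $\mathcal{G}_{1}^{s}$-distance $\ge 3$.

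The second step is to pin down the combinatorial meaning of $\mathcal{G}_{1}^{s}$-adjacency. For top dimensional simplices $T_{1},T_{2}\subset F(\Gamma)$ and $g_{1},g_{2}\in G(\Gamma)$, the flats $g_{1}F_{T_{1}},\,g_{2}F_{T_{2}}$ have coarse intersection of dimension $\ge 1$ if and only if, by Lemmas \ref{2.10} and \ref{2.14}, there is a hyperplane of $X(\Gamma)$ meeting both flats. Describing $v$-hyperplanes of $X(\Gamma)$ as cosets of $G(v^{\perp})$ and checking when such a coset meets both $g_{i}F_{T_{i}}$, this becomes: there exists a vertex $v\in V(T_{1})\cap V(T_{2})$ with
\[
g_{1}^{-1}g_{2}\;\in\; G(T_{1})\cdot G(v^{\perp})\cdot G(T_{2}).
\]

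Next I choose test flats. Fix a top dimensional simplex $T_{0}\subset F(\Gamma)$; maximality forces any $w\notin V(T_{0})$ to be non-adjacent in $\Gamma$ to some vertex of $T_{0}$. Using that $\bar{\Gamma}$ is connected, I will pick a finite word $\omega$ in generators lying outside $V(T_{0})$, whose letters ``span'' a walk in $\bar{\Gamma}$; concretely, $\omega$ will be taken of the form $(w_{1}w_{2})^{N}$ or a longer alternating product of non-commuting generators so that $\omega$ is cyclically reduced and every generator appearing in its normal form sits outside $V(T_{0})$. Set $F_{1}=F_{T_{0}}$ and $F_{2}=\omega\cdot F_{T_{0}}$.

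The key step is then to rule out intermediate flats. If some $gF_{T}$ were $\mathcal{G}_{1}^{s}$-adjacent to both $F_{1}$ and $F_{2}$, the characterisation above would give $u_{1},u_{2}\in V(T)\cap V(T_{0})$ and a factorisation
\[
\omega \;=\; a_{1}b_{1}cb_{2}a_{2},\qquad a_{i}\in G(T_{0}),\; b_{1}\in G(u_{1}^{\perp}),\; c\in G(T),\; b_{2}\in G(u_{2}^{\perp}).
\]
Now I will invoke the standard fact for RAAGs that if an element $\omega$ lies in a product $G(A_{1})\cdots G(A_{k})$ of parabolic subgroups, then every generator occurring in the reduced normal form of $\omega$ lies in $A_{1}\cup\cdots\cup A_{k}$ (this follows from the piling or Servatius normal form, because reductions only cancel $xx^{-1}$ pairs). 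Applied here, every letter of $\omega$ lies in $V(T_{0})\cup V(T)\cup u_{1}^{\perp}\cup u_{2}^{\perp}$. I will then use the freedom in choosing $\omega$, together with the assumption that $\bar{\Gamma}$ is connected and $T_{0}$ is maximal, to show that no single choice of $(T,u_{1},u_{2})$ can make this cover hold for all letters of $\omega$ simultaneously --- running through cases on which letters are absorbed into $V(T)$ versus $u_{i}^{\perp}$, and noting that a letter in $u_{i}^{\perp}$ must commute with a vertex of $T_{0}$ while a letter in $V(T)$ must belong to a clique meeting $T_{0}$.

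The hard part will be the final bookkeeping: constructing $\omega$ so that for \emph{every} admissible triple $(T,u_{1},u_{2})$ at least one letter of $\omega$ is forced outside $V(T_{0})\cup V(T)\cup u_{1}^{\perp}\cup u_{2}^{\perp}$. This is the same combinatorial core that appears in the divergence arguments of \cite{dani2012divergence}, and one natural route is to invoke their construction verbatim: a long enough word built by alternating along edges of $\bar{\Gamma}$ starting outside $V(T_{0})$ cannot be absorbed by any such finite union, because $\bar{\Gamma}$-connectedness prevents the union $V(T)\cup u_{1}^{\perp}\cup u_{2}^{\perp}$ from containing an entire $\bar{\Gamma}$-path of sufficient length disjoint from $V(T_{0})$.
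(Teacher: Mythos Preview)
There is a genuine gap in your reduction step. You claim that ``it suffices to exhibit two standard top dimensional flats at $\mathcal{G}_{1}^{s}$-distance $\ge 3$,'' but this is the wrong direction: since $\mathcal{G}_{1}^{s}$ is a full subgraph of $\mathcal{G}_{1}$, one always has $d_{\mathcal{G}_{1}^{s}}(F_{1},F_{2})\ge d_{\mathcal{G}_{1}}(F_{1},F_{2})$, so a lower bound in $\mathcal{G}_{1}^{s}$ says nothing about $\mathcal{G}_{1}$. Concretely, even if no \emph{standard} flat is $\mathcal{G}_{1}$-adjacent to both $F_{1}$ and $F_{2}$, a non-standard top dimensional flat could be. Your factorisation $\omega=a_{1}b_{1}cb_{2}a_{2}$ with $c\in G(T)$ presupposes that the intermediate flat is a coset of a parabolic $G(T)$, which is exactly what fails for non-standard flats. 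The deformation argument from Lemma~\ref{5.23} does produce standard flats from a general $F$, but it produces two of them (one near $F_{1}$, one near $F_{2}$) connected by a chain in $\mathcal{G}_{1}^{s}$ of uncontrolled length, not a single standard intermediate flat.

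The paper handles this by working geometrically with a \emph{general} intermediate flat $F$: it builds a word $W'$ (eight repetitions of a closed walk in $\Gamma^{c}$ visiting every vertex) giving a geodesic $l$ from $x_{1}$ to $x_{2}$, takes $F_{i}=F_{\Delta,x_{i}}$, and argues via hyperplanes. The $N$ hyperplanes $h_{1},\dots,h_{N}$ dual to the edges of $l$ separate $F_{1}$ from $F_{2}$, so each $h_{i}$ must cross the connected set $K_{1}\cup F\cup K_{2}$ (two strips and the intermediate flat). A counting argument --- using that the $h_{i}$ are nested and that the word visits every vertex of $\Gamma$ --- bounds how many $h_{i}$ each of $K_{1}$, $F$, $K_{2}$ can meet, giving a contradiction for $N$ large. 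This works for arbitrary $F$ because it uses only convexity and the hyperplane structure, not that $F$ is a parabolic coset. Your algebraic support argument cannot see non-standard flats, and this is the essential obstacle.
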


In the first part of the following proof, we will use the argument in \cite[Section 4.1]{dani2012divergence}.
\begin{proof} 
Following \cite[Section 4.2]{dani2012divergence}, let $\Gamma^{c}$ be the complement graph of $\Gamma$. $\Gamma_{c}$ and $\Gamma$ have the same vertex set. Two vertex are adjacent in $\Gamma_{c}$ if and only if they are not adjacent in $\Gamma$. It suffices to show $\Gamma_{c}$ is disconnected.

We argue by contradiction and suppose $\Gamma_{c}$ is connected. Pick a top dimensional simplex $\Delta$ in the flag complex $F(\Gamma)$ of $\Gamma$ (we identify $\Gamma$ with the $1$--skeleton of $F(\Gamma)$). Note that $\Delta$ corresponds to a top dimensional standard torus $T_{\Delta}$ in the Salvetti complex. For any vertex $x\in X(\Gamma)$, we denote the unique standard flat in $X(\Gamma)$ which contains $x$ and covers $T_{\Delta}$ by $F_{\Delta,x}$. 

If $\Gamma$ does not contain vertices other than those in $\Delta$, then we are done, otherwise we can find vertex $\tilde{v}\in\Gamma$ with 
\begin{equation}
\label{5.27}
\tilde{v}\notin\Delta\,.
\end{equation}
Let $\omega$ be an edge path of $\Gamma^{c}$ which starts at $\tilde{v}$, ends at $\tilde{v}$ and travels through every vertices in $\Gamma^{c}$. By recording the labels of consecutive vertices in $\omega$, we obtain a word $W$. Let $W'$ be the concatenation of eight copies of $W$.

Pick vertex $x_{1}\in X(\Gamma)$ and let $l$ be the edge path which starts at $x_{1}$ and corresponds to the word $W'$. Let $x_{2}$ be the other endpoint of $l$. Note that $l$ is actually a geodesic segment by our construction of $W'$. For $i=1,2$, let $F_{i}=F_{\Delta,x_{i}}$ and let $w_{i}$ be the vertex in $\mathcal{G}_{1}(\Gamma)$ corresponding to $F_{i}$. We claim $d(w_{1},w_{2})>2$.

If $d(w_{1},w_{2})\le 2$, then there exists a top dimensional flat $F$ such that 
\begin{equation}
\label{5.28}
diam(N_{r}(F_{i})\cap N_{r}(F))=\infty 
\end{equation}
for some $r>0$ and $i=1,2$. Let $(Y_{1},Y)=\inc (F_{1},F)$. By (\ref{5.28}) and Lemma \ref{2.10}, $Y_{1}$ (and $Y$) is not a point and we can identify the convex hull of $Y\cup Y_{1}$ with $Y_{1}\times[0,d(F_{1},F)]$. Pick an edge $e_{a}\subset Y_{1}$ and let $K_{1}$ be the strip $e_{a}\times[0,d(F_{1},F)]$ inside $Y_{1}\times[0,d(F_{1},F)]$. By considering the pair $F$ and $F_{2}$, we can similarly find an edge $e_{b}\subset F_{2}$ and a strip $K_{2}$ isometric to $e_{b}\times[0,d(F_{2},F)]$ which joins $F$ and $F_2$. See Figure \ref{fig:4}.
\begin{figure}[ht!]
\labellist
\small\hair 2pt
\pinlabel $e_a$ at 43 67
\pinlabel $F_1$ at 56 22
\pinlabel $K_1$ at 146 65
\pinlabel $F$ at 252 33
\pinlabel $K_2$ at 366 55
\pinlabel $e_b$ at 471 56
\pinlabel $F_2$ at 481 33
\endlabellist
\centering
\includegraphics[scale=0.5]{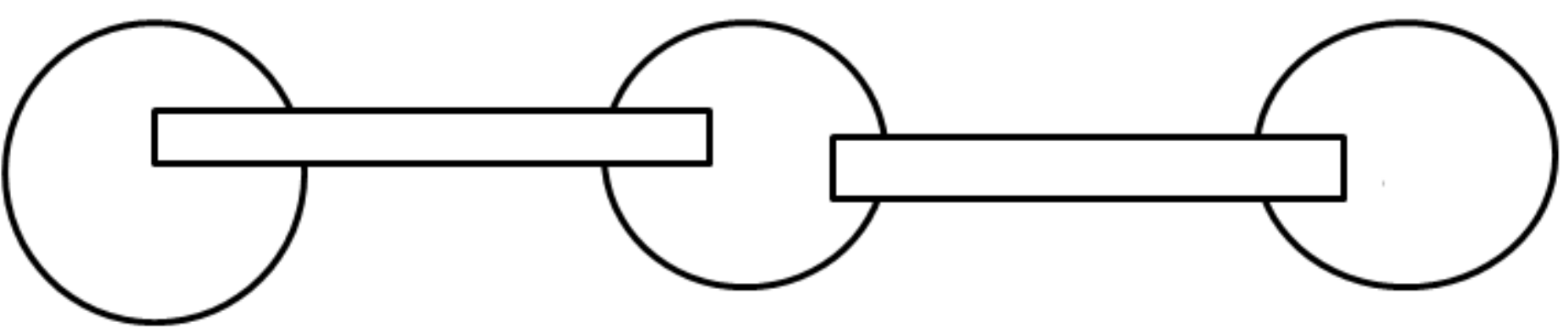}
\caption{}\label{fig:4}
\end{figure}

We parametrize the geodesic segment $l=\overline{x_{1}x_{2}}$ by arc length such that $l(0)=v_{1}$. Assume $l(N)=x_{2}$. Let $h_{i}$ be the hyperplane dual to the edge $\overline{l(i-1)l(i)}$ for $1\le i\le N$. Note that
\begin{equation}
\label{5.29}
h_{j}\ \textmd{separates}\ h_{i}\ \textmd{and}\ h_{k}\ \textmd{for}\ i<j<k\,.
\end{equation}
Moreover, each $h_{i}$ separates $F_{1}$ and $F_{2}$ by (\ref{5.27}), hence also separates $e_{a}$ and $e_{b}$. Consider the set $K_{1}\cup F\cup K_{2}$, which is connected and contains $e_{a}$ and $e_{b}$, so each $h_{i}$ must have non-trivial intersection with at least one of $K_{1}$, $F$ and $K_{2}$.

We claim each of $K_{1}$, $F$ and $K_{2}$ could intersect at most $2M$ hyperplanes from the collection $\{h_{i}\}_{i=1}^{N}$ where $M$ is the length of word $W$ ($M>1$ since $\Gamma$ contains more than one vertices). This will yield a contradiction since $N=8M$. We prove the claim for $K_{1}$, the case of $K_{2}$ is similar. 

Let $h_{a}$ be the hyperplane dual to $e_{a}$ and let $\Lambda=\{1\le i\le N\mid h_{a}\cap h_{i}\neq\emptyset\}$. Then
\begin{equation}
\label{5.30}
\{1\le i\le N\mid K_{1}\cap h_{i}\neq\emptyset\}\subset\Lambda\,.
\end{equation}
If $h_{a}=h_{i_{0}}$ for some $i_{0}$, then by (\ref{5.29}), $h_{i_0}$ is the only hyperplane in $\{h_{i}\}_{i=1}^{N}$ intersecting $K_1$. Hence we are done in this case. Now we assume $h_a\notin \{h_{i}\}_{i=1}^{N}$. Let $e_{i}$ be an edge dual to $h_{i}$. Then it follows from $h_{a}\cap h_{i}\neq\emptyset$ that for any $i\in\Lambda$,
\begin{equation}
\label{5.31}
d(V(e_{i}),V(e_{a}))=1
\end{equation}
in $\Gamma$. If the claim for $K_{1}$ is not true, then (\ref{5.30}) implies $\Lambda$ has cardinality bigger than $2M$, moreover, it follows from (\ref{5.29}) that if $i\in\Lambda$ and $j\in\Lambda$, then $k\in\Lambda$ for any $i\le k\le j$. By the construction of the word $W$, we know every vertex of $\Gamma$ is contained in the collection $\{V(e_{i})\}_{i\in\Lambda}$, which is contradictory to (\ref{5.31}).

Now prove the claim for $F$. Suppose $F\cap h_{i}\neq\emptyset$, then $V(e_{i})\in\Delta$. By the construction of $W'$, we know there exist positive integers $a,a'<M$ such that $d(V(e_{i+a}),V(e_{i}))\ge 2$ and $d(V(e_{i-a'}),V(e_{i}))\ge 2$ in $\Gamma$. Then $F\cap h_{j}=\emptyset$ for $j=i+a$ and $j=i-a'$. By (\ref{5.29}), $F\cap h_{j}=\emptyset$ for $j>i+a$ and $j<i-a'$. Thus the claim is true for $F$.
\end{proof}

\begin{thm}
\label{5.32}
The following are equivalent:
\begin{enumerate}
\item $diam(\mathcal{G}_{1}(\Gamma))<\infty$.
\item $diam(\mathcal{G}_{1}(\Gamma))\le 2$.
\item $\Gamma$ admits a non-trivial join decomposition or $\Gamma$ is one point.
\end{enumerate}
Moreover, these properties are quasi-isometry invariants for right-angled Artin groups.
\end{thm}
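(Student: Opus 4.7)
The plan is to prove the equivalence via the cycle (3) $\Rightarrow$ (2) $\Rightarrow$ (1) $\Rightarrow$ (3), and then deduce the quasi-isometry invariance from Corollary \ref{5.22}. The implication (3) $\Rightarrow$ (2) has already been sketched in the discussion preceding Lemma \ref{5.26}: if $\Gamma=\Gamma_{1}\circ\Gamma_{2}$ is a non-trivial join, then any two top-dimensional flats $F_{i}=A_{i}\times B_{i}$ are joined in $\mathcal{G}_{1}(\Gamma)$ by $F=A_{1}\times B_{2}$, giving $\mathrm{diam}(\mathcal{G}_{1}(\Gamma))\le 2$, while the one-vertex case is trivial. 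The implication (2) $\Rightarrow$ (1) is immediate.

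The main content is (1) $\Rightarrow$ (3), which I would prove by contrapositive, generalizing the proof of Lemma \ref{5.26}. Assume $\Gamma$ has at least two vertices and admits no non-trivial join decomposition, so the complement graph $\Gamma^{c}$ is connected. Following Lemma \ref{5.26}, fix a top-dimensional simplex $\Delta\subset F(\Gamma)$ and a vertex $\tilde{v}\notin\Delta$, and let $W$ be the word of length $M$ obtained from a closed walk in $\Gamma^{c}$ starting at $\tilde{v}$ and visiting every vertex. For a given integer $k\ge 1$, let $W'$ be the concatenation of $N$ copies of $W$ with $N>(2k-1)\cdot 2M$, and let $l$ be the corresponding geodesic in $X(\Gamma)$ from $x_{1}$ to $x_{2}$; set $F_{i}=F_{\Delta,x_{i}}$. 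Suppose for contradiction that $F_{1}$ and $F_{2}$ are at distance $\le k$ in $\mathcal{G}_{1}(\Gamma)$. Then there is a chain of pairwise adjacent top-dimensional flats $F_{1}=H_{0},H_{1},\dots,H_{k}=F_{2}$ with connecting strips $K_{j}$ between $H_{j-1}$ and $H_{j}$. The same strip-thickness estimate used in the proof of Lemma \ref{5.26} --- whose only input is that consecutive labels in $W$ are non-adjacent in $\Gamma$ --- shows that each strip $K_{j}$ and each intermediate flat $H_{j}$ (for $1\le j\le k-1$) is crossed by at most $2M$ of the hyperplanes $\{h_{i}\}_{i=1}^{N}$ separating $F_{1}$ from $F_{2}$. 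Summing over $k$ strips and $k-1$ intermediate flats gives at most $(2k-1)\cdot 2M<N$ crossings by the connected union $\bigcup_{j}(K_{j}\cup H_{j})$, which joins a point of $F_{1}$ to a point of $F_{2}$; but every $h_{i}$ must cross this union, a contradiction. Since $k$ is arbitrary, $\mathrm{diam}(\mathcal{G}_{1}(\Gamma))=\infty$.

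For the quasi-isometry invariance: when $\dim F(\Gamma)\ge 1$, Corollary \ref{5.22} provides a graph isomorphism $\mathcal{G}_{1}(\Gamma_{1})\to\mathcal{G}_{1}(\Gamma_{2})$ induced by any quasi-isometry, so the finiteness of the diameter is preserved; the edge cases ($\Gamma$ a single vertex or an edgeless graph with $\ge 2$ vertices) are handled by the known quasi-isometric classifications of $\mathbb{Z}$ and non-abelian free groups. I expect the main obstacle to be verifying the strip-thickness estimate in (1) $\Rightarrow$ (3): the case analysis in Lemma \ref{5.26} was formulated only for two strips and a single intermediate flat, and one has to check that the parallelism argument relating edges in a strip or intermediate flat to the hyperplanes $\{h_{i}\}$ carries over uniformly to every $H_{j}$ along a longer chain.
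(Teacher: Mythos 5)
Your proposal takes essentially the same route as the paper: (3)$\Rightarrow$(2)$\Rightarrow$(1) as you outline, (1)$\Rightarrow$(3) by iterating the argument of Lemma \ref{5.26} with longer words, which is precisely the paper's one-line remark after the theorem, and quasi-isometry invariance from Corollary \ref{5.22}. The concern you flag about uniformity of the strip-thickness estimate is exactly the point the paper leaves implicit, and it does carry over, since the bound for a connecting strip depends only on $\Gamma$ having no universal vertex, and the bound for an intermediate flat depends only on being a top-dimensional flat meeting the separating hyperplanes, neither of which depends on the position of the piece in the chain.
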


Note that $(1)\Rightarrow(3)$ follows by considering the concatenation of arbitrary many copies of $W$ in Lemma \ref{5.26} and applying the same argument. 

\begin{remark}
\label{5.33}
It is shown in \cite{behrstock2012divergence} and \cite{abrams2013pushing} that $G(\Gamma)$ has linear divergence if and only if $\Gamma$ is a non-trivial join, which also implies $\Gamma$ being a non-trivial join is quasi-isometry invariant. Moreover, their results together with Theorem B and Proposition 4.7 of \cite{kapovich1998quasi} implies the following stronger statement.

Given $X=X(\Gamma)$ and $X'=X(\Gamma')$. Let $\Gamma=\Delta\circ\Gamma_1\circ\cdots\circ\Gamma_k$ be the join decomposition such that $\Delta$ is the maximal clique factor, and each $\Gamma_i$ does not allow non-trivial further join decomposition. Similarly, let $\Gamma=\Delta'\circ\Gamma'_1\circ\cdots\circ\Gamma'_{k'}$. Let $X=\Bbb R^{n}\times\prod_{i=1}^{k}X(\Gamma_{i})$ and let $X'=\Bbb R^{n'}\times\prod_{j=1}^{k'}X(\Gamma'_{j})$ be the corresponding product decomposition. If $\phi\co X\to X'$ is an $(L,A)$ quasi-isometry, then $n=n'$, $k=k'$ and there exist constants $L'=L'(L,A)$, $A'=A'(L,A)$, $D=D(L,A)$ such that after re-indexing the factors in $X'$, we have $(L',A')$ quasi-isometry $\phi_{i}\co  X(\Gamma_{i})\to X(\Gamma'_{i})$ so that $d(p'\circ\phi, \prod_{i=1}^{k}\phi_{i}\circ p)<D$, where $p\co  X\to \prod_{i=1}^{k}X(\Gamma_{i})$ and $p'\co X'\to \prod_{i=1}^{k}X(\Gamma'_{i})$ are the projections.

More generally, let $X$ and $X'$ be locally compact $CAT(0)$ cube complexes which admit a cocompact and essential action. Let $X=\prod_{j=1}^{n} Z_j\times \prod_{i=1}^{k} X_{i}$ be the finest product decomposition of $X$, where $Z_j$'s are exactly the factors which are quasi-isometric to $\Bbb R$. Suppose $Z=\prod_{j=1}^{n} Z_j$. Then $X=Z\times \prod_{i=1}^{k} X_{i}$. Similarly, we decompose $X'$ as $X'=Z'\times \prod_{i=1}^{k'} X'_{i}$. Then any quasi-isometry between $X$ and $X'$ respects such product decompositions in the sense of the previous paragraph. This is a consequence of \cite[Theorem B]{kapovich1998quasi}, \cite[Proposition 4.7]{kapovich1998quasi} and \cite[Theorem 6.3]{caprace2011rank}.
\end{remark}

\appendix
\section{Top dimensional support sets in spaces of finite geometric dimension and application to Euclidean building}
In this section we adjust previous arguments to study the structure of top dimensional quasiflats in Euclidean buildings and prove the following result.
\begin{thm}
\label{6.1}
If $Y$ is a Euclidean building of rank $n$ and $[\sigma]\in H^{\textmd{p}}_{n,n}(Y)$, then there exist finitely many Weyl cones $\{W_{i}\}_{i=1}^{h}$ such that 
\begin{equation*}
d_{H}(S_{[\sigma]},\cup_{i=1}^{h}W_{i})<\infty\,.
\end{equation*}
Moreover, we can assume $W_{i}\subset S_{[\sigma]}$ for all $i$.
\end{thm}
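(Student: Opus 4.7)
The plan is to mimic the five-step proof of Theorem \ref{4.48} for $CAT(0)$ cube complexes, substituting Weyl cones for orthant subcomplexes throughout, and invoking the generalizations to $CAT(0)$ spaces of finite geometric dimension summarized in Theorem \ref{6.19} and Theorem \ref{6.20}. A Euclidean building $Y$ of rank $n$ is $CAT(0)$ of geometric dimension $n$, and its Tits boundary $\partial_{T}Y$ is a spherical building, hence a piecewise spherical polyhedral complex of dimension $n-1$ whose Weyl chambers supply a natural finite family of $(n-1)$-simplices. Applying the generalized results of Section \ref{support of homology class} to $S := S_{[\sigma]}$ provides the geodesic extension property, polynomial growth of degree $n$, asymptotic conicality, and the isolated-suspension statement near infinity.

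Next I would produce Weyl cones from constant-$\Delta_{mod}$-direction rays in $S$. The proofs of Lemma \ref{4.2}, Corollary \ref{4.4} and Corollary \ref{4.5} use only the piecewise spherical structure of the links $\Sigma_{x}Y$ and the geodesic extension property of $S$, so they transfer verbatim: a dense subset of $\partial_{T}S$ is represented by geodesic rays $l \subset S$ of constant $\Delta_{mod}$ direction. For each such ray I would run the building analogue of Lemma \ref{4.9}: Lemma \ref{4.8} provides the splitting $\Sigma_{l(t)}S = \Sigma_{l(t)}l \ast Y_{t}$ for $t$ large, and a parallel-transport construction carried out inside an apartment of $Y$ containing $l$ promotes the orthogonal directions to a top-dimensional Weyl cone $W \subset S$ with $\partial_{T}l \in \partial_{T}W$ and $\partial_{T}W \subset \partial_{T}S$. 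A packing argument via Lemma \ref{3.14}, combined with the local finiteness of chambers of $\partial_{T}Y$ in a compact region, shows that only finitely many top-dimensional simplices of the spherical building $\partial_{T}Y$ can meet $\partial_{T}S$, so the union $K = \cup_{i}\Delta_{i}$ of these chambers is a finite simplicial subcomplex of $\partial_{T}Y$ containing $\partial_{T}S$, with associated Weyl cones $W_{i}\subset S$.

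To close, the generalized boundary isomorphism $\partial\co H^{\textmd{p}}_{n,n}(Y) \to H_{n-1}(\partial_{T}Y)$ of Corollary \ref{4.33} (in its appendix form) shows that $\partial[\sigma]$ is a nontrivial $(n-1)$-cycle supported in $K$, hence $K$ itself carries the structure of a cellular $(n-1)$-cycle. Coning off this cycle by the Weyl cones $W_{i}$, as in the cubical coning construction of Section \ref{cubical filling}, yields a class $[\tau] \in H^{\textmd{p}}_{n,n}(Y)$ with $d_{H}(S_{[\tau]},\cup_{i}W_{i}) < \infty$; since $\partial[\tau] = \partial[\sigma]$ and $\partial$ is an isomorphism, $[\tau] = [\sigma]$, so $d_{H}(S_{[\sigma]},\cup_{i}W_{i}) < \infty$, and the $W_i$ lie in $S_{[\sigma]}$ by construction. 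The main obstacle is the Weyl-cone production step: Lemma \ref{4.9} leaned on the global cubical structure and on Lemma \ref{2.18} to span an orthant from mutually orthogonal straight rays, whereas in a building one must instead extract each Weyl cone by an apartment-and-parallel-transport argument, verifying that the transported cone genuinely lies inside $S$ rather than merely inside some apartment of $Y$, and that the finite-shape packing arguments survive the absence of a global cell structure on $Y$.
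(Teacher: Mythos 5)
Your closing steps—coning off the chambers to produce $[\tau]$ and matching $[\tau]=[\sigma]$ via the appendix boundary isomorphism—are correct and coincide with the paper's argument. The problem is your middle step. You claim the $\Delta_{mod}$-direction machinery of Lemma \ref{4.2}, Corollary \ref{4.4} and Corollary \ref{4.5} transfers ``verbatim,'' but this is not right: Lemma \ref{4.2} is stated and proved specifically for \emph{all-right} spherical $CAT(1)$ complexes, and the $\Delta_{mod}$ model space is built as a direct limit of all-right simplices $\Delta^{k}_{mod}$. The links in a Euclidean building are spherical buildings whose chambers are general Coxeter simplices, so at minimum you would need a Coxeter version of $\Delta_{mod}$ and a reworked antipodality argument. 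You then correctly notice that Lemma \ref{4.9} and Lemma \ref{2.18} lean heavily on hyperplanes and cube geometry, and propose to substitute an apartment-plus-parallel-transport construction without establishing that the resulting cone lands in $S$ rather than merely in some apartment. As written, that step is a genuine gap.

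The deeper issue is that this entire step is unnecessary, and the sketch in Section 1.2.1 flags this explicitly: the $\Delta_{mod}$/orthant-production machinery exists in the cubical case only because $\partial_{T}X$ has no a priori polyhedral structure. For a Euclidean building $Y$, the Tits boundary $\partial_{T}Y$ is already a polyhedral complex (a spherical building), and the support set of a top-dimensional homology class in a polyhedral complex is automatically a subcomplex. The paper's proof therefore goes straight to the point: set $[\alpha]=\exp_{\ast}[\sigma]\in H^{\textmd{p}}_{n,n}(C_{T}Y)$; every class in $C_{T}Y$ is conical, so $S_{[\alpha]}$ is the cone over the support $K$ of the boundary class in $\partial_{T}Y$, which is a subcomplex, i.e.\ a union of chambers $C_{i}$, and $K$ is finite because the $Cr^{n}$ growth bound controls $\mathcal{H}^{n-1}(K)$. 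Each $C_{i}$ is $\partial_{T}W_{i}$ for a Weyl cone $W_{i}$, and since any two chambers lie in a common apartment, the $W_{i}$ may be taken pairwise coapartment, giving inequalities like (\ref{2.11}) and hence the quasi-isometric embedding $\varphi\co CK\to Y$; coning and the boundary isomorphism finish as you describe, with $W_{i}\subset S_{[\tau]}$ automatic from the construction. You should delete the $\Delta_{mod}$ detour and replace it by this one observation about $\partial_{T}Y$ being polyhedral.
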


For the case of discrete Euclidean building, our previous method goes through without much modification. One way to handle the non-discrete case is to use \cite[Lemma 6.2.2]{kleiner1997rigidity} (which says the support set of a top dimensional class locally looks like a polyhedral complex) to reduce to the discrete case. But this lemma relies on the local structure of Euclidean building. We introduce another way, based on the generalization of results in Section \ref{growth condition} to $CAT(0)$ space of finite geometric dimension (or homological dimension), which is of independent interest. 

\begin{lem}
\label{6.2}
Lemma \ref{3.4} is true under the assumption that $Y$ is a CAT(0) space of homological dimension $\le n$.
\end{lem}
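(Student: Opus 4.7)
The plan is to follow the proof of Lemma 4.3 in \cite{bestvina2008quasiflats} essentially verbatim, with two small but key modifications: the appeal to piecewise Euclidean structure in the Lipschitz approximation step will be replaced by uniform contractibility of $Y$ (which holds for any $CAT(0)$ space via geodesic homotopies), and the appeal to cellular dimension for controlling the support set will be replaced by the homological-dimension hypothesis on $Y$ via Lemma \ref{3.2}.

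First I would approximate $q$ by a continuous, proper, $L'$-Lipschitz quasi-isometric embedding $\tilde{q}\co\Bbb E^{n}\to Y$ at uniformly bounded distance from $q$, with all constants depending only on $L$ and $A$. The construction is the usual one: triangulate $\Bbb E^{n}$ at unit scale, put $\tilde{q}(v)=q(v)$ on vertices, and extend skeleton by skeleton using the convex-combination structure $\sum t_i p_i$ available in any $CAT(0)$ space.

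Next I would set $[\sigma]=\tilde{q}_{*}[E_n]\in H^{\textmd{p}}_n(Y)$, where $[E_n]\in H^{\textmd{p}}_n(\Bbb E^{n})$ is the proper fundamental class. Lemma \ref{3.2} immediately gives $S:=S_{[\sigma]}\subset\tilde{q}(\Bbb E^{n})$, precisely because $Y$ has homological dimension $\le n$. The Hausdorff-measure growth bound $\mathcal{H}^n(S\cap B(p,r))\le a(1+r)^n$ then follows automatically: the preimage $\tilde{q}^{-1}(B(p,r))$ sits in a Euclidean ball of radius linear in $r$, and the $L'$-Lipschitz map $\tilde{q}$ multiplies $n$-dimensional Hausdorff measure by a factor of at most $(L')^n$.

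The substantive step is the Hausdorff closeness $d_H(S,Q)<\infty$. Since $S\subset\tilde{q}(\Bbb E^{n})$ and $d_H(\tilde{q}(\Bbb E^{n}),Q)<\infty$, it suffices to find $R_0=R_0(L,A)$ so that $\tilde{q}(x)\in S$ whenever $|x|\ge R_0$. Fix such $y=\tilde{q}(x)$ and consider the naturality square
\begin{equation*}
\begin{CD}
H^{\textmd{p}}_n(\Bbb E^{n}) @>\tilde{q}_{*}>> H^{\textmd{p}}_n(Y) \\
@VVj_xV @VVj_yV \\
H_n(\Bbb E^{n},\Bbb E^{n}\setminus\tilde{q}^{-1}(y)) @>\tilde{q}_{*}>> H_n(Y,Y\setminus\{y\})
\end{CD}
\end{equation*}
where $j_x$ and $j_y$ are the inclusion homomorphisms. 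The class $j_x[E_n]$ is nonzero because $\tilde{q}^{-1}(y)$ has diameter at most $LA$ and excision identifies the target with $H_n(\Bbb E^{n},\Bbb E^{n}\setminus\{x\})\cong\Bbb Z/2$, so it reduces to showing that the bottom $\tilde{q}_{*}$ sends this class to a nonzero class. The main obstacle is exactly this nontriviality: the original polyhedral proof invokes local cellular degree theory, whereas here one must instead argue by a Mayer--Vietoris decomposition in proper homology analogous to the one appearing in the proof of Lemma \ref{3.2}, combined with the quasi-isometric-embedding property of $\tilde{q}$ (which, for $R$ large enough, pushes $\tilde{q}(\partial B(x,R))$ a definite distance away from $y$) to rule out any collapsing of the local fundamental class.
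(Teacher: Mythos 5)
Your setup (Lipschitz approximation of $q$, definition of $\sigma$ and $S$, the containment $S\subset\tilde q(\Bbb E^n)$, and the volume growth bound) matches the paper, but the argument for $d_H(S,Q)<\infty$ has a real gap, and you have in fact flagged the exact spot where it occurs.

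The naturality square reduces the problem to showing that the bottom map $\tilde q_{*}\co H_n(\Bbb E^n,\Bbb E^n\setminus\tilde q^{-1}(y))\to H_n(Y,Y\setminus\{y\})$ does not annihilate $j_x[E_n]$, i.e.\ that the local degree of $\tilde q$ at $y$ is nontrivial. Two problems. First, the Mayer--Vietoris argument underlying Lemma \ref{3.2} only gives the inclusion $S_{f_{*}[\alpha]}\subset f(S_{[\alpha]})$; it controls how far the support can spread, not whether it can collapse, so it cannot be used to show $\tilde q_{*}$ of a nonzero local class stays nonzero. Second, the statement you try to prove --- that $\tilde q(x)\in S$ for all $|x|\ge R_0$ --- is almost certainly false in this generality: a Lipschitz quasi-isometric embedding can oscillate on a bounded scale so that its local degree vanishes at many points $y$ in its image, and those $y$ are then not in $S$ even though they are uniformly close to $S$. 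What one actually needs is the weaker, but still not automatic, statement that every $\tilde q(x)$ lies within bounded distance of $S$.

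The fix used in the paper turns the diagram around. Instead of pushing forward along $\tilde q$ and trying to prevent collapse, one builds a continuous \emph{coarse retraction} $\varphi\co U\to\Bbb E^n$ on a uniform open neighbourhood $U$ of $\mathrm{Im}\,\tilde q$ (via a bounded-diameter locally finite cover of $U$, its nerve, and a barycentric map, exactly as in the proof of Lemma~\ref{4.16}), with $d(\varphi\circ\tilde q(x),x)<C(L,A)$. Then $\varphi_{*}[\sigma]=[E_n]$, and Lemma~\ref{3.2} applies to $\varphi$ --- this is where the hypothesis that $Y$, and hence its open subset $U$, has homological dimension $\le n$ enters --- to give $\Bbb E^n=S_{[E_n]}\subset\varphi(S_{[\sigma]})$. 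Thus every $x\in\Bbb E^n$ has a preimage $s\in S$ under $\varphi$, and since $\varphi$ is a bounded-displacement coarse inverse of $\tilde q$ on $U$, this puts $\tilde q(x)$ within uniformly bounded distance of $s\in S$, with no local degree computation needed. I would encourage you to rework the final step along these lines; the rest of your outline is sound.
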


\begin{proof}
Let $\phi\co  \Bbb E^{n}\to Y$ be a top dimensional $(L,A)$--quasiflat. We can assume $\phi$ is Lipschitz as before since $Y$ is $CAT(0)$. Let $[\Bbb E^{n}]$ be the fundamental class of $\Bbb E^{n}$. Pick $\sigma=\phi_{\ast}([\Bbb E^{n}])$ and let $S=S_{[\sigma]}$ be the support set. Pick $\epsilon>0$, suppose $U$ is the $1$--neighbourhood of $\textmd{Im}\ \phi$ and suppose $\{U_{\lambda}\}_{\lambda\in\Lambda}$ is a covering of $U$, where each $U_{\lambda}$ is a open subset of $U$ with diameter $< 1$. Since every metric space is paracompact, we can assume this covering is locally finite and define a continuous map $\varphi\co  U\to \Bbb E^{n}$ via the nerve complex of this covering as in Lemma \ref{4.16} such that there exists a constant $C$ such that
\begin{equation}
\label{6.3}
d(\varphi\circ\phi(x),x)<C
\end{equation}
for any $x\in \Bbb E^{n}$, thus $\varphi_{\ast}([\sigma])=[\Bbb E^{n}]$. Then we have $S_{[\Bbb E^{n}]}=\Bbb E^{n}\subset\varphi(S_{[\sigma]})$ by Lemma \ref{3.2}. It follows that $d_{H}(S,\textmd{Im}\ \phi)<D=D(L,A)$.
\end{proof}

\begin{remark}
\label{6.4}
In the above proof, we need to define $\varphi$ in an open neighbourhood of $\textmd{Im}\ \phi$ since $S_{[\sigma],\textmd{Im}\ \phi}$ might be strictly smaller than $S_{[\sigma],Y}$. Also we do not need to bound the dimension of the nerve complex of $\{U_{\lambda}\}_{\lambda\in\Lambda}$ as in Lemma \ref{4.16} since $Y$ is $CAT(0)$, while in Lemma \ref{4.16}, the target space $CK$ is linearly contractible with the contractibility constant possibly greater than $1$. 
\end{remark}

Recall that in a polyhedral complex, every top dimensional homology class can be represented by a cycle with image inside the support of the homology class. However, we do not know if this is still true in the case of arbitrary metric space of homological dimension $n$. The following result helps us to get around this point:

\begin{lem}
\label{6.5}
Let $Y$ be a metric space of homological dimension $\le n$ and let $[\sigma]\in H^{\textmd{p}}_{n}(Y)$. If $O$ is an open neighbourhood of $S_{[\sigma]}$, then there exists a proper cycle $\sigma'$ such that $[\sigma]=[\sigma']$ and $\textmd{Im}\ \sigma'\subset O$.
\end{lem}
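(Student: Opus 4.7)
The plan is to prove the stronger vanishing statement $[\sigma]=0$ in the relative proper homology $H^{\textmd{p}}_{n}(Y,Y\setminus F)$, where $F=Y\setminus O$. Given this vanishing, the long exact sequence of the pair $(Y,Y\setminus F)$ lifts $[\sigma]$ to a class in $H^{\textmd{p}}_{n}(O)$ represented by a proper cycle $\sigma'\in C^{\textmd{p}}_{n}(O)$; pushing $\sigma'$ forward along the inclusion $O\hookrightarrow Y$ produces the required representative in $C^{\textmd{p}}_{n}(Y)$, and this pushforward preserves properness because any set bounded in $Y$ restricts to a set of bounded diameter in $O$ under the induced metric.

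For the local input, fix $z\in F$. Since $z\notin S_{[\sigma]}$, the inclusion homomorphism sends $[\sigma]$ to $0$ in $H_{n}(Y,Y\setminus\{z\})\cong H^{\textmd{p}}_{n}(Y,Y\setminus\{z\})$ (isomorphism by excision, as $\{z\}$ is bounded), so the long exact sequence provides a lift to a class in $H^{\textmd{p}}_{n}(Y\setminus\{z\})$ represented by some proper cycle $\sigma_{z}$. Properness of $\sigma_{z}$ in $Y\setminus\{z\}$ forbids accumulation at $z$, so $\textmd{Im}\,\sigma_{z}$ is closed in $Y$ and avoids an open neighborhood $U_{z}$ of $z$; equivalently, $[\sigma]=0$ in $H^{\textmd{p}}_{n}(Y,Y\setminus U_{z})$. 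Using paracompactness of the metric space $Y$, I would pass to a locally finite open cover $\{V_{j}\}_{j\in J}$ of $F$ refining $\{U_{z}\}_{z\in F}$, retaining $[\sigma]=0$ in $H^{\textmd{p}}_{n}(Y,Y\setminus V_{j})$ for every $j$.

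The main step, and the main obstacle, is to promote these local vanishings into the global statement $[\sigma]=0$ in $H^{\textmd{p}}_{n}(Y,Y\setminus\bigcup_{j}V_{j})$; functoriality under the inclusion of pairs $(Y,Y\setminus\bigcup_{j}V_{j})\hookrightarrow(Y,Y\setminus F)$ (valid because $F\subset\bigcup_{j}V_{j}$) then yields the desired vanishing. For finitely many $V_{j}$ one proceeds inductively using the relative Mayer-Vietoris sequence for the subspaces $Y\setminus V_{j},Y\setminus V_{k}\subset Y$, whose connecting homomorphism places the obstruction to gluing two local vanishings inside $H^{\textmd{p}}_{n+1}(Y,Y\setminus(V_{j}\cap V_{k}))$; after re-expressing this group in terms of open-pair relative homology via excision, the homological-dimension hypothesis (Remark~\ref{3.3}) forces it to vanish. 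For the general locally finite cover, exhaust $Y$ by an increasing sequence of bounded sets $B_{i}$; local finiteness ensures that only finitely many $V_{j}$ meet each $B_{i}$, so the finite case supplies representatives $\sigma^{(i)}$ of $[\sigma]$ whose images avoid $F\cap B_{i}$. A diagonal argument, using that the successive differences $\sigma^{(i+1)}-\sigma^{(i)}=\partial\tau_{i}$ can be arranged so that each $\tau_{i}$ is supported in a bounded region meeting only the newly-excluded pieces $V_{j}$, then patches the $\sigma^{(i)}$ into a single proper cycle $\sigma'$ on $Y$ with $\textmd{Im}\,\sigma'\subset O$.
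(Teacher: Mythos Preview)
Your overall architecture matches the paper's: establish local vanishing away from $S_{[\sigma]}$, use Mayer--Vietoris together with the $H_{n+1}$--vanishing from the dimension bound to glue finitely many local vanishings, and then pass to the infinite case by an exhaustion. The reformulation as ``$[\sigma]=0$ in $H^{\textmd{p}}_{n}(Y,O)$'' is clean and equivalent to what the paper proves.

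The substantive gap is the sentence ``can be arranged so that each $\tau_{i}$ is supported in a bounded region meeting only the newly-excluded pieces $V_{j}$.'' Nothing you have written up to that point produces such $\tau_{i}$. Your finite Mayer--Vietoris step only yields $[\sigma]=0$ in $H^{\textmd{p}}_{n}(Y,Y\setminus\bigcup_{j\in J_i}V_j)$, i.e.\ a \emph{proper} $(n{+}1)$--chain $\eta_i$ with $\sigma-\partial\eta_i$ avoiding that union; there is no bound on $\textmd{Im}\,\eta_i$, and the difference $\eta_i-\eta_{i+1}$ need not be bounded, nor need $\partial\eta_{i+1}$ stay out of the region already cleared at step $i$. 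The paper handles exactly this by first proving a \emph{bounded} relative statement: for $[\alpha]\in H_n(U,V)$ with $U$ bounded and support in some open $O$, one has $\alpha=\partial\beta+\gamma$ with $\textmd{Im}\,\beta\subset U$ and $\textmd{Im}\,\gamma\subset V\cup O$. It then works annulus by annulus, applying this claim inside $U_{i+1}=B(p,3^{i+1}+1)\setminus\bar B(p,3^{i}-1)$ to the piece of the current cycle lying there; this forces the modification $\beta'_{i+1}$ to have image in $U_{i+1}$ and guarantees the new cycle lands in $O\cup V_{i+1}$ without re-entering the part of $F$ already avoided. Your diagonal argument needs an ingredient of this type; as written it asserts the conclusion of that lemma without supplying it.

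A smaller point: the Mayer--Vietoris you invoke is for the pair of \emph{closed} subspaces $Y\setminus V_j$, $Y\setminus V_k$, and you should check excisiveness or (simpler) pass to slightly smaller closed $C_j\subset V_j$ still covering $F$ so that the complements $Y\setminus C_j$ are open, matching how the paper runs its Mayer--Vietoris with open second arguments.
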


\begin{proof}
We first prove a relative version of the above lemma for the usual homology theory. Let $V\subset U$ be open sets in $Y$. Pick $[\alpha]\in H_{n}(U,V)$ and let $K=S_{[\alpha]}$. We claim for any open neighbourhood $O\supseteq K$, there exist chains $\beta$ and $\gamma$ such that $\textmd{Im}\ \beta\subset U$, $\textmd{Im}\ \gamma\subset V\cup O$ and $\alpha=\partial\beta+\gamma$.

Let $K'=\textmd{Im}\ \alpha\setminus (V\cup O)$. For every point $x\in K'$, there exists $\epsilon(x)>0$ such that $\bar{B}(x,2\epsilon(x))\subset U\setminus \textmd{Im}\ \partial\alpha$ and $[\alpha]$ is trivial in $H_{n}(U,U\setminus \bar{B}(x,2\epsilon(x)))$. Since $K'$ is compact, we can find finite points $\{x_{i}\}_{i=1}^{N}$ in $K'$ such that $K'\subset\cup_{i=1}^{N} B(x_{i},\epsilon(x_{i}))$. Suppose $U_{K'}=\cup_{i=1}^{N} B(x_{i},\epsilon(x_{i}))$ and $W=V\cup O\cup U_{K'}$. Then $\textmd{Im}\ \alpha\subset W$ and $[\alpha]$ is trivial in $H_{n}(U,W)$. Let $W'=W\setminus(\cup_{i=1}^{N}\bar{B}(x_{i},2\epsilon(x_{i})))$. Then $\textmd{Im}\ \partial\alpha\subset W'\subset V\cup O$, so it suffices to show $[\alpha]$ is trivial in $H_{n}(U,W')$, but this follows from the Mayer--Vietoris argument in Lemma \ref{3.2}.

Now we turn to the case of $[\sigma]\in H^{\textmd{p}}_{n}(Y)$. Pick a base point $p\in Y$, put $U_{1}:=B(p,4)$, $U_{i}:=B(p,3^{i}+1)\setminus\bar{B}(p,3^{i-1}-1)$ for $i>1$, $U'_{1}:= Y\setminus \bar{B}(p,3)$ and $U'_{i}:=B(p,3^{i-1})\cup(Y\setminus \bar{B}(p,3^{i}))$ for $i>1$. By barycentric subdivision, we can assume every singular simplex in $\sigma$ has image of diameter $\le 1/3$. 

Set $\sigma_{0}=\sigma$, $V_{0}=Y$ and $V_{i}=Y\setminus \bar{B}(p,3^{i})$ for $i\ge 1$. Given $\sigma_{i}$ with $\textmd{Im}\ \sigma_{i}\subset (O\cup V_{i})$ (this is trivially true for $i=0$), we define $\sigma_{i+1}$ inductively as follows. First subdivide $\sigma_{i}$ to get a proper cycle $\sigma'_{i+1}$ such that $\textmd{Im}\ \sigma_{i}=\textmd{Im}\ \sigma'_{i+1}$, $\sigma'_{i+1}=\sigma_{i}+\partial\beta_{i+1}$ with $\textmd{Im}\ \beta_{i+1}\subset U_{i+1}$ and $\sigma'_{i+1}=\sigma'_{i+1,1}+\sigma'_{i+1,2}$ for $\textmd{Im}\ \sigma'_{i+1,1}\subset U_{i+1}$ and $\textmd{Im}\ \sigma'_{i+1,2}\subset U'_{i+1}$. It follows that $\textmd{Im}\ \partial\sigma'_{i+1,1}\subset U_{i+1}\cap U'_{i+1}$ and $\textmd{Im}\ \partial\sigma'_{i+1,1}\subset \textmd{Im}\ \sigma_{i}\subset(O\cup V_{i})$. So we can view $[\sigma'_{i+1,1}]$ as an element in $H_{n}(U_{i+1},U_{i+1}\cap U'_{i+1}\cap(O\cup V_{i}))$. Then by the previous claim, there exists a chain $\beta'_{i+1}$ with $\textmd{Im}\ \beta'_{i+1}\subset U_{i+1}$ such that $\textmd{Im}\ (\sigma'_{i+1,1}+\partial\beta'_{i+1})\subset F$, where
\begin{align*}
F&= (U_{i+1}\cap U'_{i+1}\cap(O\cup V_{i}))\cup(O\cap U_{i+1})=(U_{i+1}\cap U'_{i+1}\cap V_{i})\cup(O\cap U_{i+1})\\
&=(U_{i+1}\cap V_{i+1})\cup(O\cap U_{i+1})=(O\cup V_{i+1})\cap U_{i+1}
\end{align*}
Let $\sigma_{i+1}=\sigma_{i}+\partial(\beta_{i+1}+\beta'_{i+1})$. Then $\textmd{Im}\ \sigma_{i+1}\subset (F\cup \textmd{Im}\ \sigma'_{i+1,2})\subset(F\cup(O\cup V_{i}))\subset (O\cup V_{i+1})$ and the induction goes through.

Let $\sigma'=\sigma+\sum_{i=1}^{\infty}\partial(\beta_{i}+\beta'_{i})$. Since $\textmd{Im}\ (\beta_{i}+\beta'_{i})\subset U_{i}$, the infinite summation makes sense and $\sigma'$ is a proper cycle. Also $\textmd{Im}\ \sigma'\subset O$ by construction. 
\end{proof} 

\begin{remark}
\label{6.6}
The above proof also shows that $[\sigma]\in H^{\textmd{p}}_{n}(Y)$ is nontrivial if and only if $S_{[\sigma]}\neq \emptyset$. This is not true for lower dimensional cycles.
\end{remark}

\begin{cor}
\label{6.7}
Let $Z$ be a $CAT(1)$ space of homological dimension $n$. If $[\sigma]\in H_{n}(Z)$ is a nontrivial element, then for every point $x\in Z$, there exists a point $y\in S_{[\sigma]}$ such that $d(x,y)=\pi$.
\end{cor}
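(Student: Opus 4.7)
The plan is to argue by contradiction. Suppose that $d(x,y)<\pi$ for every $y\in S_{[\sigma]}$. Then the open set $A=\{y\in Z\mid d(x,y)<\pi\}$ is a neighbourhood of $S_{[\sigma]}$. Applying the claim established in the proof of Lemma \ref{6.5} (with $U=Z$, $V=\emptyset$, and $O=A$) to the finite singular cycle $\sigma$, we obtain a finite singular cycle $\sigma'=\sigma-\partial\beta$ satisfying $[\sigma']=[\sigma]$ in $H_n(Z)$ and $\textmd{Im}\,\sigma'\subset A$. Since $\sigma'$ is a finite chain, $\textmd{Im}\,\sigma'$ is compact, so continuity of $d(x,\cdot)$ furnishes some $r<\pi$ with $\textmd{Im}\,\sigma'\subset \bar{B}(x,r)$.

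Next I would exploit that $Z$ is $CAT(1)$. Any two points of $Z$ at distance less than $\pi$ are joined by a unique geodesic, and on the closed ball $\bar{B}(x,r)$ (with $r<\pi$) the geodesic from $x$ to a point $y$ depends continuously on $y$, by the $CAT(1)$ convexity of the distance function. Consequently, the formula $H(y,t)=\gamma_{y}(t)$, where $\gamma_{y}\co[0,1]\to Z$ is the constant-speed geodesic from $x$ to $y$, defines a continuous homotopy $H\co\textmd{Im}\,\sigma'\times[0,1]\to Z$ with $H(\cdot,0)\equiv x$ and $H(\cdot,1)$ the inclusion. Applying the standard prism operator $P$ to $\sigma'$ produces a singular $(n+1)$-chain with $\partial(P\sigma')=\sigma'-c_{x}$, where $c_{x}$ denotes the result of pushing each singular simplex of $\sigma'$ to the constant $n$-simplex at $x$. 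Since $n\ge 1$ and $c_{x}$ factors through the one-point space $\{x\}$, we have $[c_{x}]=0$ in $H_{n}(Z)$, whence $[\sigma]=[\sigma']=[c_{x}]=0$. This contradicts the non-triviality of $[\sigma]$ and completes the proof.

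The main substantive point, and the only non-routine step, is the appeal to Lemma \ref{6.5} (equivalently, the excision-type claim in its proof). A general representative of $[\sigma]$ can have image wandering far outside any prescribed neighbourhood of $S_{[\sigma]}$, in which case no $CAT(1)$-geodesic contraction toward $x$ would be available; the ability to replace $\sigma$ by an equivalent cycle supported inside $A$ is exactly what bridges the homological hypothesis and the $CAT(1)$ geometry. The remaining ingredients—compactness of $\textmd{Im}\,\sigma'$ yielding a uniform bound $r<\pi$, the continuous selection of geodesics inside $\bar{B}(x,r)$, and the chain-level prism construction—are all standard once the representative $\sigma'$ has been produced.
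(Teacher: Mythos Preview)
Your proof is correct and follows the same approach as the paper: argue by contradiction, invoke Lemma~\ref{6.5} (or, as you do more precisely, the relative claim inside its proof) to obtain a representative supported in $B(x,\pi)$, and then use that this ball is contractible in a $CAT(1)$ space. The paper simply asserts contractibility of $B(x,\pi)$ where you spell out the geodesic contraction and prism operator, but the substance is identical.
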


\begin{proof}
We argue by contradiction and assume there exists a point $x\in Z$ such that $S_{[\sigma]}\subset B(x,\pi)$, then by Lemma \ref{6.5}, there exists a cycle $\sigma'$ such that $\textmd{Im}\ \sigma'\subset B(x,\pi)$ and $[\sigma']=[\sigma]$. However, $B(x,\pi)$ is contractible and $[\sigma']$ must be trivial, which yields a contradiction.
\end{proof}

Let $Y$ be a $CAT(0)$ space. Pick $p\in Y$ and let $T_{p}Y$ be the tangent cone at $p$. Denote the base point of $T_{p}Y$ by $o$. Recall that there are logarithmic maps $\log_{p}\co  Y\to T_{p}Y$ and $\log_{p}\co  Y\setminus\{p\}\to\Sigma_{p}Y$. By \cite[Theorem 3.5]{kramer2011local} (see also \cite{uppercurvature,bk}), $\log_{p}\co  Y\setminus\{p\}\to\Sigma_{p}Y$ is a homotopy equivalence. Thus 

\begin{lem}
\label{6.8}
The map $(\log_{p})_{\ast}\co  H_{\ast}(Y,Y\setminus\{p\})\to H_{\ast}(T_{p}Y,T_{p}Y\setminus\{o\})$ is an isomorphism.
\end{lem}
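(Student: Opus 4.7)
The plan is to reduce Lemma \ref{6.8} to the cited homotopy equivalence $\log_{p}\co Y\setminus\{p\}\to\Sigma_{p}Y$ and then feed the result into the long exact sequences of the pairs $(Y,Y\setminus\{p\})$ and $(T_{p}Y,T_{p}Y\setminus\{o\})$.

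First I would upgrade the homotopy equivalence from $\Sigma_{p}Y$ to $T_{p}Y\setminus\{o\}$. Since $\Sigma_{p}Y$ is $CAT(1)$, its Euclidean cone $T_{p}Y$ is $CAT(0)$, and in particular the radial deformation retraction $(v,s)\mapsto \tfrac{v}{|v|}$ exhibits $\Sigma_{p}Y\hookrightarrow T_{p}Y\setminus\{o\}$ as a homotopy equivalence. Composing with the Kramer--Lytchak result \cite[Theorem 3.5]{kramer2011local} (which already appears in the excerpt above), $\log_{p}\co Y\setminus\{p\}\to T_{p}Y\setminus\{o\}$ is a homotopy equivalence, hence induces an isomorphism on $H_{\ast}$.

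Next, note that both $Y$ and $T_{p}Y$ are $CAT(0)$ and therefore contractible (via geodesic contraction to $p$ and $o$ respectively). Thus $\log_{p}\co Y\to T_{p}Y$ also induces an isomorphism on $H_{\ast}$ (both sides vanish in positive degrees and are $\Bbb Z/2$ in degree zero, and $\log_{p}$ sends $p$ to $o$).

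Finally I would compare the long exact sequences
\begin{equation*}
\cdots\to H_{k}(Y\setminus\{p\})\to H_{k}(Y)\to H_{k}(Y,Y\setminus\{p\})\to H_{k-1}(Y\setminus\{p\})\to\cdots
\end{equation*}
and the analogous one for $(T_{p}Y,T_{p}Y\setminus\{o\})$. The map of pairs induced by $\log_{p}$ is a chain map between them. The two preceding paragraphs show that $\log_{p}$ induces isomorphisms on $H_{\ast}(Y\setminus\{p\})\to H_{\ast}(T_{p}Y\setminus\{o\})$ and on $H_{\ast}(Y)\to H_{\ast}(T_{p}Y)$; the five-lemma then forces $(\log_{p})_{\ast}\co H_{\ast}(Y,Y\setminus\{p\})\to H_{\ast}(T_{p}Y,T_{p}Y\setminus\{o\})$ to be an isomorphism as well. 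The only non-trivial ingredient is the homotopy equivalence $\log_{p}\co Y\setminus\{p\}\to\Sigma_{p}Y$, which is imported as a black box; everything else is a routine diagram chase.
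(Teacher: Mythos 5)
Your proposal is correct and is essentially the unique way to unpack the paper's one-line "Thus": the paper itself only cites the homotopy equivalence $\log_p\co Y\setminus\{p\}\to\Sigma_pY$ and leaves the radial retraction onto $\Sigma_pY$, contractibility of $Y$ and $T_pY$, and the five-lemma comparison of the two long exact sequences of pairs to the reader. Nothing in your write-up goes beyond what the paper implicitly intends, and no step is missing.
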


We need a simple observation about support sets in cones before we proceed. Let $Z$ be a metric space and let $CZ$ be the Euclidean cone over $Z$ with base point $o$. Pick $[\sigma]\in H_{i}(CZ,CZ\setminus B(o,r))$. Recall that there is an isomorphism $\partial\co H_{i}(CZ,CZ\setminus B(o,r))\to H_{i-1}(Z)$ induced by the boundary map. 

\begin{lem}
\label{6.10}
Suppose $S=S_{\partial[\sigma],Z}$ and suppose $CS$ is the cone over $S$ inside $CZ$. Then$S_{[\sigma],CZ,CZ\setminus B(o,r)}=CS\cap B(o,r)$.
\end{lem}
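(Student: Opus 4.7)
I would prove both inclusions separately, using a cone representative for the forward direction and a local Künneth argument for the reverse.

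For the inclusion $S_{[\sigma],CZ,CZ\setminus B(o,r)}\subseteq CS\cap B(o,r)$, take $z\in B(o,r)$ with $z\notin CS$. In the generic case $z\neq o$, write $z=tz_0$ with $t\in(0,r)$, so $z_0\notin S$. By the definition of $S=S_{\partial[\sigma],Z}$, the image of $\partial[\sigma]$ in $H_{i-1}(Z,Z\setminus\{z_0\})$ vanishes; standard chain algebra then yields a cycle $\alpha$ with $[\alpha]=\partial[\sigma]$ and $\mathrm{Im}(\alpha)\subseteq Z\setminus\{z_0\}$. Coning $\alpha$ off from the vertex $o$ up to radius $r+\epsilon$ produces a chain $\beta=C\alpha$ with $\partial\beta$ at radius $r+\epsilon$, so $[\beta]\in H_i(CZ,CZ\setminus B(o,r))$, and this is $[\sigma]$ by construction of $\partial$. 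The image of $\beta$ lies in $\{o\}\cup\{sz_1:z_1\in\mathrm{Im}(\alpha),\,s>0\}$, which avoids the open ray through $z_0$ and hence avoids $z$. Thus $[\sigma]$ maps to zero in $H_i(CZ,CZ\setminus\{z\})$. The remaining case $z=o$ is handled by noting that $CZ\setminus B(o,r)\hookrightarrow CZ\setminus\{o\}$ is a homotopy equivalence (both radially retract onto $S(o,r)$), so $H_i(CZ,CZ\setminus B(o,r))\cong H_i(CZ,CZ\setminus\{o\})$ via the inclusion, and combined with the $\partial$-isomorphism we get $o\in S_{[\sigma]}\Leftrightarrow\partial[\sigma]\neq 0\Leftrightarrow S\neq\emptyset\Leftrightarrow o\in CS$.

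For the reverse inclusion $CS\cap B(o,r)\subseteq S_{[\sigma]}$, fix $z=tz_0$ with $t>0$ and $z_0\in S$. I would establish the commutative diagram
\begin{equation*}
\begin{CD}
H_i(CZ,CZ\setminus B(o,r))  @>{i_z}>>  H_i(CZ,CZ\setminus\{z\})\\
@V{\partial}V{\cong}V  @VV{\cong}V\\
H_{i-1}(Z)  @>{j_{z_0}}>>  H_{i-1}(Z,Z\setminus\{z_0\})
\end{CD}
\end{equation*}
in which $i_z$ and $j_{z_0}$ are inclusion-of-pairs homomorphisms. The right vertical isomorphism comes from the global homeomorphism $CZ\setminus\{o\}\cong Z\times(0,\infty)$ (via $sz_1\leftrightarrow(z_1,s)$): a neighborhood of $z$ is sent to a product $N\times I$ with $N$ a neighborhood of $z_0$ in $Z$ and $I$ an interval around $t$, after which excision and the relative Künneth formula give $H_i(CZ,CZ\setminus\{z\})\cong H_{i-1}(N,N\setminus\{z_0\})\otimes H_1(I,I\setminus\{t\})\cong H_{i-1}(Z,Z\setminus\{z_0\})$ (the only surviving Künneth term, since $H_1(I,I\setminus\{t\})=\mathbb{Z}/2$ and other degrees vanish). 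Commutativity is checked on the cone representative $\beta=C\alpha$ from above: under the product coordinates, the restriction of $\beta$ to $N\times I$ is literally $\alpha|_N$ times the interval, so the Künneth projection sends $i_z[\sigma]$ to $[\alpha|_N]\in H_{i-1}(N,N\setminus\{z_0\})$, which matches $j_{z_0}([\alpha])=j_{z_0}(\partial[\sigma])$. Since $z_0\in S$ means $j_{z_0}(\partial[\sigma])\neq 0$, we conclude $i_z[\sigma]\neq 0$, so $z\in S_{[\sigma]}$.

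The main obstacle will be making the local product argument precise: the Euclidean cone metric does not give product balls near non-vertex points, so one must carefully replace metric balls by homotopy-equivalent product neighborhoods obtained from the global homeomorphism $CZ\setminus\{o\}\cong Z\times(0,\infty)$. Since only homology (not metric) is used in this step, the replacement is harmless, and standard excision plus the relative Künneth formula over $\mathbb{Z}/2$ do the rest. A minor additional subtlety is the convention for $CS$ when $S=\emptyset$, which must be matched to the triviality of $[\sigma]$; this is exactly what the vertex argument via the $\partial$-isomorphism supplies.
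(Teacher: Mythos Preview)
The paper states this lemma as a ``simple observation about support sets in cones'' and gives no proof. Your argument is correct and supplies exactly the kind of details one would expect.

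A few remarks. In the forward inclusion, your ``standard chain algebra'' step is indeed routine: if $j_{z_0}(\partial[\sigma])=0$ in $H_{i-1}(Z,Z\setminus\{z_0\})$, then any representative $\alpha'$ of $\partial[\sigma]$ can be written as $\partial\beta'+\gamma$ with $\gamma\in C_{i-1}(Z\setminus\{z_0\})$, and $\gamma$ is the cycle you want. The cone representative $C\gamma$ then does the job. For the reverse inclusion, the only point worth making explicit is that relative K\"unneth applies because both subspaces $Z\times(J\setminus\{t\})$ and $(Z\setminus\{z_0\})\times J$ are open, so the couple is excisive; over $\mathbb{Z}/2$ there are no Tor terms and the single surviving summand is $H_{i-1}(Z,Z\setminus\{z_0\})\otimes H_1(J,J\setminus\{t\})$.

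Your treatment of the vertex $o$ and of the empty-$S$ convention is the right way to close the edge cases. Note that in fact the commutative diagram you build for the reverse inclusion already proves \emph{both} inclusions at once for $z\neq o$ (since the right vertical map is an isomorphism, $i_z[\sigma]\neq 0$ if and only if $j_{z_0}(\partial[\sigma])\neq 0$), so the separate cone-representative argument for the forward direction, while correct, is not strictly needed.
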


The next lemma is an immediate consequence of \cite[Theorem A]{kleiner1999local}.

\begin{lem}
\label{6.11}
If $Z$ is a $CAT(\kappa)$ space of homological dimension $\le n$, then for any $p\in Z$, $\Sigma_{p}Z$ is of homological dimension $\le n-1$.
\end{lem}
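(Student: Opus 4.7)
The plan is to derive the lemma as an immediate consequence of the equivalence between homological dimension and geometric dimension for $CAT(\kappa)$ spaces, which is precisely what Theorem A of \cite{kleiner1999local} supplies. Recall that the \emph{geometric dimension} of a $CAT(\kappa)$ space is defined inductively: $\mathrm{GeomDim}(X) = 0$ iff $X$ is discrete, and $\mathrm{GeomDim}(X) \le n$ iff $\mathrm{GeomDim}(\Sigma_x X) \le n-1$ for every $x \in X$. Kleiner's Theorem A asserts, among other things, that for any $CAT(\kappa)$ space the homological dimension coincides with the geometric dimension.

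With this in hand, the argument is essentially three lines. First, since $Z$ is $CAT(\kappa)$ and has homological dimension $\le n$, Kleiner's theorem gives $\mathrm{GeomDim}(Z) \le n$. Second, the inductive definition of geometric dimension immediately forces $\mathrm{GeomDim}(\Sigma_p Z) \le n-1$ for every $p \in Z$. Third, $\Sigma_p Z$ is itself a $CAT(1)$ space, so another application of Theorem A to $\Sigma_p Z$ translates the bound on its geometric dimension back into a bound on its homological dimension, yielding the desired inequality $\le n-1$.

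The only point requiring care is the invocation of the dimension equivalence; once it is granted, the passage from $Z$ to $\Sigma_p Z$ is automatic. A more hands-on alternative would combine Lemma \ref{6.8} (which identifies the local homology of $Z$ at $p$ with that of $T_p Z$ at its cone point) with the cone-suspension isomorphism to control local homology at $p$ in terms of the homology of $\Sigma_p Z$; but to control $H_r(U',V')$ for \emph{arbitrary} open pairs $U', V' \subset \Sigma_p Z$, as required by the definition of homological dimension given in Remark \ref{3.3}, one would still need something of the strength of Kleiner's theorem. For this reason the cleanest route is simply to cite \cite{kleiner1999local} and unravel the inductive definition, which is presumably what the author intends by calling the lemma an immediate consequence of Theorem A.
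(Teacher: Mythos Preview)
Your proposal is correct and matches the paper's own treatment exactly: the paper states that the lemma is an immediate consequence of \cite[Theorem A]{kleiner1999local} and gives no further argument, and your unpacking via the geometric/homological dimension equivalence and the inductive definition of geometric dimension is precisely the intended reasoning.
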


Now we are ready to prove the geodesic extension property for top dimensional support sets. The argument is similar to \cite[Lemma 3.1]{bestvina2008quasiflats}.
\begin{lem}
\label{6.12}
Let $Y$ be a $CAT(0)$ space of homological dimension $n$. Pick $[\sigma]\in H^{\textmd{p}}_{n}(Y)$ and let $S=S_{[\sigma]}$. Then geodesic segment $\overline{pq}\subset Y$ with $q\in S$, there exists a geodesic ray $\overline{q\xi}\subset S$ such that $\overline{pq}$ and $\overline{q\xi}$ fit together to form a geodesic ray.
\end{lem}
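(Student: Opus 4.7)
The plan is to reduce the geodesic-extension problem to an antipode problem in the link at $q$ via Lemma~\ref{6.8}, apply Corollary~\ref{6.7} to find an antipodal direction in the top-dimensional link support, and then lift this abstract antipode back to an honest geodesic ray in $S$ using Lemma~\ref{6.5} together with the conical structure of Lemma~\ref{6.10}.

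Since $q\in S$, the inclusion homomorphism sends $[\sigma]$ to a nontrivial element of $H_n(Y,Y\setminus\{q\})$. By Lemma~\ref{6.8}, the logarithmic map $\log_q$ transports this to a nontrivial element of $H_n(T_qY,T_qY\setminus\{o\})$, and because $T_qY$ is $CAT(0)$ hence contractible, the long-exact-sequence boundary followed by the radial retraction $T_qY\setminus\{o\}\to\Sigma_qY$ yields a nontrivial class $[\tau']\in H_{n-1}(\Sigma_qY)$. By Lemma~\ref{6.11}, $\Sigma_qY$ is $CAT(1)$ of homological dimension at most $n-1$, so Corollary~\ref{6.7} applied to $[\tau']$ with respect to the point $v:=\log_q p\in\Sigma_qY$ produces $v'\in S_{[\tau']}$ with $d_{\Sigma_qY}(v,v')=\pi$.

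The key step is now to lift $v'$ to an honest geodesic germ in $S$ of definite positive length. Lemma~\ref{6.5} lets me represent $[\sigma]$ by proper cycles whose image lies in $N_\delta(S)$ for arbitrarily small $\delta$. Chasing such representatives through the isomorphisms above shows that $S_{[\tau']}$ is contained in the closure in $\Sigma_qY$ of the directions $\log_q(S\setminus\{q\})$. Combined with Lemma~\ref{6.10}---which identifies the support of the lifted class in $T_qY$ with a cone over $S_{[\tau']}$---a rescaling argument comparing small balls $B(q,r)\subset Y$ with $B(o,r)\subset T_qY$ yields, for arbitrarily small $r>0$, points $x_r\in S$ with $d(q,x_r)=r$ and $\log_q x_r$ as close to $v'$ as desired. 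The $CAT(0)$ triangle comparison $d(\gamma_1(t),\gamma_2(t))\le 2t\sin\bigl(d_{\Sigma_qY}(\log_q\gamma_1,\log_q\gamma_2)/2\bigr)$ for geodesic germs from $q$ then makes the segments $\overline{qx_r}$ into a Cauchy family converging to a geodesic germ $\gamma$ with $\log_q\gamma=v'$; the closedness of $S$ in $Y$ upgrades ``endpoints in $S$ at every scale $r$'' to $\gamma\subset S$.

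Applying the link-antipode construction of the previous paragraphs at the far endpoint of $\gamma$ (using the incoming direction as the new $\log_q p$) extends the geodesic further into $S$, and a Zorn-style maximality argument combined with closedness of $S$ and completeness of $Y$ produces a maximal geodesic $\overline{q\xi}\subset S$ which, by the antipodal condition at every joint, concatenates with $\overline{pq}$ into a geodesic ray. The main obstacle is the rescaling step in the previous paragraph: turning the abstract membership $v'\in S_{[\tau']}$ into actual approximating endpoints of $S$ at a prescribed positive scale $r$, rather than merely a sequence accumulating at $q$, is what drives the whole argument and requires the interplay of Lemmas \ref{6.5} and \ref{6.10} with the Gromov--Hausdorff approximation of $Y$ near $q$ by its tangent cone $T_qY$.
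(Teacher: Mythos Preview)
Your overall structure --- pass to $\Sigma_qY$, find the antipode $v'\in S_{[\tau']}$ of $\log_q p$ via Corollary~\ref{6.7}, lift $v'$ to a segment in $S$, and iterate --- is exactly the paper's. The difference, and the gap, is in the lifting step. You propose a ``rescaling argument comparing small balls'' together with Lemma~\ref{6.5} to produce points $x_r\in S$ with $\log_q x_r$ merely \emph{close} to $v'$, then a Cauchy-type limit to land on the exact direction. This is where the argument is too vague to be correct: Gromov--Hausdorff approximation of $B(q,r)$ by $B(o,r)\subset T_qY$ does not by itself place points of $S$ at a prescribed positive radius with controlled direction, and as written it is unclear whether your $x_r$ live at a fixed scale (so that a limit in $S$ can be extracted by closedness) or at scales tending to $0$ (in which case the family collapses to $q$ and yields no segment of positive length).

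The tool that makes this step immediate is Lemma~\ref{3.2}, applied to $\log_q\colon (Y,Y\setminus B(q,2\epsilon))\to (T_qY,T_qY\setminus B(o,2\epsilon))$. Together with Lemma~\ref{6.10} it yields the exact containment
\[
CS_{[\tau']}\cap B(o,2\epsilon)\ \subset\ \log_q\bigl(S\cap B(q,2\epsilon)\bigr),
\]
which is the paper's (\ref{6.14}). Hence for every $s<2\epsilon$ there is a point $z_s\in S$ with $d(q,z_s)=s$ and direction \emph{equal to} $v'$; since in a $CAT(0)$ space the point at given distance and direction from $q$ is unique, these $z_s$ already trace out a geodesic segment in $S$ of length $\epsilon$ starting at $q$ along $v'$, with no approximation or compactness needed at this stage. (The paper then iterates this $\epsilon$-extension, as you also propose.) Your Lemma~\ref{6.5} route can in fact be salvaged here, because the fibres of $\log_q$ over nonzero points of $T_qY$ are singletons and hence $\bigcap_{\delta>0}\log_q(N_\delta(S))=\log_q(S)$; but that observation is exactly what Lemma~\ref{3.2} packages, and you should invoke it directly in place of the rescaling argument.
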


\begin{proof}
First we claim for any $\epsilon>0$, there exists a point $z\in S\cap S(p,\epsilon)$ such that the concatenation of $\overline{pq}$ and $\overline{qz}$ is a geodesic. Let $\log_{q}\co (Y,Y\setminus B(q,2\epsilon))\to (T_{q}Y,T_{q}Y\setminus B(o,2\epsilon))$ and let $\alpha=\log_{q}(\sigma)$. By Lemma \ref{6.11}, the homological dimension of $T_pY$ is bounded above by $n$. Then by Lemma \ref{3.2}, 
\begin{equation}
\label{6.13}
S_{[\alpha],T_{q}Y,T_{q}Y\setminus B(o,2\epsilon)}\subset f(S_{[\sigma],Y,Y\setminus B(q,2\epsilon)})=f(S\cap B(q,2\epsilon))\,.
\end{equation}
Let $\partial\co H_{n}(T_{q}Y,T_{q}Y\setminus B(o,2\epsilon))\to H_{n-1}(\Sigma_{q}Y)$ be the isomorphism and let $[\beta]=\partial[\alpha]$. Then $[\beta]$ is nontrivial in $H_{n-1}(\Sigma_{q}Y)$ by Lemma \ref{6.8} and the following commuting diagram:
\begin{center}
$\begin{CD}
H_{n}(Y,Y\setminus B(q,2\epsilon))          @>(\log_{q})_{\ast}>>        H_{n}(T_{q}Y,T_{q}Y\setminus B(o,2\epsilon))\\
@VVV                                                              @VVV\\
H_{n}(Y,Y\setminus\{q\})                            @>(\log_{q})_{\ast}>>        H_{n}(T_{q}Y,T_{q}Y\setminus \{o\})
\end{CD}$
\end{center}
Let $CS_{[\beta]}$ be the Euclidean cone over $S_{[\beta]}\subset\Sigma_{q}Y$ inside $T_{q}Y$. Then
\begin{equation}
\label{6.14}
CS_{[\beta]}\cap B(o,2\epsilon)=S_{[\alpha],T_{q}Y,T_{q}Y\setminus B(o,2\epsilon)}\subset f(S\cap B(q,2\epsilon))
\end{equation}
by (\ref{6.13}) and Lemma \ref{6.10}. Moreover, by Corollary \ref{6.7} and Lemma \ref{6.11}, there exists $x\in S_{[\beta]}$ such that 
\begin{equation}
\label{6.15}
d(x,\log_{q}(p))=\pi
\end{equation}
(here $\log_{q}\co Y\setminus\{q\}\to\Sigma_{q}Y$), so the claim follows from (\ref{6.14}).

By repeatedly applying the above claim, for each positive integer $n$, we can obtain unit speed geodesic $c_{n}\co [0,\epsilon]\to Y$ such that $c(0)=q$, $c(m\epsilon/2^{n})\in S$ for any integer $0\le m\le 2^{n}$ and $\log_{q}(c(\epsilon))=x$ ($x$ is the point in (\ref{6.15})). Note that $S\cap \bar{B}(q,\epsilon)$ is compact, so we assume without loss of generality that $r=\lim_{n\to\infty} c_{n}(\epsilon)$. If $c\co [0,\epsilon]\to Y$ is the unit speed geodesic joining $q$ and $r$, then $c_{n}$ converges uniformly to $c$, which implies $c([0,\epsilon])\subset S$. Moreover, $\log_{q}(c(\epsilon))=x$. Thus the concatenation of $\overline{pq}$ and $\overline{qr}$ is a geodesic by (\ref{6.15}). Now we can repeatedly apply this $\epsilon$--extension procedure to obtain the geodesic ray as required.
\end{proof}

In general, the above set $S\cap \bar{B}(q,\epsilon)$ is not equal to the geodesic cone $C_{q}(S\cap S(q,\epsilon))$ based at $q$ over $S\cap S(q,\epsilon)$ no matter how small $\epsilon$ is. However, we have
\begin{align*}
&\lim_{\epsilon\to 0}d_{GH}(\frac{1}{\epsilon}(C_{q}(S\cap S(q,\epsilon))), CS_{[\beta]}\cap\bar{B}(o,1))\\
&=\lim_{\epsilon\to 0}d_{GH}(\frac{1}{\epsilon}(S\cap \bar{B}(q,\epsilon)), CS_{[\beta]}\cap\bar{B}(o,1))=0\,.
\end{align*}
Thus the tangent cone of $S$ exists for every point in $S$.

\begin{remark}
\label{6.16}
By the same proof, we know Lemma \ref{6.12} is still true if $Y$ is a Alexandrov space which has curvature bounded above and has homological dimension $=n$. In this case, $\overline{p\xi}$ is locally geodesic ray.
\end{remark}

\begin{lem}
\label{6.17}
Let $Z$ be a $CAT(1)$ space of homological dimension $\le n$ and $[\sigma]\in \tilde{H}_{n}(Z)$ be a nontrivial class. Then 
\begin{enumerate}
\item $\mathcal{H}^{n}(S_{[\sigma]})\ge \mathcal{H}^{n}(\Bbb S^{n})$. 
\item Let $V(n,r)$ be the volume of $r$--ball in $\Bbb S^{n}$. Then for any $0\le r\le R<\pi$ and any $p\in S_{[\sigma]}$, 
\begin{equation*}
1\le\frac{\mathcal{H}^{n}(B(p,r)\cap S_{[\sigma]})}{V(n,r)}\le \frac{\mathcal{H}^{n}(B(p,R)\cap S_{[\sigma]})}{V(n,R)}\,.
\end{equation*}
\item If $\mathcal{H}^{n}(S_{[\sigma]})=\mathcal{H}^{n}(\Bbb S^{n})$, then $S_{[\sigma]}$ is an isometrically embedded copy of $\Bbb S^{n}$.
\end{enumerate}
\end{lem}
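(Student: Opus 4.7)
The plan is to prove all three claims simultaneously by induction on $n$. The base case $n=0$ is immediate: a nontrivial class in $\tilde H_0(Z)$ forces $S_{[\sigma]}$ to contain at least two points, so $\mathcal{H}^0(S_{[\sigma]})\ge 2 = \mathcal{H}^0(\mathbb{S}^0)$. For the inductive step, assume the lemma holds in dimensions $\le n-1$, and first establish the $CAT(1)$ analogue of the geodesic extension property of Lemma \ref{6.12}: for any $p\in Z$ and any $q \in S_{[\sigma]}$ with $d(p,q)<\pi$, the geodesic $\overline{pq}$ extends inside $S_{[\sigma]}$ to a genuine geodesic from $p$ of length exactly $\pi$, terminating at an antipode of $p$ in $S_{[\sigma]}$. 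The argument mirrors that of Lemma \ref{6.12}: pass to the boundary class in $H_{n-1}(\Sigma_q Z)$ via the isomorphism of Lemma \ref{6.8}, note that $\Sigma_q Z$ has homological dimension $\le n-1$ by Lemma \ref{6.11}, and apply Corollary \ref{6.7} to produce an antipodal direction lying in the support of this boundary class, which corresponds to a geodesic extension in $S_{[\sigma]}$.

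For claim (2), fix $p\in S_{[\sigma]}$ and for $0<r\le t<\pi$ define the radial contraction $\rho_{t,r}\co S(p,t)\to S(p,r)$ sending $x$ to the unique point on $\overline{px}$ at distance $r$ from $p$. By the extension property above, $\rho_{t,r}(S(p,t)\cap S_{[\sigma]})\supset S(p,r)\cap S_{[\sigma]}$, and by the $CAT(1)$ comparison inequality applied to triangles with vertex $p$, $\rho_{t,r}$ is Lipschitz with constant $\sin(r)/\sin(t)$ in the ambient metric (a direct computation in $\mathbb{S}^n$ confirms this is the optimal global constant). This yields
\[
\frac{\mathcal{H}^{n-1}(S(p,r)\cap S_{[\sigma]})}{\sin^{n-1}(r)} \le \frac{\mathcal{H}^{n-1}(S(p,t)\cap S_{[\sigma]})}{\sin^{n-1}(t)}.
\]
Combining this pointwise density monotonicity with the coarea formula for the $1$-Lipschitz function $d(p,\cdot)$ on $S_{[\sigma]}$ and the polar-coordinate expression $V(n,R) = \mathcal{H}^{n-1}(\mathbb{S}^{n-1})\int_0^R\sin^{n-1}(s)\,ds$, an elementary calculus lemma (monotonicity of $f/g$ implies monotonicity of $\int_0^R f \big/ \int_0^R g$) promotes the pointwise ratio to the cumulative ratio stated in claim (2).

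For claim (1), take $r\to 0$ in (2). By the inductive hypothesis applied to the $(n-1)$-dimensional class in $H_{n-1}(\Sigma_p Z)$ whose support is $\Sigma_p S_{[\sigma]}$ (extracted via the isomorphism of Lemma \ref{6.8} together with a tangent cone analysis as in the proof of Lemma \ref{6.12}), one has $\mathcal{H}^{n-1}(\Sigma_p S_{[\sigma]})\ge \mathcal{H}^{n-1}(\mathbb{S}^{n-1})$. A standard cone-volume calculation then gives $\liminf_{r\to 0}\mathcal{H}^n(B(p,r)\cap S_{[\sigma]})/V(n,r)\ge 1$, and monotonicity propagates this lower bound to every $r\in (0,\pi)$. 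Letting $R\to\pi^-$ and using $V(n,\pi) = \mathcal{H}^n(\mathbb{S}^n)$ yields (1). For claim (3), equality in (1) forces the density ratio to equal $1$ at every $r\in(0,\pi)$, and hence equality in each CAT(1) comparison used for $\rho_{t,r}$; the rigidity case of CAT(1) comparison then shows that the family of radial geodesics from $p$, together with the angular structure inherited from $\Sigma_p S_{[\sigma]}$ (which by rigidity of (1) in dimension $n-1$ is isometric to $\mathbb{S}^{n-1}$), assembles $S_{[\sigma]}$ isometrically onto $\mathbb{S}^n$.

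The main obstacle will be the rigorous use of the coarea equality and of the Lipschitz-area inequality $\mathcal{H}^{n-1}(\rho_{t,r}(A))\le (\sin r/\sin t)^{n-1}\mathcal{H}^{n-1}(A)$ on the a priori irregular set $S_{[\sigma]}$, which is not known to be a polyhedral complex or topological manifold. The lower density bound furnished by the argument itself, combined with the upper packing control coming from the top-dimensionality of $[\sigma]$, should ensure that $S_{[\sigma]}$ is countably $n$-rectifiable and thus justify Kirchheim's area formula for Lipschitz maps between metric spaces; however, arranging this bootstrap without circular dependence on the density bound one is trying to prove is delicate. A secondary subtlety is the geodesic extension step, where one must verify that the extension lies strictly inside $S_{[\sigma]}$ rather than in its closure; this should follow by combining the argument of Lemma \ref{6.12} with a relative version of Lemma \ref{6.5}.
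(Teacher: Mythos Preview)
Your strategy differs from the paper's in a significant way, and the obstacle you flag is exactly where your route becomes hard while the paper's stays elementary.

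For (1) and the first inequality of (2), the paper does \emph{not} compute a local density and take $r\to 0$. Instead it constructs, by induction on $n$, a $1$--Lipschitz map from a subset of $S_{[\sigma]}$ onto a full--measure subset of $\Bbb S^{n}$. Concretely: fix $p\in S_{[\sigma]}$ and use the $1$--Lipschitz logarithm $\log_p\co B(p,\pi)\to \Bbb S^{0}\ast\Sigma_p Z$; the proof of Lemma~\ref{6.12} gives the inclusion $\log_p\bigl(S_{[\sigma]}\cap B(p,\pi)\bigr)\supseteq(\Bbb S^{0}\ast S_{[\beta]})\cap B(o,\pi)$; by induction there is a $1$--Lipschitz $f\co K\subset S_{[\beta]}\to\Bbb S^{n-1}$ with full--measure image, and its spherical suspension $\tilde f$ composed with $\log_p$ does the job. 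This uses nothing about rectifiability or the coarea formula: the only fact needed is that $1$--Lipschitz maps do not increase $\mathcal H^{n}$. Your ``standard cone--volume calculation'' for $\liminf_{r\to 0}\mathcal H^{n}(B(p,r)\cap S)/V(n,r)\ge 1$ is not standard here: as the remark after Lemma~\ref{6.12} records, $S\cap\bar B(p,\epsilon)$ is only Gromov--Hausdorff close to its tangent cone, and GH convergence alone does not control $\mathcal H^{n}$. The paper's $1$--Lipschitz--surjection trick sidesteps this completely, and moreover the same inclusion, restricted to balls of radius $r$, immediately yields $\mathcal H^{n}(B(p,r)\cap S)\ge V(n,r)$ for every $r<\pi$, with no limiting argument at all.

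For the second inequality of (2) the paper simply cites the contraction argument of \cite[Corollary~3.3]{bestvina2008quasiflats} (via Remark~\ref{6.16}), i.e.\ a radial Lipschitz map on the full ball rather than the slice--plus--coarea route; this again avoids needing a coarea \emph{equality} on $S_{[\sigma]}$.

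For (3) the paper's argument is also quite different from yours. It first shows $S_{[\sigma]}$ is $\pi$--convex: the radial geodesics from $p$ along directions in $S_{[\beta]}$ fill up a set $A$ of measure $\ge\mathcal H^{n}(\Bbb S^{n})$, and the equality assumption together with (2) forces $A=S_{[\sigma]}$; then every $q\in S_{[\sigma]}\cap B(p,\pi)$ is a limit of points on such radial geodesics, so $\overline{pq}\subset S_{[\sigma]}$. Thus $S_{[\sigma]}$ is a compact, geodesically complete $CAT(1)$ space of volume $\mathcal H^{n}(\Bbb S^{n})$, and the conclusion is a black--box citation of Nagano \cite[Proposition~7.1]{nagano2002volume}. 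Your direct rigidity argument via equality in the radial comparison is plausible but would require substantially more work to make precise; Nagano's result packages all of that.
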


$\Bbb S^{n}$ denotes the $n$--dimensional standard sphere with constant curvature 1.

\begin{proof}
We claim there exists a 1--Lipschitz map from a subset of $S_{[\sigma]}$ to a full measure subset of $\Bbb S^{n}$. Let us assume this is true for $i=n-1$. Pick $p\in S_{[\sigma]}$, let $\Bbb S^{0}\ast \Sigma_{p}Z$ be the spherical suspension of $\Sigma_{p}Z$ and let $o$ be one of the suspension point. Then there is a well-define 1--Lipschitz map $\log_{p}\co B(p,\pi)\to \Bbb S^{0}\ast \Sigma_{p}Z$ sending $p$ to $o$. Let $[\beta]$ be the image of $[\sigma]$ under the map 
\begin{align*}
&\tilde{H}_{n}(Z)\to H_{n}(Z,Z\setminus\{p\})\to H_{n}(B(p,\pi),B(p,\pi)\setminus\{p\})\\
&\xrightarrow{(\log_{p})_{\ast}} H_{n}(B(o,\pi),B(o,\pi)\setminus\{o\})\to \tilde{H}_{n-1}(\Sigma_{p}Z)\,.
\end{align*}
We can slightly adjust the proof of Lemma \ref{6.12} to show that:
\begin{equation}
\label{6.18}
\log_{p}(S_{[\sigma]}\cap B(p,\pi))\supseteq (\Bbb S^{0}\ast S_{[\beta]})\cap B(o,\pi)\,.
\end{equation}
The induction assumption implies that there are subset $K\in S_{[\beta]}$ and 1--Lipschitz map $f\co K\to \Bbb S^{n-1}$ such that $\mathcal{H}^{n-1}(\Bbb S^{n-1}\setminus f(K))=0$. Note that $f$ induces a 1-Lipschitz map $\tilde{f}\co  \Bbb S^{0}\ast K\to  \Bbb S^{0}\ast\Bbb S^{n-1}=\Bbb S^{n}$ whose image also has full measure, thus by (\ref{6.18}), there exists $K'\subset S_{[\sigma]}$ such that the image of $\tilde{f}\circ\log_{p}\co K'\to \Bbb S^{n}$ has full measure. It follows that $\mathcal{H}^{n}(S_{[\sigma]})\ge \mathcal{H}^{n}(\Bbb S^{n})$.

The first inequality of (2) follows from (1) and (\ref{6.18}). The second inequality follows from Remark \ref{6.16} and the proof of \cite[Corollary 3.3]{bestvina2008quasiflats}.

Now we prove (3). By Remark \ref{6.16}, for every point $x\in S_{[\beta]}$, there exists a geodesic segment $l_{x}\subset S_{[\sigma]}$ emanating from $p$ along the direction $x$ such that it has length $=\pi$. Let $A$ be the closure of $\cup_{x\in S_{[\beta]}}l_{x}$. Then $A\subset S_{[\sigma]}$ and $\mathcal{H}^{n}(A)\ge \mathcal{H}^{n}(\Bbb S^{n})$. Then (2) implies actually $A=S_{[\sigma]}$. Pick arbitrary $q\in S_{[\sigma]}\cap B(p,\pi)$, then there exists a sequence $\{q_{n}\}_{n=1}^{\infty}\subset \cup_{x\in S_{[\beta]}}l_{x}$ such that $\lim_{n\to\infty}q_{n}=q$. Since $\overline{q_{n}p}\subset S_{[\sigma]}$ by construction, $\overline{qp}\subset S_{[\sigma]}$. It follows that $S_{[\sigma]}$ is $\pi$--convex in $Y$. Then $S_{[\sigma]}$ can be viewed as a compact and geodesically complete $CAT(1)$ space. By \cite[Proposition 7.1]{nagano2002volume}, $S_{[\sigma]}$ is isometric to $\Bbb S^{n}$.
\end{proof}

We can recover the monotonicity (\ref{3.10}) and the lower density bound (\ref{3.11}) from Lemma \ref{6.12} and Lemma \ref{6.17}, then we can define the group $H^{\textmd{p}}_{n,n}(Y)$ as before when $Y$ is a $CAT(0)$ space of homological dimension $\le n$ and the rest discussion in Section \ref{growth condition} goes through without any change. Recall that the homological dimension of a $CAT(0)$ space is equal to its geometric dimension (\cite[Thoerem A]{kleiner1999local}), so the following result holds.

\begin{thm}
\label{6.19}
Let $Y$ be a $CAT(0)$ space of geometric dimension $n$. Pick $[\sigma]\in H^{\textmd{p}}_{n,n}(Y)$ and let $S=S_{[\sigma]}$. Then
\begin{enumerate}
\item (Local property I) Each point $p\in Y$ has a well-defined tangent cone $T_{p}Y$.
\item (Local property II) $S$ has the geodesic extension property in the sense of Lemma \ref{3.6}.
\item (Monotonicity and lower density bound) For all $0\leq r\leq R$ and $p\in Y$,
\begin{equation*}
\frac{\mathcal{H}^{n}(B(p,r)\cap S)}{r^{n}}\leq \frac{\mathcal{H}^{n}(B(p,R)\cap S)}{R^{n}}\,.
\end{equation*}
If $p\in S$, then
\begin{equation*}
\mathcal{H}^{n}(B(p,r)\cap S)\geq \omega_{n} r^{n}\,,
\end{equation*}
with equality only if $B(p,r)\cap S$ is isometric to a $r$--ball in $\Bbb E^{n}$, here $\omega_{n}$ is the volume of an $n$--dimensional Euclidean ball of radius $1$.
\item (Asymptotically conicality I) Let $B(o,1)$ be the unit ball in $C_{T}S$ centered at the cone point $o$. For any $p\in Y$, 
\begin{equation*}
\lim_{r\to +\infty}d_{GH}(\frac{1}{r}(B(p,r)\cap S),B(o,1))=0\,.
\end{equation*}
Moreover, put $\partial_{p,r}S:=\{\xi\in \partial_{T}S|\ \overline{p\xi}\subset B(p,r)\cup S\}$. Then
\begin{equation*}
\lim_{r\to+\infty}d_{H}(\partial_{p,r}S,\partial_{T}S)=0\,.
\end{equation*}
\item (Asymptotically conicality II) For all $\beta>0$ there is a $r<\infty$ such that if $x\in S\setminus B(p,r)$, then 
\begin{equation*}
diam(Ant_{\infty}(\log_{x}p,S))<\beta\,.
\end{equation*}
The diameter here is with respect to the angular metric on $\partial_{T}Y$.
\end{enumerate}
\end{thm}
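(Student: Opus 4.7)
The plan is to assemble the theorem from the pieces already built in the appendix, checking that each step of the original Section \ref{growth condition} argument carries through once the two local ingredients (geodesic extension and a lower volume bound on links) are in place. First I would dispose of (1) and (2): the existence of $T_pY$ at every $p\in Y$ is standard (any $CAT(0)$ space of finite geometric dimension has well-defined tangent cones by \cite{kleiner1999local}), and (2) is exactly Lemma \ref{6.12}.

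Next I would prove (3). For monotonicity, the key point is that the uniform contraction $\Phi_{r,R}\co B(p,R)\to B(p,r)$ toward the base point is $r/R$--Lipschitz and, by the geodesic extension property (item (2)), satisfies $B(p,r)\cap S\subset \Phi_{r,R}(B(p,R)\cap S)$; comparing $n$-dimensional Hausdorff measures gives (\ref{3.10}). For the lower density bound, I would use Lemma \ref{link of support set}'s analogue: at a point $p\in S$, compose $\log_p\co (Y,Y\setminus B(p,r))\to (T_pY,T_pY\setminus B(o,r))$ with the boundary map to get a nontrivial class $[\beta]\in\tilde H_{n-1}(\Sigma_pS)$; Lemma \ref{6.17}(1) gives $\mathcal{H}^{n-1}(S_{[\beta]})\ge\mathcal{H}^{n-1}(\Bbb S^{n-1})$, and integrating radially (using the geodesic extension from $p$ provided by Lemma \ref{6.12} along every direction in $S_{[\beta]}$) yields the required $\omega_n r^n$ bound. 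Monotonicity plus the equality case of Lemma \ref{6.17}(3) in each link forces rigidity: equality in the density bound makes $B(p,r)\cap S$ isometric to a flat $r$-ball, by the standard argument of Bishop--Gromov type.

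For (4) I would follow Lemma \ref{3.15} verbatim. The only ingredients used there are monotonicity, the geodesic extension property, and the packing bound of Lemma \ref{3.14}, which itself only needs (\ref{3.13}) together with the lower density bound (\ref{3.11}); all of these are now at hand. The statement $\lim_{r\to\infty}d_H(\partial_{p,r}S,\partial_TS)=0$ is extracted from the same construction (the $\epsilon$--net $\{\xi_i\}$ built in the proof already witnesses that every $\xi\in\partial_TS$ is Tits-approximated by some $\partial_Tl_i$, with the rays $l_i$ living arbitrarily deep inside $S$). For (5), the proof of Lemma \ref{3.26} goes through unchanged once one has (4): if $Ant_\infty(\log_xp,S)$ contained two points $\xi,\xi'$ with $\angle_T(\xi,\xi')\ge\beta$, the two geodesic rays $\overline{x\xi}$ and $\overline{x\xi'}$ together with $\overline{xp}$ exhibit, for $d(x,p)$ large, a subset of $B(p,r)\cap S$ whose rescaled Gromov--Hausdorff limit contains a configuration incompatible with $B(o,1)\subset C_TS$, violating (4).

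The only step that requires real care is the lower density bound in (3): the radial integration argument needs one to know that along every direction in a full-measure subset of $S_{[\beta]}\subset\Sigma_pS$ the geodesic ray given by Lemma \ref{6.12} stays inside $S$ for time $r$ (not merely exists). I would handle this by iterating the $\epsilon$-extension step inside $B(p,r)$ and using compactness of $S\cap \overline{B}(p,r)$ to pass to a limit, as in the proof of Lemma \ref{6.12}; the tangent cone picture described immediately after Lemma \ref{6.12} then identifies the Hausdorff-density at $p$ with $\mathcal{H}^{n-1}(S_{[\beta]})/n$, and Lemma \ref{6.17} closes the bound. Once this volume comparison is nailed down, the remaining items are essentially copies of the cube-complex proofs, with $Shape$-finiteness replaced throughout by finiteness of the geometric dimension.
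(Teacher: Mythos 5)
Your proposal is correct and follows essentially the same route as the paper: establish the geodesic extension property (Lemma~\ref{6.12}), derive monotonicity and the lower density bound from Lemma~\ref{6.12} together with the link-volume estimate of Lemma~\ref{6.17}, and then observe that the remaining arguments of Section~\ref{growth condition} carry over unchanged. The paper compresses exactly this into a one-paragraph remark before the theorem statement, and your sketch just fleshes it out. Two small imprecisions worth flagging: the ``identification'' of the Hausdorff density at $p$ with $\mathcal{H}^{n-1}(S_{[\beta]})/n$ is, strictly speaking, only a lower bound coming from the inclusion $CS_{[\beta]}\cap B(o,r)\subset\log_p(S\cap B(p,r))$ and the fact that $\log_p$ is $1$--Lipschitz (but the lower bound is all that is used, so nothing breaks); and the quick ``configuration incompatible with $B(o,1)\subset C_TS$'' sketch for item~(5) is looser than the actual BKM volume argument, though your governing claim that the proof of Lemma~\ref{3.26} goes through once monotonicity, the growth bound, and geodesic extension are in hand is exactly right.
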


Now we reinterpret the group $H^{\textmd{p}}_{n,n}(Y)$. Recall that there is another logarithmic map $\log_{p}\co  C_{T}Y\to Y$ sending the base point $o$ of $C_{T}Y$ to $p\in Y$. Since $\log_{p}$ is proper and 1--Lipschitz, it induces a map $H^{\textmd{p}}_{n,n}(C_{T}Y)\to H^{\textmd{p}}_{n,n}(Y)$.

Next we define a map in the other direction. Pick $[\sigma]\in H^{\textmd{p}}_{n,n}(Y)$, let $S=S_{[\sigma]}$ be the support set and let $U_{S}$ be the 1--neighbourhood of $S$. By Lemma \ref{6.5}, we can assume $\textmd{Im}\ \sigma\subset U_{S}$. For $\epsilon>0$, we define the map $f_{\epsilon}\co  U_{S}\to C_{T}S$ as in Lemma \ref{4.16}. To approximate $f_{\epsilon}$ by a continuous map, we choose a locally finite covering of $U_{S}$ by its open subsets which satisfies the diameter condition in Lemma \ref{4.16}, then proceed as before to obtain a continuous map $f_{\epsilon}\co  U_{S}\to C_{T}X$. Here the image may not stay inside $C_{T}S$, however, it is sub-linearly close to $C_TS$. We define $\exp_{\ast}([\sigma])=\lim_{\epsilon\to 0}f_{\epsilon\ast}([\sigma])$, note that $f_{\epsilon\ast}([\sigma])$ does not depends on $\epsilon$ when it is small. Since (\ref{4.23}) is still true, $(\log_{p})_{\ast}\circ\exp_{\ast}=\textmd{Id}$.

To see $\exp_{\ast}\circ(\log_{p})_{\ast}=\textmd{Id}$, we follow the proof of (\ref{4.30}), the only difference is that we need to replace $I_{\sigma'}$ there by the 1--neighbourhood of $\textmd{Im}\ \sigma'$, then use the nerve complex of suitable covering to approximate $g_{\epsilon}$ as we did for $f_{\epsilon}$. So 
\begin{equation*}
(\log_{p})_{\ast}\co  H^{\textmd{p}}_{n,n}(C_{T}Y)\to H^{\textmd{p}}_{n,n}(Y)
\end{equation*}
is an isomorphism, with the inverse map $\exp_{\ast}$ defined as above.

Let $h_{\lambda}\co  C_{T}Y\to C_{T}Y$ be the homothety map with respect to the base point $o$ by a factor $\lambda$. Then $h_{\lambda}$ is properly homotopic to $h_{1}$ for any $0<\lambda<\infty$, so for any $[\beta]\in H^{\textmd{p}}_{i}(C_{T}Y)$, $h_{\lambda\ast}([\beta])=[\beta]$ and $h_{\lambda}(S_{[\beta]})=S_{[\beta]}$. It follows that every cycle in $H^{\textmd{p}}_{i}(C_{T}Y)$ is conical. Thus the map $j\co H^{\textmd{p}}_{i}(C_{T}Y)\to H_{i}(C_{T}Y,C_{T}Y\setminus\{o\})\to H_{i-1}(\partial_{T}Y)$ is an isomorphism with inverse given by \textquotedblleft conning off\textquotedblright\ procedure. It follows that the map defined in (\ref{4.27}) and (\ref{4.28}) are isomorphisms and the analogues of Corollary \ref{4.33} and Remark \ref{4.37} in the case of $CAT(0)$ spaces with finite homological dimension are still true (again, for our argument to go through, we need to replace the set $I_{q}$ in the proof of Corollary \ref{4.33} by some $r$--neighbourhood of the image of $q$). This discussion can be summarized as follows. 

\begin{thm}
\label{6.20}
Let $q\co Y\to Y'$ be a quasi-isometric embedding, where $Y$ and $Y'$ are $CAT(0)$ spaces of geometric dimension $\le n$. Then:
\begin{enumerate}
\item $\partial:=j\circ(\exp_{\ast})\co H^{\textmd{p}}_{n,n}(Y)\to H_{n-1}(\partial_{T}Y)$ is a group isomorphism, and the inverse is given by the conning off map $ c\co H_{n-1}(\partial_{T}Y)\to H^{\textmd{p}}_{n,n}(Y)$ (see (\ref{4.28})).
\item $q$ induces a monomorphism $q_{\ast}\co H_{n-1}(\partial_{T}Y)\to H_{n-1}(\partial_{T}Y')$. If $q$ is a quasi-isometry, then $q_{\ast}$ is an isomorphism.
\item There exists $D'>0$ which depends on the quasi-isometry constants of $q$ such that
\begin{equation*}
d_{H}(q(S_{[\tilde{\sigma}]}),S_{q_{\ast}[\tilde{\sigma}]})<D'\,.
\end{equation*}
for any $[\tilde\sigma]\in H^{\textmd{p}}_{n,n}(Y)$.
\end{enumerate}
\end{thm}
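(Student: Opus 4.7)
The plan is to organize the pieces already assembled in the discussion preceding the statement, noting that most of the work has been done; what remains is to verify that each step of the cube-complex proof (Corollary~\ref{4.33} and Remark~\ref{4.37}) generalizes under the weaker hypothesis of finite geometric dimension, with the polyhedral structure replaced by nerve-complex approximations.

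For part (1), I would assemble the factorization $\partial = j \circ \exp_{\ast}$, verifying each factor is an isomorphism. The map $(\log_p)_{\ast}\co H^{\textmd{p}}_{n,n}(C_TY) \to H^{\textmd{p}}_{n,n}(Y)$ is defined because $\log_p$ is proper and $1$-Lipschitz. To build the inverse $\exp_{\ast}$, given $[\sigma] \in H^{\textmd{p}}_{n,n}(Y)$, invoke Lemma~\ref{6.5} to choose a representative $\sigma$ with $\textmd{Im}\ \sigma \subset U_S$ (the $1$-neighborhood of the support set), then mimic the construction of $f_\epsilon$ from Lemma~\ref{4.16}: an $\epsilon$-net $\{l_i(t)\}$ in $S$ defined via the asymptotic geodesic rays produces a ``log-straightening'' map to $C_TS \subset C_TY$, which is then approximated by a continuous map using a locally finite open covering of $U_S$ with small diameters. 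Here we do \emph{not} need to bound the dimension of the nerve, since $C_TY$ is $CAT(0)$ and hence linearly contractible. Estimate~(\ref{4.23}) shows $d(\log_p \circ f_\epsilon(x), x)$ is sub-linear in $d(x,p)$, giving a proper geodesic homotopy to the identity, so $(\log_p)_{\ast} \circ \exp_{\ast} = \textmd{Id}$; the reverse identity follows by approximating the cone-off map $g_\epsilon$ in a $1$-neighborhood of its support as in (\ref{4.30}). Finally, $j\co H^{\textmd{p}}_{n,n}(C_TY) \to H_{n-1}(\partial_T Y)$ is an isomorphism because the homothety $h_\lambda$ is properly homotopic to $h_1$, forcing $h_{\lambda\ast}[\beta] = [\beta]$ for any $[\beta] \in H^{\textmd{p}}_n(C_TY)$, so every such class is conical; the inverse is the conning-off map (\ref{4.28}).

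For part (2), approximate $q$ by a Lipschitz quasi-isometric embedding (still denoted $q$). Because $q$ is proper and Lipschitz, and because we can represent each class in $H^{\textmd{p}}_{n,n}(Y)$ by a cycle supported in a $1$-neighborhood of its support set (Lemma~\ref{6.5}), $q$ induces
\begin{equation*}
q_{\ast}\co H^{\textmd{p}}_{n,n}(Y) \to H^{\textmd{p}}_{n,n}(U_q) \hookrightarrow H^{\textmd{p}}_{n,n}(Y'),
\end{equation*}
where $U_q$ is the $1$-neighborhood of $\textmd{Im}\ q$ in $Y'$. To see $q_{\ast}$ is monomorphic, construct a coarse inverse $p\co U_q \to Y$ with $d(p \circ q(x), x) < D$ for all $x \in Y$: choose a locally finite open cover of $U_q$ and build $p$ via a barycentric map into the nerve followed by a map back to $Y$, using contractibility of $Y$ (no cell structure needed). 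Then $p_{\ast} \circ q_{\ast} = \textmd{Id}$ by proper geodesic homotopy. When $q$ is a quasi-isometry, the same argument gives a two-sided inverse. Composing with the isomorphism from part (1) transfers this to $H_{n-1}(\partial_T Y) \to H_{n-1}(\partial_T Y')$.

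For part (3), the constant $D'$ emerges by tracking Hausdorff distances through the construction. The Lipschitz approximation of $q$ perturbs by a constant depending on $(L,A)$; the nerve approximations in parts (1) and (2) each introduce additional bounded error controlled by the linear contractibility constants, which in a $CAT(0)$ space are absolute. Since $\exp_{\ast}$ and $\log_{p\ast}$ respect the support set up to sub-linear error (and hence, after passing through the boundary-at-infinity isomorphism, up to a bounded error on support sets), chasing the identity $q_{\ast}([\tilde\sigma]) = (\exp \circ j^{-1} \circ q_{\ast} \circ \partial)[\tilde\sigma]$ yields a uniform bound $d_H(q(S_{[\tilde\sigma]}), S_{q_{\ast}[\tilde\sigma]}) < D'(L,A)$. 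The main obstacle is the first paragraph: ensuring that the nerve-complex approximations truly stay inside prescribed neighborhoods of support sets (so that the $f_\epsilon$ and $g_\epsilon$ maps are well-defined), which is exactly what Lemma~\ref{6.5} is designed to guarantee, and which is the conceptual substitute for the simplicial representative of a top-dimensional class available in the polyhedral setting.
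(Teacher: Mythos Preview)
Your proposal is correct and follows essentially the same approach as the paper. The paper's proof of Theorem~\ref{6.20} is entirely contained in the discussion immediately preceding the statement, and you have accurately reconstructed it: the key substitutions are replacing simplicial representatives by Lemma~\ref{6.5}, replacing the subcomplex $I_{\sigma'}$ (resp.\ $I_q$) by the $1$-neighbourhood of $\textmd{Im}\ \sigma'$ (resp.\ of $\textmd{Im}\ q$), and using nerve-complex approximations with no dimension bound required since the target $C_TY$ is $CAT(0)$ (cf.\ Remark~\ref{6.4}).
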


We refer to the work of Kleiner and Lang \cite{quasimini} for a more general version of the above theorem. 

\begin{remark}
\label{6.21}
Pick $[\tau]\in H_{n-1}(\partial_{T}Y)$, then by Lemma \ref{6.8} and Theorem \ref{6.20}, $S_{c([\tau])}=\{y\in Y\mid [\tau]$ is nontrivial under $(\log_{y})_{\ast}\co  H_{n-1}(\partial_{T}Y)\to H_{n-1}(\Sigma_{y}Y)\}$, here $c$ is the conning off map in (\ref{4.28}).
\end{remark}
Now we are ready to prove Theorem \ref{6.1}. To avoid repetition, we will only sketch the main steps. 
 
\begin{proof}[Proof of Theorem \ref{6.1}]
If $Y$ is a Euclidean building of rank $n$, then by \cite[Corollary 6.1.1]{kleiner1997rigidity}, the homological dimension of $Y$ is less or equal than $n$. This also follows \cite[Theorem A]{kleiner1999local} by noticing $\Sigma_{p}Y$ is a spherical building of dimension $n-1$ for any $p\in Y$. Let $[\sigma]\in H^{\textmd{p}}_{n,n}(Z)$. 

\textit{Step 1:} Let $[\alpha]=\exp_{\ast}([\sigma])\in H^{\textmd{p}}_{n,n}(C_{T}Y)$. $\partial_{T}Y$ is a spherical building, so $S_{[\alpha]}$ is a cone over $K$, where $K=\cup_{i=1}^{h}C_{i}$ and each $C_{i}$ is a chamber in $\partial_{T}Y$.

\textit{Step 2:} Let $W_{i}\subset Y$ be a Weyl cone such that $\partial_{T}W_{i}=C_{i}$. Note that for any $i\neq j$, there is an apartment of $\partial_TY$ which contains $C_{i}$ and $C_{j}$. Thus we can assume $W_{i}$ and $W_{j}$ are contained in a common apartment of $Y$. So $W_i$ and $W_j$ satisfy inequalities similar to (\ref{2.11}). The quotient map $\sqcup_{i=1}^{h}C_{i}\to K$ induces a map $\varphi\co  CK\to Y$ which is a quasi-isometric embedding as in Lemma \ref{4.39}. We can assume $\varphi$ is continuous. Put $[\tau]=\varphi_{\ast}([CK])$, where $[CK]$ is the fundamental class of $CK$. Then it follows from the proof of Lemma \ref{4.47} that $d_{H}(S_{[\tau]},\cup_{i=1}^{h}W_{i})<\infty$. Moreover, we can assume $W_{i}\subset S_{[\tau]}$.

\textit{Step 3:} It suffices to show $[\sigma]=[\tau]$. Pick $p\in Y$. Note that there exists $D>0$ such that $d(\log_{p}(x),\varphi(x))<D$ for any $x\in CK$, then $[\tau]=\varphi_{\ast}([CK])=(\log_{p})_{\ast}([\alpha])=((\log_{p})_{\ast}\circ\exp_{\ast})([\sigma])=[\sigma]$.
\end{proof}

The following result is an immediate consequence of Lemma \ref{6.2} and Theorem \ref{6.1}.
\begin{cor}
\label{6.22}
If $Y$ is a Euclidean building of rank $n$ and $Q\subset Y$ is an $n$--quasiflat, then there exist finitely many Weyl cones $\{W_{i}\}_{i=1}^{h}$ such that 
\begin{equation*}
d_{H}(Q,\cup_{i=1}^{h}W_{i})<\infty\,.
\end{equation*}
\end{cor}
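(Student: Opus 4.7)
The plan is to reduce Corollary~\ref{6.22} directly to Theorem~\ref{6.1} via Lemma~\ref{6.2}, thereby passing from a quasiflat (which may exhibit local wiggles) to a support set of a proper homology class (which has rigid local and asymptotic structure). Since $Y$ is a Euclidean building of rank $n$, each $\Sigma_{p}Y$ is a spherical building of dimension $n-1$, hence by \cite[Theorem A]{kleiner1999local} (or \cite[Corollary 6.1.1]{kleiner1997rigidity}) $Y$ has geometric dimension $\le n$; this is the hypothesis needed to invoke the generalized framework of Section~6.

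First, I would apply Lemma~\ref{6.2} to the $n$--quasiflat $Q\subset Y$: this produces a proper homology class $[\sigma]\in H^{\mathrm{p}}_{n}(Y)$ such that $d_{H}(S_{[\sigma]},Q)\le D$ for some $D=D(L,A)$, and such that the growth bound $\mathcal{H}^{n}(B(p,r)\cap S_{[\sigma]})\le a(1+r)^{n}$ holds. The latter estimate implies
\begin{equation*}
\lim_{r\to\infty}\frac{\mathcal{H}^{n}(B(p,r)\cap S_{[\sigma]})}{r^{n}}<\infty,
\end{equation*}
so $[\sigma]\in H^{\mathrm{p}}_{n,n}(Y)$ by definition. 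Here I am using that the construction and conclusions of Lemma~\ref{3.4} carry over to the setting of $CAT(0)$ spaces of homological dimension $\le n$, as recorded in Lemma~\ref{6.2}.

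Next, I would invoke Theorem~\ref{6.1} applied to this class $[\sigma]$: it yields finitely many Weyl cones $\{W_{i}\}_{i=1}^{h}$ in $Y$ with
\begin{equation*}
d_{H}\bigl(S_{[\sigma]},\textstyle\bigcup_{i=1}^{h}W_{i}\bigr)<\infty.
\end{equation*}
Combining the two Hausdorff estimates via the triangle inequality for $d_{H}$ gives $d_{H}(Q,\bigcup_{i=1}^{h}W_{i})<\infty$, which is the desired conclusion.

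There is essentially no obstacle here, since both ingredients are already established in this appendix; the entire content of Corollary~\ref{6.22} lies in verifying that the hypotheses of Lemma~\ref{6.2} and Theorem~\ref{6.1} are met, namely that $Y$ has geometric dimension $\le n$ and that the support set of a top dimensional quasiflat falls within $H^{\mathrm{p}}_{n,n}(Y)$. Both points have been checked above. The only minor care needed is to note that the stronger ``moreover'' statement in Theorem~\ref{6.1} (that $W_{i}\subset S_{[\sigma]}$) is not needed for the corollary, so one does not have to worry about which representative orthants lie inside $Q$ itself.
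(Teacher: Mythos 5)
Your argument is correct and coincides with the paper's proof, which simply states that Corollary~\ref{6.22} follows immediately from Lemma~\ref{6.2} and Theorem~\ref{6.1}; you have merely spelled out the routine verifications (that the growth bound places $[\sigma]$ in $H^{\mathrm{p}}_{n,n}(Y)$, and that the two Hausdorff estimates combine via the triangle inequality). Nothing is missing or off-route.
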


\bibliographystyle{alpha}
\bibliography{1}

\newcommand{\etalchar}[1]{$^{#1}$}
\begin{thebibliography}{BKMM12}

\bibitem[ABD{\etalchar{+}}13]{abrams2013pushing}
Aaron Abrams, Noel Brady, Pallavi Dani, Moon Duchin, and Robert Young.
\newblock Pushing fillings in right-angled artin groups.
\newblock {\em Journal of the London Mathematical Society}, 87(3):663--688,
  2013.

\bibitem[BC12]{behrstock2012divergence}
Jason Behrstock and Ruth Charney.
\newblock Divergence and quasimorphisms of right-angled artin groups.
\newblock {\em Mathematische Annalen}, 352(2):339--356, 2012.

\bibitem[BH99]{MR1744486}
Martin~R. Bridson and Andr{\'e} Haefliger.
\newblock {\em Metric spaces of non-positive curvature}, volume 319 of {\em
  Grundlehren der Mathematischen Wissenschaften [Fundamental Principles of
  Mathematical Sciences]}.
\newblock Springer-Verlag, Berlin, 1999.

\bibitem[BH12]{behrstock2012cubulated}
Jason Behrstock and Mark~F Hagen.
\newblock Cubulated groups: thickness, relative hyperbolicity, and simplicial
  boundaries.
\newblock {\em arXiv preprint arXiv:1212.0182}, 2012.

\bibitem[BHS14]{behrstock2014hierarchically}
Jason Behrstock, Mark~F Hagen, and Alessandro Sisto.
\newblock Hierarchically hyperbolic spaces i: curve complexes for cubical
  groups.
\newblock {\em arXiv preprint arXiv:1412.2171}, 2014.

\bibitem[BJN10]{MR2727658}
Jason~A. Behrstock, Tadeusz Januszkiewicz, and Walter~D. Neumann.
\newblock Quasi-isometric classification of some high dimensional right-angled
  {A}rtin groups.
\newblock {\em Groups Geom. Dyn.}, 4(4):681--692, 2010.

\bibitem[BKMM12]{MR2928983}
Jason Behrstock, Bruce Kleiner, Yair Minsky, and Lee Mosher.
\newblock Geometry and rigidity of mapping class groups.
\newblock {\em Geom. Topol.}, 16(2):781--888, 2012.

\bibitem[BKS08a]{MR2421136}
Mladen Bestvina, Bruce Kleiner, and Michah Sageev.
\newblock The asymptotic geometry of right-angled {A}rtin groups. {I}.
\newblock {\em Geom. Topol.}, 12(3):1653--1699, 2008.

\bibitem[BKS08b]{bestvina2008quasiflats}
Mladen Bestvina, Bruce Kleiner, and Michah Sageev.
\newblock Quasiflats in cat (0) complexes.
\newblock {\em arXiv preprint arXiv:0804.2619}, 2008.

\bibitem[BN08]{behrstock2008quasi}
Jason~A Behrstock and Walter~D Neumann.
\newblock Quasi-isometric classification of graph manifold groups.
\newblock {\em Duke Mathematical Journal}, 141(2):217--240, 2008.

\bibitem[Bri91]{MR2686786}
Martin~Robert Bridson.
\newblock {\em Geodesics and curvature in metric simplicial complexes}.
\newblock ProQuest LLC, Ann Arbor, MI, 1991.
\newblock Thesis (Ph.D.)--Cornell University.

\bibitem[Cha07]{charney2007introduction}
Ruth Charney.
\newblock An introduction to right-angled {A}rtin groups.
\newblock {\em Geometriae Dedicata}, 125(1):141--158, 2007.

\bibitem[CS11]{caprace2011rank}
Pierre-Emmanuel Caprace and Michah Sageev.
\newblock Rank rigidity for cat (0) cube complexes.
\newblock {\em Geometric and functional analysis}, 21(4):851--891, 2011.

\bibitem[DM99]{davis1999notes}
Michael~W Davis and G{\'a}bor Moussong.
\newblock Notes on nonpositively curved polyhedra.
\newblock {\em Low dimensional topology (Eger, 1996/Budapest, 1998)}, 8:11--94,
  1999.

\bibitem[DT12]{dani2012divergence}
Pallavi Dani and Anne Thomas.
\newblock Divergence in right-angled coxeter groups.
\newblock {\em arXiv preprint arXiv:1211.4565}, 2012.

\bibitem[Dun85]{dunwoody1985accessibility}
Martin~J Dunwoody.
\newblock The accessibility of finitely presented groups.
\newblock {\em Inventiones mathematicae}, 81(3):449--457, 1985.

\bibitem[EF97]{eskin1997quasi}
Alex Eskin and Benson Farb.
\newblock Quasi-flats and rigidity in higher rank symmetric spaces.
\newblock {\em Journal of the American Mathematical Society}, 10(3):653--692,
  1997.

\bibitem[FLS11]{frigerio2011rigidity}
Roberto Frigerio, Jean-Francois Lafont, and Alessandro Sisto.
\newblock Rigidity of high dimensional graph manifolds.
\newblock {\em arXiv preprint arXiv:1107.2019}, 2011.

\bibitem[Gro87]{MR919829}
M.~Gromov.
\newblock Hyperbolic groups.
\newblock In {\em Essays in group theory}, volume~8 of {\em Math. Sci. Res.
  Inst. Publ.}, pages 75--263. Springer, New York, 1987.

\bibitem[HK16]{huang2016groups}
Jingyin Huang and Bruce Kleiner.
\newblock Groups quasi-isometric to {RAAG}'s.
\newblock {\em arXiv preprint arXiv:1601.00946}, 2016.

\bibitem[Hua14]{huang2014quasi}
Jingyin Huang.
\newblock Quasi-isometry classification of right-angled artin groups {I}: the
  finite out case.
\newblock {\em arXiv preprint arXiv:1410.8512}, 2014.

\bibitem[Hua15]{huang2015quasi}
Jingyin Huang.
\newblock Quasi-isometry of right-angled artin groups {II}: several infinite
  out case.
\newblock {\em Preprint}, 2015.

\bibitem[Hua16]{huang2016rigid}
Jingyin Huang.
\newblock Quasi-isometry rigidity for lower-dimensional {RAAG}'s.
\newblock {\em Preprint}, 2016.

\bibitem[HW41]{MR0006493}
Witold Hurewicz and Henry Wallman.
\newblock {\em Dimension {T}heory}.
\newblock Princeton Mathematical Series, v. 4. Princeton University Press,
  Princeton, N. J., 1941.

\bibitem[HW08]{MR2377497}
Fr{\'e}d{\'e}ric Haglund and Daniel~T. Wise.
\newblock Special cube complexes.
\newblock {\em Geom. Funct. Anal.}, 17(5):1551--1620, 2008.

\bibitem[KKL98]{kapovich1998quasi}
Michael Kapovich, Bruce Kleiner, and Bernhard Leeb.
\newblock Quasi-isometries and the de rham decomposition.
\newblock {\em Topology}, 37(6):1193--1211, 1998.

\bibitem[KL]{quasimini}
Bruce Kleiner and Urs Lang.
\newblock Quasi-minimizing varieties in spaces of higher asymptotic rank. in
  preparation.

\bibitem[KL97a]{kapovich1997quasi}
Michael Kapovich and Bernhard Leeb.
\newblock Quasi-isometries preserve the geometric decomposition of haken
  manifolds.
\newblock {\em Inventiones mathematicae}, 128(2):393--416, 1997.

\bibitem[KL97b]{kleiner1997rigidity}
Bruce Kleiner and Bernhard Leeb.
\newblock Rigidity of quasi-isometries for symmetric spaces and euclidean
  buildings.
\newblock {\em Comptes Rendus de l'Acad{\'e}mie des Sciences-Series
  I-Mathematics}, 324(6):639--643, 1997.

\bibitem[Kle]{bk}
Bruce Kleiner.
\newblock unpublished.

\bibitem[Kle99]{kleiner1999local}
Bruce Kleiner.
\newblock The local structure of length spaces with curvature bounded above.
\newblock {\em Mathematische Zeitschrift}, 231(3):409--456, 1999.

\bibitem[Kra11]{kramer2011local}
Linus Kramer.
\newblock On the local structure and the homology of {CAT} ($\kappa$) spaces
  and {E}uclidean buildings.
\newblock {\em Advances in Geometry}, 11(2):347--369, 2011.

\bibitem[KW09]{kramer2009coarse}
Linus Kramer and Richard~M Weiss.
\newblock Coarse rigidity of euclidean buildings.
\newblock {\em Preprint}, 2009.

\bibitem[LN]{uppercurvature}
Alexander Lytchak and Koichi Nagano.
\newblock Geodesically complete spaces with an upper curvature bound. in
  preparation.

\bibitem[Mos73]{mostow1973strong}
George~D Mostow.
\newblock {\em Strong rigidity of locally symmetric spaces}, volume~78.
\newblock Princeton University Press, 1973.

\bibitem[Nag02]{nagano2002volume}
Koichi Nagano.
\newblock A volume convergence theorem for {A}lexandrov spaces with curvature
  bounded above.
\newblock {\em Mathematische Zeitschrift}, 241(1):127--163, 2002.

\bibitem[Sag95]{MR1347406}
Michah Sageev.
\newblock Ends of group pairs and non-positively curved cube complexes.
\newblock {\em Proc. London Math. Soc. (3)}, 71(3):585--617, 1995.

\bibitem[Wor06]{wortman2006quasiflats}
Kevin Wortman.
\newblock Quasiflats with holes in reductive groups.
\newblock {\em Algebraic \& Geometric Topology}, 6:91--117, 2006.

\end{thebibliography}

\end{document}